\newcommand{\blue}{\color{blue}}
\newcommand{\black}{\color{black}}
\theoremstyle{plain}
\newtheorem{theorem}{Theorem}
\newtheorem{lemma}[theorem]{Lemma}
\newtheorem{remark}[theorem]{Remark}
\newtheorem{definition}[theorem]{Definition}
\newtheorem{corollary}[theorem]{Corollary}
\begin{document}
 
 \bigskip

\noindent{\Large
Degenerations of Poisson-type algebras}\footnote{
The % first part of the 
work is supported by the Spanish Government through the Ministry of Universities grant `Margarita Salas', funded by the European Union - NextGenerationEU; FCT   UIDB/MAT/00212/2020,  UIDP/MAT/00212/2020 and 2022.02474.PTDC.
%The second part of this work is supported by the Russian Science Foundation under grant 22-11-00081. 
}%\footnote{Corresponding author: Ivan Kaygorodov (kaygorodov.ivan@gmail.com)}

 %\bigskip

\begin{center}

 {\bf
 Hani Abdelwahab\footnote{Department of Mathematics, 
 Faculty of Science, Mansoura University,  Mansoura, Egypt; \ haniamar1985@gmail.com},
Amir Fernández Ouaridi\footnote{Department of Mathematics, University of Cádiz, Puerto Real, Espa\~na; \ amir.fernandez.ouaridi@gmail.com}
\& 
Ivan Kaygorodov\footnote{CMA-UBI, University of  Beira Interior, Covilh\~{a}, Portugal; 
%Saint Petersburg State University, Russia;
\    kaygorodov.ivan@gmail.com}

}

\bigskip

\end{center}

\noindent {\bf Abstract:}
{\it  
The degenerations of Poisson-type algebras are studied in the following varieties in dimension two:
Leibniz--Poisson algebras, 
transposed Leibniz--Poisson algebras, 
Novikov--Poisson algebras,
commutative pre-Lie algebras,
anti-pre-Lie Poisson algebras and 
pre-Poisson algebras. 
For these varieties, the algebraic and geometric classifications are given. Also, the complete graph of degenerations is obtained, together with the description of the orbit closures of each of its algebras and parametric families up to isomorphism.
 }

\medskip 
\noindent {\bf Keywords}:
{\it Poisson-type algebras,  
 classification, degeneration.}

\noindent {\bf MSC2020}:  
,
17A40,
17B30,
17D99,
14D06,
14L30.

{\small 	 \tableofcontents
}

%%%%%%%%%%%%%%%%%%%%%%%%%%%%%%%%%%%%%%%%%%%%%%%%%%%%%%%%%%%%%%%%%%%%%
  
%\newpage

\section*{ Introduction}

\bigskip 

The algebraic classification (up to isomorphism) of algebras of dimension $n$ of a certain variety
defined by a family of polynomial identities is a classic problem in the theory of non-associative algebras, see \cite{ack,afm,afk21,ak21, ccsmv, ckls,erik,Ernst,fkkv,kkl20,kkp20,kv16,japan,petersson,  shaf,abcf2, abcflts, abcfp}.
There are many results related to the algebraic classification of small-dimensional algebras in different varieties of
associative and non-associative algebras.
For example, algebraic classifications of 
$2$-dimensional algebras \cite{petersson},
$3$-dimensional evolution algebras \cite{ccsmv},
$3$-dimensional anticommutative algebras \cite{japan},
$4$-dimensional division algebras \cite{Ernst,erik},
$6$-dimensional anticommutative nilpotent algebras \cite{kkl20}
and 
$8$-dimensional dual Mock Lie algebras  \cite{ckls} have been given.
 Geometric properties of a variety of algebras defined by a family of polynomial identities have been an object of study since the 1970's, see \cite{wolf2, wolf1,  ak21,afk21,   chouhy,     BC99, aleis, aleis2,   gabriel,    ckls, cibils,   GRH, GRH2, ale3, fkkv, ack,  ikp20, ikv17,        kv16, S90}  and references in \cite{k23,l24}. 
 Gabriel described the irreducible components of the variety of $4$-dimensional unital associative algebras~\cite{gabriel}.  
 Cibils considered rigid associative algebras with $2$-step nilpotent radical \cite{cibils}.
 Chouhy  proved that  in the case of finite-dimensional associative algebras,
 the $N$-Koszul property is preserved under the degeneration relation~\cite{chouhy}.
 Burde and Steinhoff constructed the graphs of degenerations for the varieties of    $3$-dimensional and $4$-dimensional Lie algebras~\cite{BC99}. 
 Grunewald and O'Halloran studied the degenerations for the variety of $5$-dimensional nilpotent Lie algebras~\cite{GRH}. 
 Alvarez and Hernández described the degenerations for the variety of $5$-dimensional nilpotent Lie superalgebras \cite{aleis2}.
  Alvarez and Vera  described the degenerations for the variety of $3$-dimensional ${\rm Hom}$-Lie algebras \cite{aleis}.
 Fernández Ouaridi,  Kaygorodov,  Khrypchenko and    Volkov described all the degenerations in the variety of $4$-dimensional commutative nilpotent algebras. 
Ignatyev,  Kaygorodov, and Popov determined the dimensions of the irreducible components of 
$n$-dimensional $2$-step nilpotent algebras (commutative and anticommutative)  \cite{ikp20}.
On the other hand, Kaygorodov, Khrypchenko, and  Lopes studied the dimensions of the irreducible components of $n$-dimensional (all, commutative, anticommutative) nilpotent algebras \cite{kkl21}. 
Degenerations have also been used to study the level of complexity of an algebra~\cite{wolf1,wolf2}.
The notion of degenerations of binary algebras was also extended 
to degenerations of $n$-ary algebras \cite{kv20} and degenerations of algebras with two multiplications \cite{abcf2, abcflts, abcfp, afm}. 
The study of degenerations of algebras is very rich and closely related to deformation theory, in the sense of Gerstenhaber \cite{ger63}.

Poisson algebras arose from the study of Poisson geometry in the 1970s and have appeared in an extremely wide range of areas in mathematics and physics, such as Poisson manifolds, algebraic geometry, operads, quantization theory, quantum groups, and classical and quantum mechanics. The study of Poisson algebras also led to other algebraic structures, such as 
noncommutative Poisson algebras, 
generic Poisson algebras,
transposed Poisson algebras,
Novikov--Poisson algebras,
algebras of Jordan brackets and generalized Poisson algebras,
$F$-manifold algebras,
quasi-Poisson algebras,
Gerstenhaber algebras,
Poisson bialgebras,
double Poisson algebras,
Poisson $n$-Lie algebras, etc. All these classes are related in some way to Poisson algebras, for that reason we call them Poisson-type algebras.

%The study of all possible Poisson structures with a certain Lie or associative part is an important problem in the theory of Poisson algebras.

\medskip

The present paper is devoted to the algebraic, geometric, and degeneration  classification of the complex algebras from the principal varieties of Poisson-type algebras in dimension two. In particular, we consider the varieties of Leibniz--Poisson algebras, 
transposed Leibniz--Poisson algebras, 
Novikov--Poisson algebras,
commutative pre-Lie algebras,
anti-Pre-Poisson algebras and 
pre-Poisson algebras.

\newpage
\section{Preliminaries: the algebraic and geometric  classifications}

All the algebras below will be over $\mathbb C$ and all the linear maps will be $\mathbb C$-linear.
For simplicity, every time we write the multiplication table of an algebra 
the products of basic elements whose values are zero or can be recovered from the commutativity  or from the anticommutativity are omitted. In this section, introduce the techniques used to obtain our main results.

\subsection{The algebraic classification of algebras}
In this paper, we work with various  types of algebras with two multiplications such that the  ``first" multiplication is associative and commutative (or no) and the ``second" multiplication depends on the particular type of algebras under consideration.
Let us review the method we will use to obtain the algebraic classification  for an arbitrary variety of Poisson-type algebras
(the present method, in the case of Poisson algebras, is given with more details in \cite{afm}).

\begin{definition} 
A Poisson-type algebra  is a  vector space $({\rm A}, \cdot, \bullet)$ equipped with 
one commutative associative multiplication $\cdot$ and another multiplication $\bullet$ satisfying a family 
of polynomial identities $\Omega^{\{\bullet\}}_i(x_1, \ldots, x_{n_i}),$
which includes only one multiplication $\bullet.$
These two operations are required to satisfy  a family of  Leibniz type  identities
$\Theta^{ \{\cdot, \bullet\}}_i(x_1, \ldots, x_{k_i}),$
which includes two of these  multiplications $\cdot$ and $\bullet.$
Main examples of Poisson-type algebras are the following:
Poisson algebras, 
Leibniz--Poisson algebras,
transposed Poisson algebras,
Novikov--Poisson algebras, 
commutative pre-Lie algebras or  
anti-pre-Lie Poisson algebras.
\end{definition}

\begin{definition}
Let $\left( {\rm A},\cdot \right) $ be a commutative associative
algebra. Define ${\rm Z}^{2}\left( {\rm A},{\rm A}\right) $ to be the
set of all bilinear maps 
$\theta :{\rm A}\times {\rm A}\longrightarrow {\rm A}$ such that:%
\begin{center}
$\Omega^{\{\bullet \to \theta\}}_i(x_1, \ldots, x_{n_i})=0$ and 
$\Theta^{ \{\cdot, \bullet \to \theta\}}_i(x_1, \ldots, x_{k_i})=0,$
\end{center}
where, by $\bullet \to \theta$ we denote the changing of the multiplication $\bullet$ to the cocycle $\theta.$ 
Then ${\rm Z}^{2}\left({\rm A},{\rm A}\right)\neq \mathcal{\emptyset }$ since $\theta=0\in {\rm Z}^{2}\left({\rm A},{\rm A}\right)$.
\end{definition}

Now, for $\theta \in {\rm Z}^{2}\left( {\rm A},{\rm A}\right)$, let us define a multiplication 
$\bullet_{\theta }$ on ${\rm A}$ by $x\bullet_{\theta}y=\theta \left( x,y\right) $ for all $x,y$ in ${\rm A}$. 
Then $\left( {\rm A},\cdot ,\bullet_{\theta }\right) $ is a Poisson-type
algebra satisfying the two  families of identities
$\Omega^{\{\bullet_\theta\}}_i(x_1, \ldots, x_{n_i})$ and 
$\Theta^{ \{\cdot, \bullet_\theta\}}_i(x_1, \ldots, x_{k_i})$. 
Conversely, if $\left( {\rm A},\cdot ,\bullet \right) $ is a Poisson-type algebra satisfying the two  families of identities
$\Omega^{\{\bullet\}}_i(x_1, \ldots, x_{n_i})$ and 
$\Theta^{ \{\cdot, \bullet\}}_i(x_1, \ldots, x_{k_i})$, 
then there exists $\theta \in {\rm Z}^{2}\left( 
{\rm A},{\rm A}\right) $ such that $\left( 
{\rm A},\cdot ,\bullet_{\theta }\right) \cong\left( {\rm A},\cdot ,\bullet
\right)$. To see this, consider the bilinear map $\theta :{\rm A}\times {\rm A}\longrightarrow {\rm A}$ defined by $
\theta \left( x,y\right) = x\bullet y$ for all $x,y$ in ${\rm A}$. Then $\theta \in {\rm Z}^{2}\left( 
{\rm A},{\rm A}\right) $ and $\left( 
{\rm A},\cdot ,\bullet_\theta \right) =\left( {\rm A},\cdot ,\bullet
\right)$.

Let $\left( {\rm A},\cdot \right) $ be a commutative associative
algebra. The automorphism group, ${\rm Aut}\left( {\rm A}\right) $,
of ${\rm A}$ acts on ${\rm Z}^{2}\left( {\rm A},{\rm A}\right) $ by 
\begin{longtable}{rcl}
$\left(\theta *\phi\right)  \left( x,y\right) $&$=$&$\phi ^{-1}\left( \theta \left( \phi \left(
x\right) ,\phi \left( y\right) \right) \right),$
\end{longtable}
for $\phi \in {\rm Aut}%
\left( {\rm A}\right) $, and $\theta \in {\rm Z}^{2}\left( {\rm A}, {\rm A}\right) $.

\begin{lemma}
\label{isom}Let $\left( {\rm A},\cdot \right) $ be a commutative associative algebra
and $\theta ,\vartheta \in {\rm Z}^{2}\left( {\rm A},{\rm A}\right) $.
Then $\left( {\rm A},\cdot ,\bullet_{\theta }\right) $ and $%
\left( \rm{A},\cdot ,\bullet_{\vartheta }\right) $ are
isomorphic if and only if there is a $\phi \in {\rm Aut}\left( {\rm A}
\right) $ with $\theta *\phi =\vartheta $.
\end{lemma}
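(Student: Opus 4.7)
The plan is to unwind definitions in both directions, the decisive observation being that both $\left({\rm A},\cdot ,\bullet_{\theta }\right)$ and $\left({\rm A},\cdot ,\bullet_{\vartheta }\right)$ share the very same commutative associative multiplication $\cdot$. Consequently, any isomorphism between them must restrict to an automorphism of the underlying commutative associative algebra $\left({\rm A},\cdot\right)$, and the lemma becomes a bookkeeping statement about how the remaining datum (the cocycle $\theta$) transforms.

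For the forward direction, suppose $\phi \colon \left({\rm A},\cdot ,\bullet_{\vartheta }\right)\to\left({\rm A},\cdot ,\bullet_{\theta }\right)$ is an isomorphism. Since $\phi$ preserves $\cdot$, we have $\phi\in{\rm Aut}\left({\rm A}\right)$; and the condition $\phi(x\bullet_{\vartheta} y)=\phi(x)\bullet_{\theta}\phi(y)$ translates into $\phi\left(\vartheta(x,y)\right)=\theta\left(\phi(x),\phi(y)\right)$. Applying $\phi^{-1}$ to both sides yields $\vartheta(x,y)=\phi^{-1}\left(\theta(\phi(x),\phi(y))\right)=(\theta*\phi)(x,y)$, exactly as required. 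For the converse, if $\phi\in{\rm Aut}\left({\rm A}\right)$ satisfies $\theta*\phi=\vartheta$, the same calculation run backwards shows that $\phi$ intertwines $\bullet_{\vartheta}$ and $\bullet_{\theta}$, while it already preserves $\cdot$ by hypothesis; hence it is an isomorphism of Poisson-type algebras.

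The only substantive check is that the $*$-action is well-defined, i.e., $\theta*\phi\in{\rm Z}^{2}\left({\rm A},{\rm A}\right)$ whenever $\theta\in{\rm Z}^{2}\left({\rm A},{\rm A}\right)$ and $\phi\in{\rm Aut}\left({\rm A}\right)$. This is a routine functoriality argument: since $\phi$ preserves $\cdot$, substituting $x_i\mapsto\phi(x_i)$ in any identity $\Omega^{\{\bullet\to\theta\}}_i=0$ or $\Theta^{\{\cdot,\bullet\to\theta\}}_i=0$ and then pulling back along $\phi^{-1}$ produces the corresponding identity for $\theta*\phi$. I expect no genuine obstacle here; the content of the lemma lies entirely in identifying the correct action by which ${\rm Aut}\left({\rm A}\right)$ permutes the admissible $\bullet$-structures on the fixed commutative associative base $\left({\rm A},\cdot\right)$, and both directions of the equivalence are then immediate from the definition of $\theta*\phi$.
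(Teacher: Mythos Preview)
Your proof is correct and follows essentially the same approach as the paper's: both argue that an isomorphism $\phi$ must lie in ${\rm Aut}({\rm A})$ because it preserves $\cdot$, and then unwind the condition $\phi(x\bullet_{\vartheta}y)=\phi(x)\bullet_{\theta}\phi(y)$ into $\vartheta=\theta*\phi$, with the converse by reversing the computation. Your additional paragraph on well-definedness of the $*$-action is extra commentary not present in the paper's proof (the paper simply states the action on ${\rm Z}^2({\rm A},{\rm A})$ beforehand), but it does no harm.
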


\begin{proof}
If $\theta * \phi =\vartheta $, then $\phi :\left( \rm{A},\cdot ,\bullet_{\vartheta }\right) \longrightarrow $ $\left( \rm{A},\cdot
,\bullet_{\theta }\right) $ is an isomorphism since 
$\phi
\left( \vartheta \left( x,y\right) \right) =\theta \left( \phi \left(
x\right) ,\phi \left( y\right) \right) $. On the other hand, if $\phi
:\left( \rm{A},\cdot ,\bullet_{\vartheta }\right)
\longrightarrow $ $\left( \rm{A},\cdot ,\bullet _{\theta
}\right) $ is an isomorphism of Poisson-type algebras, then $\phi \in {\rm Aut}%
\left( \rm{A}\right) $ and $\phi \left(  x\bullet  _{\vartheta
}y \right) = \phi \left( x\right) \bullet  _{\theta } \phi \left( y\right)  
 $. Hence \begin{longtable}{lclcl}
    $\vartheta \left( x,y\right) $&$=$&$\phi ^{-1}\left( \theta
\left( \phi \left( x\right) ,\phi \left( y\right) \right) \right) $&$=$&$\left(\theta *
\phi \right)\left( x,y\right) $
 \end{longtable} and therefore $\theta * \phi=\vartheta $.
\end{proof}

Consequently, we have a procedure to classify the Poisson-type algebras with given
associated commutative associative algebra $\left( \rm{A},\cdot \right) 
$. It consists of three steps:

\begin{enumerate}
\item Compute ${\rm Z}^{2}\left( \rm{A},\rm{A}\right) $.

\item Find the orbits of ${\rm Aut}\left( \rm{A}\right) $ on $%
{\rm Z}^{2}\left( \rm{A},\rm{A}\right) $.
\item  Choose a representative $\theta$ from each orbit and then construct the Poisson-type algebra $\left( \rm{A},\cdot,\bullet  _{\theta }\right) $.
\end{enumerate}

\subsubsection{The algebraic classification of $2$-dimensional commutative associative algebras}
All algebras under our consideration are constructed on the base of $2$-dimensional commutative associative algebras. 
Hence,  the classification of them, which can be obtained by direct verification from \cite{kv16}, is given below.

\begin{theorem}
\label{asocc2}
Let $\left( {\rm A}, \cdot \right) $ be a  $2$-dimensional
commutative associative algebra. Then ${\rm A}$ is
isomorphic to one of the following algebras:
\begin{longtable}{lclcllcll} 

${\rm A}_{01}$&$:$&$e_1 \cdot e_1 $&$=$&$ e_1,$&$ e_2 \cdot e_2 $&$=$&$ e_2$\\ 

${\rm A}_{02}$&$:$& $ e_1 \cdot e_1  $&$=$&$ e_1,$&$ e_1 \cdot e_2 $&$ =$&$e_2$\\  

${\rm A}_{03}$&$:$ & $e_1 \cdot e_1 $&$= $&$e_1$\\ 
 
${\rm A}_{04}$&$:$ & $ e_1 \cdot e_1 $&$ =$&$ e_2$
 
 \end{longtable}

\end{theorem}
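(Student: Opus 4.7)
The plan is to classify two-dimensional commutative associative algebras over $\mathbb{C}$ by splitting on whether $A$ contains a nonzero idempotent and then normalising the remaining parameters in each branch. Over an algebraically closed field, a finite-dimensional commutative associative algebra is either nilpotent or contains a nonzero idempotent: if some $x \in A$ is not nilpotent, factoring its minimal polynomial as $t^{k} q(t)$ with $q(0) \neq 0$ and applying the B\'ezout identity to the coprime factors $t^{k}$ and $q(t)$ produces an idempotent of the form $p(x)$.

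In the nilpotent branch, if $A^{2} \neq 0$ then $A^{2} \subsetneq A$ forces $\dim A^{2} = 1$, and the fact that $A^{3}$ is an ideal strictly smaller than $A^{2}$ (else iteration would contradict nilpotency) forces $A^{3} = 0$. Picking $e_{1} \in A$ with $e_{1}^{2} \neq 0$ and setting $e_{2} = e_{1}^{2}$ gives $e_{1} \cdot e_{2} = e_{1}^{3} = 0$ and $e_{2}^{2} = 0$, which is precisely $\mathrm{A}_{04}$. The residual sub-case $A^{2} = 0$ would yield the null algebra, which is not in the stated list and must be tacitly excluded as trivial.

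In the idempotent branch, fix a nonzero idempotent $e$ and form the Peirce decomposition $A = A_{1}(e) \oplus A_{0}(e)$ with $A_{1}(e) = \{x : ex = x\}$ and $A_{0}(e) = \{x : ex = 0\}$; both summands are subalgebras since $A$ is commutative. If $\dim A_{1}(e) = 2$, then $e$ is the unit; choosing a complement $f \notin \mathbb{C} e$, writing $f^{2} = \alpha e + \beta f$, and replacing $f$ by $f - (\beta/2)\,e$ reduces to $f^{2} = \gamma e$. For $\gamma \neq 0$ I rescale so that $f^{2} = e$ and pass to the orthogonal idempotents $\tfrac{1}{2}(e \pm f)$ to recover $\mathrm{A}_{01}$; for $\gamma = 0$ the basis $(e,f)$ directly realises $\mathrm{A}_{02}$. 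If instead $\dim A_{1}(e) = 1$, I take a nonzero $f \in A_{0}(e)$, observe that $f^{2} \in A_{0}(e)$ so $f^{2} = \alpha f$, and distinguish $\alpha \neq 0$ (rescaling $f$ to an orthogonal idempotent and recovering $\mathrm{A}_{01}$) from $\alpha = 0$ (which yields $\mathrm{A}_{03}$).

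The main obstacle is the careful bookkeeping inside the unit sub-case, where the quadratic reduction must exhaust all orbits of the residual automorphism group acting on the coefficient $\gamma$. To finish, I would confirm that the four algebras are pairwise non-isomorphic: $\mathrm{A}_{04}$ is the only nilpotent one, $\mathrm{A}_{01}$ is the only one with two orthogonal nonzero idempotents, $\mathrm{A}_{02}$ is the unique unital algebra among the remaining two, and $\mathrm{A}_{03}$ is distinguished as the unique non-unital algebra with exactly one nonzero idempotent.
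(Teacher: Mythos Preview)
Your proof is correct and self-contained. The paper, however, does not actually prove this theorem: it simply states that the classification ``can be obtained by direct verification from \cite{kv16}'', i.e., one extracts the commutative associative algebras from the full classification of $2$-dimensional algebras given in that reference. Your route is genuinely different and more structural: you use the nilpotent/idempotent dichotomy and the Peirce decomposition $A = A_1(e) \oplus A_0(e)$ to reduce to a handful of normal forms, rather than filtering a pre-existing list. The advantage of your approach is that it is independent of \cite{kv16} and explains \emph{why} exactly these four algebras arise; the advantage of the paper's approach is brevity, since the heavy lifting is outsourced. One small point worth tightening: the B\'ezout/minimal-polynomial step implicitly assumes a unit, so you should either phrase it in the unitalisation $\tilde A = A \oplus \mathbb{C}\cdot 1$ (where the minimal polynomial of $x \in A$ automatically has vanishing constant term), or replace it by the elementary observation that in dimension two a non-nilpotent $x$ satisfies $x^2 = \lambda x$ with $\lambda \neq 0$ or else $\{x,x^2\}$ is a basis with $x^3 = \alpha x + \beta x^2$, from which an idempotent is easily extracted.
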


\begin{lemma}
\label{asocc2aut}
The description of the group of automorphisms of every $2$-dimensional commutative associative algebra is given.

\begin{enumerate}
    \item If $\phi \in {\rm Aut}({\rm A}_{01})$, then $\phi\in \mathbb S_2,$ i.e. $\phi(e_1)=e_1, \  \phi(e_2)=e_2$ or
    $\phi(e_1)=e_2, \  \phi(e_2)=e_1.$
    
    \item If $\phi \in{\rm Aut}({\rm A}_{02})$, then $\phi(e_1) = e_1$ and $\phi(e_2)= \xi e_2$, for $\xi\in \mathbb{C}^{*}$.
    
    \item If $\phi \in{\rm Aut}({\rm A}_{03})$, then $\phi(e_1) = e_1$ and $\phi(e_2)= \xi e_2$, for 
    $\xi\in \mathbb{C}^{*}$. 
    
    \item If $\phi \in{\rm Aut}({\rm A}_{04})$, then $\phi(e_1) = \xi e_1 + \nu e_2$ and $\phi(e_2)= \xi^2 e_2$, for $\xi\in \mathbb{C}^{*}$ and $\nu\in \mathbb{C}$. 
\end{enumerate}

\end{lemma}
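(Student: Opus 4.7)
The plan is to treat all four cases uniformly by writing a generic linear map
$\phi(e_1)=\alpha e_1+\beta e_2,\ \phi(e_2)=\gamma e_1+\delta e_2$
with invertibility condition $\alpha\delta-\beta\gamma\neq 0$, and then impose the three multiplicative constraints $\phi(e_i\cdot e_j)=\phi(e_i)\cdot\phi(e_j)$ for $i\le j\in\{1,2\}$ in each of the four algebras. Each such constraint becomes a pair of polynomial equations in $\alpha,\beta,\gamma,\delta$ (one equation per basis coefficient), so the entire problem reduces to solving a small polynomial system together with the open condition $\det\neq 0$.

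For ${\rm A}_{01}$, the identities $\phi(e_1)^2=\phi(e_1)$ and $\phi(e_2)^2=\phi(e_2)$ force $\alpha,\beta,\gamma,\delta\in\{0,1\}$ (using that $e_1\cdot e_2=0$ kills all cross terms), and then $\phi(e_1)\phi(e_2)=0$ together with invertibility leaves only the identity and the swap $e_1\leftrightarrow e_2$, giving $\mathbb S_2$. For ${\rm A}_{02}$, the relation $\phi(e_2)^2=0$ combined with $e_1\cdot e_1=e_1$ immediately forces $\gamma=0$; the relation $\phi(e_1)^2=\phi(e_1)$ gives $\alpha^2=\alpha$ and $2\alpha\beta=\beta$; and $\phi(e_1)\phi(e_2)=\phi(e_2)$ with $\delta\neq 0$ forces $\alpha=1$, which then forces $\beta=0$, so $\phi(e_1)=e_1,\ \phi(e_2)=\delta e_2$ with $\delta\in\mathbb C^*$. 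The case ${\rm A}_{03}$ is almost the same: $\phi(e_1)^2=\phi(e_1)$ gives $\alpha^2=\alpha$ and $\beta=0$, invertibility rules out $\alpha=0$, and $\phi(e_1)\phi(e_2)=0$ forces $\gamma=0$, leaving $\phi(e_1)=e_1,\ \phi(e_2)=\xi e_2$.

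For ${\rm A}_{04}$ the computation is slightly different: since $e_2$ is a square-zero radical ideal, the relation $\phi(e_1)^2=\phi(e_2)$ reads $\alpha^2 e_2=\gamma e_1+\delta e_2$, forcing $\gamma=0$ and $\delta=\alpha^2$; the remaining relations $\phi(e_1)\phi(e_2)=0$ and $\phi(e_2)^2=0$ are automatic because the image of $e_2$ already lies in $\mathbb C e_2$ and $e_2$ annihilates everything. Invertibility then reduces to $\alpha^3\neq 0$, i.e.\ $\alpha\in\mathbb C^*$, with $\beta\in\mathbb C$ free, which matches the stated form.

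No step is genuinely difficult; the only place where a small care is needed is ${\rm A}_{02}$, where one must chain the deductions in the right order (first $\gamma=0$ from the nilpotence of $\phi(e_2)$, then $\alpha=1$ from the unit-like relation, then $\beta=0$) so that one does not accidentally divide by a parameter that is not yet known to be nonzero. Otherwise it is a direct bookkeeping of linear and quadratic relations in four unknowns.
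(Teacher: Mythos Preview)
Your proof is correct. The paper states this lemma without proof, and your direct computation---writing a generic linear map and imposing the multiplicativity constraints $\phi(e_i)\cdot\phi(e_j)=\phi(e_i\cdot e_j)$ together with the invertibility condition---is exactly the standard way to verify it; there is nothing to compare.
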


 \subsection{Degenerations and the  geometric classification of algebras}
Let us introduce the techniques used to obtain the geometric classification for an arbitrary variety of Poisson-type algebras. Given a complex vector space ${\mathbb V}$ of dimension $n$, the set of bilinear maps \begin{center}$\textrm{Bil}({\mathbb V} \times {\mathbb V}, {\mathbb V}) \cong \textrm{Hom}({\mathbb V} ^{\otimes2}, {\mathbb V})\cong ({\mathbb V}^*)^{\otimes2} \otimes {\mathbb V}$
\end{center} is a vector space of dimension $n^3$. The set of pairs of bilinear maps 
\begin{center}$\textrm{Bil}({\mathbb V} \times {\mathbb V}, {\mathbb V}) \oplus \textrm{Bil}({\mathbb V}\times {\mathbb V}, {\mathbb V}) \cong ({\mathbb V}^*)^{\otimes2} \otimes {\mathbb V} \oplus ({\mathbb V}^*)^{\otimes2} \otimes {\mathbb V}$ \end{center} which is a vector space of dimension $2n^3$. This vector space has the structure of the affine space $\mathbb{C}^{2n^3}$ in the following sense:
fixed a basis $e_1, \ldots, e_n$ of ${\mathbb V}$, then any pair with multiplication $(\mu, \mu')$, is determined by some parameters $c_{ij}^k, c_{ij}'^k \in \mathbb{C}$,  called {structural constants},  such that
$$\mu(e_i, e_j) = \sum_{p=1}^n c_{ij}^k e_k \textrm{ and } \mu'(e_i, e_j) = \sum_{p=1}^n c_{ij}'^k e_k$$
which corresponds to a point in the affine space $\mathbb{C}^{2n^3}$. Then a set of bilinear pairs $\mathcal S$ corresponds to an algebraic variety, i.e., a Zariski closed set, if there are some polynomial equations in variables $c_{ij}^k, c_{ij}'^k$ with zero locus equal to the set of structural constants of the bilinear pairs in $\mathcal S$. Given the identities defining a particular class of Poisson-type algebras, we can obtain a set of polynomial equations in variables $c_{ij}^k, c_{ij}'^k$. This class of $n$-dimensional Poisson-type algebras is a variety. Denote it by $\mathcal{T}_{n}$.
Now, consider the following action of $\rm{GL}({\mathbb V})$ on ${\mathcal T}_{n}$:%by conjugation
$$(g*(\mu, \mu'))(x,y) := (g \mu (g^{-1} x, g^{-1} y), g \mu' (g^{-1} x, g^{-1} y))$$
for $g\in\rm{GL}({\mathbb V})$, $(\mu, \mu')\in \mathcal{T}_{n}$ and for any $x, y \in {\mathbb V}$. Observe that the $\textrm{GL}({\mathbb V})$-orbit of $(\mu, \mu')$, denoted $O((\mu, \mu'))$, contains all the structural constants of the bilinear pairs isomorphic to the Poisson-type algebra with structural constants $(\mu, \mu')$.

A geometric classification of a variety of algebras consists on describing the irreducible components of the variety. Recall that any affine variety can be represented as a finite union of its irreducible components uniquely.
Note that describing the irreducible components of  ${\mathcal{T}_{n}}$ gives us the rigid algebras of the variety, which are those bilinear pairs with an open $\textrm{GL}(\mathbb V)$-orbit. This is due to the  fact that a bilinear pair is rigid in a variety if and only if the closure of its orbit is an irreducible component of the variety. 
For this reason, the following notion is convenient. Denote by $\overline{O((\mu, \mu'))}$ the closure of the orbit of $(\mu, \mu')\in{\mathcal{T}_{n}}$.

\begin{definition}
\rm Let ${\rm T} $ and ${\rm T}'$ be two $n$-dimensional Poisson-type algebras of a fixed class corresponding to the variety $\mathcal{T}_{n}$ and $(\mu, \mu'), (\lambda,\lambda') \in \mathcal{T}_{n}$ be their representatives in the affine space, respectively. The algebra ${\rm T}$ is said to {degenerate}  to ${\rm T}'$, and we write ${\rm T} \to {\rm T} '$, if $(\lambda,\lambda')\in\overline{O((\mu, \mu'))}$. If ${\rm T}  \not\cong {\rm T}'$, then we call it a  {proper degeneration}.
Conversely, if $(\lambda,\lambda')\not\in\overline{O((\mu, \mu'))}$ then  we say that ${{\rm T} }$ does not degenerate to ${{\rm T} }'$
and we write ${{\rm T} }\not\to {{\rm T} }'$.
\end{definition}

Furthermore, for a parametric family of algebras we have the following notion.

\begin{definition}
\rm
Let ${{\rm T} }(*)=\{{{\rm T} }(\alpha): {\alpha\in I}\}$ be a family of $n$-dimensional Poisson-type algebras of a fixed class corresponding to ${\mathcal{T} }_n$ and let ${{\rm T} }'$ be another algebra. Suppose that ${{\rm T} }(\alpha)$ is represented by the structure $(\mu(\alpha),\mu'(\alpha))\in{\mathcal{T} }_n$ for $\alpha\in I$ and ${{\rm T} }'$ is represented by the structure $(\lambda, \lambda')\in{\mathcal{T} }_n$. We say that the family ${{\rm T} }(*)$ {degenerates}   to ${{\rm T} }'$, and write ${{\rm T} }(*)\to {{\rm T} }'$, if $(\lambda,\lambda')\in\overline{\{O((\mu(\alpha),\mu'(\alpha)))\}_{\alpha\in I}}$.
Conversely, if $(\lambda,\lambda')\not\in\overline{\{O((\mu(\alpha),\mu'(\alpha)))\}_{\alpha\in I}}$ then we call it a  {non-degeneration}, and we write ${{\rm T} }(*)\not\to {{\rm T} }'$.

\end{definition}

Observe that ${\rm T}'$ corresponds to an irreducible component of $\mathcal{T}_n$ (more precisely, $\overline{{\rm T}'}$ is an irreducible component) if and only if ${{\rm T} }\not\to {{\rm T} }'$ for any $n$-dimensional Poisson-type algebra ${\rm T}$ and ${{\rm T}(*) }\not\to {{\rm T} }'$ for any parametric family of $n$-dimensional Poisson-type algebras ${\rm T}(*)$. To prove a particular algebra corresponds to an irreducible compotent, we will use the next ideas.
Firstly, since $\mathrm{dim}\,O((\mu, \mu')) = n^2 - \mathrm{dim}\,\mathfrak{Der}({\rm T})$, then if $ {\rm T} \to  {\rm T} '$ and  ${\rm T} \not\cong  {\rm T} '$, we have that $\mathrm{dim}\,\mathfrak{Der}( {\rm T} )<\mathrm{dim}\,\mathfrak{Der}( {\rm T} ')$, where $\mathfrak{Der}( {\rm T} )$ denotes the Lie algebra of derivations of  ${\rm T} $. 
Secondly, to prove degenerations, let ${{\rm T} }$ and ${{\rm T} }'$ be two Poisson-type algebras represented by the structures $(\mu, \mu')$ and $(\lambda, \lambda')$ from ${{\mathcal T} }_n$, respectively. Let $c_{ij}^k, c_{ij}'^k$ be the structure constants of $(\lambda, \lambda')$ in a basis $e_1,\dots, e_n$ of ${\mathbb V}$. If there exist $n^2$ maps $a_i^j(t): \mathbb{C}^*\to \mathbb{C}$ such that $E_i(t)=\sum_{j=1}^na_i^j(t)e_j$ ($1\leq i \leq n$) form a basis of ${\mathbb V}$ for any $t\in\mathbb{C}^*$ and the structure constants $c_{ij}^k(t), c_{ij}'^k(t)$ of $(\mu, \mu')$ in the basis $E_1(t),\dots, E_n(t)$ satisfy $\lim\limits_{t\to 0}c_{ij}^k(t)=c_{ij}^k$ and $\lim\limits_{t\to 0}c_{ij}'^k(t)=c_{ij}'^k$, then ${{\rm T} }\to {{\rm T} }'$. In this case,  $E_1(t),\dots, E_n(t)$ is called a parametrized basis for ${{\rm T} }\to {{\rm T} }'$.
Thirdly, to prove non-degenerations we may use a remark that follows from this lemma, see \cite{afm}. 

\begin{lemma} %\label{main1}
Consider two Poisson-type algebras ${\rm T}$ and ${\rm T}'$. Suppose ${\rm T} \to {\rm T}'$. Let C be a Zariski closed in ${\mathcal T}_n$ that is stable by the action of the invertible upper (lower) triangular matrices. Then if there is a representation $(\mu, \mu')$ of ${\rm T}$ in C, then there is a representation $(\lambda, \lambda')$ of ${\rm T}'$ in C.
\end{lemma}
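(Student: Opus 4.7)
The plan is to leverage the fact that for $G=\mathrm{GL}(\mathbb{V})$ with Borel subgroup $B$, orbit closures under $G$ are obtained from orbit closures under $B$ through the completeness of the flag variety $G/B$. Concretely, I would proceed as follows.

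First, fix a basis $e_1,\dots,e_n$ of $\mathbb{V}$ and let $B\subset\mathrm{GL}(\mathbb{V})$ denote the Borel subgroup of invertible upper (or lower) triangular matrices in this basis. The set $C$ is $B$-stable and Zariski closed by hypothesis, and contains $(\mu,\mu')$, so it contains the orbit $B\cdot(\mu,\mu')$ and, being closed, its closure $\overline{B\cdot(\mu,\mu')}$ as well.

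Next, I would invoke the classical Borel-transfer identity: for any action of a connected linear algebraic group $G$ with Borel subgroup $B$ on a variety $X$, and for every $x\in X$, one has $\overline{G\cdot x}=G\cdot\overline{B\cdot x}$. The standard proof uses completeness of $G/B$: the twisted product $G\times^{B}\overline{B\cdot x}$ maps properly onto $X$ through the action, so its image is closed, contains $G\cdot x$, and therefore equals $\overline{G\cdot x}$. Applied to $x=(\mu,\mu')$ this yields $\overline{O((\mu,\mu'))}=\mathrm{GL}(\mathbb{V})\cdot\overline{B\cdot(\mu,\mu')}$.

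Finally, from ${\rm T}\to{\rm T}'$ we have $(\lambda,\lambda')\in\overline{O((\mu,\mu'))}$, so the identity above produces $g\in\mathrm{GL}(\mathbb{V})$ and $(\nu,\nu')\in\overline{B\cdot(\mu,\mu')}$ with $g*(\nu,\nu')=(\lambda,\lambda')$. The point $(\nu,\nu')$ is then isomorphic to $(\lambda,\lambda')$ as a Poisson-type algebra and lies in $\overline{B\cdot(\mu,\mu')}\subseteq C$, supplying the representation of ${\rm T}'$ inside $C$ that we need. The only genuine obstacle is the Borel-transfer identity $\overline{G\cdot x}=G\cdot\overline{B\cdot x}$; everything else is a direct translation of the hypotheses. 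In practice the lemma is applied through its contrapositive: to certify ${\rm T}\not\to{\rm T}'$ it suffices to exhibit a $B$-stable Zariski closed subset of $\mathcal{T}_n$, cut out by polynomial conditions on the structure constants, which contains some representative of ${\rm T}$ but no representative of ${\rm T}'$.
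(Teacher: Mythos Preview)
Your argument is correct and is in fact the standard proof of this folklore lemma. The paper itself does not supply a proof: it merely states the lemma and refers the reader to \cite{afm}. So there is nothing to compare against in this paper, but what you have written is exactly the argument one finds in the literature on degenerations (going back at least to Grunewald--O'Halloran): the key point is the identity $\overline{G\cdot x}=G\cdot\overline{B\cdot x}$, which follows from the completeness of $G/B$ via the identification $G\times^{B}X\cong (G/B)\times X$ sending $[g,y]$ to $(gB,g\cdot y)$, under which $G\times^{B}\overline{B\cdot x}$ becomes a closed subset whose projection to $X$ is therefore closed. Your final paragraph, explaining how the lemma is used in contrapositive form to certify non-degenerations, matches precisely how the paper applies it throughout.
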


In order to apply this lemma, we will give the explicit definition of the appropriate stable Zariski closed $C$ in terms of the variables $c_{ij}^k, c_{ij}'^k$ in each case. For clarity, we assume by convention that $c_{ij}^k=0$ (resp. $c_{ij}'^k=0$) if $c_{ij}^k$ (resp. $c_{ij}'^k$) is not explicitly mentioned on the definition of $C$.

\begin{remark}
\label{redbil}

Moreover, let ${{\rm T} }$ and ${{\rm T} }'$ be two Poisson-type algebras represented by the structures $(\mu, \mu')$ and $(\lambda, \lambda')$ from ${{\rm T} }_n$. Suppose ${\rm T}\to{\rm T}'$. Then if $\mu, \mu', \lambda, \lambda'$ represents algebras ${\rm T}_{0}, {\rm T}_{1}, {\rm T}'_{0}, {\rm T}'_{1}$ in the affine space $\mathbb{C}^{n^3}$ of algebras with a single multiplication, respectively, we have ${\rm T}_{0}\to {\rm T}'_{0}$ and $ {\rm T}_{1}\to {\rm T}'_{1}$.
So for example, $(0, \mu)$ can not degenerate in $(\lambda, 0)$ unless $\lambda=0$. 

\end{remark}

Fourthly, to prove ${{\rm T} }(*)\to {{\rm T} }'$, suppose that ${{\rm T} }(\alpha)$ is represented by the structure $(\mu(\alpha),\mu'(\alpha))\in{\mathcal{T} }_n$ for $\alpha\in I$ and ${{\rm T} }'$ is represented by the structure $(\lambda, \lambda')\in{\mathcal{T} }_n$. Let $c_{ij}^k, c_{ij}'^k$ be the structure constants of $(\lambda, \lambda')$ in a basis  $e_1,\dots, e_n$ of ${\mathbb V}$. If there is a pair of maps $(f, (a_i^j))$, where $f:\mathbb{C}^*\to I$ and $a_i^j:\mathbb{C}^*\to \mathbb{C}$ are such that $E_i(t)=\sum_{j=1}^na_i^j(t)e_j$ ($1\le i\le n$) form a basis of ${\mathbb V}$ for any  $t\in\mathbb{C}^*$ and the structure constants $c_{ij}^k(t), c_{ij}'^k(t)$ of $(\mu\big(f(t)\big),\mu'\big(f(t)\big))$ in the basis $E_1(t),\dots, E_n(t)$ satisfy $\lim\limits_{t\to 0}c_{ij}^k(t)=c_{ij}^k$ and $\lim\limits_{t\to 0}c_{ij}'^k(t)=c_{ij}'^k$, then ${{\rm T} }(*)\to {{\rm T} }'$. In this case  $E_1(t),\dots, E_n(t)$ and $f(t)$ are called a parametrized basis and a parametrized index for ${{\rm T} }(*)\to {{\rm T} }'$, respectively.
Fithly, to prove ${{\rm T} }(*)\not \to {{\rm T} }'$, we may use an analogous of Remark \ref{redbil} for parametric families that follows from Lemma \ref{main2}.

\begin{lemma}\label{main2}
Consider the family of Poisson-type algebras ${\rm T}(*)$ and the Poisson-type algebra ${\rm T}'$. Suppose ${\rm T}(*) \to {\rm T}'$. Let C be a Zariski closed in $\mathcal{T}_n$ that is stable by the action of the invertible upper (lower) triangular matrices. Then if there is a representation $(\mu(\alpha), \mu'(\alpha))$ of ${\rm T}(\alpha)$ in C for every $\alpha\in I$, then there is a representation $(\lambda, \lambda')$ of ${\rm T}'$ in C.
\end{lemma}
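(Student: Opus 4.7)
The plan is to apply the argument of the single-algebra version of this lemma (stated just before Remark \ref{redbil}, attributed to \cite{afm}) but with a union of orbits in place of a single orbit. By definition of ${\rm T}(*) \to {\rm T}'$, we have $(\lambda, \lambda') \in \overline{\bigcup_{\alpha \in I} O((\mu(\alpha), \mu'(\alpha)))}$, and the goal is to produce $g \in \textrm{GL}(\mathbb V)$ with $g * (\lambda, \lambda') \in C$. The hypothesis that every $(\mu(\alpha), \mu'(\alpha))$ lies in $C$ gives immediately $O((\mu(\alpha), \mu'(\alpha))) \subseteq \textrm{GL}(\mathbb V) * C$ for each $\alpha$, and hence $\bigcup_{\alpha\in I} O((\mu(\alpha), \mu'(\alpha))) \subseteq \textrm{GL}(\mathbb V) * C$.

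The crucial step is to prove that the saturation $\textrm{GL}(\mathbb V) * C$ is itself Zariski closed in $\mathcal T_n$; this is where the $B$-stability of $C$ (with $B$ the Borel of invertible upper triangular matrices) becomes essential. I would invoke the classical fact that, since $B$ is a parabolic subgroup of $\textrm{GL}(\mathbb V)$, the flag variety $\textrm{GL}(\mathbb V)/B$ is projective. Concretely, the associated bundle $\textrm{GL}(\mathbb V) \times_B C$ embeds as a closed subset of $\textrm{GL}(\mathbb V)/B \times \mathcal T_n$ via $[g, c] \mapsto (gB,\, g*c)$ (well defined by $B$-stability of $C$), and its image under the projection onto $\mathcal T_n$ is exactly $\textrm{GL}(\mathbb V) * C$; projectivity of $\textrm{GL}(\mathbb V)/B$ makes this projection proper, so its image is closed.

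With closedness of $\textrm{GL}(\mathbb V) * C$ in hand, the inclusion from the first paragraph passes to Zariski closures, giving $\overline{\bigcup_{\alpha\in I} O((\mu(\alpha), \mu'(\alpha)))} \subseteq \textrm{GL}(\mathbb V) * C$. Hence $(\lambda, \lambda') = g * c$ for some $g \in \textrm{GL}(\mathbb V)$ and $c \in C$, and $g^{-1} * (\lambda, \lambda') = c \in C$ is the required representation of ${\rm T}'$ in $C$. The lower triangular variant goes through verbatim with the opposite Borel.

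The main technical obstacle is the closedness of $\textrm{GL}(\mathbb V) * C$. A more elementary alternative, avoiding the flag variety, proceeds in the analytic topology via the Iwasawa decomposition $g = ub$ with $u$ unitary and $b$ upper triangular: any approximating sequence $g_n * (\mu(\alpha_n), \mu'(\alpha_n)) \to (\lambda, \lambda')$ rewrites as $u_n * x_n$ with $x_n := b_n * (\mu(\alpha_n), \mu'(\alpha_n)) \in C$, and compactness of the unitary group extracts a convergent subsequence $u_{n_k} \to u_\infty$ yielding $u_\infty^{-1} * (\lambda, \lambda') \in C$ by the closedness of $C$.
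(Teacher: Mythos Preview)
Your argument is correct. The key observation that $\textrm{GL}(\mathbb V)*C$ is Zariski closed when $C$ is closed and $B$-stable, together with the trivial inclusion $\bigcup_{\alpha}O((\mu(\alpha),\mu'(\alpha)))\subseteq \textrm{GL}(\mathbb V)*C$, is exactly what is needed; both justifications you give for closedness (properness of $\textrm{GL}(\mathbb V)/B$ via the associated bundle, and the Iwasawa/compactness argument in the analytic topology) are standard and valid.

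As for comparison: the paper does not actually supply a proof of this lemma. It is stated as the parametric-family analogue of the single-algebra lemma (itself quoted from \cite{afm} without proof) and used as a black-box tool. So there is no ``paper's own proof'' to set yours against; you have written out precisely the argument that is implicit in the reference.
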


Finally, the following remark simplifies the geometric problem.

\begin{remark}
 Let $(\mu, \mu')$ and $(\lambda, \lambda')$ represent two Poisson-type algebras. Suppose $(\lambda, 0)\not\in\overline{O((\mu, 0))}$, (resp. $(0, \lambda')\not\in\overline{O((0, \mu'))}$), then $(\lambda, \lambda')\not\in\overline{O((\mu, \mu'))}$.
  As we construct the classification of a given class of Poisson-type algebras from a certain class of algebras with a single multiplication which remains unchanged, this remark becomes very useful.  Namely, all the classes considered in this paper are constructed on a $2$-dimensional commutative associative algebra. The geometric classification of this class was given in \cite{kv16}.
\end{remark}

\section{ Leibniz--Poisson algebras}
The notion of  Leibniz--Poisson algebras is a non-anticommutative generalization of Poisson algebras and on the other hand, it is a 
particular case of 
algebras with bracket \cite{cp06}
and noncommutative Leibniz--Poisson algebras \cite{cd06}.
Other results related to Leibniz--Poisson algebras have appeared in \cite{R13}.

\subsection{The algebraic classification of $2$-dimensional Leibniz algebras}

\begin{definition}
An algebra  is called a Leibniz algebra if it satisfies the identity 
\begin{longtable}{rcl}
$[[ x,y] ,z] $&$=$&$[[ x,z],y] +[ x,[ y,z]].$
\end{longtable}%
\end{definition}

The algebraic classification of  Leibniz algebras was obtained by direct verification from \cite{kv16}.

\begin{theorem}
\label{leib2}
Let ${\rm L}$ be a nonzero $2$-dimensional Leibniz algebra. 
Then $L$  is
isomorphic to  one and only one  of the following algebras:
\begin{longtable}{lclcllcl}
${\rm L}_{1}$&$:$&$[ e_{1},e_{1}]$&$ =$&$e_{2}$\\

${\rm L}_{2}$&$:$&$[ e_{1},e_{2}] $&$=$&$e_{1}$\\

${\rm L}_{3}$&$:$&$[ e_{1},e_{2}] $&$=$&$ e_{2},$ &$ [e_{2},e_{1}] $&$= $&$-e_{2}$

\end{longtable}

\end{theorem}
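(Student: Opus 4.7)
The plan is to parameterise a generic bilinear bracket on $\mathbb{V}=\langle e_1,e_2\rangle$ by eight structure constants, impose the Leibniz identity, and then reduce the resulting family modulo the $\mathrm{GL}_2(\mathbb{C})$-action by change of basis. Writing
\begin{align*}
[e_1,e_1] &= a_1 e_1 + a_2 e_2, & [e_1,e_2] &= b_1 e_1 + b_2 e_2,\\
[e_2,e_1] &= c_1 e_1 + c_2 e_2, & [e_2,e_2] &= d_1 e_1 + d_2 e_2,
\end{align*}
the eight instances $[[e_i,e_j],e_k]=[[e_i,e_k],e_j]+[e_i,[e_j,e_k]]$ produce a polynomial system in these constants. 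An immediate consequence, obtained by setting $y=z$ in the defining identity, is that $[x,[y,y]]=0$, so every square lies in the right annihilator; this together with the remaining constraints cuts the variety of $2$-dimensional Leibniz brackets down to a low-dimensional affine subvariety of $\mathbb{C}^8$ that can be analysed directly.

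Next, I would split the classification into cases according to two natural invariants: whether the bracket is anticommutative (i.e.\ a Lie algebra) and whether all squares $[x,x]$ vanish. In the Lie case the standard two-dimensional classification yields the abelian algebra together with ${\rm L}_3$, the unique nonabelian $2$-dimensional Lie algebra, and a short computation fixes the canonical form $[e_1,e_2]=e_2=-[e_2,e_1]$ by a linear change of basis. If some square is nonzero, one may first use a change of basis to set $[e_1,e_1]=e_2$; the right-annihilator property forces $[x,e_2]=0$ for all $x$, and the remaining Leibniz relations eliminate $[e_1,e_2]$ and $[e_2,e_1]$, yielding ${\rm L}_1$. The remaining stratum — non-anticommutative bracket with trivial squares — admits, after normalisation, a unique representative with $[e_1,e_2]=e_1$ and $[e_2,e_1]=0$, namely ${\rm L}_2$.

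Finally, one must verify that ${\rm L}_1$, ${\rm L}_2$ and ${\rm L}_3$ are pairwise non-isomorphic. This is immediate from invariants: only ${\rm L}_1$ possesses a nonzero square, only ${\rm L}_3$ is anticommutative, and ${\rm L}_2$ is neither. The main obstacle is the orbit reduction inside the non-anticommutative, square-free stratum, where one must argue carefully from the constraints coming from the Leibniz identity to rule out any continuous family of non-isomorphic brackets and arrive at a single representative. Since the ambient dimension is only two, the whole case analysis can be carried out by elementary linear algebra, without appealing to any machinery beyond the cocycle method of Section~2 specialised to the one-operation setting.
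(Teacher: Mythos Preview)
Your overall strategy is sound, but the case analysis breaks down at a key point. Over a field of characteristic $\neq 2$, the condition ``all squares $[x,x]$ vanish'' is \emph{equivalent} to anticommutativity, by polarisation: expanding $[x+y,x+y]=0$ forces $[x,y]+[y,x]=0$. Hence your third stratum --- ``non-anticommutative bracket with trivial squares'' --- is empty, and your proposed source for ${\rm L}_2$ disappears.

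The algebra ${\rm L}_2$ in fact lives inside your second case. After normalising $[e_1,e_1]=e_2$, the right-annihilator argument correctly kills $[e_1,e_2]$ and $[e_2,e_2]$, but the Leibniz identity does \emph{not} force $[e_2,e_1]=0$: writing $[e_2,e_1]=\alpha e_1+\beta e_2$, the instance $(x,y,z)=(e_1,e_2,e_1)$ yields $\alpha=0$, and every remaining instance is then satisfied for arbitrary $\beta$. One obtains a one-parameter family $[e_1,e_1]=e_2$, $[e_2,e_1]=\beta e_2$. Rescaling $e_1\mapsto\lambda e_1$ (and $e_2\mapsto\lambda^2 e_2$) sends $\beta\mapsto\lambda\beta$, so there are exactly two orbits: $\beta=0$ gives ${\rm L}_1$, while $\beta\neq 0$ normalises to $\beta=1$, and the change of basis $g_1=e_2$, $g_2=e_1-e_2$ then exhibits ${\rm L}_2$. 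Your final non-isomorphism argument is likewise affected: ${\rm L}_2$ \emph{does} have a nonzero square (take $x=e_1+e_2$), so that invariant does not separate it from ${\rm L}_1$. Use nilpotency instead: ${\rm L}_1$ is nilpotent, ${\rm L}_2$ and ${\rm L}_3$ are not, and only ${\rm L}_3$ is anticommutative.
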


\subsection{The algebraic classification of  $2$-dimensional Leibniz--Poisson algebras}

\begin{definition}
A Leibniz–Poisson  algebra is a vector space  equipped with 
a commutative associative multiplication $\cdot$
and     
a Leibniz multiplication $[-,-].$
These two operations are required to satisfy the following identity:
\begin{longtable}{rcl}
$[x\cdot y, z]$&$ =$&$ [x, z]\cdot y + x\cdot [y, z].$
\end{longtable}

\end{definition}

Note that the Leibniz–Poisson algebras $({\rm L}, \cdot, [-, -])$ with zero multiplication $ \cdot$  are the Leibniz algebras. The classification of  $2$-dimensional Leibniz algebras is given in Theorem \ref{leib2}.  
Hence, we are studying the Leibniz--Poisson algebras defined on every  commutative associative algebra from Theorem \ref{asocc2}. 
\begin{definition}
Let $\left( \rm{A},\cdot \right) $ be a commutative associative
algebra. Define ${\rm Z}_{\rm LP}^{2}\left( \rm{A},\rm{A}\right) $ to be the
set of all bilinear maps $\theta :\rm{A}\times \rm{A}%
\longrightarrow \rm{A}$ such that:%
\begin{longtable}{rcl}
$\theta \left( \theta \left( x,y\right) ,z\right)$ & $=$&$\theta \left( \theta
\left( x,z\right) ,y\right) +\theta \left( x,\theta \left( y,z\right)
\right),$ \\
$\theta \left( x\cdot y,z\right)$ & $=$&$\theta \left( x,z\right) \cdot y+x\cdot
\theta \left( y,z\right).$
\end{longtable}
If $\theta \in {\rm Z}_{\rm LP}^{2}\left( \rm{A},%
\rm{A}\right) $, then $\left( \rm{A},\cdot ,[ -,-]\right) $ is a Leibniz--Poisson algebra where $[ x,y]=\theta \left( x,y\right) $ for all $x,y\in \rm{A}$.
\end{definition}

\subsubsection{Leibniz--Poisson algebras defined on  ${\rm A}_{01}$}

Since ${\rm Z}_{\rm LP}^{2}({\rm A}_{01}, {\rm A}_{01}) = \left\{0\right\}$,  then there is only the trivial structure  ${\rm L}_{04} = {\rm A}_{01}$.
 
\subsubsection{Leibniz--Poisson algebras defined on  ${\rm A}_{02}$}

From the computation of ${\rm Z}_{\rm LP}^{2}({\rm A}_{02}, {\rm A}_{02})$, the Leibniz--Poisson structures defined on ${\rm A}_{02}$ are of the form:
$${\rm L}_{5}^{\alpha} : =\left\{ 
\begin{tabular}{lcllcl}
$e_1 \cdot e_1  $&$=$&$ e_1,$&$ e_1 \cdot e_2 $&$ =$&$e_2$ \\ 
$[ e_{2},e_{1}] $&$=$&$ \alpha_{1} e_{2}$%
\end{tabular}%
\right.$$
Since the action of ${\rm Aut}({\rm A}_{02})$ on this structure is the identity, we obtain there are no isomorphic algebras in this family.

\subsubsection{Leibniz--Poisson algebras defined on  ${\rm A}_{03}$}

The Leibniz--Poisson structures defined on ${\rm A}_{03}$ are:
$${\rm L}_{6}^{\alpha} : = \left\{ 
\begin{tabular}{lcl}
$e_1 \cdot e_1 $&$=$&$ e_1$ \\ 
$[ e_{2},e_{1}] $&$=$&$ \alpha e_{2}$%
\end{tabular}%
\right.$$
Again, the action of ${\rm Aut}({\rm A}_{03})$ on this structure is the identity, hence all algebras from this family are not isomorphic.

\subsubsection{Leibniz--Poisson algebras defined on  ${\rm A}_{04}$}

The Leibniz--Poisson structures defined on ${\rm A}_{04}$ are:
$${\rm L}_{7}^{\alpha} : = \left\{ 
\begin{tabular}{lcl}
$e_1 \cdot e_1  $&$=$&$ e_2$ \\
$[e_{1},e_{1}] $&$=$&$ \alpha e_{2}$%
\end{tabular}%
\right.$$

As in the previous cases of this section, the action of ${\rm Aut}({\rm A}_{04})$ on the structure above is the identity, hence all algebras from this family are not isomorphic.

\subsubsection{The algebraic classification of $2$-dimensional Leibniz--Poisson algebras}

\begin{theorem}
% \label{2dim LP} 
Let $\left( {\rm L},\cdot , [ -,-] \right) $ be a nonzero $2$-dimensional Leibniz--Poisson algebra. 
Then ${\rm L}$ is isomorphic to one Leibniz algebra listed in Theorem \ref{leib2} or  to one algebra listed below:

\begin{longtable}{lcllll}
 
${\rm L}_{4} $&$:$&$e_1 \cdot e_1 = e_1, e_2 \cdot e_2 = e_2$ \\

${\rm L}_{5}^{\alpha}$&$ :$&$ \left\{ 
\begin{tabular}{lcllcl}
$e_1 \cdot e_1  $&$=$&$ e_1,$&$ e_1 \cdot e_2  $&$=$&$e_2$ \\ 
$[ e_{2},e_{1}] $&$=$&$ \alpha e_{2}$%
\end{tabular}%
\right.$\\

 ${\rm L}_{6}^{\alpha}$&$ :$&$ \left\{ 
\begin{tabular}{lcl}
$e_1 \cdot e_1 $&$=$&$ e_1$ \\ 
$[ e_{2},e_{1}] $&$=$&$ \alpha e_{2}$%
\end{tabular}%
\right.$\\

 ${\rm L}_{7}^{\alpha} $&$:$&$ \left\{ 
\begin{tabular}{lcl}
$e_1 \cdot e_1  $&$=$&$ e_2$ \\ 
$[ e_{1},e_{1}] $&$=$&$ \alpha e_{2}$%
\end{tabular}%
\right.$

\end{longtable}

\end{theorem}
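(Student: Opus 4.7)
The plan is to apply the procedure encapsulated in Lemma \ref{isom}: for each candidate commutative associative background $({\rm A}, \cdot)$, compute the cocycle space ${\rm Z}_{\rm LP}^{2}({\rm A}, {\rm A})$ of admissible Leibniz brackets, quotient it by the action of ${\rm Aut}({\rm A})$ described in Lemma \ref{asocc2aut}, and read off orbit representatives. Since the subsections preceding the theorem have essentially already done this case by case, the theorem is the consolidation of those partial classifications.

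First, I would split into two cases according to whether the commutative associative multiplication vanishes. If $\cdot = 0$, then the Leibniz--Poisson compatibility identity is automatic and the structure reduces to a $2$-dimensional Leibniz algebra, so the classification is provided by Theorem \ref{leib2}, yielding ${\rm L}_1$, ${\rm L}_2$, ${\rm L}_3$. Otherwise $({\rm A}, \cdot)$ is nonzero and thus isomorphic to one of ${\rm A}_{01}, \ldots, {\rm A}_{04}$ of Theorem \ref{asocc2}.

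Next, for each ${\rm A}_{0i}$ I would parameterize a generic bilinear map $\theta(e_i, e_j) = \sum_k c_{ij}^{k} e_k$ and impose both the Leibniz identity on $\theta$ and the compatibility identity $\theta(x \cdot y, z) = \theta(x,z) \cdot y + x \cdot \theta(y, z)$, obtaining a linear system in the $c_{ij}^k$. For ${\rm A}_{01}$, testing the compatibility on triples of idempotents forces every $c_{ij}^k$ to vanish, so only the trivial bracket survives and one recovers ${\rm L}_4$. For ${\rm A}_{02}$, ${\rm A}_{03}$, ${\rm A}_{04}$, the system cuts out a one-dimensional subspace of cocycles, yielding respectively the single nonzero bracket $[e_2, e_1] = \alpha e_2$, $[e_2, e_1] = \alpha e_2$, and $[e_1, e_1] = \alpha e_2$.

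Finally I would apply ${\rm Aut}({\rm A}_{0i})$ to each surviving cocycle using Lemma \ref{asocc2aut}. A short computation with $\phi(e_2) = \xi e_2$ shows that on ${\rm A}_{02}$ and ${\rm A}_{03}$ the parameter $\alpha$ is fixed, so the families ${\rm L}_5^{\alpha}$, ${\rm L}_6^{\alpha}$ have no further identifications. The main obstacle is the case ${\rm A}_{04}$: its automorphism group contains an extra shear $\phi(e_1) = \xi e_1 + \nu e_2$, and one must check that the off-diagonal components of $\theta$ are already forced to vanish by the compatibility identity, so that $(\theta * \phi)(e_1, e_1) = \phi^{-1}(\xi^{2} \alpha e_2) = \alpha e_2$ and $\alpha$ is again an invariant. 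Putting the four cases together with Theorem \ref{leib2} gives the asserted list.
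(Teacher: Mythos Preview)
Your proposal is correct and follows essentially the same approach as the paper: split according to whether $\cdot$ is trivial (reducing to Theorem \ref{leib2}) or isomorphic to one of ${\rm A}_{01},\ldots,{\rm A}_{04}$, compute ${\rm Z}_{\rm LP}^{2}$ in each case, and verify via Lemma \ref{asocc2aut} that the automorphism action leaves the parameter $\alpha$ invariant. The paper presents this as a sequence of subsections rather than a single proof, but the content is identical.
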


\subsubsection{The algebraic classification of $2$-dimensional generic Poisson algebras}

\begin{definition}
A generic Poisson  algebra (see, \cite{ksu18}) is a vector space  equipped with 
a commutative associative multiplication $\cdot$
and     
an anticommutative multiplication $[-,-].$
These two operations are required to satisfy the following identity:
\begin{longtable}{rcl}
$[x\cdot y, z]$&$ =$&$ [x, z]\cdot y + x\cdot [y, z].$
\end{longtable}
\end{definition}
 
 It is known \cite{kv16}, that each $2$-dimensional anticommutative algebra is a Lie algebra.
 Hence, each $2$-dimensional generic Poisson algebra is a Poisson algebra.
 
 \begin{corollary}
  Let $\left( {\rm G},\cdot , [ -,-] \right) $ be a nonzero $2$-dimensional generic Poisson algebra. 
Then ${\rm G}$ is isomorphic to one commutative associative  algebra listed in Theorem \ref{asocc2}
or to ${\rm L}_{3}.$

\end{corollary}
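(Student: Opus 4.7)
The plan is to leverage the remark preceding the corollary: in dimension two any anticommutative algebra is automatically a Lie algebra, because the Jacobi identity on a triple $(x,y,z)$ either involves a repeated argument (vanishes by antisymmetry) or requires three independent vectors (impossible in dimension two). Consequently, a 2-dimensional generic Poisson algebra is a 2-dimensional Poisson algebra, and the task reduces to classifying these.

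I would split into two cases according to whether the commutative associative product $\cdot$ is zero. If $\cdot=0$, the Leibniz-Poisson identity is vacuous, and $({\rm G},[-,-])$ is a 2-dimensional Lie algebra; by Theorem \ref{leib2} (restricted to anticommutative brackets) the only nonzero such algebra up to isomorphism is ${\rm L}_{3}$. If $\cdot\neq 0$, then $({\rm G},\cdot)$ is isomorphic to one of ${\rm A}_{01},\ldots,{\rm A}_{04}$ from Theorem \ref{asocc2}, and it suffices to show that the only anticommutative bracket compatible with $\cdot$ (in the Leibniz-Poisson sense) is the zero bracket.

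For this last step I would appeal directly to the computations of the previous subsections. On ${\rm A}_{01}$ one already has ${\rm Z}_{\rm LP}^{2}({\rm A}_{01},{\rm A}_{01})=\{0\}$, so nothing remains. On ${\rm A}_{02}, {\rm A}_{03}, {\rm A}_{04}$ the Leibniz-Poisson structures are the families ${\rm L}_{5}^{\alpha}, {\rm L}_{6}^{\alpha}, {\rm L}_{7}^{\alpha}$, in which only $[e_{2},e_{1}]=\alpha e_{2}$ (respectively $[e_{1},e_{1}]=\alpha e_{2}$) is nonzero while the remaining bracket entries are implicitly zero. Imposing anticommutativity ($[e_{1},e_{2}]=-[e_{2},e_{1}]$ and $[e_{i},e_{i}]=0$) forces $\alpha=0$ in each family, collapsing the algebra to the underlying commutative associative algebra.

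The argument is essentially bookkeeping on top of the previously established Leibniz-Poisson classification; there is no real obstacle. The only point worth stressing is that, because the bracket tables listed in ${\rm L}_{5}^{\alpha},{\rm L}_{6}^{\alpha},{\rm L}_{7}^{\alpha}$ are complete (unspecified products are zero by convention), the passage from Leibniz-Poisson to generic Poisson is just the intersection of those families with the anticommutativity condition, which immediately yields the stated dichotomy.
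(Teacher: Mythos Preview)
Your proof is correct and follows essentially the same line as the paper: the paper notes (citing \cite{kv16}) that every $2$-dimensional anticommutative algebra is Lie, hence every $2$-dimensional generic Poisson algebra is a Poisson algebra, and then leaves the corollary as an immediate consequence of the Leibniz--Poisson classification; you make this last step explicit by intersecting the families ${\rm L}_5^{\alpha},{\rm L}_6^{\alpha},{\rm L}_7^{\alpha}$ with anticommutativity and identifying ${\rm L}_3$ as the unique nonzero anticommutative algebra in Theorem~\ref{leib2}.
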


\subsection{Degenerations  of  $2$-dimensional Leibniz--Poisson algebras}

\begin{lemma} \label{th:degleib}
The graph of primary degenerations and non-degenerations of the variety of $2$-dimensional Leibniz--Poisson algebras is given in Figure  1, where the numbers on the right side are the dimensions of the corresponding orbits.

\end{lemma}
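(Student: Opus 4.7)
The plan is to build the degeneration graph in three phases: orbit dimensions, positive degenerations, and negative results, following the standard methodology already invoked in the paper via Lemma stated after Remark \ref{redbil} and its parametric counterpart Lemma \ref{main2}.

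First I would compute $\dim O(\rm T) = 4 - \dim \mathfrak{Der}({\rm T})$ for each of the algebras ${\rm A}_{01},\ldots,{\rm A}_{04}$, ${\rm L}_1,{\rm L}_2,{\rm L}_3$, ${\rm L}_4$, and the parametric families ${\rm L}_5^{\alpha},{\rm L}_6^{\alpha},{\rm L}_7^{\alpha}$. These dimensions both fill in the labels on Figure~1 and provide an immediate necessary condition for any claimed proper degeneration: the target must have strictly larger derivation algebra. This will rule out a large portion of the non-edges in one stroke.

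Second, for each primary degeneration ${\rm T}\to{\rm T}'$ or ${\rm T}(*)\to{\rm T}'$ appearing in Figure~1, I would exhibit an explicit parametrized basis $E_1(t),E_2(t)$ (and, in the parametric case, an index function $f(t)$) such that the structure constants of $(\mu,\mu')$ (resp.\ $(\mu(f(t)),\mu'(f(t)))$) in this basis converge as $t\to 0$ to those of the target. Typical rescalings of the form $E_1(t)=e_1+\eta(t)e_2$, $E_2(t)=\xi(t)e_2$ with $\xi,\eta$ suitable powers of $t$, together with parametrized choices $f(t)=t^k\alpha$, handle the families ${\rm L}_5^{\alpha},{\rm L}_6^{\alpha},{\rm L}_7^{\alpha}$ and their specializations $\alpha=0$ or $\alpha\to\infty$ (the latter realized by renormalizing so that the $\alpha$-free term is killed and $\alpha$ is absorbed in the basis).

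Third, and this is where the real work lies, I would establish the non-degenerations. The cleanest ones come from Remark~\ref{redbil}: since the ambient commutative associative algebra $(\rm A,\cdot)$ is one of ${\rm A}_{01},\ldots,{\rm A}_{04}$ and the geometric picture of this subvariety is already known from \cite{kv16}, any Leibniz--Poisson degeneration projects to a degeneration among the ${\rm A}_{0i}$'s (and, projecting onto the bracket, among the $2$-dimensional Leibniz algebras of Theorem \ref{leib2}). This forbids most cross-family arrows at once. The remaining, more delicate non-degenerations, typically those between algebras sharing the same $(\rm A,\cdot)$ and the same generic Leibniz type but with different specific parameters, would be handled by producing, for each such pair, an explicit Zariski-closed subset $C\subset{\mathcal T}_2$ defined by polynomial conditions on the $c_{ij}^k,c_{ij}'^k$ that is stable under upper (or lower) triangular $\mathrm{GL}_2$-action, contains a representative of ${\rm T}$ (or of every ${\rm T}(\alpha)$), but contains no representative of ${\rm T}'$.

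The hard part is the parametric families: one must simultaneously produce a uniform-in-$\alpha$ representative inside $C$ to apply Lemma \ref{main2}, and show that no triangular conjugate of a representative of ${\rm T}'$ lies in $C$. I expect to design $C$'s of the type $\{c_{12}'^{\,1}=0,\ c_{22}'^{\,2}=\lambda\, c_{12}'^{\,2}\}$ and variants compatible with commutativity of $\cdot$, tuned so that ${\rm L}_5^\alpha$ or ${\rm L}_6^\alpha$ sits in $C$ for all $\alpha$ while the targets such as ${\rm L}_2$ or ${\rm L}_3$ cannot. Once these stable closed sets are in place, the non-degenerations follow mechanically, and together with the dimension obstructions and the projection remark they will close off all non-edges of Figure~1.
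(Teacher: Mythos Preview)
Your three-phase plan (derivation dimensions, explicit parametrized bases, then triangular-stable closed sets $\mathcal{R}$ combined with the projection Remark~\ref{redbil}) is exactly the method the paper uses, and the list of tools you name is the right one.

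One point to sharpen: Lemma~\ref{th:degleib} concerns \emph{primary} degenerations, i.e.\ for each fixed parameter value, not the closure of the whole family $\{O({\rm L}_i^{\alpha})\}_{\alpha}$. The family closures are the content of the separate Lemma~\ref{th:degleibf}, so Lemma~\ref{main2} and ``uniform-in-$\alpha$'' representatives are not needed here. In particular your example target ${\rm L}_2$ is misplaced: for fixed $\alpha$, ${\rm L}_5^{\alpha}\not\to{\rm L}_2$ is automatic from equal orbit dimension, and as a family one actually has ${\rm L}_5^{*}\to{\rm L}_2$ (via $\alpha=t^{-1}$), so no closed set will separate them. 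The genuine closed-set work in this lemma is to rule out, for each fixed $\alpha$, the targets ${\rm L}_1$, ${\rm L}_3$, and ${\rm L}_7^{\beta}$ with $\beta$ not matching the forced value; the paper does this with sets of the form $\{c_{11}'^2=\alpha\,c_{11}^2,\ c_{21}'^2=\alpha\,c_{11}^1,\ldots\}$ tying the bracket constants linearly to the associative ones, which is the shape you should aim for rather than relations purely among the $c_{ij}'^{k}$.
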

\begin{center}
	
	\begin{tikzpicture}[->,>=stealth,shorten >=0.05cm,auto,node distance=1.3cm,
	thick,
	main node/.style={rectangle,draw,fill=gray!10,rounded corners=1.5ex,font=\sffamily \scriptsize \bfseries },
	rigid node/.style={rectangle,draw,fill=black!20,rounded corners=0ex,font=\sffamily \scriptsize \bfseries }, 
	poisson node/.style={rectangle,draw,fill=black!20,rounded corners=0ex,font=\sffamily \scriptsize \bfseries },
	ac node/.style={rectangle,draw,fill=black!20,rounded corners=0ex,font=\sffamily \scriptsize \bfseries },
	lie node/.style={rectangle,draw,fill=black!20,rounded corners=0ex,font=\sffamily \scriptsize \bfseries },
	style={draw,font=\sffamily \scriptsize \bfseries }]

	\node (3) at (0.5,6) {$4$};
	\node (2) at (0.5,4) {$3$};
	\node (1) at (0.5,2) {$2$};
	\node (0)  at (0.5,0) {$0$};

    \node[main node] (c20) at (-3,0) {${\mathbb C^8}$};
  
    \node[main node] (c21) at (-5,2) {${\rm L}_{1}$};
    \node[main node] (c23) at (-3,2) {${\rm L}_{3}$};
    \node[ac node] (c27) at (-1,2) {${\rm L}_{7}^{\alpha}$};

    \node[main node] (c22) at (-5,4) {${\rm L}_{2}$};
    \node[ac node] (c25) at (-3,4) {${\rm L}_{5}^{\beta}$};
    \node[ac node] (c26) at (-1,4) {${\rm L}_{6}^{\beta}$};
    
    \node[ac node] (c24) at (-3,6) {${\rm L}_{4}$};
	\path[every node/.style={font=\sffamily\small}]

    %(c329) edge [bend left=-15]  node[above=5, right=-19, fill=white]{\tiny $\delta=\pm\gamma$}  (c330)
    %(c331) edge [bend left=-15]  node[above=5, right=-12, fill=white]{%\tiny $\delta=\gamma$     }  (c324)
    
    %(c28) edge node[above=-6, right=-10, fill=white]{\tiny $\begin{array}{c}        \delta=\beta  \\        \epsilon=\gamma     \end{array}$}  (c29)
    %(c28) edge [bend left=0] node{}  (c21)
    
    (c24) edge node[above=0, right=-13, fill=white]{\tiny $\beta=0$}  (c25)
    (c24) edge node[above=0, right=-13, fill=white]{\tiny $\beta=0$}  (c26)
    (c25) edge node[above=0, right=-13, fill=white]{\tiny $\alpha=\beta$}  (c27)
    (c26) edge node[above=0, right=-13, fill=white]{\tiny $\alpha=-\beta$}  (c27)
    (c22) edge [bend left=0] node{}  (c21)
    
    (c21) edge [bend left=0] node{}  (c20)
    (c23) edge [bend left=0] node{}  (c20)
    (c27) edge [bend left=0] node{}  (c20);

	\end{tikzpicture}

{\tiny 
\begin{itemize}
\noindent Legend:
\begin{itemize}
    \item[--] Round nodes: Leibniz--Poisson algebras with trivial commutative associative multiplication $\cdot$.
    \item[--] Squared nodes: Leibniz--Poisson algebras with non-trivial commutative associative multiplication $\cdot$.

\end{itemize}
\end{itemize}}

{Figure 1.}  Graph of primary degenerations and non-degenerations.	
\end{center}
\bigskip

\bigskip

\begin{proof}
The dimensions of the orbits are deduced by computing the algebra of derivations.
The primary degenerations of the Leibniz--Poisson algebras with trivial commutative associative multiplication follow by \cite{kv16}, the others are proven using the parametric bases included in following Table:

    \begin{longtable}{|lcl|ll|}
\hline
\multicolumn{3}{|c|}{\textrm{Degeneration}}  & \multicolumn{2}{|c|}{\textrm{Parametrized basis}} \\
\hline
\hline

%${\rm L}_{2} $ & $\to$ & ${\rm L}_{1}  $ & $
%E_{1}(t)= t^{-1}e_1 + t e_2$ & $
%E_{2}(t)= e_1$  \\
%\hline

${\rm L}_{4} $ & $\to$ & ${\rm L}_{5}^{0}  $ & $
E_{1}(t)= e_1 + e_2$ & $
E_{2}(t)= te_2$  \\
\hline

${\rm L}_{4} $ & $\to$ & ${\rm L}_{6}^{0}  $ & $
E_{1}(t)= e_1$ & $
E_{2}(t)= te_2$  \\
\hline

${\rm L}_{5}^{\alpha} $ & $\to$ & ${\rm L}_{7}^{\alpha}  $ & $
E_{1}(t)= te_1 + t^{-1}e_2$ & $
E_{2}(t)= e_2$  \\
\hline

${\rm L}_{6}^{\alpha} $ & $\to$ & ${\rm L}_{7}^{-\alpha}  $ & $
E_{1}(t)= te_1 - t^{-1}e_2$ & $
E_{2}(t)= e_2$  \\
\hline 
    \end{longtable}

The primary non-degenerations of Leibniz--Poisson algebras with non-trivial commutative associative multiplication are proven using the sets from the following table:

    \begin{longtable}{|l|l|}
\hline
\multicolumn{1}{|c|}{\textrm{Non-degeneration}} & \multicolumn{1}{|c|}{\textrm{Arguments}}\\
\hline
\hline

%$\begin{array}{cccc}
%     {\rm L}_{2} &\not \to&  {\rm L}_{3} & 
%\end{array}$ &
%${\mathcal R}= \left\{ \begin{array}{l} %c_{11}'^{1}, c_{11}'^{2}, c_{12}'^{1}, %c_{12}'^{2} \in \mathbb{C},\\
%c_{12}'^{2} = -c_{11}'^{1}, c_{11}'^{2} %c_{12}'^{1} = -(c_{12}'^{2})^2,\\ %c_{ij}^{k}=c_{ij}'^{k}=0 \textrm{ otherwise} 
%\end{array} \right\}$\\
%\hline

$\begin{array}{cccc}
     {\rm L}_{4} &\not \to&  {\rm L}_{1}, 
     %{\rm L}_{2}, 
     {\rm L}_{3}, %{\rm L}_{5}^{\alpha\neq0},
     %{\rm L}_{6}^{\alpha\neq0},
     {\rm L}_{7}^{\alpha\neq0} & 
\end{array}$ &
${\mathcal R}= \left\{ \begin{array}{l}
c_{11}^{1}, c_{11}^{2}, c_{12}^{2}, c_{21}^{2}, c_{22}^{2} \in \mathbb{C}, 
c_{12}^{2} = c_{21}^{2}%%,\\ c_{ij}^{k}=c_{ij}'^{k}=0 \textrm{ otherwise} 
\end{array} \right\}$\\
\hline

$\begin{array}{cccc}
     {\rm L}_{5}^{\alpha} &\not \to&  {\rm L}_{1}, {\rm L}_{3}, {\rm L}_{7}^{\beta\neq\alpha} & 
\end{array}$ &
${\mathcal R}= \left\{ \begin{array}{l}
c_{11}^{1}, c_{11}^{2}, c_{12}^{2}, c_{21}^{2}, c_{11}'^{2}, c_{21}'^{2} \in\mathbb{C},\\
c_{11}^{1}=c_{12}^{2},  c_{11}^{1}=c_{21}^{2},   c_{11}'^{2}=\alpha  c_{11}^{2},  c_{21}'^{2}=\alpha  c_{11}^{1}%%,\\ c_{ij}^{k}=c_{ij}'^{k}=0 \textrm{ otherwise} 
\end{array} \right\}$\\
\hline

$\begin{array}{cccc}
     {\rm L}_{6}^{\alpha} &\not \to&  {\rm L}_{1}, {\rm L}_{3}, {\rm L}_{7}^{\beta\neq-\alpha} & 
\end{array}$ &
${\mathcal R}= \left\{ \begin{array}{l}
c_{11}^{1}, c_{11}^{2}, c_{11}'^{2}, c_{21}'^{2} \in \mathbb{C},
c_{11}'^{2}=-\alpha  c_{11}^{2},  c_{21}'^{2}=\alpha  c_{11}^{1}%%,\\ c_{ij}^{k}=c_{ij}'^{k}=0 \textrm{ otherwise} 
\end{array} \right\}$\\
\hline

    \end{longtable}

\end{proof}

At this point, only the description of the closures of the orbits of the parametric families is missing. Although it is not necessary to study the closure of the orbits of each of the parametric families of
the variety of $2$-dimensional Leibniz--Poisson algebras in order to identify its irreducible components, we will study them to give a complete description of the variety.

\begin{lemma}\label{th:degleibf}
The description of the closure of the orbit of the parametric families in
the variety of $2$-dimensional Leibniz--Poisson algebras are given below:

\begin{longtable}{lcl}
$\overline{\{O({\rm L}_{5}^{*})\}}$ & ${\supseteq}$ &
$ \Big\{\overline{\{O({\rm L}_{7}^{\alpha})\}}, 
\overline{\{O({\rm L}_{2})\}}, 
\overline{\{O({\rm L}_{1})\}},
\overline{\{O({\mathbb C^8})\}} \Big\}$\\

$\overline{\{O({\rm L}_{6}^{*})\}}$ & ${\supseteq}$ &
$ \Big\{\overline{\{O({\rm L}_{7}^{\alpha})\}}, 
\overline{\{O({\rm L}_{2})\}}, 
\overline{\{O({\rm L}_{1})\}}, 
\overline{\{O({\mathbb C^8})\}} \Big\}$\\
    
$\overline{\{O({\rm L}_{7}^{*})\}}$ & ${\supseteq}$ &
$ \Big\{\overline{\{O({\rm L}_{1})\}},
\overline{\{O({\mathbb C^8})\}} \Big\}$\

\end{longtable}

\end{lemma}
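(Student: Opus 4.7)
The plan is to reduce the problem to a small number of ``top" degenerations and then exhibit, for each, an explicit parametrized basis together with a parametrized index. First, I would observe that the closure of the union of the orbits of a parametric family is itself Zariski closed, hence if $\mathrm{T}(*)\to \mathrm{T}'$ and $\mathrm{T}'\to \mathrm{T}''$, then $\mathrm{T}(*)\to \mathrm{T}''$ by transitivity. Combined with the primary degenerations $\mathrm{L}_2\to\mathrm{L}_1$, $\mathrm{L}_1\to\mathbb{C}^8$ and $\mathrm{L}_7^\alpha\to \mathbb{C}^8$ from Lemma \ref{th:degleib}, this reduces the claims of the lemma to exhibiting only the following degenerations: $\mathrm{L}_5^*\to\mathrm{L}_7^\alpha$ and $\mathrm{L}_5^*\to\mathrm{L}_2$; $\mathrm{L}_6^*\to\mathrm{L}_7^\alpha$ and $\mathrm{L}_6^*\to\mathrm{L}_2$; and $\mathrm{L}_7^*\to\mathrm{L}_1$.

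Next I would handle the easy cases. The degenerations $\mathrm{L}_5^*\to\mathrm{L}_7^\alpha$ (for any fixed $\alpha\in\mathbb{C}$) and $\mathrm{L}_6^*\to\mathrm{L}_7^\alpha$ follow from the primary degenerations $\mathrm{L}_5^\alpha\to\mathrm{L}_7^\alpha$ and $\mathrm{L}_6^{-\alpha}\to\mathrm{L}_7^\alpha$ already recorded in Lemma \ref{th:degleib} by taking the constant parametrized index $f(t)=\alpha$, respectively $f(t)=-\alpha$, together with the parametrized bases from that table.

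The main work will lie in constructing the remaining degenerations, whose novelty is that the commutative product must collapse while a previously absent Leibniz bracket must appear. The natural device is to let $f(t)\to\infty$ and rescale so that $f(t)$ cancels against a vanishing scalar. Concretely, I expect the following parametrized bases and parametrized indices to work:
\begin{longtable}{|lcl|lll|}
\hline
\multicolumn{3}{|c|}{\textrm{Degeneration}} & \multicolumn{3}{|c|}{\textrm{Parametrized basis and index}} \\
\hline
$\mathrm{L}_5^*$ & $\to$ & $\mathrm{L}_2$ & $E_1(t)=e_2,$ & $E_2(t)=te_1,$ & $f(t)=1/t$ \\
\hline
$\mathrm{L}_6^*$ & $\to$ & $\mathrm{L}_2$ & $E_1(t)=e_2,$ & $E_2(t)=te_1,$ & $f(t)=1/t$ \\
\hline
$\mathrm{L}_7^*$ & $\to$ & $\mathrm{L}_1$ & $E_1(t)=e_1,$ & $E_2(t)=t^{-1}e_2,$ & $f(t)=1/t$ \\
\hline
\end{longtable}
In each case, a direct computation of the structural constants of $\bigl(\mu(f(t)),\mu'(f(t))\bigr)$ in the basis $E_1(t),E_2(t)$ gives scalars that tend either to $0$ or to the structural constants of the target algebra as $t\to 0$. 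For instance, in the first row the only surviving bracket is $[E_1(t),E_2(t)] = t\cdot f(t)\, e_2 = E_1(t)$, while the commutative products $E_i(t)\cdot E_j(t)$ all pick up a factor of $t$.

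The main obstacle, and the step that requires genuine checking rather than routine bookkeeping, is verifying that all three conditions hold simultaneously for each candidate basis: (i) the coefficients of the vanishing products tend to $0$, (ii) the coefficients of the bracket of the target tend to the correct constants, and (iii) the $E_i(t)$ remain a basis (i.e. the Jacobian determinant does not vanish for $t\neq 0$). Once these computations are in place, transitivity finishes the remaining inclusions in the statement, completing the proof.
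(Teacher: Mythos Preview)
Your approach is correct and essentially identical to the paper's: you use the same parametrized bases and indices ($E_1(t)=e_2$, $E_2(t)=te_1$, $f(t)=t^{-1}$ for ${\rm L}_5^*,{\rm L}_6^*\to{\rm L}_2$; and $E_1(t)=e_1$, $E_2(t)=t^{-1}e_2$, $f(t)=t^{-1}$ for ${\rm L}_7^*\to{\rm L}_1$), and you invoke the same transitivity through Lemma~\ref{th:degleib} for the remaining inclusions.

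One point worth flagging: although the displayed relations in the lemma are written with $\supseteq$, the word ``description'' in the statement is meant to convey that this list is \emph{exhaustive}. Accordingly, the paper's proof also supplies the non-degenerations ${\rm L}_5^*\not\to{\rm L}_3$ and ${\rm L}_6^*\not\to{\rm L}_3$ (via explicit upper-triangular-stable closed sets $\mathcal{R}$), ruling out the only remaining $2$-dimensional algebra of lower orbit dimension. Your write-up handles only the containment direction; if the intent is a full description of the closures, you should add those two non-degeneration arguments.
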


\begin{proof}
Thanks to Theorem \ref{th:degleib} we have all necessarily degenerations.
The closure of the orbits of suitable families are given using the parametric bases and the parametric index included in following Table: 

    \begin{longtable}{|lcl|ll|}
\hline
\multicolumn{3}{|c|}{\textrm{Degeneration}}  & \multicolumn{2}{|c|}{\textrm{Parametrized basis}} \\
\hline
\hline

${\rm L}_{5}^{t^{-1}} $ & $\to$ & ${\rm L}_{2}  $ & $
E_{1}(t)= e_2$ & $
E_{2}(t)= te_1$  \\
\hline

${\rm L}_{6}^{t^{-1}} $ & $\to$ & ${\rm L}_{2}  $ & $
E_{1}(t)= e_2$ & $
E_{2}(t)= te_1$  \\
\hline

${\rm L}_{7}^{t^{-1}} $ & $\to$ & ${\rm L}_{1}  $ & $
E_{1}(t)= e_1$ & $
E_{2}(t)= t^{-1}e_2$  \\
\hline

    \end{longtable}

  \begin{longtable}{|l|l|}
\hline
\multicolumn{1}{|c|}{\textrm{Non-degeneration}} & \multicolumn{1}{|c|}{\textrm{Arguments}}\\
\hline
\hline

$\begin{array}{cccc}
     {\rm L}_{5}^{*} &\not \to&  {\rm L}_{3} & 
\end{array}$ &
${\mathcal R}= \left\{ \begin{array}{l}
c_{11}^{1}, c_{11}^{2}, c_{12}^2, c_{21}^2, c_{11}'^{2}, c_{21}'^2 \in \mathbb{C},
c_{11}^{1}=c_{12}^{2},  c_{11}^{1}=c_{21}^{2}%%,\\ c_{ij}^{k}=c_{ij}'^{k}=0 \textrm{ otherwise} 
\end{array} \right\}$\\
\hline

$\begin{array}{cccc}
     {\rm L}_{6}^{*} &\not \to&  {\rm L}_{3} & 
\end{array}$ &
${\mathcal R}= \left\{ \begin{array}{l}
c_{11}^{1}, c_{11}^{2}, c_{11}'^{2}, c_{21}'^{2} \in \mathbb{C}%%,\\ c_{ij}^{k}=c_{ij}'^{k}=0 \textrm{ otherwise} 
\end{array} \right\}$\\
\hline

%$\begin{array}{cccc}
%    {\rm L}_{7}^{*} &\not \to&  {\red {\rm L}_{3}} & 
%\end{array}$ &
%${\mathcal R}= \left\{ \begin{array}{l}
%c_{11}^{2}, c_{11}'^{2} \in \mathbb{C},\\ %c_{ij}^{k}=c_{ij}'^{k}=0 \textrm{ otherwise} 
%\end{array} \right\}$\\
%\hline

    \end{longtable}

\end{proof}

By Lemma \ref{th:degleib} and Lemma \ref{th:degleibf}, we have the following result that summarizes the geometric classification of ${{\rm L}_{2}}$.

\begin{theorem}
The variety of $2$-dimensional Leibniz--Poisson algebras has four irreducible components, corresponding to the Leibniz--Poisson algebras ${\rm L}_{3}$ and ${\rm L}_{4}$ and the families of  Leibniz--Poisson algebras ${\rm L}_{5}^{*}$ and ${\rm L}_{6}^{*}$.
\end{theorem}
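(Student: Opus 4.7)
The plan is to apply the standard criterion that a closure $\overline{O({\rm T})}$, respectively $\overline{\bigcup_{\alpha}O({\rm T}(\alpha))}$, is an irreducible component of $\mathcal{T}_{2}$ if and only if it is irreducible and not strictly contained in any other such closure. Individual orbit closures are irreducible automatically, and each parametric family from the classification has pairwise non-isomorphic members (as noted in the algebraic classification), so the closure of the union of its orbits is the image under the $\textrm{GL}({\mathbb V})$-action of the irreducible set $\mathbb{C}\times\textrm{GL}({\mathbb V})$, and is therefore also irreducible. Identifying the irreducible components thus reduces to finding the maximal elements of the poset of closures recorded in Lemma \ref{th:degleib} and Lemma \ref{th:degleibf}.

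First I would eliminate the non-maximal candidates. The chain ${\rm L}_{5}^{\alpha}\to{\rm L}_{7}^{\alpha}\to\mathbb{C}^{8}$ of Figure 1, together with the parametric degenerations ${\rm L}_{7}^{t^{-1}}\to{\rm L}_{1}$ and ${\rm L}_{5}^{t^{-1}}\to{\rm L}_{2}$ of Lemma \ref{th:degleibf}, exhibit $\mathbb{C}^{8}$, ${\rm L}_{1}$, ${\rm L}_{2}$ and every ${\rm L}_{7}^{\alpha}$ as a degeneration of ${\rm L}_{5}^{*}$ (and symmetrically of ${\rm L}_{6}^{*}$). Each ${\rm L}_{5}^{\beta}$, respectively ${\rm L}_{6}^{\beta}$, sits inside its own family closure by construction. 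This leaves ${\rm L}_{3}$, ${\rm L}_{4}$, ${\rm L}_{5}^{*}$ and ${\rm L}_{6}^{*}$ as the only candidate components.

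Next I would check that none of these four lies in the closure of any of the others. For ${\rm L}_{4}$ this is immediate, since its orbit has the top dimension $4$ and no arrow, primary or parametric, targets it. For ${\rm L}_{3}$ the non-degenerations ${\rm L}_{4}\not\to{\rm L}_{3}$, ${\rm L}_{5}^{*}\not\to{\rm L}_{3}$ and ${\rm L}_{6}^{*}\not\to{\rm L}_{3}$ already proved in the tables, combined with $\dim O({\rm T})\le 2=\dim O({\rm L}_{3})$ for the remaining algebras, prevent any inclusion $\overline{O({\rm L}_{3})}\subseteq\overline{O({\rm T})}$. For the family ${\rm L}_{5}^{*}$ the only potential threat of the right dimension is $\overline{O({\rm L}_{4})}$; but the arrow ${\rm L}_{4}\to{\rm L}_{5}^{\beta}$ of Figure 1 holds only for $\beta=0$, so ${\rm L}_{5}^{\beta}\notin\overline{O({\rm L}_{4})}$ for generic $\beta$, which gives $\overline{O({\rm L}_{5}^{*})}\not\subseteq\overline{O({\rm L}_{4})}$. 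The same reasoning handles ${\rm L}_{6}^{*}$, and the two families are separated from each other via Remark \ref{redbil} and its parametric analogue Lemma \ref{main2}, because they are built on the non-comparable commutative associative algebras ${\rm A}_{02}$ and ${\rm A}_{03}$ of \cite{kv16}.

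The hard part will be this final incomparability step, since $\overline{O({\rm L}_{4})}$ and the two family closures all have dimension $4$ and no cheap dimension argument distinguishes them; one must really exploit the $\beta$-dependence of the restricted arrow ${\rm L}_{4}\to{\rm L}_{5}^{0}$ to keep ${\rm L}_{5}^{*}$ (and symmetrically ${\rm L}_{6}^{*}$) out of $\overline{O({\rm L}_{4})}$. Once this is settled, the four closures $\overline{O({\rm L}_{3})}$, $\overline{O({\rm L}_{4})}$, $\overline{O({\rm L}_{5}^{*})}$ and $\overline{O({\rm L}_{6}^{*})}$ are established as the irreducible components, and the elimination step guarantees that no further components exist.
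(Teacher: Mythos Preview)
Your proposal is correct and follows exactly the approach of the paper, which simply states that the theorem is a consequence of Lemma~\ref{th:degleib} and Lemma~\ref{th:degleibf}; you have merely spelled out the routine poset analysis that the paper leaves implicit. One small slip: in the paragraph on ${\rm L}_{3}$ you claim $\dim O({\rm T})\le 2$ for the ``remaining algebras'', but ${\rm L}_{2}$ has orbit dimension $3$; this is harmless, since ${\rm L}_{2}$ has already been absorbed into $\overline{O({\rm L}_{5}^{*})}$ and is therefore not a candidate component, so only the explicit non-degenerations ${\rm L}_{4}\not\to{\rm L}_{3}$, ${\rm L}_{5}^{*}\not\to{\rm L}_{3}$, ${\rm L}_{6}^{*}\not\to{\rm L}_{3}$ are actually needed.
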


%

%\section{ Left pre-Poisson algebras}

%{\red This class is also interesting.  
%REFERENCE DOI: 10.1023/A:1010818119040}

%\begin{definition}

%A \emph{left pre-Poisson algebra} over a field $\mathbb{C}$ is a $\mathbb{C}$-vector space $\mathcal{Z}$ equipped with two bilinear operations:

%\begin{enumerate}
 %   \item A left Zinbiel multiplication, denoted by $cdot$.
    
%    \item A left pre-Lie algebra, denoted by $\left\{-, -\right\}$. 
%\end{enumerate}
%These two operations are required to satisfy for any $x,y,z\in \mathcal{Z}$:
%$$\left\{x, y\right\} \cdot z-\left\{y, x\right\}\cdot z = \left\{x, y \cdot z\right\} - y \cdot %\left\{x, z\right\} $$
%$$\left\{x\cdot y, z\right\} + \left\{y \cdot x, z\right\} = x \cdot \left\{y , z\right\} + y \cdot %\left\{x, z\right\} $$

%\end{definition}

\section{Transposed Leibniz--Poisson algebras}
The notion of transposed Leibniz--Poisson algebras is a non-anticommutative generalization of transposed Poisson algebras. The notion of transposed Poisson algebras was introduced in \cite{bai20}. 
Let us note, that they are related to many other interesting classes of algebras,
such as commutative Gelfand-Dorfman algebras and $F$-manifold algebras \cite{kms}. 
The study of transposed Leibniz--Poisson algebras is motivated by a question posed in \cite{bfk22}.

\subsection{The algebraic classification of $2$-dimensional transposed Leibniz--Poisson algebras}

\begin{definition}

A transposed Leibniz–Poisson algebra   is a vector space   equipped with  a commutative associative multiplication, denoted by $\cdot$ and  a symmetric Leibniz multiplication $[-, -]$. 
These two operations are required to satisfy the following  identity:
\begin{longtable}{lcl}
$2 [x, y]  \cdot z $&$=$&$ [ x\cdot z, y] + [x, y\cdot z].$
\end{longtable}
\end{definition}

\begin{definition}
Let $\left( \rm{A},\cdot \right) $ be a commutative associative
algebra. Define ${\rm Z}_{\rm TLP}^{2}\left( \rm{A},\rm{A}\right) $ to be
the set of all bilinear maps $\theta :\rm{A}\times \rm{A}%
\longrightarrow \rm{A}$ such that:%
\begin{longtable}{rcl}
$\theta \left( \theta \left( x,y\right) ,z\right)$ & $=$&$ \theta \left( \theta
\left( x,z\right) ,y\right) +\theta \left( x,\theta \left( y,z\right)
\right),$ \\
$\theta \left( x,\theta \left( y,z\right) \right)$ & $=$&$ \theta \left( \theta
\left( x,y\right) ,z\right) +\theta \left( y,\theta \left( x,z\right)
\right),$ \\
$2\theta \left( x,y\right) \cdot z$& $=$&$\theta \left( x\cdot z,y\right) +\theta
\left( x,y\cdot z\right),$
\end{longtable}
for all $x,y,z$ in $\rm{A}$. If $\theta \in {\rm Z}_{\rm TLP}^{2}\left( \rm{A},\rm{A}\right) $, then $\left( \rm{A},\cdot , [ -,-] \right) $ is a transposed Leibniz--Poisson algebra where $[x,y]  =\theta \left( x,y\right) $ for all $x,y\in \rm{A}$.
\end{definition}

\subsubsection{The algebraic classification of $2$-dimensional symmertic Leibniz   algebras}\label{symLeib}
By Theorem \ref{leib2}, we obtain that there are only two  
transposed Leibniz--Poisson algebras with trivial multiplication $\cdot$ and non-trivial multiplication $[-,-]:$
${\rm T}_{1}=L_{1},{\rm T}_{2}=L_{3}$.

\subsubsection{Transposed Leibniz--Poisson algebras defined on ${{\rm A}}%
_{01}$}

Since ${\rm Z}_{\rm TLP}^{2}({{\rm A}}_{01},{{\rm A}}_{01})=\left\{ 0\right\} $%
, the only transposed Leibniz--Poisson structure is trivial, denote it by $%
{\rm T}_{3}$.

\subsubsection{Transposed Leibniz--Poisson algebras defined on ${{\rm A}}%
_{02}$}

From the computation of ${\rm Z}_{\rm TLP}^{2}({{\rm A}}_{02},{{\rm A}}_{02})$, 
the transposed Leibniz--Poisson structures defined on ${{\rm A}}_{02}$
are of the form: 
\begin{equation*}
\left\{ 
\begin{tabular}{lcllcl}
$e_{1}\cdot e_{1}$&$=$&$e_{1},$& $e_{1}\cdot e_{2}$&$=$&$e_{2},$ \\ 
$[ e_{1},e_{1}] $&$=$&$\alpha _{1}e_{2}.$%
\end{tabular}%
\right. \qquad   \left\{ 
\begin{tabular}{lcllcl}
$e_{1}\cdot e_{1}$&$=$&$e_{1}, $&$e_{1}\cdot e_{2}$&$=$&$e_{2},$ \\ 
$[ e_{1},e_{2}]$&$=$&$\alpha _{2}e_{2},$& $
[e_{2},e_{1}]  $&$=$&$-\alpha _{2}e_{2}.$%
\end{tabular}%
\right.
\end{equation*}%
We may assume $\alpha _{1}\neq 0$. Then, by Lemma \ref{isom}, we obtain the
following non-isomorphic transposed Leibniz--Poisson algebras:%
\begin{equation*}
{\rm T}_{4}:=\left\{ 
\begin{tabular}{lcllcl}
$e_{1}\cdot e_{1}$&$=$&$e_{1},$&$e_{1}\cdot e_{2}$&$=$&$e_{2},$ \\ 
$[ e_{1},e_{1}] $&$=$&$e_{2}.$%
\end{tabular}%
\right. \qquad {\rm T}_{5}^{\alpha }:=\left\{ 
\begin{tabular}{lcllcl}
$e_{1}\cdot e_{1}$&$=$&$e_{1},$& $e_{1}\cdot e_{2}$&$=$&$e_{2},$ \\ 
$\left\{ e_{1},e_{2}\right\} $&$=$&$\alpha e_{2},$& 
$[ e_{2},e_{1}]$&$=$&$-\alpha e_{2}.$%
\end{tabular}%
\right.
\end{equation*}

\subsubsection{Transposed Leibniz--Poisson algebras defined on ${{\rm A}}%
_{03}$}

Since ${\rm Z}_{\rm TLP}^{2}({{\rm A}}_{03},{{\rm A}}_{03})=\left\{ 0\right\} $%
, the only transposed Leibniz--Poisson structure is trivial, denote it by $%
{\rm T}_{6}$.

\subsubsection{Transposed Leibniz--Poisson algebras defined on ${{\rm A}}%
_{04}$}

The transposed Leibniz--Poisson structures defined on ${{\rm A}}_{04}$ are of the
form: 
\begin{equation*}
\left\{ 
\begin{tabular}{lcl}
$e_{1}\cdot e_{1}$&$=$&$e_{2},$ \\ 
$[ e_{1},e_{1}] $&$=$&$\alpha _{1}e_{2}.$%
\end{tabular}%
\right. \qquad \qquad \qquad \left\{ 
\begin{tabular}{lcllcl}
$e_{1}\cdot e_{1}$&$=$&$e_{2},$ \\ 
$[ e_{1},e_{2}] $&$=$&$\alpha _{2}e_{2}, $&$[ e_{2},e_{1}]$&$=$&$-\alpha _{2}e_{2}.$%
\end{tabular}%
\right.
\end{equation*}

We may assume $\alpha _{2}\neq 0$. Then, by Lemma \ref{isom}, we obtain the
following non-isomorphic transposed Leibniz--Poisson algebras:%
\begin{equation*}
\rm{T}_{7}^{\alpha }:=\left\{ 
\begin{tabular}{lcl}
$e_{1}\cdot e_{1}$&$=$&$e_{2},$ \\ 
$[ e_{1},e_{1}] $&$=$&$\alpha e_{2}.$%
\end{tabular}%
\right. \qquad \qquad \qquad {\rm T}_{8}:=\left\{ 
\begin{tabular}{lcllcl}
$e_{1}\cdot e_{1}$&$=$&$e_{2},$ \\ 
$[e_{1},e_{2}] $&$=$&$e_{2},$&$[ e_{2},e_{1}] $&$=$&$-e_{2}.$%
\end{tabular}%
\right. 
\end{equation*}

\subsubsection{The algebraic classification of $2$-dimensional transposed Leibniz--Poisson  algebras}

\begin{theorem}
%\label{2dim LPb} 
Let $\left( {\rm T},\cdot , [-,-] \right) $ be a nonzero $2$-dimensional transposed Leibniz--Poisson algebra. Then ${\rm T}$ is isomorphic to one symmetric Leibniz algebra listed in Subsection \ref{symLeib} or  to one algebra listed below:

\begin{longtable}{lcllll}

${\rm T}_{3}$&$:$&$e_1 \cdot e_1  = e_1, e_2 \cdot e_2  =e_2$\\

${\rm T}_{4}$&$ :$&$ \left\{ 
\begin{tabular}{lcllcl}
$e_1 \cdot e_1  $&$=$&$ e_1,$&$ e_1 \cdot e_2 $&$ =$&$e_2,$ \\ 
$[ e_{1},e_{1}]  $&$=$&$  e_{2}.$%
\end{tabular}%
\right.$\\

${\rm T}_{5}^{\alpha}$&$ :$&$ \left\{ 
\begin{tabular}{lcllcl}
$e_1 \cdot e_1  $&$=$&$ e_1,$&$ e_1 \cdot e_2  $&$=$&$e_2,$ \\ 
$[e_{1},e_{2}]  $&$=$&$ \alpha e_{2},$&$[ e_{2},e_{1}] $&$=$&$ -\alpha e_{2}.$%
\end{tabular}%
\right.$\\

${\rm T}_{6}$&$ :$&$e_1 \cdot e_1 = e_1.$\\ 

${\rm T}_{7}^{\alpha}$&$ :$&$ \left\{ 
\begin{tabular}{lcl}
$e_1 \cdot e_1  $&$=$&$ e_2,$ \\ 
$[  e_{1},e_{1}] $&$=$&$ \alpha e_{2}.$%
\end{tabular}%
\right.$\\

${\rm T}_{8}$&$ :$&$ \left\{ 
\begin{tabular}{lcllcl}
$e_1 \cdot e_1  $&$=$&$ e_2,$ \\ 
$[  e_{1},e_{2}]  $&$=$&$ e_{2},$&$  [ e_{2},e_{1}] $&$=$&$ -e_{2}.$%
\end{tabular}%
\right.$
\end{longtable}
\end{theorem}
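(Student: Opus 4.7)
The plan is to assemble the case analysis of the preceding subsections into a unified statement. First I would split into two exhaustive cases according to whether the commutative associative multiplication $\cdot$ is trivial or not. In the trivial case, the defining identities of a transposed Leibniz--Poisson algebra reduce to those of a symmetric Leibniz algebra, whose two-dimensional classification ${\rm T}_{1},{\rm T}_{2}$ was recorded in Subsection \ref{symLeib} as a direct consequence of Theorem \ref{leib2}. In the non-trivial case, Theorem \ref{asocc2} tells us that the associated commutative associative algebra is isomorphic to exactly one of ${\rm A}_{01},{\rm A}_{02},{\rm A}_{03},{\rm A}_{04}$, so it suffices to enumerate the orbits of ${\rm Aut}({\rm A}_{0i})$ on ${\rm Z}_{\rm TLP}^{2}({\rm A}_{0i},{\rm A}_{0i})$ and apply the three-step procedure described in the preliminaries.

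For each ${\rm A}_{0i}$, the cocycle space ${\rm Z}_{\rm TLP}^{2}({\rm A}_{0i},{\rm A}_{0i})$ is obtained by writing a bilinear map $\theta$ with undetermined matrix entries and imposing the three defining relations (the Leibniz identity, the symmetry relation $\theta(x,\theta(y,z))=\theta(\theta(x,y),z)+\theta(y,\theta(x,z))$, and the transposed Leibniz--Poisson compatibility). Direct linear algebra shows that for ${\rm A}_{01}$ and ${\rm A}_{03}$ the system forces $\theta=0$, yielding only the trivial extensions ${\rm T}_{3}$ and ${\rm T}_{6}$; for ${\rm A}_{02}$ and ${\rm A}_{04}$ one obtains, in each case, two disjoint one-parameter branches of solutions, which are precisely those recorded in the corresponding subsections.

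Next I would pass to orbit representatives using Lemma \ref{isom} together with the explicit form of ${\rm Aut}({\rm A}_{0i})$ from Lemma \ref{asocc2aut}. On ${\rm A}_{02}$, ${\rm Aut}({\rm A}_{02})$ rescales $e_{2}$, so a non-zero parameter $\alpha_{1}$ on the first branch can be normalized to $1$, giving the single algebra ${\rm T}_{4}$; on the second branch the same rescaling acts trivially on the coefficient, so $\alpha_{2}$ is an invariant and we obtain the family ${\rm T}_{5}^{\alpha}$. On ${\rm A}_{04}$ the group ${\rm Aut}({\rm A}_{04})$ acts by $\phi(e_{1})=\xi e_{1}+\nu e_{2}$, $\phi(e_{2})=\xi^{2}e_{2}$; tracking this action on each branch collapses one branch to the family ${\rm T}_{7}^{\alpha}$ (with $\alpha$ a genuine invariant) and normalizes the other to the single algebra ${\rm T}_{8}$.

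Finally I would verify that the lists from different ${\rm A}_{0i}$ do not overlap: this is immediate, because the underlying commutative associative algebras are pairwise non-isomorphic by Theorem \ref{asocc2}, and any isomorphism of transposed Leibniz--Poisson algebras must preserve the $\cdot$-part. The only real subtlety is the parameter reduction in ${\rm T}_{5}^{\alpha}$ and ${\rm T}_{7}^{\alpha}$, where one has to confirm that the automorphism action actually fixes $\alpha$; this is the main bookkeeping step, but it is a straightforward verification from the explicit description of the $*$-action in Lemma \ref{asocc2aut}.
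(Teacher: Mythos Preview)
Your proposal is correct and follows essentially the same approach as the paper: split according to whether $\cdot$ is trivial (reducing to the symmetric Leibniz classification of Subsection \ref{symLeib}) or nontrivial (running the three-step procedure over each ${\rm A}_{0i}$ from Theorem \ref{asocc2}, computing ${\rm Z}_{\rm TLP}^{2}$, and normalizing via Lemma \ref{asocc2aut}). The case analysis, the identification of which parameters are invariants and which can be normalized away, and the final disjointness argument all match the paper's treatment in the subsections preceding the theorem.
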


\subsubsection{The algebraic classification of $2$-dimensional transposed Poisson algebras}

\begin{definition}

A transposed Poisson algebra   is a vector space   equipped with  a commutative associative multiplication, denoted by $\cdot$ and  a Lie multiplication $[-, -]$. 
These two operations are required to satisfy the following  identity:
\begin{longtable}{lcl}
$2 [x, y]  \cdot z $&$=$&$ [ x\cdot z, y] + [x, y\cdot z].$
\end{longtable}
\end{definition}

\begin{corollary}
Let $\left( {\rm T},\cdot , [-,-] \right) $ be a nonzero $2$-dimensional transposed Poisson algebra. Then ${\rm T}$ is isomorphic to one  algebra listed below: 
\begin{center}
${\rm T}_{2},$ ${\rm T}_{3},$ ${\rm T}_{5}^\alpha,$ ${\rm T}_{6},$  ${\rm T}_{7}^0,$   ${\rm T}_{8}.$
\end{center}
\end{corollary}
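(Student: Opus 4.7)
The plan is to reduce the classification of $2$-dimensional transposed Poisson algebras to the already-established classification of $2$-dimensional transposed Leibniz--Poisson algebras. The key observation is that every Lie bracket is automatically a symmetric Leibniz bracket: starting from the Jacobi identity $[[x,y],z]+[[y,z],x]+[[z,x],y]=0$ and applying antisymmetry to rewrite the last two summands yields both Leibniz identities $[[x,y],z]=[[x,z],y]+[x,[y,z]]$ and $[x,[y,z]]=[[x,y],z]+[y,[x,z]]$. Since the compatibility identity $2[x,y]\cdot z=[x\cdot z,y]+[x,y\cdot z]$ is literally the same in both definitions, every transposed Poisson algebra is a transposed Leibniz--Poisson algebra. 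Therefore it suffices to scan the full list produced in the preceding theorem and single out the members whose bracket is anticommutative.

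First I would dispose of the algebras built on a nontrivial commutative associative product: in $\rm{T}_3$, $\rm{T}_5^\alpha$, $\rm{T}_6$ and $\rm{T}_8$ the bracket is visibly antisymmetric (either zero or involving only $[e_1,e_2]$ and $[e_2,e_1]$ with opposite signs), so these pass; in $\rm{T}_4$ the relation $[e_1,e_1]=e_2\neq 0$ rules it out, and in $\rm{T}_7^\alpha$ the relation $[e_1,e_1]=\alpha e_2$ forces $\alpha=0$, leaving only $\rm{T}_7^0$. Next I would check the two symmetric Leibniz algebras from Subsection \ref{symLeib}: $\rm{T}_1=\rm{L}_1$ has $[e_1,e_1]=e_2\neq 0$ and is discarded, while $\rm{T}_2=\rm{L}_3$ is precisely the non-abelian $2$-dimensional Lie algebra and is kept.

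Finally I would confirm that each surviving algebra really is a transposed Poisson algebra, i.e.\ that the bracket is Lie, not merely anticommutative. The Jacobi identity is automatic in dimension two: for any alternating bilinear map on a $2$-dimensional space, $[[x,y],z]$ vanishes whenever $x,y,z$ are chosen among two basis vectors (at least two of them coincide, killing the inner bracket by antisymmetry), and by trilinearity the full Jacobiator vanishes. Thus the surviving list $\rm{T}_2,\rm{T}_3,\rm{T}_5^\alpha,\rm{T}_6,\rm{T}_7^0,\rm{T}_8$ gives exactly the $2$-dimensional transposed Poisson algebras, establishing the corollary.

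No step here is a real obstacle; the one place requiring attention is the compatibility check between the Lie/symmetric-Leibniz conditions, which is a short derivation from Jacobi plus antisymmetry and needs to be spelled out to justify invoking the previous theorem.
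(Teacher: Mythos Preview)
Your proposal is correct and matches the paper's approach: the corollary is stated there without proof, as it follows immediately from the preceding theorem by selecting precisely those algebras in the list whose bracket is anticommutative (hence Lie in dimension two). You have simply spelled out the details the paper leaves implicit, including the verification that a Lie bracket satisfies both symmetric Leibniz identities and the standard observation (also cited elsewhere in the paper from \cite{kv16}) that every $2$-dimensional anticommutative algebra is automatically Lie.
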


\subsection{Degenerations of $2$-dimensional transposed Leibniz--Poisson algebras}

\begin{lemma} \label{th:degtleib}
The graph of primary degenerations and non-degenerations of the variety of $2$-dimensional transposed Leibniz--Poisson algebras is given in Figure  2, where the numbers on the right side are the dimensions of the corresponding orbits.

\end{lemma}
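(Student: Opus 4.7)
The plan is to follow the template established in the proof of Lemma \ref{th:degleib}. First, I would compute the derivation algebra $\mathfrak{Der}({\rm T})$ for each of the eight algebras and the two parametric families ${\rm T}_5^\alpha$ and ${\rm T}_7^\alpha$; the orbit dimension is then read off from the formula $\dim O({\rm T}) = 4 - \dim \mathfrak{Der}({\rm T})$. This determines the vertical levels of Figure~2 and immediately rules out any degeneration that would increase the dimension.

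Second, I would establish the primary degenerations. The degenerations between the two symmetric Leibniz algebras ${\rm T}_1$ and ${\rm T}_2$ with trivial commutative multiplication are inherited from the Leibniz case \cite{kv16}. For the remaining ones, I expect two natural types: degenerations within the same underlying commutative associative algebra (sending a parameter to a limiting value or to zero), and degenerations that mimic the underlying pattern ${\rm A}_{01} \leftarrow {\rm A}_{02} \leftarrow {\rm A}_{03} \leftarrow {\rm A}_{04}$ on the commutative part. Concretely I would try parametrized bases of the form $E_1(t) = e_1 + e_2$, $E_2(t) = te_2$ for a degeneration like ${\rm T}_3 \to {\rm T}_5^0$ (killing the second idempotent), $E_1(t) = e_1$, $E_2(t) = te_2$ for passing to the ${\rm A}_{03}$-based algebra ${\rm T}_6$, and bases of the form $E_1(t) = te_1 \pm t^{-1} e_2$, $E_2(t) = e_2$ for degenerations connecting ${\rm T}_5^\alpha$ or similar families to ${\rm T}_7^{\pm\alpha}$, in direct analogy with Lemma~\ref{th:degleib}.

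Third, I would prove the non-degenerations by writing down for each relevant pair $({\rm T}, {\rm T}')$ a Zariski closed subset $\mathcal R$ of $\mathcal T_2$ stable under the action of invertible upper (or lower) triangular matrices, containing a representative of ${\rm T}$ but none of ${\rm T}'$, as in Lemma~\ref{main2}. A substantial simplification comes from the remark following Lemma~\ref{main2}: any non-degeneration ${\rm A}_{0i} \not\to {\rm A}_{0j}$ between the underlying commutative associative algebras (recorded in \cite{kv16}) lifts automatically to a non-degeneration of the corresponding transposed Leibniz--Poisson algebras. This leaves only the ``fine'' non-degenerations within a single base algebra, such as ${\rm T}_5^\alpha \not\to {\rm T}_7^{\beta\neq\alpha}$ and analogous statements for ${\rm T}_7^\alpha$, to be resolved with explicit stable sets defined by linear relations among the structure constants $c_{ij}^k, c_{ij}'^k$, patterned on the arguments in the Leibniz--Poisson proof.

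The main obstacle I expect is the bookkeeping around the parametric families ${\rm T}_5^\alpha$ and ${\rm T}_7^\alpha$: identifying precisely which value of $\beta$ arises as a primary limit from ${\rm T}_5^\alpha$, and then engineering a stable set with relations of the form $c_{11}'^{2} = \alpha \, c_{11}^{2}$ (or an appropriate symmetric-Leibniz variant) that simultaneously admits ${\rm T}_5^\alpha$ and forbids every ${\rm T}_7^{\beta\neq \alpha}$. The presence of the symmetry identity $\theta(x,\theta(y,z)) = \theta(\theta(x,y),z) + \theta(y,\theta(x,z))$ in addition to the Leibniz identity should restrict the set of admissible cocycles further, which is likely to eliminate some degenerations that held in the pure Leibniz--Poisson case and will need to be tracked carefully when selecting the parametric bases.
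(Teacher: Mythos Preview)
Your overall strategy is the same as the paper's --- derivation algebras for orbit dimensions, explicit parametrized bases for the degenerations, and stable Zariski closed sets for the non-degenerations --- and the specific bases you suggest for ${\rm T}_3 \to {\rm T}_5^0$ and ${\rm T}_3 \to {\rm T}_6$ are exactly the ones used. However, there is a genuine gap in your expectation about the parametric families.

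You anticipate degenerations of the shape ${\rm T}_5^\alpha \to {\rm T}_7^{\pm\alpha}$ in analogy with ${\rm L}_5^\alpha \to {\rm L}_7^{\alpha}$, and a corresponding non-degeneration ${\rm T}_5^\alpha \not\to {\rm T}_7^{\beta\neq\alpha}$. This analogy fails: the bracket on ${\rm T}_5^\alpha$ (and on ${\rm T}_6$, ${\rm T}_8$) is anticommutative, so for any $x$ one has $[x,x]=0$. Consequently every degeneration of ${\rm T}_5^\alpha$, ${\rm T}_6$ or ${\rm T}_8$ to an algebra with underlying commutative part ${\rm A}_{04}$ lands only in ${\rm T}_7^0$, independently of $\alpha$; the paper indeed proves ${\rm T}_5^\alpha \to {\rm T}_7^0$ and ${\rm T}_5^\alpha \not\to {\rm T}_7^{\beta\neq 0}$. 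The stable set you propose, with a relation of the form $c_{11}'^{2}=\alpha\,c_{11}^{2}$, is therefore wrong in type; the correct invariant is the anticommutativity relation $c_{21}'^{2}=-c_{12}'^{2}$, which forces $c_{11}'^{2}=0$ in any limit and excludes all ${\rm T}_7^{\beta\neq 0}$ simultaneously. The algebra that actually reaches the rest of the family ${\rm T}_7^{\alpha\neq 0}$ is ${\rm T}_4$ (whose bracket has $[e_1,e_1]=e_2\neq 0$): the paper gives ${\rm T}_4 \to {\rm T}_1$ and ${\rm T}_4 \to {\rm T}_7^{\alpha\neq 0}$, together with ${\rm T}_4 \not\to {\rm T}_6$, which are features absent from the Leibniz--Poisson picture and which your plan does not account for.
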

\begin{center}
	
	\begin{tikzpicture}[->,>=stealth,shorten >=0.05cm,auto,node distance=1.3cm,
	thick,
	main node/.style={rectangle,draw,fill=gray!10,rounded corners=1.5ex,font=\sffamily \scriptsize \bfseries },
	rigid node/.style={rectangle,draw,fill=black!20,rounded corners=0ex,font=\sffamily \scriptsize \bfseries }, 
	poisson node/.style={rectangle,draw,fill=black!20,rounded corners=0ex,font=\sffamily \scriptsize \bfseries },
	ac node/.style={rectangle,draw,fill=black!20,rounded corners=0ex,font=\sffamily \scriptsize \bfseries },
	lie node/.style={rectangle,draw,fill=black!20,rounded corners=0ex,font=\sffamily \scriptsize \bfseries },
	style={draw,font=\sffamily \scriptsize \bfseries }]

	\node (3) at (0.5,6) {$4$};
	\node (2) at (0.5,4) {$3$};
	\node (1) at (0.5,2) {$2$};
	\node (0)  at (0.5,0) {$0$};

    \node[main node] (c20) at (-3,0) {${\mathbb C^8}$};
  
    \node[main node] (c21) at (-5,2) {${\rm T}_{1}$};
    \node[main node] (c22) at (-3,2) {${\rm T}_{2}$};
    \node[ac node] (c27) at (-1,2) {${\rm T}_{7}^{\alpha}$};

    \node[ac node] (c25) at (-3,4) {${\rm T}_{5}^{\beta}$};
    \node[ac node] (c26) at (-1,4) {${\rm T}_{6}$};
    \node[ac node] (c28) at (-5,4) {${\rm T}_{8}$};
    
    \node[ac node] (c23) at (-2,6) {${\rm T}_{3}$};
    \node[ac node] (c24) at (-4,6) {${\rm T}_{4}$};
	\path[every node/.style={font=\sffamily\small}]

    %(c329) edge [bend left=-15]  node[above=5, right=-19, fill=white]{\tiny $\delta=\pm\gamma$}  (c330)
    %(c331) edge [bend left=-15]  node[above=5, right=-12, fill=white]{%\tiny $\delta=\gamma$     }  (c324)
    
    %(c28) edge node[above=-6, right=-10, fill=white]{\tiny $\begin{array}{c}        \delta=\beta  \\        \epsilon=\gamma     \end{array}$}  (c29)
    %(c28) edge [bend left=0] node{}  (c21)
    
    %(c24) edge node[above=0, right=-13, fill=white]{\tiny $\beta=0$}  (c25)
    %(c24) edge node[above=0, right=-13, fill=white]{\tiny $\beta=0$}  (c26)
    
    (c23) edge node[above=3, right=-12, fill=white]{\tiny $\beta=0$} (c25)
    
    (c24) edge node[above=-2, right=-18, fill=white]{\tiny $\beta=0$} (c25)
    
    (c26) edge node[above=3, right=-12, fill=white]{\tiny $\alpha=0$} (c27)
    (c25) edge node[above=3, right=-22, fill=white]{\tiny $\alpha=0$} (c27)
    (c28) edge node[above=2, right=-25, fill=white]{\tiny $\alpha=0$} (c27)
    
    (c23) edge (c26)
    (c28) edge (c22)
  
        (c24) edge [bend left=0] (c21)
    (c24) edge [bend left=5]  (c27)

    (c21) edge (c20)
    (c22) edge (c20)
    (c27) edge (c20);

	\end{tikzpicture}

{\tiny 
\begin{itemize}
\noindent Legend:
\begin{itemize}
    \item[--] Round nodes: transposed Leibniz--Poisson algebras with trivial commutative associative multiplication $\cdot$.
    \item[--] Squared nodes: transposed Leibniz--Poisson algebras with nontrivial commutative associative multiplication $\cdot$.

\end{itemize}
\end{itemize}}

{Figure 2.}  Graph of primary degenerations and non-degenerations.	
\end{center}
\bigskip

\bigskip

\begin{proof}
The dimensions of the orbits are deduced by computing the algebra of derivations.
The primary degenerations are proven using the parametric bases included in following Table:

    \begin{longtable}{|lcl|ll|}
\hline
\multicolumn{3}{|c|}{\textrm{Degeneration}}  & \multicolumn{2}{|c|}{\textrm{Parametrized basis}} \\
\hline
\hline

${\rm T}_{3} $ & $\to$ & ${\rm T}_{5}^{0} $ & $
E_{1}(t)= e_1 + e_2$ & $
E_{2}(t)= t e_2$  \\
\hline

${\rm T}_{3} $ & $\to$ & ${\rm T}_{6} $ & $
E_{1}(t)= e_1$ & $
E_{2}(t)= t e_2$  \\
\hline

${\rm T}_{4} $ & $\to$ & ${\rm T}_{1} $ & $
E_{1}(t)= t e_1$ & $
E_{2}(t)= t^2 e_2$  \\
\hline

${\rm T}_{4} $ & $\to$ & ${\rm T}_{5}^{0} $ & $
E_{1}(t)= e_1$ & $
E_{2}(t)= t^{-1}e_2$  \\
\hline

${\rm T}_{4} $ & $\to$ & ${\rm T}_{7}^{\alpha\neq0} $ & $
E_{1}(t)= \alpha t e_1 + t e_2$ & $
E_{2}(t)= \alpha t^{2} e_2$  \\
\hline

${\rm T}_{5}^{\alpha} $ & $\to$ & ${\rm T}_{7}^{0} $ & $
E_{1}(t)= t e_1 + e_2$ & $
E_{2}(t)= t e_2$  \\
\hline

${\rm T}_{6} $ & $\to$ & ${\rm T}_{7}^{0} $ & $
E_{1}(t)= t e_1 + e_2$ & $
E_{2}(t)= - te_2$  \\
\hline

${\rm T}_{8} $ & $\to$ & ${\rm T}_{2} $ & $
E_{1}(t)= e_1$ & $
E_{2}(t)= t^{-1}e_2$  \\
\hline

${\rm T}_{8} $ & $\to$ & ${\rm T}_{7}^{0} $ & $
E_{1}(t)= t e_1$ & $
E_{2}(t)= t^2 e_2$  \\
\hline
 
    \end{longtable}

The primary non-degenerations are proven using the sets from the following table: 
  
    \begin{longtable}{|l|l|}
\hline
\multicolumn{1}{|c|}{\textrm{Non-degeneration}} & \multicolumn{1}{|c|}{\textrm{Arguments}}\\
\hline
\hline

$\begin{array}{cccc}
     {\rm T}_{3} &\not \to&  {\rm T}_{1}, {\rm T}_{2},      {\rm T}_{5}^{\alpha\neq0} 
, {\rm T}_{7}^{\alpha\neq0}
     %{\rm T}_{8},
\end{array}$ &
${\mathcal R}= \left\{ \begin{array}{l} c_{11}^1, c_{11}^2, c_{12}^2, c_{21}^2, c_{22}^2 \in \mathbb{C},
c_{12}^2=c_{21}^2%%,\\ c_{ij}^{k}=c_{ij}'^{k}=0 \textrm{ otherwise} 
\end{array} \right\}$\\
\hline

$\begin{array}{cccc}
     {\rm T}_{4} &\not \to&  {\rm T}_{2}, 
     %{\rm T}_{8},
     {\rm T}_{5}^{\alpha\neq0}, {{\rm T}_{6}} & 
\end{array}$ &
${\mathcal R}= \left\{ \begin{array}{l} c_{11}^1, c_{11}^2, c_{12}^2, c_{21}^2, c_{11}'^2 \in \mathbb{C},
c_{11}^1=c_{12}^2,  c_{11}^1=c_{21}^2
%%,\\ c_{ij}^{k}=c_{ij}'^{k}=0 \textrm{ otherwise} 
\end{array} \right\}$\\
\hline

$\begin{array}{cccc}
     {\rm T}_{5}^{\alpha} &\not \to&  {\rm T}_{1}, {\rm T}_{2}, {\rm T}_{7}^{\beta\neq0} & 
\end{array}$ &
${\mathcal R}= \left\{ \begin{array}{l} c_{11}^1, c_{11}^2, c_{12}^2, c_{21}^2, c_{12}'^2, c_{21}'^2 \in \mathbb{C},\\
c_{11}^1=c_{12}^2,  c_{11}^1=c_{21}^2,  c_{12}'^2=\alpha  c_{11}^1,  c_{21}'^2=-c_{12}'^2%%,\\ c_{ij}^{k}=c_{ij}'^{k}=0 \textrm{ otherwise} 
\end{array} \right\}$\\
\hline

%$\begin{array}{cccc}
%     {\rm T}_{6} &\not \to&  {\red {\rm T}_{1}, {\rm %T}_{2}, {\rm T}_{7}^{\alpha \neq0}} & 
%\end{array}$ &
%${\mathcal R}= \left\{ \begin{array}{l}  
%c_{11}^1, c_{11}^2\in \mathbb{C},\\ %c_{ij}^{k}=c_{ij}'^{k}=0 \textrm{ otherwise} 
%\end{array} \right\}$\\
%\hline

$\begin{array}{cccc}
     {\rm T}_{8} &\not \to&  {\rm T}_{1}, {\rm T}_{7}^{\alpha\neq0}  & 
\end{array}$ &
${\mathcal R}= \left\{ \begin{array}{l} c_{11}^2, c_{12}'^2, c_{21}'^2 \in \mathbb{C},
c_{21}'^2=-c_{12}'^2%%,\\ c_{ij}^{k}=c_{ij}'^{k}=0 \textrm{ otherwise} 
\end{array} \right\}$\\
\hline

    \end{longtable}

\end{proof}

At this point, only the description of the closures of the orbits of the parametric families is missing. Although it is not necessary to study the closure of the orbits of each of the parametric families of
the variety of $2$-dimensional transposed Leibniz--Poisson algebras in order to identify its irreducible components, we will study them to give a complete description of the variety.

\begin{lemma}\label{th:degtleibf}
The description of the closure of the orbit of the parametric families in
the variety of $2$-dimensional transposed Leibniz--Poisson algebras are given below:

\begin{longtable}{lcl}
$\overline{\{O({\rm T}_{5}^{*})\}}$ & ${\supseteq}$ &
$ \Big\{
\overline{\{O({\rm T}_{2})\}}, \overline{\{O({\rm T}_{7}^{0})\}}, \overline{\{O({\rm T}_{8})\}},
\overline{\{O({\mathbb C^8})\}} \Big\}$\\

$\overline{\{O({\rm T}_{7}^{*})\}}$ & ${\supseteq}$ &
$ \Big\{\overline{\{O({\rm T}_{1})\}},
\overline{\{O({\mathbb C^8})\}} \Big\}$\\

\end{longtable}

\end{lemma}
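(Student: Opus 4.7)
The plan is to establish each claimed inclusion either by transitively extending primary degenerations from Lemma \ref{th:degtleib} or by exhibiting a parametric basis together with a parametric index for a degeneration of the appropriate parametric family. Since the statement only asserts $\supseteq$, no non-degeneration arguments are strictly needed; in the spirit of Lemma \ref{th:degleibf} one may additionally record some non-degenerations, but these are not part of the claim to be proved.

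For the family $\{{\rm T}_{5}^{\beta}\}$, the inclusions $\overline{O({\rm T}_{7}^{0})}, \overline{O(\mathbb{C}^{8})} \subseteq \overline{\{O({\rm T}_{5}^{*})\}}$ are immediate from Lemma \ref{th:degtleib}: each fixed member ${\rm T}_{5}^{\beta}$ primarily degenerates to ${\rm T}_{7}^{0}$ via the edge labeled $\alpha=0$, and hence also to $\mathbb{C}^{8}$ by transitivity through the graph. The two remaining inclusions require constructing parametric indices: for ${\rm T}_{5}^{*}\to {\rm T}_{2}$ I would take $\alpha(t)=t^{-1}$, $E_{1}(t)=t e_{1}$, $E_{2}(t)=e_{2}$, so that $E_{1}\cdot E_{1}=tE_{1}\to 0$, $E_{1}\cdot E_{2}=tE_{2}\to 0$ and $[E_{1},E_{2}]=t\cdot t^{-1}e_{2}=E_{2}$; for ${\rm T}_{5}^{*}\to {\rm T}_{8}$ I would take $\alpha(t)=t^{-1}$, $E_{1}(t)=t(e_{1}+e_{2})$, $E_{2}(t)=t^{2}e_{2}$. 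Likewise, for the family $\{{\rm T}_{7}^{\alpha}\}$ the inclusion $\overline{O(\mathbb{C}^{8})}\subseteq \overline{\{O({\rm T}_{7}^{*})\}}$ follows from Lemma \ref{th:degtleib}, while ${\rm T}_{7}^{*}\to {\rm T}_{1}$ is obtained with $\alpha(t)=t^{-1}$, $E_{1}(t)=t e_{1}$, $E_{2}(t)=t e_{2}$, giving $E_{1}\cdot E_{1}=t^{2}e_{2}=t E_{2}\to 0$ and $[E_{1},E_{1}]=t^{2}\cdot t^{-1}e_{2}=E_{2}$.

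The main obstacle is the degeneration ${\rm T}_{5}^{*}\to {\rm T}_{8}$: in contrast with the other cases, here the target has a nontrivial commutative associative product $e_{1}\cdot e_{1}=e_{2}$ that must emerge in the limit from $e_{1}\cdot e_{1}=e_{1}$ in the source, so a purely diagonal rescaling is bound to fail. One is forced to mix $e_{1}$ and $e_{2}$ inside $E_{1}$ and to rescale $E_{2}$ more aggressively than $E_{1}$, so that the unwanted $e_{1}$-term in $E_{1}\cdot E_{1}$ becomes a vanishing multiple of $E_{1}$ while the $e_{2}$-term is simultaneously promoted to $E_{2}$. The proposed basis $E_{1}=t(e_{1}+e_{2})$, $E_{2}=t^{2}e_{2}$ achieves exactly this balance, and the index $\alpha=t^{-1}$ is dictated by the requirement that the rescaled bracket $[E_{1},E_{2}]=t^{3}\alpha\, e_{2}$ converge to $E_{2}=t^{2}e_{2}$. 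Verifying that all six products and brackets (including the vanishing of $[E_{1},E_{1}]$ by anti-cancellation of the two mixed terms) limit correctly is a short but careful calculation; the other degenerations reduce to one-line checks.
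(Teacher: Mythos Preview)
Your proposal is correct and follows essentially the same approach as the paper: exhibit explicit parametrized bases and indices for the needed family degenerations, and invoke Lemma \ref{th:degtleib} for the primary ones. Your bases for ${\rm T}_{5}^{*}\to{\rm T}_{8}$ and ${\rm T}_{7}^{*}\to{\rm T}_{1}$ differ from the paper's only by harmless rescalings (the paper uses $E_{1}=te_{1}+e_{2}$, $E_{2}=te_{2}$ with index $t^{-1}$, respectively $E_{1}=te_{1}$, $E_{2}=e_{2}$ with index $t^{-2}$); your direct degeneration ${\rm T}_{5}^{*}\to{\rm T}_{2}$ is an extra that the paper instead obtains by transitivity through ${\rm T}_{5}^{*}\to{\rm T}_{8}\to{\rm T}_{2}$.
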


\begin{proof}
Thanks to Theorem \ref{th:degtleib} we have all necessarily degenerations.
The closure of the orbits of suitable families are given using the parametric bases and the parametric index included in following Table: 

    \begin{longtable}{|lcl|ll|}
\hline
\multicolumn{3}{|c|}{\textrm{Degeneration}}  & \multicolumn{2}{|c|}{\textrm{Parametrized basis}} \\
\hline
\hline

${\rm T}_{5}^{t^{-1}} $ & $\to$ & ${\rm T}_{8} $ & $
E_{1}(t)= t e_1 + e_2$ & $
E_{2}(t)= t e_2$  \\
\hline

${\rm T}_{7}^{t^{-2}} $ & $\to$ & ${\rm T}_{1} $ & $
E_{1}(t)= t e_1$ & $
E_{2}(t)= e_2$  \\
\hline
    \end{longtable}

  \begin{longtable}{|l|l|}
\hline
\multicolumn{1}{|c|}{\textrm{Non-degeneration}} & \multicolumn{1}{|c|}{\textrm{Arguments}}\\
\hline
\hline

$\begin{array}{cccc}
     {\rm T}_{5}^{*} &\not \to&  {\rm T}_{1}, {\rm T}_{6}, {\rm T}_{7}^{\alpha\neq0} & 
\end{array}$ &
${\mathcal R}= \left\{ \begin{array}{l} c_{11}^1, c_{11}^2, c_{12}^2, c_{21}^2, c_{12}'^2, c_{21}'^2 \in \mathbb{C},\\
c_{11}^1=c_{12}^2,  c_{11}^1=c_{21}^2, c_{21}'^2=-c_{12}'^2%,\\ 
%c_{ij}^{k}=c_{ij}'^{k}=0 \textrm{ otherwise} 
\end{array} \right\}$\\
\hline

$\begin{array}{cccc}
     {\rm T}_{7}^{*} &\not \to&  {\rm T}_{2} & 
\end{array}$ &
${\mathcal R}= \left\{ \begin{array}{l} c_{11}^2, c_{11}'^2 \in \mathbb{C}%,\\
%c_{ij}^{k}=c_{ij}'^{k}=0 \textrm{ otherwise} 
\end{array} \right\}$\\
\hline

    \end{longtable}

\end{proof}

By Lemma \ref{th:degtleib} and Lemma \ref{th:degtleibf}, we have the following result that summarizes the geometric classification.

\begin{theorem}
The variety of $2$-dimensional transposed Leibniz--Poisson algebras has three irreducible components, corresponding to the transposed Leibniz--Poisson algebras ${\rm T}_{3}$ and ${\rm T}_{4}$ and the family of transposed Leibniz--Poisson algebras ${\rm T}_{5}^{*}$.
\end{theorem}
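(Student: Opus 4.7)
The plan is to combine Lemmas \ref{th:degtleib} and \ref{th:degtleibf} to identify the irreducible components of the variety. Recall that $\overline{O({\rm T})}$ (respectively $\overline{\{O({\rm T}(*))\}}$) gives an irreducible component precisely when no other orbit or parametric family properly contains it in its closure, which reduces to a finite check of non-degenerations among the candidates exhibited in the algebraic classification.

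First I would verify that $\overline{O({\rm T}_3)}$ and $\overline{O({\rm T}_4)}$ are irreducible components. Both orbits have the maximal possible dimension $n^2 = 4$, so no orbit of strictly smaller dimension can degenerate to them; moreover ${\rm T}_3 \not\to {\rm T}_4$ and ${\rm T}_4 \not\to {\rm T}_3$ because they are non-isomorphic with equal orbit dimension. The only remaining candidates are the parametric families ${\rm T}_5^*$ and ${\rm T}_7^*$, whose closures have dimension $4$ and $3$ respectively; the family ${\rm T}_7^*$ cannot reach a four-dimensional orbit by dimension. For ${\rm T}_5^*$, a degeneration ${\rm T}_5^* \to {\rm T}_3$ would induce $A_{02} \to A_{01}$ on the underlying commutative associative parts via Remark \ref{redbil}, which fails by the geometric classification in \cite{kv16}; and ${\rm T}_5^* \not\to {\rm T}_4$ follows from Lemma \ref{main2} applied to the Zariski closed set of transposed Leibniz--Poisson algebras with anticommutative bracket, which contains every ${\rm T}_5^\alpha$ but not ${\rm T}_4$.

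Next I would verify that $\overline{\{O({\rm T}_5^*)\}}$ is the third irreducible component and that no further components exist. Lemma \ref{th:degtleib} shows that only ${\rm T}_3$ and ${\rm T}_4$ degenerate to a member of the family ${\rm T}_5^*$, and only to the special value ${\rm T}_5^0$; hence neither $\overline{O({\rm T}_3)}$ nor $\overline{O({\rm T}_4)}$ contains ${\rm T}_5^\beta$ for $\beta \neq 0$, so $\overline{\{O({\rm T}_5^*)\}}$ is not contained in either. The three closures together exhaust the variety: tracing Lemmas \ref{th:degtleib} and \ref{th:degtleibf}, $\overline{O({\rm T}_3)}$ contains ${\rm T}_6$, $\overline{O({\rm T}_4)}$ contains ${\rm T}_1$ and ${\rm T}_7^\alpha$ for every $\alpha \neq 0$, and $\overline{\{O({\rm T}_5^*)\}}$ contains ${\rm T}_2$, ${\rm T}_7^0$, and ${\rm T}_8$. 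The main subtlety is to exclude ${\rm T}_7^*$ as a fourth component: since $\{{\rm T}_7^\alpha : \alpha \neq 0\} \subseteq \overline{O({\rm T}_4)}$ is Zariski dense in the irreducible closure $\overline{\{O({\rm T}_7^*)\}}$, the full closure $\overline{\{O({\rm T}_7^*)\}}$ already lies in $\overline{O({\rm T}_4)}$, so ${\rm T}_7^*$ contributes no new component and the classification stops at three.
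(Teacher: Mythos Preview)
Your proposal is correct and takes the same approach as the paper, which simply states that the result follows from Lemmas \ref{th:degtleib} and \ref{th:degtleibf}; you have spelled out in detail the reasoning the paper leaves implicit. Your justifications for the non-degenerations ${\rm T}_5^* \not\to {\rm T}_3$ (via the underlying associative part and Remark \ref{redbil}) and ${\rm T}_5^* \not\to {\rm T}_4$ (via the $\mathrm{GL}$-stable closed condition of anticommutativity of the bracket) are valid, as is the density argument absorbing $\overline{\{O({\rm T}_7^*)\}}$ into $\overline{O({\rm T}_4)}$.
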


 \subsubsection{The geometric classification of $2$-dimensional transposed
Poisson algebras}

\begin{corollary}

The variety of $2$-dimensional transposed Poisson algebras has two  irreducible components, corresponding to the transposed  Poisson algebra ${\rm T}_{3}$  and the family of transposed Poisson algebras ${\rm T}_{5}^{*}$.
\end{corollary}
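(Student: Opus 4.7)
The plan is to transfer the degeneration analysis of transposed Leibniz--Poisson algebras (Lemmas \ref{th:degtleib} and \ref{th:degtleibf}) to the transposed Poisson subvariety. The key observation is that the variety of $2$-dimensional transposed Poisson algebras is a Zariski-closed subvariety of the variety of $2$-dimensional transposed Leibniz--Poisson algebras, cut out by the linear anticommutativity condition $c_{ij}'^{k}+c_{ji}'^{k}=0$. Since closed conditions pass to limits, the orbit closure of any transposed Poisson algebra inside the subvariety coincides with its closure in the ambient variety, so all the previously established degenerations and non-degenerations apply verbatim.

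First, the preceding classification corollary identifies the algebras in the subvariety as exactly ${\rm T}_{2}, {\rm T}_{3}, {\rm T}_{5}^{\alpha}, {\rm T}_{6}, {\rm T}_{7}^{0}, {\rm T}_{8}$; in particular, the irreducible component ${\rm T}_{4}$ of the transposed Leibniz--Poisson variety drops out because its bracket is not anticommutative. Restricting Figure 2 to these algebras and reading off Lemma \ref{th:degtleib}, the closure $\overline{O({\rm T}_{3})}$ already contains ${\rm T}_{5}^{0}$, ${\rm T}_{6}$ and, via the chain ${\rm T}_{6}\to{\rm T}_{7}^{0}$, also ${\rm T}_{7}^{0}$; by Lemma \ref{th:degtleibf}, the closure $\overline{\{O({\rm T}_{5}^{*})\}}$ contains ${\rm T}_{7}^{0}$, ${\rm T}_{8}$ and ${\rm T}_{2}$. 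Hence every $2$-dimensional transposed Poisson algebra lies in $\overline{O({\rm T}_{3})}\cup\overline{\{O({\rm T}_{5}^{*})\}}$, and each term on the right-hand side is irreducible (a single orbit closure and the closure of a one-parameter family, respectively).

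Finally, one must check that these two pieces are incomparable. The non-degeneration ${\rm T}_{3}\not\to{\rm T}_{5}^{\alpha\neq0}$ is already recorded in Lemma \ref{th:degtleib}, so the family is not absorbed by ${\rm T}_{3}$. Conversely, ${\rm T}_{5}^{*}\not\to{\rm T}_{3}$ follows from the commutative-associative projection argument of Remark \ref{redbil} together with its parametric analogue via Lemma \ref{main2}: such a degeneration would force ${\rm A}_{02}\to{\rm A}_{01}$ in the variety of $2$-dimensional commutative associative algebras, which is excluded by \cite{kv16}. The only subtle point of the argument is to invoke the closed-subvariety observation correctly so that orbit closures computed in the ambient variety remain valid when restricted to the transposed Poisson subvariety; the rest is straightforward bookkeeping.
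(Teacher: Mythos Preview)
Your proof is correct and follows exactly the approach the paper intends: the corollary is stated without proof precisely because it is meant to be read off from Lemmas \ref{th:degtleib} and \ref{th:degtleibf} by restricting to the anticommutative-bracket sublocus, which is what you do. Your handling of the two incomparability directions (${\rm T}_{3}\not\to{\rm T}_{5}^{\alpha\neq0}$ from the non-degeneration table, and ${\rm T}_{5}^{*}\not\to{\rm T}_{3}$ via the first-multiplication projection ${\rm A}_{02}\not\to{\rm A}_{01}$) is clean and makes explicit what the paper leaves to the reader.
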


\section{ Novikov--Poisson algebras}
The notion of Novikov--Poisson (also known as Gelfand-Dorfman-Novikov--Poisson \cite{B18}) algebras is introduced in a paper of Xu  in order to study Novikov algebras \cite{xu96}.
He uses these new algebras to give the first examples of simple Novikov algebras.
The next paper of Xu 
opens the systematic study of Novikov--Poisson algebras \cite{xu97}.
Novikov--Poisson algebras are related to Jordan superalgebras and transposed Poisson algebras \cite{bai20}.
A first tentative of the algebraic classification of $2$- and $3$-dimensional Novikov--Poisson algebras is given in \cite{z15},
but the author did not separate isomorphic algebras.
Our aim in the present section is to improve and extend the results results in \cite{z15}.

\subsection{The algebraic classification of $2$-dimensional Novikov algebras}
\begin{definition}
An algebra  is called
Novikov if 
\begin{longtable}{rcl}
$ ( x \circ y)\circ z-x \circ ( y \circ z) $ & $=$& $(y  \circ  x)  \circ  z-y  \circ  ( x  \circ z) $, \\
$ ( x  \circ  y)  \circ  z$& $=$ &$( x \circ z)  \circ y.$
\end{longtable}
\end{definition}
The algebraic classification of  $2$-dimensional Novikov algebras is given by \cite{Bai}.

\begin{theorem}
\label{nov2}
Let ${\rm N}$ be a nonzero $2$-dimensional Novikov algebra. Then ${\rm N}$ is
isomorphic to one and only one of the following algebras:

\begin{longtable}{lllcllcllcl}
${\rm N}_{01}$&$:$&$  e_{1} \circ e_{1} $&$=$&$e_{2}$\\

 ${\rm N}_{02}$&$:$&$ e_{2} \circ e_{1}  $&$=$&$-e_{1}$\\

${\rm N}_{03}$&$:$&$ e_{1} \circ e_{1} $&$=$&$e_{1},$ & $ e_{2}\circ e_{2}$&$=$&$e_{2}$\\

${\rm N}_{04}$&$ :$&$ e_{1}\circ e_{1}  $&$=$&$e_{1}$\\

${\rm N}_{05}$ & $:$&$ e_{1}\circ e_{2}  $&$=$&$ e_{1},$ & $e_{2}\circ e_{2}$&$=$&$e_{1}+e_{2}$\\

${\rm N}_{06}^{\gamma}$ & $:$ & $e_{1}\circ e_{2} $&$=$&$e_{1},$ & $e_{2}\circ  e_{1}  $&$=$&$\gamma e_{1},$ & $ e_{2}\circ e_{2} $&$=$&$e_{2}$
\end{longtable}

\end{theorem}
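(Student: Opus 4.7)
The plan is to carry out the classification by a direct analysis of the two defining identities on a general two-dimensional algebra. Fix a basis $e_1, e_2$ of $\mathrm{N}$ and write $e_i \circ e_j = \sum_k c_{ij}^k e_k$, which gives eight structural constants. Substituting the triples $(e_i, e_j, e_k)$ with $i,j,k\in\{1,2\}$ into the two Novikov identities yields a system of polynomial equations in the $c_{ij}^k$, which one would then solve completely.

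A convenient first reduction is to exploit the right-commutativity identity, which is equivalent to the right multiplications $R_x : y \mapsto y \circ x$ commuting pairwise: $R_x R_y = R_y R_x$. Hence $R_{e_1}$ and $R_{e_2}$ can be simultaneously triangularized, so after an initial change of basis these two $2\times 2$ matrices are upper triangular. This already kills several parameters and makes the remaining case split manageable.

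Next, I would split the analysis according to the Jordan structure of the pair $(R_{e_1}, R_{e_2})$: both scalar, one scalar with the other diagonalizable, simultaneously diagonal with distinct eigenvalues, or properly triangular with a nilpotent part. In each branch, the left-symmetry identity (symmetry of the associator in the first two arguments) supplies further linear/quadratic relations among the surviving $c_{ij}^k$. Then I would use the residual freedom in the basis change (rescalings of $e_1, e_2$ and the shears preserving the upper-triangular form) to normalize the remaining parameters and read off a canonical representative in each branch, which should match one of $\mathrm{N}_{01},\ldots,\mathrm{N}_{06}^{\gamma}$.

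Finally, to show the six families are pairwise non-isomorphic, I would tabulate simple invariants: $\dim(\mathrm{N}\circ\mathrm{N})$, $\dim\mathrm{Ann}(\mathrm{N})$, the presence of left or right units, the set of idempotents, whether the algebra is commutative, and $\dim\mathfrak{Der}(\mathrm{N})$. For the family $\mathrm{N}_{06}^{\gamma}$, one further checks that distinct values of $\gamma$ give non-isomorphic algebras by exhibiting $\gamma$ as a ratio of structure constants which is invariant under the automorphisms that preserve the multiplication table's shape. The main obstacle is not conceptual but combinatorial: the case split on $(R_{e_1}, R_{e_2})$ yields several branches, each needing its own normalization, and organizing them to avoid duplications (for instance, branches related by the swap $e_1\leftrightarrow e_2$) is the error-prone step.
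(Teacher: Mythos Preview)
The paper does not prove this theorem; it simply quotes it from \cite{Bai} (Bai--Meng), so there is no in-paper argument to compare against. Your outline is a sound strategy and is essentially how such low-dimensional classifications are done: the observation that right-commutativity makes $R_{e_1},R_{e_2}$ commute, hence simultaneously upper-triangularizable over $\mathbb{C}$, is exactly the right organizing device, and the subsequent split by Jordan type plus normalization by the stabilizer of the triangular form is standard.

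One small point of care in your last step: to separate $\mathrm{N}_{06}^{\gamma}$ from $\mathrm{N}_{06}^{\gamma'}$ you need an invariant under \emph{all} basis changes, not only those preserving the triangular shape. A clean choice is to note that $e_2$ is an idempotent with $e_1\circ e_2=e_1$ and $e_2\circ e_1=\gamma e_1$; when $\gamma\neq 0$ this idempotent is unique (a quick computation), and the eigenvalue of $L_{e_2}$ on the ideal $\mathbb{C}e_1$ (already normalized so that $R_{e_2}$ acts by $1$) recovers $\gamma$ intrinsically. The case $\gamma=0$ is distinguished from $\gamma\neq 0$ by having a one-parameter family of idempotents. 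With that adjustment your plan goes through.
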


\subsection{The algebraic classification of  $2$-dimensional Novikov--Poisson algebras}

\begin{definition}
A Novikov--Poisson algebra is a vector space  equipped with 
a commutative associative multiplication $\cdot$
and     
a Novikov multiplication $\circ.$
These two operations are required to satisfy the following identities:
\begin{longtable}{rcl}
$(x\cdot  y) \circ z$ &$  =$&$ x \cdot( y \circ z),$\\
$(x \circ y)\cdot z - (y \circ x)\cdot z$&$ =$&$ x\circ (y \cdot z)  - y \circ (x \cdot z).$
\end{longtable}
\end{definition}

Novikov--Poisson algebras 
$({\rm N}, \cdot, \circ)$ with zero commutative associative multiplication  are the Novikov algebras given in Theorem \ref{nov2}.  Hence, we are studying the Novikov--Poisson algebras defined on every  commutative associative algebra from Theorem \ref{asocc2}.

\begin{definition}
Let $\left( {\rm A},\cdot \right) $ be a commutative associative
algebra. Define ${\rm Z}_{\rm NP}^{2}\left( {\rm A},{\rm A}\right) $ to be the
set of all bilinear maps $\theta :{\rm A}\times {\rm A} \longrightarrow {\rm A}$ such that:
\begin{longtable}{rcl}
$\theta \left( x,\theta \left( y,z\right) \right) -\theta \left( \theta
\left( x,y\right) ,z\right)$ & $=$&$\theta \left( y,\theta \left( x,z\right)
\right) -\theta \left( \theta \left( y,x\right) ,z\right),$ \\
$\theta \left( \theta \left( x,y\right) ,z\right)$ &$ =$&$\theta \left( \theta
\left( x,z\right) ,y\right),$ \\
$\theta \left( x\cdot y,z\right)$ & $=$ &$x\cdot \theta \left( y,z\right),$ \\
$\theta \left( x,y\right) \cdot z-\theta \left( y,x\right) \cdot z$& $=$&$\theta
\left( x,y\cdot z\right) -\theta \left( y,x\cdot z\right) ,$
\end{longtable}%
for all $x,y,z$ in ${\rm A}$. 
If $\theta \in {\rm Z}_{\rm NP}^{2}\left( {\rm A}, {\rm A}\right) $, then 
$\left( {\rm A},\cdot ,\circ \right) $ is a Novikov--Poisson algebra where $ x \circ y=\theta \left( x,y\right),$ for all $x,y\in {\rm A}$. 
\end{definition}

\subsubsection{Novikov--Poisson algebras defined on   ${\rm A}_{01}$}

The Novikov--Poisson structures defined on ${\rm A}_{01}$ are:
$${\rm N}_{07}^{\alpha, \beta} : = \left\{ 
\begin{tabular}{lcllcl}
$e_1 \cdot e_1 $&$=$&$ e_1,$&$ e_2 \cdot e_2 $&$=$&$ e_2$ \\ 
$e_{1} \circ e_{1}  $&$=$&$ \alpha e_{1},$ &$ e_{2} \circ e_{2} $&$=$&$ \beta e_{2}$%
\end{tabular}%
\right.$$
Since ${\rm Aut}({\rm A}_{01})=\mathbb S_2$, we have 
${\rm N}_{07}^{\alpha, \beta}\cong {\rm N}_{07}^{\beta, \alpha}$.

\subsubsection{Novikov--Poisson algebras defined on   ${\rm A}_{02}$}

The Novikov--Poisson structures defined on ${\rm A}_{02}$ are of the form:
$$\left\{ 
\begin{tabular}{lcllcllcl}
$e_1 \cdot e_1  $&$=$&$ e_1,$&$ e_1 \cdot e_2  $&$=$&$e_2$ \\ 
$ e_{1} \circ e_{1}  $&$=$&$ \alpha_{1} e_{1} + \beta_{1} e_{2},$&$   e_{1} \circ e_{2}  $&$=$&$ \gamma_{1} e_{2},$ & $ e_{2} \circ e_{1}  $&$=$&$ \alpha_{1} e_{2}$%
\end{tabular}%
\right.$$

By Lemma \ref{asocc2aut}, to study the action of ${\rm Aut}({\rm A}_{02})$, we have to distinguish two cases:
\begin{itemize}
    \item If $\beta_{1}\neq0$, then choose $\xi=\beta_{1}$ and obtain  the parametric family:
$${\rm N}_{08}^{\alpha, \beta} := \left\{ 
\begin{tabular}{lcllcllcl}
$e_1 \cdot e_1  $&$=$&$ e_1,$&$ e_1 \cdot e_2  $&$=$&$ e_2$ \\ 
$e_{1}\circ e_{1} $&$=$&$ \alpha e_{1} + e_{2},$ & $e_{1}\circ e_{2}  $&$=$&$ \beta e_{2},$ & $ e_{2}\circ e_{1} $&$=$&$ \alpha e_{2}$%
\end{tabular}%
\right.$$
    
    \item If $\beta_{1}=0$, then,  we have:
$${\rm N}_{09}^{\alpha, \beta} := \left\{ 
\begin{tabular}{lll}
$e_1 \cdot e_1  = e_1,$& $ e_1 \cdot e_2  =e_2$ \\ 
$ e_{1}\circ e_{1} = \alpha e_{1},$ & $  e_{1}\circ e_{2}  = \beta e_{2},$ & $  e_{2}\circ e_{1}  = \alpha e_{2}$%
\end{tabular}%
\right.$$
    
\end{itemize}

\subsubsection{Novikov--Poisson algebras defined on   ${\rm A}_{03}$}
The Novikov--Poisson structures defined on ${\rm A}_{03}$ are:
$$\left\{ 
\begin{tabular}{lll}
$e_1 \cdot e_1 = e_1,$ \\ 
$ e_{1}\circ e_{1}  = \alpha_{1} e_{1},$ & $ e_{2}\circ e_{2}= \beta_{1} e_{2}$%
\end{tabular}%
\right.$$

The action of ${\rm Aut}({\rm A}_{03})$ produces two cases:
\begin{itemize}
    \item If $\beta_{1}\neq0$, then set $\xi=\beta_{1}^{-1}$ and obtain the family:
    $${\rm N}_{10}^{\alpha} := \left\{ 
\begin{tabular}{lcllcl}
$e_1 \cdot e_1 $&$=$&$ e_1$ \\ 
$e_{1}\circ e_{1} $&$=$&$ \alpha e_{1},$ & $ e_{2}\circ e_{2} $&$=$&$ e_{2}$%
\end{tabular}%
\right.$$

    \item If $\beta_{1} = 0$, then we obtain the family:
    $${\rm N}_{11}^{\alpha} := \left\{ 
\begin{tabular}{lcllcl}
$e_1 \cdot e_1 $&$=$&$ e_1$ \\ 
$e_{1}\circ e_{1}  $&$=$&$ \alpha e_{1}$%
\end{tabular}%
\right.$$
    
\end{itemize}

\subsubsection{Novikov--Poisson algebras defined on   ${\rm A}_{04}$}

The Novikov--Poisson structures defined on ${\rm A}_{04}$ are:
$$\left\{ 
\begin{tabular}{lcllcllcl}
$e_1 \cdot e_1  $&$=$&$ e_2$ \\ 
$ e_{1}\circ e_{1}  $&$=$&$ \alpha_{1} e_{1} + \beta_{1} e_{2},$ & $ e_{1}\circ e_{2} $&$=$&$ \gamma_{1} e_{2},$ &$  e_{2}\circ e_{1}  $&$=$&$ \alpha_{1} e_{2}$%
\end{tabular}%
\right.$$

The following cases arise from the action of  ${\rm Aut}({\rm A}_{04})$ on this structure:
\begin{itemize}
    \item If $\gamma_{1}\neq0$, then choose $\xi=\gamma_{1}^{-1}$ and $\nu=-\beta_{1} \gamma_{1}^{-2}$ obtaining  the family:
    $${\rm N}_{12}^{\alpha} := \left\{ 
\begin{tabular}{lcllcllcl}
$e_1 \cdot e_1  $&$=$&$ e_2$ \\ 
$ e_{1}\circ e_{1}  $&$=$&$ \alpha e_{1},$& $  e_{1}\circ e_{2} $&$=$&$ e_{2},$&$  e_{2}\circ e_{1}  $&$=$&$ \alpha e_{2}$%
\end{tabular}%
\right.$$
    
    \item If $\gamma_{1}=0$, then 
    \begin{itemize}
        \item If $\alpha_{1}\neq0$, then for $\xi=\alpha_{1}^{-1}$, we obtain:        
$${\rm N}_{13}^{\alpha} := \left\{ 
\begin{tabular}{lcllcl}
$e_1 \cdot e_1  $&$=$&$ e_2$ \\ 
$ e_{1}\circ e_{1}  $&$=$&$ e_{1} + \alpha e_2,$ &$  e_{2}\circ e_{1} $&$=$&$ e_{2}$%
\end{tabular}%
\right.$$
        \item If $\alpha_{1}=0$, then we have  the algebra: 
$${\rm N}_{14}^{\alpha} := \left\{ 
\begin{tabular}{lcllcl}
$e_1 \cdot e_1  $&$=$&$ e_2$ \\ 
$ e_{1}\circ e_{1}  $&$=$&$ \alpha e_{2}$%
\end{tabular}%
\right.$$
    \end{itemize}
\end{itemize}

\subsubsection{The algebraic classification of  $2$-dimensional Novikov-Poisson 
 algebras}

\begin{theorem}
\label{2dim NP} Let $({\rm N},\cdot , \circ ) $ be a nonzero $2$-dimensional Novikov--Poisson algebra. Then ${\rm N}$ is isomorphic to one Novikov algebra listed in Theorem \ref{nov2} or  to one algebra listed below: 
\begin{longtable}{lcl} 
 ${\rm N}_{07}^{\alpha, \beta}$ &$ :$&$ 
 \left\{ 
\begin{tabular}{lcllcl}
$e_1 \cdot e_1 $&$=$&$ e_1,$&$ e_2 \cdot e_2 $&$=$&$ e_2$ \\ 
$ e_{1}\circ e_{1}  $&$=$&$ \alpha e_{1}, $&$  e_{2}\circ e_{2}  $&$=$&$ \beta e_{2}$%
\end{tabular}%
\right.$\\

${\rm N}_{08}^{\alpha, \beta} $&$:$&$ \left\{ 
\begin{tabular}{lcllcllcl}
$e_1 \cdot e_1  $&$=$&$ e_1,$&$ e_1 \cdot e_2  $&$=$&$e_2$ \\ 
$ e_{1}\circ e_{1}  $&$=$&$ \alpha e_{1} + e_{2},$&$  e_{1}\circ e_{2}  $&$=$&$ \beta e_{2},$&$e_{2}\circ e_{1}$&$=$&$ \alpha e_{2}$%
\end{tabular}%
\right.$\\

${\rm N}_{09}^{\alpha, \beta} $&$:$&$ \left\{ 
\begin{tabular}{lcllcllcllcl}
$e_1 \cdot e_1  $&$=$&$ e_1, $&$e_1 \cdot e_2  $&$=$&$e_2$ \\ 
$e_{1}\circ e_{1}  $&$=$&$ \alpha e_{1},$&$  e_{1}\circ e_{2} $&$=$&$ \beta e_{2},$&$ e_{2}\circ e_{1} $&$=$&$ \alpha e_{2}$%
\end{tabular}%
\right.$\\

${\rm N}_{10}^{\alpha}$&$ : $&$\left\{ 
\begin{tabular}{lcllcl}
$e_1 \cdot e_1 $&$=$&$ e_1$ \\ 
$ e_{1}\circ e_{1}  $&$=$&$ \alpha e_{1},$&$  e_{2}\circ e_{2} $&$=$&$ e_{2}$%
\end{tabular}%
\right.$\\

 ${\rm N}_{11}^{\alpha} $&$:$&$ \left\{ 
\begin{tabular}{lcllcl}
$e_1 \cdot e_1 $&$=$&$ e_1$ \\ 
$e_{1}\circ e_{1}  $&$=$&$ \alpha e_{1}$%
\end{tabular}%
\right.$\\

 ${\rm N}_{12}^{\alpha} $&$:$&$ \left\{ 
\begin{tabular}{lcllcllcl}
$e_1 \cdot e_1  $&$=$&$ e_2$ \\ 
$ e_{1}\circ e_{1}  $&$=$&$ \alpha e_{1}, $&$ e_{1}\circ e_{2} $&$ =$&$ e_{2},$&$ e_{2}\circ e_{1} $&$=$&$ \alpha e_{2}$%
\end{tabular}%
\right.$\\

 ${\rm N}_{13}^{\alpha}$&$ :$&$ \left\{ 
\begin{tabular}{lcllcl}
$e_1 \cdot e_1  $&$=$&$ e_2,$ \\ 
$e_{1}\circ e_{1}  $&$=$&$ e_{1} + \alpha e_2$&$ e_{2}\circ e_{1}  $&$=$&$ e_{2}$%
\end{tabular}%
\right.$\\

${\rm N}_{14}^{\alpha} $&$:$&$ \left\{ 
\begin{tabular}{lcllcl}
$e_1 \cdot e_1  $&$=$&$ e_2$ \\ 
$e_{1}\circ e_{1} $&$=$&$ \alpha e_{2}$%
\end{tabular}%
\right.$\\

\end{longtable}
The only non-trivial isomorphism between these families is ${\rm N}_{07}^{\alpha, \beta}\cong {\rm N}_{07}^{\beta, \alpha}$.

\end{theorem}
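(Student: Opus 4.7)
The plan is to apply the three-step procedure from the preliminaries to each of the four $2$-dimensional commutative associative algebras ${\rm A}_{01},\ldots,{\rm A}_{04}$ listed in Theorem \ref{asocc2}. In each case, I will (i) compute the cocycle space ${\rm Z}^{2}_{\rm NP}({\rm A}_{0i},{\rm A}_{0i})$; (ii) let the corresponding automorphism group from Lemma \ref{asocc2aut} act on it; and (iii) read off a canonical representative from each orbit. Together with the Novikov algebras of Theorem \ref{nov2} (which exhaust the case where $\cdot$ is zero), this will account for every $2$-dimensional Novikov--Poisson algebra.

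For step (i), I would parametrize a bilinear map $\theta \in {\rm Z}^{2}_{\rm NP}({\rm A}_{0i},{\rm A}_{0i})$ by setting $\theta(e_j,e_k)= a^{1}_{jk} e_1 + a^{2}_{jk} e_2$ with eight unknowns, then substitute into the four defining identities of a Novikov--Poisson cocycle (Novikov left-symmetry, right-commutativity, and the two Leibniz-type compatibilities with $\cdot$), evaluating on all ordered triples from $\{e_1,e_2\}$. Each identity yields polynomial equations in the $a^{k}_{jk}$. For instance, on ${\rm A}_{01}$ the idempotents $e_1,e_2$ force $\theta(e_1,e_2)=\theta(e_2,e_1)=0$ and $\theta(e_i,e_i)\in \mathbb{C}e_i$ through the compatibility $\theta(x\cdot y,z)=x\cdot\theta(y,z)$; on ${\rm A}_{02}$ the unit-like element $e_1$ and the relation $e_1\cdot e_2=e_2$ constrain $\theta$ to the three-parameter family displayed in the paper; similarly on ${\rm A}_{03}$ and ${\rm A}_{04}$. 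This step is mechanical but must be carried out carefully triple-by-triple.

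For step (ii), I would invoke the action formula $(\theta \ast \phi)(x,y) = \phi^{-1}(\theta(\phi(x),\phi(y)))$ using the explicit automorphisms of Lemma \ref{asocc2aut}: ${\mathbb S}_2$ for ${\rm A}_{01}$; a one-parameter scaling $e_2\mapsto \xi e_2$ for ${\rm A}_{02}$ and ${\rm A}_{03}$; and the two-parameter group $(\xi,\nu)\mapsto (\xi e_1+\nu e_2,\xi^{2} e_2)$ for ${\rm A}_{04}$. The orbits are then described by splitting according to which structure constant is nonzero: on ${\rm A}_{02}$ the coefficient $\beta_{1}$ of $e_2$ in $e_1\circ e_1$ can be scaled to $1$ or killed, producing ${\rm N}_{08}$ and ${\rm N}_{09}$; on ${\rm A}_{03}$ the scalar $\beta_{1}$ in $e_2\circ e_2$ can be scaled to $1$ or killed, producing ${\rm N}_{10}$ and ${\rm N}_{11}$; on ${\rm A}_{04}$ one splits first on $\gamma_1$, then on $\alpha_1$, using the two degrees of freedom $\xi,\nu$ to simultaneously normalize one coefficient and remove another, producing ${\rm N}_{12}$, ${\rm N}_{13}$, and ${\rm N}_{14}$. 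The only case with a nontrivial discrete symmetry is ${\rm A}_{01}$, where the swap $e_1\leftrightarrow e_2$ identifies ${\rm N}_{07}^{\alpha,\beta}$ with ${\rm N}_{07}^{\beta,\alpha}$.

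Finally, for step (iii) I would verify there are no other isomorphisms between the listed families. Since each family lives over a fixed commutative associative algebra ${\rm A}_{0i}$, algebras from different families cannot be isomorphic (they would induce an isomorphism between non-isomorphic ${\rm A}_{0i}$'s via passage to the commutative-associative part). Within each family the orbits of ${\rm Aut}({\rm A}_{0i})$ are by construction disjoint. The only delicate point, and the main obstacle, is the extensive but routine case analysis in solving the cocycle equations on ${\rm A}_{02}$ and ${\rm A}_{04}$, where all eight unknowns can a priori be nonzero and the right-commutativity and Leibniz identities interact; a tidy bookkeeping (for example, triple-by-triple in lexicographic order) is essential to ensure that no solution branch is missed and that the final parameter ranges in ${\rm N}_{08}$--${\rm N}_{14}$ are exactly those displayed.
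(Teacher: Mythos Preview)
Your proposal is correct and follows essentially the same approach as the paper: computing ${\rm Z}_{\rm NP}^{2}({\rm A}_{0i},{\rm A}_{0i})$ for each commutative associative algebra in Theorem \ref{asocc2}, then using the automorphism groups of Lemma \ref{asocc2aut} to normalize parameters via exactly the case splits you describe ($\beta_1$ on ${\rm A}_{02}$ and ${\rm A}_{03}$; $\gamma_1$ then $\alpha_1$ on ${\rm A}_{04}$), with the ${\mathbb S}_2$-symmetry on ${\rm A}_{01}$ yielding the only nontrivial isomorphism ${\rm N}_{07}^{\alpha,\beta}\cong {\rm N}_{07}^{\beta,\alpha}$. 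Your remark in step (iii) that non-isomorphic underlying associative algebras force the families to be disjoint makes explicit what the paper uses implicitly through Lemma \ref{isom}.
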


\subsubsection{The algebraic classification of $2$-dimensional pre-Lie
Poisson algebras}

\begin{definition}
A pre-Lie algebra is a vector space ${\rm P}$ equipped with 
a pre-Lie algebra multiplication  $\circ$  satisfying  the following condition: 
\begin{longtable}{rcl}
$\left( x\circ y\right) \circ z-\left( y\circ x\right) \circ z$ &$=$&
$x\circ \left(y\circ z\right) -y\circ \left( x\circ z\right).$ 
\end{longtable}
\end{definition}

\begin{definition}
A pre-Lie Poisson algebra is a vector space ${\rm P}$ equipped with 
a  commutative associative multiplication  $\cdot$ and 
a pre-Lie algebra multiplication  $\circ.$
These two operations are required to satisfy the following conditions: 
\begin{longtable}{rcl}
$\left( x\circ y\right) \cdot z-\left( y\circ x\right) \cdot z$ &$=$&$x\circ \left(
y\cdot z\right) -y\circ \left( x\cdot z\right),$ \\
$\left( x\cdot y\right) \circ z$ &$=$&$x\cdot \left( y\circ z\right).$
\end{longtable}
\end{definition}

\begin{definition}
Let $\left( \rm{P},\cdot \right) $ be a commutative associative
algebra. Define ${\rm Z}_{\rm PLP}^{2}\left( \rm{P},\rm{P}\right) $ to be
the set of all bilinear maps $\theta :\rm{P}\times \rm{P}%
\longrightarrow \mathcal{P}$ such that:%
\begin{longtable}{rcl}
$\theta \left( \theta(x,  y),z\right) -\theta ( x, \theta \left( y,z\right)) $&$=
 $&$ \theta \left( \theta(y,
 x),z\right) -\theta
\left( y, \theta( x ,z\right),$ \\
$\theta \left( x,y\right) \cdot z-\theta \left( y,x\right) \cdot z$&$=$&$\theta
\left( x,y\cdot z\right) -\theta \left( y,x\cdot z\right),$ \\
$\theta \left( x\cdot y,z\right)$&$=$&$x\cdot \theta \left( y,z\right),$
\end{longtable}%
for all $x,y,z$ in $\rm{P}$. If $\theta \in {\rm Z}_{\rm PLP}^{2}\left( 
\rm{P},\rm{P}\right) $, then $\left( \rm{P},\cdot ,\circ 
\right) $ is a pre-Lie Poisson algebra where $x\circ  y=\theta \left(
x,y\right) $ for all $x,y\in \rm{P}$.
\end{definition}

The main tool for the algebraic classification of $2$-dimensional pre-Lie Poisson algebras 
can be obtained by a direct computations and given in next lemma.

\begin{lemma}
Let ${\rm P}$ be a nonzero $2$-dimensional commutative associative algebra, 
then ${\rm Z}_{\rm PLP}^{2}\left( 
\rm{P},\rm{P}\right)={\rm Z}_{\rm NP}^{2}\left( 
\rm{P},\rm{P}\right) $
\end{lemma}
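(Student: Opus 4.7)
The plan is to establish both inclusions separately. The inclusion ${\rm Z}_{\rm NP}^{2}({\rm P})\subseteq{\rm Z}_{\rm PLP}^{2}({\rm P})$ is immediate once we compare definitions: each of the three defining identities of ${\rm Z}_{\rm PLP}^{2}$ appears verbatim among the four defining identities of ${\rm Z}_{\rm NP}^{2}$. The only extra condition imposed in the Novikov--Poisson case is the right-commutativity identity $\theta(\theta(x,y),z)=\theta(\theta(x,z),y)$. Hence the real content of the lemma is that, in dimension two over the four possible commutative associative bases, the pre-Lie Poisson axioms already force this extra identity.

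For the reverse inclusion I would proceed by case analysis on the four possibilities for $({\rm P},\cdot)$ listed in Theorem \ref{asocc2}. In each case, parametrize $\theta$ by writing $\theta(e_i,e_j)=\sum_k a_{ij}^{k}e_k$ with unknown structure constants, substitute into the three ${\rm Z}_{\rm PLP}^{2}$ identities, and reduce the resulting polynomial system. A crucial first simplification is the third identity $\theta(x\cdot y,z)=x\cdot\theta(y,z)$: whenever ${\rm P}$ contains an idempotent $e$ (as in ${\rm A}_{01}$, ${\rm A}_{02}$, ${\rm A}_{03}$), taking $x=y=e$ gives $\theta(e,z)=e\cdot\theta(e,z)$, which forces $\theta(e,z)$ to lie in the principal ideal $e\cdot{\rm P}$; for ${\rm A}_{04}$ the same identity pushes $\theta(e_2,-)$ and $\theta(-,e_2)$ onto $\mathbb{C}e_2$. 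Combining these restrictions with the antisymmetric Leibniz-type identity quickly collapses the number of free parameters.

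Having solved the system in each of the four cases, I would compare the resulting family of PLP cocycles with the explicit families already computed in the Novikov--Poisson subsection: namely ${\rm N}_{07}^{\alpha,\beta}$ on ${\rm A}_{01}$, the pair ${\rm N}_{08}^{\alpha,\beta}$, ${\rm N}_{09}^{\alpha,\beta}$ on ${\rm A}_{02}$, and so on. The expected outcome is coincidence of the two families in each case, which, together with the trivial inclusion above, yields the claimed equality. The main obstacle is not conceptual but purely computational bookkeeping: in ${\rm A}_{02}$ and ${\rm A}_{04}$ the cocycle has several independent entries and one has to expand the pre-Lie identity on all ordered triples $(e_i,e_j,e_k)$ before the parameters align. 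A consequence of the lemma, worth recording immediately afterward, is that the algebraic and geometric classifications of $2$-dimensional pre-Lie Poisson algebras are identical to those of $2$-dimensional Novikov--Poisson algebras already obtained.
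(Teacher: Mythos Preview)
Your proof strategy is correct and matches the paper's own justification, which is simply ``by direct computations'': the inclusion ${\rm Z}_{\rm NP}^{2}\subseteq{\rm Z}_{\rm PLP}^{2}$ is formal, and the reverse inclusion is obtained by solving the PLP system on each of the four bases ${\rm A}_{01},\dots,{\rm A}_{04}$ and observing that the resulting cocycles coincide with those listed in the Novikov--Poisson section.

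However, your final remark about consequences is wrong. The lemma only concerns \emph{nonzero} commutative associative algebras ${\rm P}$; it says nothing about the case $\cdot=0$. A pre-Lie Poisson algebra with trivial $\cdot$ is just a pre-Lie algebra, and in dimension two there are pre-Lie algebras that are not Novikov (the paper lists ${\rm P}_{01}^{\alpha\neq0}$, ${\rm P}_{02}$, ${\rm P}_{03}$ in Corollary~\ref{algprelieP}). So the algebraic and geometric classifications of $2$-dimensional pre-Lie Poisson algebras are strictly larger than those of Novikov--Poisson algebras: four irreducible components versus two. The correct consequence is only that every $2$-dimensional pre-Lie Poisson algebra with nontrivial associative product is automatically Novikov--Poisson.
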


It is know, that each Novikov algebra is a Pre-Lie algebra.
The algebraic classification of  $2$-dimensional pre-Lie algebras can be found in  \cite{burde09}. Hence, we have the following statement.

\begin{corollary}\label{algprelieP}
Let $({\rm P},\cdot , \circ ) $ be a nonzero $2$-dimensional pre-Lie Poisson algebra. 
Then ${\rm P}$ is isomorphic to one Novikov--Poisson algebra listed in Theorem \ref{2dim NP} or  
to one pre-Lie (non-Novikov) algebra listed below: 

\begin{longtable}{lclcllcllcl}

${\rm P}_{01}^{\alpha\neq0}$&$:$&
$e_2 \circ e_1 $&$=$&$ -e_1,$&$ e_2 \circ e_2 $&$=$&$ \alpha e_2$\\

${\rm P}_{02}$&$:$&
$e_1 \circ e_1 $&$=$&$ e_2,$ &$ e_2 \circ e_1 $&$=$&$ -e_1, $&$e_2 \circ e_2 $&$=$&$ -2e_2$\\

${\rm P}_{03}$&$:$&
 $e_2 \circ e_1 $&$=$&$ -e_1,$ & $ e_2 \circ e_2 $&$=$&$ e_1 - e_2$\\ 

\end{longtable}

\end{corollary}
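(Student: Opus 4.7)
The proof splits naturally into two cases according to whether the commutative associative multiplication $\cdot$ is trivial or not.

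First, I would address the case in which $\cdot$ is nontrivial. Here $({\rm P},\cdot)$ is isomorphic to one of the four algebras ${\rm A}_{01},\dots,{\rm A}_{04}$ of Theorem \ref{asocc2}. By the preceding lemma, ${\rm Z}^{2}_{\rm PLP}({\rm P},{\rm P}) = {\rm Z}^{2}_{\rm NP}({\rm P},{\rm P})$ for each of these four algebras. The action of ${\rm Aut}({\rm P})$ on either cocycle space is the same, so Lemma \ref{isom} gives the same orbit representatives. Consequently, in this case the classification of pre-Lie Poisson algebras coincides exactly with the list of non-trivial Novikov--Poisson structures ${\rm N}_{07}^{\alpha,\beta},\dots,{\rm N}_{14}^{\alpha}$ from Theorem \ref{2dim NP}.

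Second, I would treat the case when $\cdot$ is identically zero. In this situation the Leibniz-type identities relating $\cdot$ and $\circ$ become vacuous, and $({\rm P},\cdot,\circ)$ reduces to a pre-Lie algebra $({\rm P},\circ)$. The algebraic classification of $2$-dimensional complex pre-Lie algebras is known from \cite{burde09}; among these, the Novikov ones are precisely ${\rm N}_{01},\dots,{\rm N}_{06}^{\gamma}$ listed in Theorem \ref{nov2}. Subtracting this Novikov list from the full pre-Lie list of \cite{burde09} leaves exactly the three families ${\rm P}_{01}^{\alpha\neq 0}$, ${\rm P}_{02}$, ${\rm P}_{03}$, which accounts for the remaining algebras in the statement.

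The only technical point that requires attention is the bookkeeping step: making sure that the normalization carried out in \cite{burde09} is compatible with our presentation (in particular, that ${\rm P}_{01}^{\alpha\ne 0}$ is a genuine one-parameter family with no further identifications, and that ${\rm P}_{02}$ and ${\rm P}_{03}$ are mutually non-isomorphic and non-Novikov). This can be checked by verifying the non-vanishing of the associator $(e_i\circ e_j)\circ e_k - (e_i\circ e_k)\circ e_j$ on each of these three algebras, which certifies that they fail the right-symmetry identity distinguishing Novikov from pre-Lie and thus cannot coincide with any algebra in Theorem \ref{nov2}. Combining the two cases then yields the corollary.
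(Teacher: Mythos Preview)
Your proof is correct and follows essentially the same route as the paper: the paper invokes the preceding lemma ${\rm Z}_{\rm PLP}^{2}={\rm Z}_{\rm NP}^{2}$ for the nontrivial $\cdot$ case and then appeals to the $2$-dimensional pre-Lie classification from \cite{burde09} for the trivial $\cdot$ case, exactly as you do. The only minor quibble is terminological: the expression $(e_i\circ e_j)\circ e_k-(e_i\circ e_k)\circ e_j$ you test is the right-commutativity defect (the second Novikov identity), not the associator, but the verification itself is the right one.
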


\subsection{Degenerations  of  $2$-dimensional Novikov--Poisson algebras}

\begin{theorem} \label{th:degnov}
The graph of primary degenerations and non-degenerations of the variety of $2$-dimensional Novikov--Poisson algebras is given in Figure 3, where the numbers on the right side are the dimensions of the corresponding orbits.

\end{theorem}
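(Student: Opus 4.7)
The plan is to proceed exactly as in the proofs of Lemma \ref{th:degleib} and Lemma \ref{th:degtleib}. First, for each Novikov algebra ${\rm N}_{i}$ ($1\leq i\leq 6$) and each Novikov--Poisson algebra ${\rm N}_{j}^{\alpha,\beta}$ or ${\rm N}_{j}^{\alpha}$ ($7\leq j\leq 14$) from Theorem \ref{2dim NP}, I would compute the Lie algebra of derivations $\mathfrak{Der}({\rm N})$ by direct calculation and use the formula $\dim O((\mu,\mu'))=4-\dim\mathfrak{Der}({\rm N})$ to stratify the variety by orbit dimension. A proper degeneration ${\rm N}\to{\rm N}'$ requires $\dim\mathfrak{Der}({\rm N})<\dim\mathfrak{Der}({\rm N}')$, which already prunes the search significantly.

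Second, the invariant from Remark \ref{redbil} is decisive here: since the commutative associative part of $({\rm N},\cdot,\circ)$ is one of ${\rm A}_{01},\ldots,{\rm A}_{04}$, any degeneration must be compatible with the (already known, see \cite{kv16}) degeneration graph of these four $2$-dimensional commutative associative algebras, and likewise with the degeneration graph of $2$-dimensional Novikov algebras when we forget the $\cdot$ multiplication. Combined with the partial order on derivation dimensions, this reduces the candidates to a short list whose positive direction I would handle one at a time by writing down an explicit parametrized basis $E_1(t),E_2(t)$ (typically of the form $e_i\mapsto t^{a}e_i+t^{b}e_j$ for suitable exponents, analogous to the tables in the previous sections) and checking that the structure constants of $(\mu,\mu')$ in this basis converge to those of the target.

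Third, for the non-degenerations I would invoke Lemma \ref{main2} and exhibit, for each pair ${\rm N}\not\to{\rm N}'$ not already excluded by the previous observations, a Zariski closed set $\mathcal{R}$ stable under the action of invertible upper (or lower) triangular matrices. Each such $\mathcal{R}$ is described by a small system of polynomial equalities in the variables $c_{ij}^k,c_{ij}'^k$ (as in the tables in Lemmas \ref{th:degleib} and \ref{th:degtleib}), containing a representative of ${\rm N}$ but no representative of ${\rm N}'$; verifying both properties is a linear algebra exercise once $\mathcal{R}$ is chosen.

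The main obstacle will be the sheer combinatorial size of the problem: the variety contains $8$ parametric families (three of which are two-parameter families ${\rm N}_{07}^{\alpha,\beta},{\rm N}_{08}^{\alpha,\beta},{\rm N}_{09}^{\alpha,\beta}$) together with the six rigid Novikov algebras, so many candidate edges have to be examined. Particular care will be required for the two-parameter families and for the borderline cases where the parameter must be specialized (such as $\alpha=0$ or $\beta=0$) before degeneration occurs; designing the Zariski closed sets $\mathcal{R}$ that separate, say, ${\rm N}_{08}^{\alpha,\beta}$ from ${\rm N}_{09}^{\alpha',\beta'}$ for generic parameters is the technically most delicate part, and will rely on using the equation $c_{11}^1=c_{12}^2$ (encoding $e_1\cdot e_1=e_1$ and $e_1\cdot e_2=e_2$) together with relations among the Novikov structure constants that single out the correct parameter values.
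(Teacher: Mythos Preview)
Your plan is exactly the paper's approach: compute orbit dimensions via $\mathfrak{Der}$, invoke \cite{kv16} for the degenerations among the underlying Novikov and commutative associative algebras, and then settle the remaining cases by explicit parametrized bases for degenerations and explicit upper-triangular-stable Zariski closed sets $\mathcal{R}$ for non-degenerations. The paper's proof is nothing more than the execution of this plan in two tables (one of bases $E_1(t),E_2(t)$, one of sets $\mathcal{R}$), so the substance you still owe is precisely those tables; be aware that several edges in Figure~3 require special parameter values (e.g.\ ${\rm N}_{07}^{\alpha,\alpha}\to{\rm N}_{09}^{\alpha,\alpha}$ only on the diagonal, ${\rm N}_{12}^{\alpha}\to{\rm N}_{06}^{\alpha^{-1}}$, ${\rm N}_{10}^{\alpha}\to{\rm N}_{14}^{\beta\neq\alpha}$) and the corresponding $\mathcal{R}$'s must encode these constraints.
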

\begin{center}
	
	\begin{tikzpicture}[->,>=stealth,shorten >=0.05cm,auto,node distance=1.3cm,
	thick,
	main node/.style={rectangle,draw,fill=gray!10,rounded corners=1.5ex,font=\sffamily \scriptsize \bfseries },
	rigid node/.style={rectangle,draw,fill=black!20,rounded corners=0ex,font=\sffamily \scriptsize \bfseries }, 
	poisson node/.style={rectangle,draw,fill=black!20,rounded corners=0ex,font=\sffamily \scriptsize \bfseries },
	ac node/.style={rectangle,draw,fill=black!20,rounded corners=0ex,font=\sffamily \scriptsize \bfseries },
	lie node/.style={rectangle,draw,fill=black!20,rounded corners=0ex,font=\sffamily \scriptsize \bfseries },
	style={draw,font=\sffamily \scriptsize \bfseries }]

	\node (3) at (0.5,8) {$4$};
	\node (2) at (0.5,5) {$3$};
	\node (1) at (0.5,2) {$2$};
	\node (0)  at (0.5,0) {$0$};

    \node[main node] (c20) at (-7,0) {${\mathbb C}^8$};
    
    \node[ac node] (c214) at (-5,2) {${\rm N}_{14}^{\alpha}$};
    \node[main node] (c260) at (-7,2) {${\rm N}_{06}^{0}$};
    \node[main node] (c21) at (-9,2) {${\rm N}_{01}$};
    
    \node[ac node] (c213) at (-5,5) {${\rm N}_{13}^{\beta}$};
    \node[ac node] (c211) at (-3,5) {${\rm N}_{11}^{\beta}$};
    \node[ac node] (c29) at (-1,5) {${\rm N}_{09}^{\beta,\gamma}$};
    \node[main node] (c26a) at (-7,5) {${\rm N}_{06}^{\beta\neq0}$};
    \node[main node] (c25) at (-9,5) {${\rm N}_{05}$};
    \node[main node] (c24) at (-11,5) {${\rm N}_{04}$};
    \node[main node] (c22) at (-13,5) {${\rm N}_{02}$};
    
    \node[ac node] (c212) at (-9,8) {${\rm N}_{12}^{\delta}$};
    \node[ac node] (c210) at (-7,8) {${\rm N}_{10}^{\delta}$};
    \node[ac node] (c28) at (-3,8) {${\rm N}_{08}^{\delta,\epsilon}$};
    \node[ac node] (c27) at (-5,8) {${\rm N}_{07}^{\delta,\epsilon}$};
    \node[main node] (c23) at (-11,8) {${\rm N}_{03}$};
    
	\path[every node/.style={font=\sffamily\small}]

    %(c329) edge [bend left=-15]  node[above=5, right=-19, fill=white]{\tiny $\delta=\pm\gamma$}  (c330)
    %(c331) edge [bend left=-15]  node[above=5, right=-12, fill=white]{%\tiny $\delta=\gamma$     }  (c324)
    
    (c27) edge node[above=4, right=-22, fill=white]{\tiny $\begin{array}{c}
        \delta=\beta \textrm{ or } \\
        \epsilon = \beta  
    \end{array}$}  (c211)
    
    (c210) edge[bend left=46] node[above=-30, right=-10, fill=white]{\tiny $\delta\neq\alpha$}  (c214)
    
    (c27) edge[bend left=-22] node[above=42, right=19, fill=white]{\tiny $\delta\neq\epsilon$}  (c21)
    
    (c27) edge[bend left=-42] node[above=-27, right=-14, fill=white]{\tiny $\delta\neq\epsilon$}  (c214)
    
    (c27) edge [bend left=20] node[above=-6, right=-10, fill=white]{\tiny $\begin{array}{c}
        \delta=\beta  \\
        \epsilon=\beta  \\
        \gamma=\beta 
    \end{array}$}  (c29)
    
    (c28) edge [bend left=40] node[above=-6, right=-12, fill=white]{\tiny $\begin{array}{c}
        \delta=\beta  \\
        \epsilon=\gamma 
    \end{array}$}  (c29)
    (c28) edge [bend left=-30] node{}  (c21)
     
    (c212) edge node[above=-15, right=-35, fill=white]{\tiny $\delta=0$}  (c22)
    (c212) edge node[above=-3, right=-18, fill=white]{\tiny $\delta=\beta^{-1}$}  (c26a)

    (c210) edge node[above=-15, right=10, fill=white]{\tiny $\delta=\beta$}  (c211)
    (c210) edge   node{}  (c24)
    
    (c29) edge node[above=15, right=7, fill=white]{\tiny $\alpha=\gamma$}  (c214)
    (c211) edge node[above=15, right=3, fill=white]{\tiny $\alpha=\beta$}  (c214)
    (c213) edge node[above=13, right=-14, fill=white]{\tiny $\alpha=\beta$}  (c214)
    (c213) edge   node{}  (c260)
    
    (c23) edge node[above=10, right=-25, fill=white]{\tiny $\beta=1$}  (c26a)
    (c23) edge   node{}  (c24)
    (c25) edge   node{}  (c260)
    (c26a) edge   node{}  (c21)
    (c25) edge   node{}  (c21)
    (c24) edge   node{}  (c21)
    (c22) edge   node{}  (c21)
    (c214) edge   node{}  (c20)
    (c260) edge   node{}  (c20)
    (c21) edge   node{}  (c20)
    (c212) edge [bend left=23]  node{}  (c214)
      (c28) edge [bend left=10] node{}  (c214)
  ;

	\end{tikzpicture}

{\tiny 
\begin{itemize}
\noindent Legend:
\begin{itemize}
    \item[--] Round nodes:  Novikov--Poisson algebras with trivial commutative associative multiplication $\cdot$.
    \item[--] Squared nodes: Novikov--Poisson algebras with non-trivial commutative associative multiplication $\cdot$.
\end{itemize}
\end{itemize}}

{Figure 3.}  Graph of primary degenerations and non-degenerations of $2$-dimensional Novikov--Poisson algebras.	
\end{center}
\bigskip

\bigskip

\begin{proof}
The dimensions of the orbits are deduced by computing the algebra of derivations.
The primary degenerations of the Novikov--Poisson algebras with trivial commutative associative multiplication follow by \cite{kv16}, the others are proven using the parametric bases included in following Table:

    \begin{longtable}{|lcl|ll|}
\hline
\multicolumn{3}{|c|}{\textrm{Degeneration}}  & \multicolumn{2}{|c|}{\textrm{Parametrized basis}} \\
\hline
\hline

%${\rm N}_{02} $ & $\to$ & ${\rm N}_{01}  $ & $
%E_{1}(t)= -t^{-1}e_1 + t e_2$ & $
%E_{2}(t)= 2e_1 - t^2e_2$  \\
%\hline
%
%
%
%
%${\rm N}_{03} $ & $\to$ & ${\rm N}_{04}  $ & $
%E_{1}(t)= e_1$ & $
%E_{2}(t)= te_2$  \\
%\hline
%
%${\rm N}_{03} $ & $\to$ & ${\rm N}_{06}^{1}  $ %& $
%E_{1}(t)= te_1$ & $
%E_{2}(t)= e_1 + e_2$  \\
%\hline

%${\rm N}_{04} $ & $\to$ & ${\rm N}_{01}  $ & $
%E_{1}(t)= te_1 - t^{-1}e_2$ & $
%E_{2}(t)= e_2$  \\
%\hline
%
%${\rm N}_{05} $ & $\to$ & ${\rm N}_{01}  $ & $
%E_{1}(t)= t e_2$ & $
%E_{2}(t)= t^2 e_1$  \\
%\hline 
%
%${\rm N}_{05} $ & $\to$ & ${\rm N}_{06}^{0}  $ %& $
%E_{1}(t)= t^{-1}e_1$ & $
%E_{2}(t)= e_2$  \\
%\hline

%${\rm N}_{06}^{\alpha\neq0} $ & $\to$ & ${\rm %N}_{01}  $ & $
%E_{1}(t)= t^{-1} e_1 + t\gamma^{-1}e_2$ & $
%E_{2}(t)= - t^{2}\gamma^{-1} e_2$  \\
%\hline

 ${\rm N}_{07}^{\alpha, \beta\neq\alpha} $ & $\to$ & ${\rm N}_{01}  $ & $
E_{1}(t)= t e_1 + t e_2$ & $
E_{2}(t)= t^2 (\alpha - \beta)e_1$  \\
\hline

${\rm N}_{07}^{\alpha, \alpha} $ & $\to$ & ${\rm N}_{09}^{\alpha ,\alpha}  $ & $
E_{1}(t)= e_1 + e_2$ & $
E_{2}(t)= -te_2$  \\
\hline

${\rm N}_{07}^{\alpha,\beta} $ & $\to$ & ${\rm N}_{11}^{\alpha}  $ & $
E_{1}(t)= e_1$ & $
E_{2}(t)= te_2$  \\
\hline

${\rm N}_{07}^{\alpha,\alpha} $ & $\to$ & ${\rm N}_{14}^{\alpha}  $ & $
E_{1}(t)= t e_1 - (t-1)t e_2$ & $
E_{2}(t)= (t-1)t^3 e_2$  \\
\hline

${\rm N}_{08}^{\alpha,\beta} $ & $\to$ & ${\rm N}_{01}  $ & $
E_{1}(t)= t e_1$ & $
E_{2}(t)= t^2 e_2$  \\
\hline

${\rm N}_{08}^{\alpha,\beta} $ & $\to$ & ${\rm N}_{09}^{\alpha,\beta}  $ & $
E_{1}(t)= e_1$ & $
E_{2}(t)= t^{-1}e_2$  \\
\hline

${\rm N}_{08}^{\alpha,\beta} $ & $\to$ & ${\rm N}_{14}^{\gamma}  $ & $
E_{1}(t)= t e_1 + t (\gamma-\beta)^{-1} e_2$ & $
E_{2}(t)= t^2 (\gamma - \beta)^{-1} e_2$  \\
\hline

${\rm N}_{09}^{\alpha,\beta} $ & $\to$ & ${\rm N}_{14}^{\beta}  $ & $
E_{1}(t)= t e_1 + t^{-1}e_2$ & $
E_{2}(t)= e_2$  \\
\hline

${\rm N}_{10}^{\alpha} $ & $\to$ & ${\rm N}_{04}  $ & $
E_{1}(t)= e_2$ & $
E_{2}(t)= te_1$  \\
\hline

${\rm N}_{10}^{\alpha} $ & $\to$ & ${\rm N}_{11}^{\alpha}  $ & $
E_{1}(t)= e_1$ & $
E_{2}(t)= te_2$  \\
\hline
 
${\rm N}_{10}^{\alpha} $ & $\to$ & ${\rm N}_{14}^{\beta\neq\alpha}  $ & $
E_{1}(t)= t(\alpha-\beta)^{-1}e_1 + t e_2$ & $
E_{2}(t)= t^2(\alpha-\beta)^{-2}e_1$  \\
\hline

${\rm N}_{11}^{\alpha} $ & $\to$ & ${\rm N}_{14}^{\alpha}  $ & $
E_{1}(t)= t e_1 - t^{-1}e_2$ & $
E_{2}(t)= e_2$  \\
\hline

${\rm N}_{12}^{0} $ & $\to$ & ${\rm N}_{02}  $ & $
E_{1}(t)= t^{-1}e_2$ & $
E_{2}(t)= -e_1$  \\
\hline

${\rm N}_{12}^{\alpha} $ & $\to$ & ${\rm N}_{06}^{\alpha^{-1}}  $ & $
E_{1}(t)= t^{-1}e_2$ & $
E_{2}(t)=  \alpha^{-1}e_1$  \\
\hline

${\rm N}_{12}^{\alpha} $ & $\to$ & ${\rm N}_{14}^{\beta}  $ & $
E_{1}(t)= te_1 + \alpha t e_2$ & $
E_{2}(t)= t^2 e_2$  \\
\hline

${\rm N}_{13}^{\alpha} $ & $\to$ & ${\rm N}_{06}^{0}  $ & $
E_{1}(t)= t^{-1}e_2$ & $
E_{2}(t)= e_1$  \\
\hline

${\rm N}_{13}^{\alpha} $ & $\to$ & ${\rm N}_{14}^{\alpha}  $ & $
E_{1}(t)= te_1$ & $
E_{2}(t)= t^2e_2$  \\
\hline
\end{longtable}

The primary non-degenerations of Novikov--Poisson algebras with non-trivial commutative associative multiplication are proven using the sets from the following table: 

 \begin{longtable} {|lcl|l|}
\hline
\multicolumn{3}{|c|}{\textrm{Non-degeneration}} & \multicolumn{1}{|c|}{\textrm{Arguments}}\\
\hline
\hline

%${\rm N}_{02}$ &$\not \to$&  {\red ${\rm %N}_{06}^{0}$} &
%${\mathcal R}= \left\{ \begin{array}{l}
%c_{11}'^1, c_{11}'^2, c_{21}'^1, c_{21}'^2 \in %\mathbb{C},\\
%c_{21}'^2=-c_{11}'^1, (c_{11}'^1)^2=-c_{11}'^2 %c_{21}'^1,\\ c_{ij}^{k}=c_{ij}'^{k}=0 \textrm{ %otherwise} 
%\end{array} \right\}$\\
%\hline

%${\rm N}_{03}$ &$\not \to$&  ${\rm N}_{02}, %{\rm N}_{05}, {\rm N}_{06}^{\alpha\neq1}$ & %${\mathcal R}= \left\{ \begin{array}{l}
%c_{11}'^1, c_{11}'^2, c_{12}'^2, c_{21}'^2, %c_{22}'^2 \in \mathbb{C},\\
%c_{12}'^2=c_{21}'^2,\\ %c_{ij}^{k}=c_{ij}'^{k}=0 \textrm{ otherwise} 
%\end{array} \right\}$\\
%\hline

%$    {\rm N}_{04}$ &$\not \to$&  {\red${\rm %N}_{06}^{0}$} &
%${\mathcal R}= \left\{ \begin{array}{l}
%c_{11}'^1, c_{11}'^2\in \mathbb{C},\\ %c_{ij}^{k}=c_{ij}'^{k}=0 \textrm{ otherwise} 
%\end{array} \right\}$\\
%\hline
%
%${\rm N}_{06}^{\alpha\neq0}$ &$\not \to$&  {\red %${\rm N}_{06}^{0}$}&
%${\mathcal R}= \left\{ \begin{array}{l}
%c_{11}'^1, c_{11}'^2, c_{12}'^1, c_{21}'^1, %c_{21}'^2, c_{22}'^2 \in \mathbb{C},\\
%c_{12}'^1=c_{22}'^2,c_{21}'^1=\gamma  c_{22}'^2, \\
% (1-\gamma ) c_{11}'^1=(\gamma +1) c_{21}'^2,\\ 
% c_{ij}^{k}=c_{ij}'^{k}=0 \textrm{ otherwise} 
%\end{array} \right\}$\\
%\hline

${{\rm N}_{07}^{\alpha,\beta }}$  &$\not \to$&  
$\Big\{ \begin{array}{l}
{\rm N}_{06}^{\gamma}, {\rm N}_{02}, {\rm N}_{04},\\ 
     %{\rm N}_{05},
     %{\rm N}_{13}^{\gamma},
     {\rm N}_{11}^{\gamma\neq\alpha, \beta}, {\rm N}_{09}^{\gamma,\delta} [\beta\neq \alpha]
     \end{array} \Big\}$&
${\mathcal R}= \left\{ \begin{array}{l}
c_{11}^1, c_{11}^2, c_{12}^2, c_{21}^2, c_{22}^2, c_{11}'^1, c_{11}'^2, c_{12}'^2, c_{21}'^2, c_{22}'^2 \in \mathbb{C},\\
c_{21}^2=c_{12}^2, c_{12}'^2=c_{21}'^2,\\
c_{11}'^1=\alpha  c_{11}^1,  c_{12}'^2=\beta  c_{12}^2, c_{22}'^2=\beta  c_{22}^2%%,\\ c_{ij}^{k}=c_{ij}'^{k}=0 \textrm{ otherwise} 
\end{array} \right\}$\\
\hline

${{\rm N}_{07}^{\alpha,\alpha}}$ &$\not \to$&  
$ \Big\{\begin{array}{l}    %{\rm N}_{06}^{\beta}, 
     {\rm N}_{06}^{0},
     %{\rm N}_{02}, 
     %{\rm N}_{04}, 
     %{\rm N}_{05}, 
     %{\rm N}_{13}^{\beta},
     %{\rm N}_{11}^{\beta}[\beta\neq\alpha],
     {\rm N}_{09}^{(\beta,\gamma)\neq (\alpha,\alpha)},\\
     {\rm N}_{01},
     {\rm N}_{14}^{\beta\neq\alpha} \end{array}\Big\}$&
${\mathcal R}= \left\{ \begin{array}{l}
c_{11}^1, c_{11}^2, c_{12}^2, c_{21}^2, c_{22}^2, c_{11}'^1, c_{11}'^2, c_{12}'^2, c_{21}'^2, c_{22}'^2 \in \mathbb{C},\\
c_{21}^2=c_{12}^2, c_{11}'^1=\alpha  c_{11}^1, c_{11}'^2=\alpha  c_{11}^2,\\ c_{12}'^2=c_{21}'^2, c_{12}'^2=\alpha  c_{12}^2, c_{22}'^2=\alpha  c_{22}^2%%,\\ c_{ij}^{k}=c_{ij}'^{k}=0 \textrm{ otherwise} 
\end{array} \right\}$\\
\hline

${\rm N}_{08}^{\alpha,\beta}$ &$\not \to$&  
$\Big\{\begin{array}{l} {\rm N}_{06}^{\gamma}, {\rm N}_{02}, 
     %{\rm N}_{05}, 
     %{\rm N}_{13}^{\gamma},
     {\rm N}_{04},\\
     {{\rm N}_{11}^{\gamma},} {\rm N}_{09}^{(\gamma,\delta)\neq(\alpha,\beta)} \end{array}\Big\}$ &
${\mathcal R}= \left\{ \begin{array}{l}
c_{11}^1, c_{11}^2, c_{12}^2, c_{21}^2, c_{11}'^1, c_{11}'^2, c_{12}'^2, c_{21}'^2 \in \mathbb{C},\\
c_{12}^2=c_{11}^1, c_{21}^2=c_{11}^1, c_{11}'^1=\alpha  c_{11}^1,\\
c_{12}'^2=\beta  c_{11}^1, c_{21}'^2=\alpha  c_{11}^1%%, \\  c_{ij}^{k}=c_{ij}'^{k}=0 \textrm{ otherwise} 
\end{array} \right\}$\\
\hline

${\rm N}_{09}^{\alpha,\beta}$ &$\not \to$&  
${\rm N}_{01}, %{\rm N}_{06}^{\beta},
%{\red {\rm N}_{06}^{0},} 
{\rm N}_{14}^{\gamma\neq\beta}$ &
${\mathcal R}= \left\{ \begin{array}{l}
c_{11}^1, c_{11}^2, c_{12}^2, c_{21}^2, c_{11}'^1, c_{11}'^2, c_{12}'^2, c_{21}'^2 \in \mathbb{C}, \\
c_{12}^2=c_{11}^1, c_{21}^2=c_{11}^1, c_{11}'^1=\alpha  c_{11}^1,\\ c_{21}'^2=\alpha  c_{11}^1, c_{12}'^2=\beta  c_{11}^1, c_{11}'^2=\beta  c_{11}^2%%,\\ c_{ij}^{k}=c_{ij}'^{k}=0 \textrm{ otherwise} 
\end{array} \right\}$\\
\hline

${\rm N}_{10}^{\alpha}$ &$\not \to$&  
$ {\rm N}_{02},  {\rm N}_{06}^{\beta},
     %{\rm N}_{05}, 
     %{\rm N}_{13}^{\beta},
     {{\rm N}_{09}^{\beta,\gamma},} {\rm N}_{11}^{\beta\neq\alpha}$ & 
${\mathcal R}= \left\{ \begin{array}{l}
c_{11}^1, c_{11}^2, c_{11}'^1, c_{11}'^2, c_{12}'^2, c_{21}'^2, c_{22}'^2\in \mathbb{C}, \\c_{11}'^1=\alpha  c_{11}^1, c_{12}'^2=c_{21}'^2, c_{11}^1 c_{21}'^2=-c_{11}^2 c_{22}'^2%%,\\ c_{ij}^{k}=c_{ij}'^{k}=0 \textrm{ otherwise} 
\end{array} \right\}$\\
\hline

${\rm N}_{11}^{\alpha}$ &$\not \to$&  
${\rm N}_{01}, %{\rm N}_{06}^{\beta},
%{\red {\rm N}_{06}^{0},} 
{\rm N}_{14}^{\beta\neq\alpha}$ & 
${\mathcal R}= \left\{ \begin{array}{l}
c_{11}^1, c_{11}^2, c_{11}'^1, c_{11}'^2\in \mathbb{C},
c_{11}'^1=\alpha  c_{11}^1, c_{11}'^2=\alpha  c_{11}^2%%,\\ c_{ij}^{k}=c_{ij}'^{k}=0 \textrm{ otherwise} 
\end{array} \right\}$\\
\hline

${\rm N}_{12}^{\alpha}$ &$\not \to$&  %{\rm N}_{05},
     %{\rm N}_{13}^{\beta},
$\Big\{
\begin{array}{l}
{{\rm N}_{09}^{\beta,\gamma}, {\rm N}_{11}^{\beta},} {\rm N}_{04}, \\ {\rm N}_{06}^{\beta\neq\alpha^{-1}}, {\rm N}_{02}(\alpha\neq0)
\end{array}\Big\}$ & 
${\mathcal R}= \left\{ \begin{array}{l}
c_{11}^2, c_{11}'^1, c_{11}'^2, c_{12}'^2, c_{21}'^2\in \mathbb{C}, 
c_{11}'^1=\alpha  c_{12}'^2, c_{21}'^2=\alpha  c_{12}'^2%%,\\ c_{ij}^{k}=c_{ij}'^{k}=0 \textrm{ otherwise} 
\end{array} \right\}$\\
\hline

${\rm N}_{13}^{\alpha}$ &$\not \to$&  
${\rm N}_{01}, {\rm N}_{14}^{\beta\neq\alpha}$ & 
${\mathcal R}= \left\{ \begin{array}{l}
c_{11}^2, c_{11}'^1, c_{11}'^2, c_{21}'^2 \in \mathbb{C},
c_{11}'^1=c_{21}'^2, c_{11}'^2=\beta  c_{11}^2%%,\\ c_{ij}^{k}=c_{ij}'^{k}=0 \textrm{ otherwise} 
\end{array} \right\}$\\
\hline

\end{longtable}

\end{proof}

At this point, only the description of the closures of the orbits of the parametric families is missing. 
 
\begin{lemma}\label{th:degnovf}
The description of the closure of the orbit of the parametric families in
the variety of $2$-dimensional Novikov--Poisson algebras are given below:

\begin{longtable}{lcl}
$\overline{\{O({\rm N}_{06}^{*}) \}}$ & ${\supseteq}$ &
$ \Big\{\overline{\{O({\rm N}_{05})\}}, \  
\overline{\{O({\rm N}_{02})\}}, \  
\overline{\{O({\rm N}_{01})\}}, \ 
\overline{\{O({\mathbb C^8})\}} \Big\}$\\

  $\overline{\{O({\rm N}_{07}^{*})\}}$ & ${\supseteq}$ &
  $\Big\{\begin{array}{l}\overline{\{O({\rm N}_{14}^{\alpha}\}}, \ 
  \overline{\{O({\rm N}_{12}^{1}\}}, \ 
  \overline{\{O({\rm N}_{11}^{\alpha}\}}, \ 
  \overline{\{O({\rm N}_{10}^{\alpha}\}}, \ 
  \overline{\{O({\rm N}_{09}^{\alpha, \alpha}\}}, \  
  \overline{\{O({\rm N}_{08}^{\alpha, \alpha}\}}, \\ 
  \overline{\{O({\rm N}_{06}^1\}},  \ 
  \overline{\{O({\rm N}_{04})\}},  \ 
  \overline{\{O({\rm N}_{03})\}}, \ 
  \overline{\{O({\rm N}_{01})\} }, \ 
  \overline{\{O({\mathbb C^8})\}} \end{array}\Big\}$\\

$\overline{\{O({\rm N}_{08}^{*})\}}$ & ${\supseteq}$ &
$\Big\{\begin{array}{l}\overline{\{O({\rm N}_{14}^{\alpha}) \}}, \ 
\overline{\{O({\rm N}_{13}^{\alpha}) \}}, \
\overline{\{O({\rm N}_{12}^{\alpha}) \}}, \ 
\overline{\{O({\rm N}_{09}^{\alpha, \beta})\}}, \ 
\overline{\{O({\rm N}_{06}^{\alpha})\}}, \\ 
\overline{\{O({\rm N}_{05})\}}, \ 
\overline{\{O({\rm N}_{02})\}}, \ 
\overline{\{O({\rm N}_{01})\}}, \ 
\overline{\{O({\mathbb C^8})\}}\end{array}\Big\}$\\

$\overline{\{O({\rm N}_{09}^{*})\}}$ & ${\supseteq}$ & 
$\Big\{\begin{array}{l}\overline{
\{O({\rm N}_{14}^{\alpha})\}}, \ 
\overline{\{O({\rm N}_{13}^{\alpha})\}}, \ 
\overline{\{O({\rm N}_{06}^{\alpha})\}},  \
\overline{\{O({\rm N}_{05}) \}}, \\ 
\overline{\{O({\rm N}_{02})\}}, \ 
\overline{\{O({\rm N}_{01})\}},
\overline{\{O({\mathbb C^8})\}}\end{array}\Big\}$\\

$\overline{\{O({\rm N}_{10}^{*})\}}$ & ${\supseteq}$ & 
$\Big\{\begin{array}{l}\overline{\{O({\rm N}_{14}^{\alpha})\}}, \ 
\overline{\{O({\rm N}_{12}^{1})\}}, \ 
\overline{\{O({\rm N}_{11}^{\alpha})\}}, \   
\overline{\{O({\rm N}_{06}^1)\}}, \\ 
\overline{\{O({\rm N}_{04})\}}, \ 
\overline{\{O({\rm N}_{03})\}}, \ 
\overline{\{O({\rm N}_{01})\}}, 
\overline{\{O({\mathbb C^8})\}}\end{array}\Big\}$\\

$\overline{\{O({\rm N}_{11}^{*})\}}$ & ${\supseteq}$ & 
$\Big\{\overline{\{O({\rm N}_{14}^{\alpha})\}}, \ 
\overline{\{O({\rm N}_{04})\}}, \ 
\overline{\{O({\rm N}_{01})\}}, 
\overline{\{O({\mathbb C^8})\}}\Big\}$\\

$\overline{\{O({\rm N}_{12}^{*})\}}$  & ${\supseteq}$ &  
$\Big\{\begin{array}{l}\overline{\{O({\rm N}_{14}^{\alpha})\}}, \ 
\overline{\{O({\rm N}_{13}^{\alpha})\}}, \
\overline{\{O({\rm N}_{06}^{\alpha})\}}, \ 
\overline{\{O({\rm N}_{05})\}}, \\ 
\overline{\{O({\rm N}_{02})\}}, \ 
\overline{\{O({\rm N}_{01})\}}, 
\overline{\{O({\mathbb C^8})\}}\end{array}\Big\}$\\

$\overline{\{O({\rm N}_{13}^{*})\}}$  & ${\supseteq}$ & 
$\Big\{\overline{\{O({\rm N}_{14}^{\alpha})\}}, \  
\overline{\{O({\rm N}_{06}^0)\}}, \ 
\overline{\{O({\rm N}_{05})\}}, \ 
\overline{\{O({\rm N}_{01})\}},
\overline{\{O({\mathbb C^8})\}}\Big\}$\\

$\overline{\{O({\rm N}_{14}^{*})\}}$  & ${\supseteq}$ & 
$\Big\{\overline{\{O({\rm N}_{01})\}}, \ 
\overline{\{O({\mathbb C^8})\}}\Big\}$ .
    
\end{longtable}

\end{lemma}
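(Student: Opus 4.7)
The plan is to establish each of the listed containments $\overline{O({\rm T}')} \subseteq \overline{\{O({\rm N}_i^{*})\}}$ by producing, where necessary, an explicit pair consisting of a parametric index $f\colon \mathbb{C}^{*}\to I$ and a parametrized basis $E_1(t), E_2(t)$ witnessing the family degeneration, in the style of Lemmas \ref{th:degleibf} and \ref{th:degtleibf}. The containments split naturally into two types. First, whenever ${\rm T}'$ is the target of a primary degeneration ${\rm N}_i^{\alpha_0}\to {\rm T}'$ already recorded in Theorem \ref{th:degnov} (for some constant choice of the family parameter $\alpha_0$), the inclusion $\overline{O({\rm T}')} \subseteq \overline{O({\rm N}_i^{\alpha_0})} \subseteq \overline{\{O({\rm N}_i^{*})\}}$ is immediate. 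Taking the transitive closure of the primary degeneration graph then yields the vast majority of the claimed inclusions for each of the nine families ${\rm N}_{06}^{*}, \ldots, {\rm N}_{14}^{*}$; for example, the inclusions $\overline{O({\rm N}_{05})}, \overline{O({\rm N}_{02})}, \overline{O({\rm N}_{01})}, \overline{O(\mathbb{C}^8)} \subseteq \overline{\{O({\rm N}_{06}^{*})\}}$ all follow from Theorem \ref{th:degnov} and transitivity.

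Second, a small number of inclusions require genuinely \emph{parametric} family degenerations, in which the family index itself depends on $t$. By analogy with the ``${\rm L}_{7}^{t^{-1}} \to {\rm L}_{1}$'' and ``${\rm T}_{7}^{t^{-2}} \to {\rm T}_{1}$'' constructions of the preceding sections, I expect these to be handled by letting $\alpha = f(t)$ tend to $\infty$ and simultaneously rescaling the basis. The key candidates I would try are the following: ${\rm N}_{14}^{t^{-1}} \to {\rm N}_{01}$ with $E_1(t) = e_1,\, E_2(t) = t^{-1} e_2$; ${\rm N}_{11}^{t^{-1}} \to {\rm N}_{04}$ with analogous rescalings; and similarly, ${\rm N}_{07}^{t,t^{-1}}$, ${\rm N}_{08}^{t,t^{-1}}$, ${\rm N}_{09}^{t,t^{-1}}$, ${\rm N}_{10}^{t^{-1}}$, ${\rm N}_{12}^{t^{-1}}$, ${\rm N}_{13}^{t^{-1}}$ mapping to $\mathbb{C}^{8}$ or to the relevant Novikov algebras ${\rm N}_{01}, {\rm N}_{02}, {\rm N}_{04}$ whenever those are not already reachable from a member of the family at a constant index. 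Each candidate will be verified by expanding $\mu(f(t))(E_i(t), E_j(t))$ in the basis $E_1(t), E_2(t)$ and checking that all structure constants have finite limits matching the target algebra.

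The remaining work is organisational: for each of the nine families I would compile a table of the primary ingredients (reused from Theorem \ref{th:degnov}) together with the new parametric-index degenerations, and then verify that the transitive closure of this augmented set recovers precisely the list claimed in the lemma. The main obstacle is bookkeeping: with roughly fifty individual containments to verify across the nine families, care is needed to ensure that each family's list is both complete (nothing missing) and accurate (every asserted inclusion is genuinely witnessed). Since the statement only asserts $\supseteq$ containments, no non-degeneration arguments are needed here; those have already been deployed in Theorem \ref{th:degnov} to pin down the primary graph, and the present lemma is a purely constructive supplement.
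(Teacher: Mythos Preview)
Your overall strategy---harvest everything possible from the primary degeneration graph of Theorem~\ref{th:degnov} via transitivity, then fill in the gaps with parametric-index degenerations---is exactly the paper's approach. Two points, however, need correction.

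\textbf{The main gap: your list of parametric targets is far too short.} You propose that the only ``new'' parametric-index degenerations needed are those landing in $\mathbb{C}^{8}$ or in the Novikov algebras ${\rm N}_{01}$, ${\rm N}_{02}$, ${\rm N}_{04}$ (those with trivial $\cdot$). This is not enough. Roughly half of the parametric degenerations actually required are \emph{inter-family} degenerations whose targets themselves carry a nontrivial commutative associative multiplication. Concretely, the paper needs (among others)
\[
{\rm N}_{07}^{\alpha,\,t^{-1}} \to {\rm N}_{10}^{\alpha},\quad
{\rm N}_{07}^{t+\alpha,\,\alpha} \to {\rm N}_{08}^{\alpha,\alpha},\quad
{\rm N}_{08}^{\alpha t^{-1},\,t^{-1}} \to {\rm N}_{12}^{\alpha},\quad
{\rm N}_{09}^{t^{-1},\,\alpha} \to {\rm N}_{13}^{\alpha},\quad
{\rm N}_{10}^{t^{-1}} \to {\rm N}_{12}^{1},\quad
{\rm N}_{12}^{t^{-1}} \to {\rm N}_{13}^{\alpha},
\]
and also ${\rm N}_{09}^{t^{-1},\,\alpha t^{-1}} \to {\rm N}_{06}^{\alpha}$, ${\rm N}_{13}^{t^{-1}} \to {\rm N}_{05}$, ${\rm N}_{07}^{t^{-1},t^{-1}}\to{\rm N}_{03}$, ${\rm N}_{10}^{t^{-1}}\to{\rm N}_{03}$. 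None of these are reachable from the primary graph at a fixed value of the family parameter, and none are covered by your candidate list. Without them you cannot, for instance, place ${\rm N}_{10}^{\alpha}$, ${\rm N}_{08}^{\alpha,\alpha}$, ${\rm N}_{12}^{1}$, or ${\rm N}_{03}$ inside $\overline{\{O({\rm N}_{07}^{*})\}}$, nor ${\rm N}_{12}^{\alpha}$ inside $\overline{\{O({\rm N}_{08}^{*})\}}$, etc. Note also that several of these parametrized indices are more delicate than a bare $t^{-1}$; e.g.\ reaching ${\rm N}_{08}^{\alpha,\alpha}$ from ${\rm N}_{07}^{*}$ requires a two-parameter choice of the form $(\alpha+t,\alpha)$.

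\textbf{On non-degenerations.} You are literally correct that the displayed $\supseteq$ symbols require only degenerations. But the lemma is titled a ``description of the closure,'' and its role in the subsequent geometric-classification theorem is to certify that the listed sets are \emph{exactly} the closures (up to the orbits of the family members themselves). For that reason the paper's proof also supplies a table of non-degenerations (e.g.\ ${\rm N}_{07}^{*}\not\to{\rm N}_{02}$, ${\rm N}_{10}^{*}\not\to{\rm N}_{09}^{\alpha,\alpha}$, ${\rm N}_{08}^{*}\not\to{\rm N}_{04}$), established via invariant Zariski-closed sets as in Lemma~\ref{main2}. If you intend the lemma only as the one-sided containment, say so explicitly; otherwise you will need these non-degeneration arguments as well.
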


\begin{proof}
Thanks to Theorem \ref{th:degnov} we have all necessarily degenerations.
The closure of the orbits of suitable families are given using the parametric bases and the parametric index included in following Table: 
\begin{longtable}{|lcl|ll|}
\hline
%${\rm N}_{06}^{t^{-1}} $ & $\to$ & ${\rm %N}_{02}  $ & $
%E_{1}(t)= e_1$ & $
%E_{2}(t)= -te_2$  \\
%\hline
%
%${\rm N}_{06}^t $ & $\to$ & ${\rm N}_{05}  $ & %$
%E_{1}(t)= e_1$ & $
%E_{2}(t)= t^{-1}e_1 + e_2$  \\
%\hline

${\rm N}_{07}^{t^{-1}, t^{-1}} $ & $\to$ & ${\rm N}_{03}  $ & $
E_{1}(t)= t e_1$ & $
E_{2}(t)= t e_2$  \\
\hline

${\rm N}_{07}^{t(1+\alpha t^{-1}), \alpha} $ & $\to$ & ${\rm N}_{08}^{\alpha \alpha}  $ & $
E_{1}(t)= e_1 + e_2$ & $
E_{2}(t)= -t e_1$  \\
\hline

${\rm N}_{07}^{\alpha, t^{-1}} $ & $\to$ & ${\rm N}_{10}^{\alpha}  $ & $
E_{1}(t)= e_1$ & $
E_{2}(t)= t e_2$  \\
\hline
${\rm N}_{08}^{\alpha t^{-1}, t^{-1}} $ & $\to$ & ${\rm N}_{12}^{\alpha}  $ & $
E_{1}(t)= t e_1 - t^{2}e_2$ & $
E_{2}(t)= -t^3e_2$  \\
\hline

${\rm N}_{09}^{t^{-1}, \alpha t^{-1}} $ & $\to$ & ${\rm N}_{06}^{\alpha}  $ & $
E_{1}(t)= te_2$ & $
E_{2}(t)= te_1$  \\
\hline

${\rm N}_{09}^{t^{-1}, \alpha} $ & $\to$ & ${\rm N}_{13}^{\alpha}  $ & $
E_{1}(t)= t e_1 + t^{-1}e_2$ & $
E_{2}(t)= e_2$  \\
\hline
${\rm N}_{10}^{t^{-1}} $ & $\to$ & ${\rm N}_{03}  $ & $
E_{1}(t)= t e_1$ & $
E_{2}(t)= e_2$  \\
\hline

${\rm N}_{10}^{t^{-1}} $ & $\to$ & ${\rm N}_{12}^1  $ & $
E_{1}(t)= t e_1 + e_2$ & $
E_{2}(t)= t^2 e_1$  \\
\hline

${\rm N}_{11}^{t^{-1}} $ & $\to$ & ${\rm N}_{04}  $ & $
E_{1}(t)= te_1$ & $
E_{2}(t)= e_2$  \\
\hline

${\rm N}_{12}^{t^{-1}} $ & $\to$ & ${\rm N}_{13}^{\alpha}  $ & $
E_{1}(t)= t e_1 + \alpha t  e_2$ & $
E_{2}(t)= t^{2}e_2$  \\
\hline

${\rm N}_{13}^{t^{-1}} $ & $\to$ & ${\rm N}_{05}  $ & $
E_{1}(t)= t^{-1}e_2$ & $
E_{2}(t)= e_1$  \\
\hline
${\rm N}_{14}^{t^{-1}} $ & $\to$ & ${\rm N}_{01}  $ & $
E_{1}(t)= e_1$ & $
E_{2}(t)= t^{-1}e_2$  \\
\hline
\end{longtable}

 \begin{longtable} {|lcl|l|}
\hline
\multicolumn{3}{|c|}{\textrm{Non-degeneration}} & \multicolumn{1}{|c|}{\textrm{Arguments}}\\
\hline
\hline

%${\rm N}_{06}^{*}$ &$\not \to$&  {\red ${\rm %N}_{04}$} & 
%${\mathcal R}= \left\{ \begin{array}{l}
%c_{11}'^1, c_{11}'^2, c_{12}'^1, c_{21}'^1, %c_{21}'^2, c_{22}'^2 \in \mathbb{C},\\ %c_{12}'^1=c_{22}'^2, 
%(c_{11}'^1)^2+4 c_{11}'^2 c_{22}'^2=(c_{21}'^2)^2,\\
%c_{11}'^1 (c_{21}'^1-c_{22}'^2)+c_{21}'^2 %(c_{21}'^1+c_{22}'^2)=0,\\ c_{ij}^{k}=c_{ij}'^{k}=0 %\textrm{ otherwise} 
%\end{array} \right\}$\\
%\hline

$ {{\rm N}_{07}^{*}}$ &$\not \to$ & 
${\rm N}_{02}, 
%{\rm N}_{05}, 
{\rm N}_{06}^{\alpha\neq1}, 
%{\rm N}_{08}^{\alpha,\beta\neq\alpha}, 
{\rm N}_{09}^{\alpha,\beta\neq\alpha}
%{\rm N}_{12}^{\alpha\neq1}, 
%{\rm N}_{13}^{\alpha}
$  & 
${\mathcal R}= \left\{ \begin{array}{l}
c_{11}^1, c_{11}^2, c_{12}^2, c_{21}^2, c_{22}^2, c_{11}'^1, c_{11}'^2, c_{12}'^2, c_{21}'^2, c_{22}'^2 \in \mathbb{C},\\
c_{21}^2=c_{12}^2, c_{12}'^2=c_{21}'^2, c_{22}^2 c_{12}'^2=c_{12}^2 c_{22}'^2%%,\\ c_{ij}^{k}=c_{ij}'^{k}=0 \textrm{ otherwise} 
\end{array} \right\}$\\
\hline

${\rm N}_{08}^{*}$ &$\not \to$&  
$%{\rm N}_{03}, 
{\rm N}_{04}, %{\rm N}_{07}^{\alpha, \beta}, 
{\rm N}_{10}^{\alpha}, {\rm N}_{11}^{\alpha}$ & 
${\mathcal R}= \left\{ \begin{array}{l}
c_{11}^1, c_{11}^2, c_{12}^2, c_{21}^2, c_{11}'^1, c_{11}'^2, c_{12}'^2, c_{21}'^2 \in \mathbb{C},\\
c_{12}^2=c_{11}^1, c_{21}^2=c_{11}^1, c_{11}'^1=c_{21}'^2%%, \\  c_{ij}^{k}=c_{ij}'^{k}=0 \textrm{ otherwise} 
\end{array} \right\}$\\
\hline

$ {\rm N}_{09}^{*}$ &$\not \to$&  
$
%{\red {\rm N}_{03}, {\rm N}_{04},} 
%{\rm N}_{07}^{\alpha, \beta}, 
{\rm N}_{08}^{\alpha, \beta}, 
%{\rm N}_{10}^{\alpha},
%{\red{\rm N}_{11}^{\alpha},} 
{\rm N}_{12}^{\alpha}$ & 
${\mathcal R}= \left\{ \begin{array}{l}
c_{11}^1, c_{11}^2, c_{12}^2, c_{21}^2, c_{11}'^1, c_{11}'^2, c_{12}'^2, c_{21}'^2 \in \mathbb{C}, \\
c_{12}^2=c_{11}^1, c_{21}^2=c_{11}^1,c_{11}'^1=c_{21}'^2, c_{11}^2 c_{12}'^2=c_{11}^1 c_{11}'^2%%,\\ c_{ij}^{k}=c_{ij}'^{k}=0 \textrm{ otherwise} 
\end{array} \right\}$\\
\hline

$ {\rm N}_{10}^{*}$ &$\not \to$&  
$
%{\red {\rm N}_{02},
%{\rm N}_{05}, 
%{\rm N}_{06}^{\alpha\neq1}, } 
{\rm N}_{07}^{\alpha, \beta}, 
%{\rm N}_{08}^{\alpha, \beta}, 
{\rm N}_{09}^{\alpha, \alpha}
%{\rm N}_{09}^{\alpha, {\red \beta}\alpha}
%{\rm N}_{12}^{\alpha\neq1},
%{\rm N}_{13}^{\alpha}
$  
& 
${\mathcal R}= \left\{ \begin{array}{l}
c_{11}^1, c_{11}^2, c_{11}'^1, c_{11}'^2, c_{12}'^2, c_{21}'^2, c_{22}'^2\in \mathbb{C},\\
c_{12}'^2=c_{21}'^2, c_{11}^1 c_{21}'^2=-c_{11}^2 c_{22}'^2%%,\\ c_{ij}^{k}=c_{ij}'^{k}=0 \textrm{ otherwise} 
\end{array} \right\}$\\
\hline 

${\rm N}_{11}^{*}$ &$\not \to$& 
%{\red ${\rm N}_{02},$}
%{\rm N}_{05}, 
$%{\rm N}_{06}^{{\red \alpha} 1}
{\rm N}_{06}^{1}
%{\red, {\rm N}_{09}^{\alpha, \beta}}
%{\rm N}_{13}^{\alpha}
$
& 
${\mathcal R}= \left\{ \begin{array}{l}
c_{11}^1, c_{11}^2, c_{11}'^1, c_{11}'^2\in \mathbb{C},
c_{11}^1 c_{11}'^2=c_{11}^2 c_{11}'^1%%,\\ c_{ij}^{k}=c_{ij}'^{k}=0 \textrm{ otherwise} 
\end{array} \right\}$\\
\hline

${\rm N}_{12}^{*}$ &$\not \to$&  
$%{\rm N}_{03}, 
%{\red{\rm N}_{04}, }
%{\rm N}_{07}^{\alpha, \beta}, 
%{\rm N}_{08}^{\alpha, \beta}, 
{\rm N}_{09}^{\alpha, \beta} 
%{\rm N}_{10}^{\alpha}, 
%{\red , {\rm N}_{11}^{\alpha}}
$ & 
${\mathcal R}= \left\{ \begin{array}{l}
c_{11}^2, c_{11}'^1, c_{11}'^2, c_{12}'^2, c_{21}'^2\in \mathbb{C}, 
c_{11}'^1=c_{21}'^2%%,\\ c_{ij}^{k}=c_{ij}'^{k}=0 \textrm{ otherwise} 
\end{array} \right\}$\\

\hline
${\rm N}_{13}^{*}$ &$\not \to$&  
${\rm N}_{02}, 
%{\red {\rm N}_{04},} 
{\rm N}_{06}^{\alpha\neq0}, 
%{\red{\rm N}_{09}^{\alpha, \beta}, {\rm N}_{11}^{\alpha}}
$  
& 
${\mathcal R}= \left\{ \begin{array}{l}
c_{11}^2, c_{11}'^1, c_{11}'^2, c_{21}'^2 \in \mathbb{C},
c_{11}'^1=c_{21}'^2%%,\\ c_{ij}^{k}=c_{ij}'^{k}=0 \textrm{ otherwise} 
\end{array} \right\}$\\
\hline

%${\rm N}_{14}^{*}$ &$\not \to$&  {\red ${\rm %N}_{06}^{0}$} & 
%${\mathcal R}= \left\{ \begin{array}{l}
%c_{11}^2, c_{11}'^2 \in  \mathbb{C},\\ %c_{ij}^{k}=c_{ij}'^{k}=0 \textrm{ otherwise} 
%\end{array} \right\}$\\
%\hline

\end{longtable}

\end{proof}

By Lemma \ref{th:degnov} and Lemma \ref{th:degnovf}, we have the following result that summarizes the geometric classification of 
$2$-dimensional Novikov--Poisson algebras.

\begin{theorem}\label{geo_NP}
The variety of $2$-dimensional Novikov--Poisson algebras  has two irreducible components,  corresponding to the   closures of the orbits of the  families of  Novikov--Poisson algebras ${\rm N}_{07}^{*}$ and ${\rm N}_{08}^{*}$.
\end{theorem}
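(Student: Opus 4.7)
The plan is to harvest the data from Lemma \ref{th:degnov} (primary degenerations and non-degenerations) and Lemma \ref{th:degnovf} (closures of parametric families) and apply the general principle that the irreducible components of the variety of $2$-dimensional Novikov--Poisson algebras are exactly the maximal elements, under inclusion, among the orbit closures $\overline{O(\mathrm T)}$ of single algebras and $\overline{\{O(\mathrm T(\alpha))\}_{\alpha\in I}}$ of parametric families. Each such set is irreducible, being the Zariski closure of a constructible image of an irreducible variety under a regular map, so the task reduces to picking out the maximal ones.

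First I would verify coverage: every algebra in Theorem \ref{2dim NP}, and every Novikov algebra from Theorem \ref{nov2}, must appear in $\overline{\{O({\rm N}_{07}^{*})\}} \cup \overline{\{O({\rm N}_{08}^{*})\}}$. This is a direct comparison against the explicit containment lists in Lemma \ref{th:degnovf}: the families ${\rm N}_{09}^{*}$, ${\rm N}_{12}^{*}$, ${\rm N}_{13}^{*}$, ${\rm N}_{14}^{*}$, ${\rm N}_{06}^{*}$ and the algebras ${\rm N}_{02}$, ${\rm N}_{05}$ all lie in $\overline{\{O({\rm N}_{08}^{*})\}}$, while ${\rm N}_{10}^{*}$, ${\rm N}_{11}^{*}$, ${\rm N}_{03}$, ${\rm N}_{04}$ lie in $\overline{\{O({\rm N}_{07}^{*})\}}$, and the algebras ${\rm N}_{01}$ and ${\mathbb C}^{8}$ lie in both.

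Second I would show that neither closure is contained in the other, by producing a distinguishing algebra in each direction. On one side, $\overline{\{O({\rm N}_{10}^{\alpha})\}} \subseteq \overline{\{O({\rm N}_{07}^{*})\}}$ by Lemma \ref{th:degnovf}, while the non-degeneration ${\rm N}_{08}^{*} \not\to {\rm N}_{10}^{\alpha}$ tabulated in the same lemma yields $\overline{\{O({\rm N}_{07}^{*})\}} \not\subseteq \overline{\{O({\rm N}_{08}^{*})\}}$. Symmetrically, $\overline{\{O({\rm N}_{13}^{\alpha})\}} \subseteq \overline{\{O({\rm N}_{08}^{*})\}}$, and combining the primary degeneration ${\rm N}_{13}^{\alpha} \to {\rm N}_{06}^{0}$ of Lemma \ref{th:degnov} with the non-degeneration ${\rm N}_{07}^{*} \not\to {\rm N}_{06}^{\beta\neq 1}$ of Lemma \ref{th:degnovf} forces $\overline{\{O({\rm N}_{08}^{*})\}} \not\subseteq \overline{\{O({\rm N}_{07}^{*})\}}$ by transitivity. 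Together with a dimension count (a single orbit of dimension $4$ cannot absorb a two-parameter family of orbits of dimension $4$), no other candidate closure contains either $\overline{\{O({\rm N}_{07}^{*})\}}$ or $\overline{\{O({\rm N}_{08}^{*})\}}$, so these are precisely the two irreducible components.

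The substantive work — parametrized bases for degenerations, stable Zariski-closed subsets $\mathcal R$ to block non-degenerations, and derivation dimension calculations — has already been carried out in Lemmas \ref{th:degnov} and \ref{th:degnovf}, so the main obstacle I would expect is purely combinatorial: careful bookkeeping in the coverage step, ensuring that every single algebra and every parametric family of Theorem \ref{2dim NP} is accounted for and that no additional maximal candidate has been overlooked.
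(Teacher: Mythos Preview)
Your proposal is correct and takes essentially the same approach as the paper: the paper's own proof is a single sentence invoking Lemma \ref{th:degnov} and Lemma \ref{th:degnovf}, and you have spelled out the combinatorial verification (coverage by the two families, mutual non-containment, and maximality via dimension count) that those lemmas are meant to encode.
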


 \subsubsection{The geometric classification of $2$-dimensional pre-Lie
Poisson algebras}
The geometric classification of $2$-dimensional Novikov--Poisson algebras is given in theorem \ref{geo_NP};
a description of all degenerations of $2$-dimensional pre-Lie algebras is given in \cite{burde09};
a description of all orbit closures of families of $2$-dimensional algebras is given in \cite{kv16}.
Summarizing, these cited results and corollary \ref{algprelieP} we have the following statement.

\begin{corollary}
The variety of $2$-dimensional pre-Lie Poisson algebras has four irreducible components,  corresponding to the  the Pre-Lie Poisson algebra ${\rm P}_{02}$ and  the  families of pre-Lie Poisson algebras
${\rm P}_{01}^{*},$ ${\rm N}_{07}^{*}$ and ${\rm N}_{08}^{*}$.
\end{corollary}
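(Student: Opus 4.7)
The plan is to combine the geometric classification of $2$-dimensional Novikov--Poisson algebras (Theorem \ref{geo_NP}) with the description of $2$-dimensional pre-Lie algebras from \cite{burde09}, using Corollary \ref{algprelieP} to enumerate all candidates. By that corollary, every $2$-dimensional pre-Lie Poisson algebra is either a Novikov--Poisson algebra from Theorem \ref{2dim NP} or one of the three pre-Lie non-Novikov algebras ${\rm P}_{01}^{\alpha\neq 0},\ {\rm P}_{02},\ {\rm P}_{03},$ all of which have zero commutative associative multiplication. Since any irreducible component is the closure of the orbit of some algebra (or family), the candidates for components live in these two lists.

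First, I would promote the two Novikov--Poisson components to the larger variety. By Theorem \ref{geo_NP}, inside Novikov--Poisson the closures $\overline{\{O({\rm N}_{07}^{*})\}}$ and $\overline{\{O({\rm N}_{08}^{*})\}}$ are the two irreducible components. To show they remain components in the full pre-Lie Poisson variety, I need to rule out degenerations into them from the extra algebras ${\rm P}_{01}^{\alpha\neq 0},{\rm P}_{02},{\rm P}_{03}$ and from any parametric family built out of them. This is immediate from Remark \ref{redbil} and its parametric analogue via Lemma \ref{main2}: the extras all have zero commutative multiplication, so the projection onto the first coordinate sits in the closed orbit $\{0\},$ while every member of ${\rm N}_{07}^{*}$ and ${\rm N}_{08}^{*}$ has nonzero commutative product.

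Next, I would analyse the pre-Lie non-Novikov algebras, which lie in the Zariski closed subvariety $C=\{c_{ij}^{k}=0\}$ cutting out zero commutative multiplication. On this subvariety pre-Lie Poisson structures are exactly $2$-dimensional pre-Lie algebras, so the classification and degeneration graph of \cite{burde09} applies: ${\rm P}_{01}^{*}$ and ${\rm P}_{02}$ are the irreducible components inside $C,$ while ${\rm P}_{03}$ degenerates from one of them and is therefore not a component. To lift this to the ambient variety, I must verify that no Novikov--Poisson algebra or family degenerates to ${\rm P}_{01}^{\alpha}$ or ${\rm P}_{02}.$ This follows structurally because Novikov algebras form a Zariski closed subset of pre-Lie algebras, being cut out by the extra right-symmetry identity $(x\circ y)\circ z=(x\circ z)\circ y;$ hence the closure of the $\mathrm{GL}(\mathbb V)$-orbit of any Novikov bracket, and even of any parametric family of such, consists entirely of Novikov brackets, so it cannot reach the strictly pre-Lie brackets of ${\rm P}_{01}^{\alpha\neq 0}$ and ${\rm P}_{02}.$

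The hard part, in my view, is not any single calculation but the careful bookkeeping: one has to confirm via Lemma \ref{main2} that the parametric analogue of ``Novikov is closed in pre-Lie'' applies to each of the families ${\rm N}_{07}^{*},{\rm N}_{08}^{*},{\rm N}_{09}^{*},{\rm N}_{10}^{*},{\rm N}_{11}^{*},{\rm N}_{12}^{*},{\rm N}_{13}^{*},{\rm N}_{14}^{*},$ and simultaneously that the projection argument of Remark \ref{redbil} rules out every cross-degeneration between the two halves of the variety. Once these two uniform closedness facts are in hand, the four closures $\overline{\{O({\rm P}_{01}^{*})\}},\ \overline{\{O({\rm P}_{02})\}},\ \overline{\{O({\rm N}_{07}^{*})\}},\ \overline{\{O({\rm N}_{08}^{*})\}}$ are pairwise incomparable and exhaust the irreducible components, which is the claim.
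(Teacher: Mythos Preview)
Your proposal is correct and follows essentially the same approach as the paper: both combine Corollary \ref{algprelieP}, Theorem \ref{geo_NP}, and the degeneration results for $2$-dimensional pre-Lie algebras from \cite{burde09} (together with \cite{kv16}) to assemble the four components. The paper simply cites these ingredients and states the result, whereas you spell out the two closedness facts (that Novikov is a Zariski closed subvariety of pre-Lie, and that the locus $\{\cdot=0\}$ is closed, via Remark \ref{redbil} and Lemma \ref{main2}) that make the gluing rigorous; this is precisely the content the paper leaves implicit in its citations.
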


\medskip 

%{\bf Compliance with ethical standard}

%\medskip 

%{\bf Author contributions}  All authors contributed to the study, conception and design. All authors read and approved the final manuscript.

%{\bf Conflict of interest}  There is no potential conflict of ethical approval, conflict of interest, and ethical standards.

%%{\bf Data Availibility}  Data sharing is not applicable to this article as no datasets were generated or analyzed during the current study.

\section{ Commutative pre-Lie algebras}

Commutative pre-Lie algebras first appeared in \cite{mansuy} and after that, they appeared in 
\cite{Foissycom,Foissy,dots,mammez,lsyb}.
A description of free commutative pre-Lie algebras with one generators in terms of partitioned trees is given in \cite{Foissycom} and a relation between commutative pre-Lie algebras and $F$-manifold algebras is established in \cite{dots}.

\subsection{The algebraic classification of $2$-dimensional pre-Lie algebras}

\begin{definition}
An algebra  is called a pre-Lie algebra if it satisfies the identity 
\begin{longtable}{rcl}
$\left( x\circ   y\right) \circ   z-\left( y\circ   x\right) \circ   z$&$=$&$x\circ   \left( y\circ  
z\right) -y\circ   \left( x\circ   z\right).$
\end{longtable}%
 \end{definition}

The algebraic classification of  $2$-dimensional pre-Lie algebras can be obtained by direct verification from \cite{kv16}.

\begin{theorem}
\label{pre2}
Let ${\rm C}$ be a nonzero $2$-dimensional pre-Lie algebra. 
Then ${\rm C}$  is
isomorphic to  one and only one  of the following algebras:

\begin{longtable}{lclcllcllcllcl}

${\rm C}_{01}$&$:$&$ e_{1} \circ   e_{1}  $&$=$&$ e_{1} + e_{2},$&$ e_{2} \circ   e_{1} $&$=$&$e_{2}$\\

${\rm C}_{02}$&$:$&$ e_{1}\circ   e_{1} $&$=$&$ e_{1} + e_{2},$&$ e_{1}\circ   e_{2}  $&$=$&$e_{2}$\\

${\rm C}_{03}$&$:$&$ e_{1}\circ   e_{1} $&$=$&$e_{2}$\\

${\rm C}_{04}$&$:$&$ e_{2} \circ   e_{1} $&$=$&$e_{1}$\\

${\rm C}_{05}^{\alpha}$&$:$&$ e_{1}\circ   e_{1}  $&$=$&$e_{1},$&$  e_{1}\circ   e_{2} $&$=$&$ \alpha e_{2}$\\

${\rm C}_{06}^{\alpha}$&$:$&$e_{1} \circ   e_{1}  $&$=$&$e_{1},$&$  e_{1}  \circ   e_{2}  $&$=$&$ \alpha e_{2}, $&$ e_{2} \circ   e_{1} $&$=$&$e_{2}$\\

${\rm C}_{07}$&$:$&$ e_{1}\circ   e_{1}  $&$=$&$e_{1},$&$  e_{2}\circ   e_{2}$&$=$&$e_{2}$\\

${\rm C}_{08}$&$:$&$  e_{1}\circ   e_{1}  $&$=$&$e_{1},$&$  e_{1}\circ   e_{2} $&$=$&$ 2 e_{2},$&$ e_{2}\circ   e_{1}  $&$=$&$\frac{1}{2} e_{1} + e_2, $&$  e_{2}\circ   e_{2}$&$=$&$e_{2}$

\end{longtable}

\end{theorem}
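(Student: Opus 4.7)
The plan is to obtain the list of $2$-dimensional pre-Lie algebras by parametrizing all bilinear products on $\mathbb{C}^2$ by their $8$ structure constants, imposing the pre-Lie identity as a system of polynomial equations, and then reducing to canonical forms under the $\mathrm{GL}_2(\mathbb{C})$ change-of-basis action. Concretely, fix a basis $e_1, e_2$ and write $e_i \circ e_j = c_{ij}^1 e_1 + c_{ij}^2 e_2$. Since the left pre-Lie identity is symmetric in its first two arguments and trivially holds when these two arguments coincide, it is equivalent to checking it on the two triples $(e_1,e_2,e_1)$ and $(e_1,e_2,e_2)$. Each triple produces $2$ scalar equations by comparing coefficients of $e_1$ and $e_2$, giving $4$ polynomial relations among the $c_{ij}^k$ that cut out the affine variety of $2$-dimensional pre-Lie algebras inside $\mathbb{C}^8$.

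Next, I would partition the variety according to natural $\mathrm{GL}_2$-invariants so that the subsequent case analysis remains tractable. Convenient invariants to stratify on are the dimensions of the left and right annihilators, the dimension of $\mathrm{C}\circ \mathrm{C}$, the rank of the left-multiplication operator $L_{e_1}$, and the existence of an idempotent. In each stratum one may choose a first basis vector realizing the stratifying invariant (for instance, a nonzero idempotent when one exists, or a generator of $\mathrm{C}\circ \mathrm{C}$ when that subspace is one-dimensional) and then use the residual freedom in the choice of the second basis vector, together with rescaling, to normalise the remaining parameters. Solving the four pre-Lie equations under these reductions will produce exactly the eight forms $\mathrm{C}_{01},\dots,\mathrm{C}_{08}$ listed, with $\mathrm{C}_{05}^{\alpha}$ and $\mathrm{C}_{06}^{\alpha}$ arising as one-parameter families because in those branches rescaling by $\xi \in \mathbb{C}^*$ cannot absorb the remaining scalar.

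Finally, to see the classification is both exhaustive and irredundant, I would verify pairwise non-isomorphism by comparing discriminating invariants: the dimensions of the left and right annihilators, the dimension of $\mathrm{C}\circ \mathrm{C}$, whether $\mathrm{C}$ is commutative or associative, the existence and multiplicity of idempotents, and the dimension of the derivation algebra $\mathfrak{Der}(\mathrm{C})$. For the families $\mathrm{C}_{05}^{\alpha}$ and $\mathrm{C}_{06}^{\alpha}$, the parameter $\alpha$ is recovered as the eigenvalue of $L_{e_1}$ on a canonically singled-out line, which shows that different values of $\alpha$ give non-isomorphic algebras.

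The main obstacle is the third step: managing the combinatorial explosion of subcases so that no orbit is overlooked and degenerate limits (for example, the specialisations to $\mathrm{C}_{03}$ or $\mathrm{C}_{04}$) are assigned to the correct class. The excerpt shortcuts this by invoking the general classification of $2$-dimensional algebras in \cite{kv16}: accepting that list reduces the argument to a finite verification of which of those algebras satisfies the pre-Lie identity and which isomorphisms among them are pre-Lie preserving, making the proof a matter of direct inspection rather than an open-ended case analysis.
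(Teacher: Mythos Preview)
Your proposal is correct and you have accurately identified the paper's own approach in your final paragraph: the paper gives no argument beyond the sentence ``can be obtained by direct verification from \cite{kv16}'', so the intended proof is exactly the finite check you describe---take the complete list of $2$-dimensional algebras from \cite{kv16}, test each against the pre-Lie identity, and read off the survivors. Your more detailed outline (reducing the pre-Lie identity to the two triples $(e_1,e_2,e_1)$ and $(e_1,e_2,e_2)$, stratifying by idempotents and the dimension of $\mathrm{C}\circ\mathrm{C}$, and separating orbits via invariants such as $\dim\mathfrak{Der}$) is a valid independent route that would reprove the relevant fragment of \cite{kv16} from scratch, but it goes well beyond what the paper actually does.
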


\subsection{The algebraic classification of  $2$-dimensional commutative pre-Lie algebras}

\begin{definition}
A commutative pre-Lie algebra   is a vector space  equipped with 
a commutative associative multiplication $\cdot$
and     
a pre-Lie multiplication $\circ  .$
These two operations are required to satisfy the following identities:
 
\begin{longtable}{rcl}
$x\circ   \left( y\cdot z\right)$&$ =
$&$\left( x\circ   y\right) \cdot z+y\cdot \left( x\circ   z\right).$
\end{longtable}
\end{definition}

Commutative pre-Lie  algebras 
$({\rm C}, \cdot, \circ  )$ with zero commutative associative multiplication  are   pre-Lie algebras given in Theorem \ref{pre2}.  Hence, we are studying commutative pre-Lie algebras defined on every  commutative associative algebra from Theorem \ref{asocc2}.

\begin{definition}
Let $\left( \rm{A},\cdot \right) $ be a commutative associative
algebra. Define ${\rm Z}_{\rm CPL}^{2}\left( \rm{A},\rm{A}\right) $ to be
the set of all bilinear maps $\theta :\rm{A}\times \rm{A}%
\longrightarrow \rm{A}$ such that:%
\begin{longtable}{rcl}
$\theta \left( \theta \left( x,y\right) ,z\right) -\theta \left( x,\theta \left( y,z\right)
\right)$ & $=$&$\theta \left( \theta
\left( y,x\right) ,z\right) -\theta \left( y,\theta \left( x,z\right) \right),$ \\
$\theta \left( x,y\cdot z\right)$ & $=$&$\theta \left( x,y\right) \cdot z+y\cdot
\theta \left( x,z\right).$
\end{longtable}%
If $\theta \in {\rm Z}_{\rm CPL}^{2}\left( \rm{A},%
\rm{A}\right) $, then $\left( \rm{A},\cdot ,\circ   \right) $ is a
commutative pre-Lie algebra where $x\circ   y=\theta \left( x,y\right) $ for
all $x,y\in \rm{A}$.
\end{definition}

\subsubsection{Commutative pre-Lie algebras defined on  ${\rm A}_{01}$}

Since ${\rm Z}_{\rm CPL}^{2}({\rm A}_{01}, {\rm A}_{01}) = \left\{0\right\}$, then there is only the trivial structure  ${\rm C}_{09} = {\rm A}_{01}$.

\subsubsection{Commutative pre-Lie algebras defined on  ${\rm A}_{02}$}

From the computation of ${\rm Z}_{\rm CPL}^{2}({\rm A}_{02}, {\rm A}_{02})$, the commutative pre-Lie structures defined on ${\rm A}_{02}$ are of the form:
$$\left\{ 
\begin{tabular}{lcllcl}
$e_1 \cdot e_1  $&$= $&$e_1,$&$ e_1 \cdot e_2  $&$=$&$e_2$ \\ 
$ e_{1}\circ   e_{2} $&$=$&$ \alpha_1 e_{2}$%
\end{tabular}%
\right. 
\qquad \qquad \qquad
\left\{ 
\begin{tabular}{lcllcl}
$e_1 \cdot e_1  $&$=$&$ e_1,$&$ e_1 \cdot e_2  $&$=$&$e_2$ \\ $ e_{2} \circ   e_{2}$&$= $&$\alpha_2 e_{2}$%
\end{tabular}%
\right. $$

If $\alpha_2=0$, then it is included in the first family. So we may assume $\alpha_2\neq0$. Further, if we choose $\xi = {\alpha_2}^{-1}$, 
we then obtain the following non-isomorphic algebras:
$${\rm C}_{10}^{\alpha} : = \left\{ 
\begin{tabular}{lcllcl}
$e_1 \cdot e_1 $&$ =$&$ e_1,$&$ e_1 \cdot e_2  $&$=$&$e_2$ \\ 
$ e_{1} \circ   e_{2}$&$ =$&$ \alpha e_{2}$%
\end{tabular}%
\right. 
\qquad \qquad \qquad
{\rm C}_{11} : = \left\{ 
\begin{tabular}{lcllcl}
$e_1 \cdot e_1  $&$=$&$ e_1,$ & $e_1 \cdot e_2  $&$=$&$e_2$ \\ 
$ e_{2} \circ   e_{2} $&$=$&$ e_{2}$%
\end{tabular}%
\right. $$

\subsubsection{Commutative pre-Lie algebras defined on  ${\rm A}_{03}$}

The commutative pre-Lie structures defined on ${\rm A}_{03}$ are:
$$\left\{ 
\begin{tabular}{lcl}
$e_1 \cdot e_1 $&$=$&$ e_1$ \\ 
$e_{1}\circ   e_{2} $&$=$&$ \alpha_1 e_{2}$%
\end{tabular}%
\right.
\qquad \qquad \qquad
\left\{ 
\begin{tabular}{lcl}
$e_1 \cdot e_1 $&$=$&$ e_1$ \\ 
$e_{2}\circ   e_{2} $&$=$&$ \alpha_2 e_{2}$%
\end{tabular}%
\right.$$

As above, we may assume $\alpha_2\neq0$. Then we obtain the following non-isomorphic algebras:
$${\rm C}_{12}^{\alpha} : = \left\{ 
\begin{tabular}{lcl}
$e_1 \cdot e_1  $&$=$&$ e_1$ \\
$ e_{1}\circ   e_{2} $&$=$&$ \alpha e_{2}$%
\end{tabular}%
\right. 
\qquad \qquad \qquad
{\rm C}_{13} : = \left\{ 
\begin{tabular}{lcl}
$e_1 \cdot e_1  $&$=$&$ e_1$ \\ 
$e_{2}\circ   e_{2} $&$=$&$ e_{2}$%
\end{tabular}%
\right. $$

\subsubsection{Commutative pre-Lie algebras defined on  ${\rm A}_{04}$}

The commutative pre-Lie structures defined on ${\rm A}_{04}$ are:
$$\left\{ 
\begin{tabular}{lcllcl}
$e_1 \cdot e_1  $&$=$&$ e_2$ \\ 
$e_{1}\circ   e_{1} $&$=$&$ \alpha_1 e_{1} + \beta_1 e_2,$&$  e_{1}\circ   e_{2} $&$=$&$ 2 \alpha_1 e_{2}$%
\end{tabular}%
\right.
\qquad 
\left\{ 
\begin{tabular}{lcllcl}
$e_1 \cdot e_1  $&$=$&$ e_2$ \\ 
$e_{1}\circ   e_{1} $&$=$&$ \alpha_2 e_{1} + \beta_2 e_2,$&$  e_{1}\circ   e_{2} $&$=$&$ 2 \alpha_2 e_{2}$ \\
$e_{2}\circ   e_{1} $&$=$&$ \gamma_2 e_1 + \alpha_2 e_{2}, $&$ e_{2}\circ   e_{2} $&$=$&$ 2\gamma_2 e_{2}$%
\end{tabular}%
\right.
$$

Note that if $\alpha_1=0$, then it is included in the second one. So assume $\alpha_1\neq0$. Then, we can choose $\xi={\alpha_1}^{-1}$ and  $\nu= -\beta_1\alpha_1^{-2}$ to simplify the first family into:
$${\rm C}_{14} : = \left\{ 
\begin{tabular}{ll}
$e_1 \cdot e_1  = e_2,$ \\ 
$e_{1}\circ   e_{1} = e_{1},$&$e_{1}\circ   e_{2} = 2 e_{2}.$%
\end{tabular}%
\right.$$

Now, for the second family we have the following cases.

\begin{itemize}
    \item If $\gamma_2\neq0$, then choose $\xi=-{\sqrt{\gamma_{2}^{-1}}}$ and $\nu = \alpha_2 \sqrt{\gamma_2^{-3}}-\sqrt{\gamma_2^{-3} \left(\alpha_2^2-\beta_2 \gamma_2\right)}$ to obtain the family:
    
$${\rm C}_{15}^{\alpha} : = \left\{ 
\begin{tabular}{lcllcllcllcl}
$e_1 \cdot e_1  $&$=$&$ e_2$ \\ 
$e_{1}\circ   e_{1} $&$=$&$ \alpha e_{1},$&$  e_{1}\circ   e_{2} $&$=$&$ 2 \alpha e_{2},$ &
$e_{2}\circ   e_{1} $&$=$&$ e_1 + \alpha e_{2}, $&$  e_{2}\circ   e_{2} $&$=$&$ 2 e_{2}$%
\end{tabular}%
\right.$$
    
    Moreover, ${\rm C}_{15}^{\alpha} \cong {\rm C}_{15}^{-\alpha}$, setting $\xi=-1$ and $\nu=0$.

    \item If $\gamma_2=0$, then if $\alpha_2\neq0$, by choosing $\xi={\alpha_2}^{-1}$ and $\nu = - \beta_2(2\alpha_{2}^2)^{-1}$, we obtain the algebra:
    
$${\rm C}_{16} : = \left\{ 
\begin{tabular}{lcllcllcl}
$e_1 \cdot e_1  $&$=$&$ e_2$ \\ 
$e_{1}\circ   e_{1} $&$=$&$ e_{1},$&$ e_{1}\circ   e_{2} $&$=$&$ 2 e_{2},$ &  $e_{2}\circ   e_{1} $&$=$&$ e_{2}$%
\end{tabular}%
\right.$$

Otherwise, we have the parametric family :
$${\rm C}_{17}^{\alpha} : = \left\{ 
\begin{tabular}{lcl}
$e_1 \cdot e_1  $&$=$&$ e_2$ \\ 
$e_{1}\circ   e_{1} $&$=$&$ \alpha e_{2}$%
\end{tabular}%
\right.$$

\end{itemize}

\begin{theorem}
\label{2dim PRE} 
Let $\left( {\rm C},\cdot , \circ   \right) $ be a nonzero $2$-dimensional commutative pre-Lie algebra. Then ${\rm C}$ is isomorphic to one
pre-Lie algebra listed in Theorem \ref{pre2} or to one algebra listed below: 

\begin{longtable}{lcl}

 ${\rm C}_{09}$&$:$&$ e_1 \cdot e_1 = e_1, \ \ e_2 \cdot e_2 = e_2$\\ 

 ${\rm C}_{10}^{\alpha} $&$:$&$ \left\{ 
\begin{tabular}{lcllcl}
$e_1 \cdot e_1  $&$=$&$ e_1, $&$ e_1 \cdot e_2  $&$=$&$e_2$ \\ 
$ e_{1} \circ   e_{2} $&$=$&$ \alpha e_{2}$%
\end{tabular}%
\right. $\\

 ${\rm C}_{11}$&$ : $&$\left\{ 
\begin{tabular}{lcllcl}
$e_1 \cdot e_1  $&$=$&$ e_1,$&$ e_1 \cdot e_2  $&$=$&$e_2$ \\ 
$ e_{2} \circ   e_{2} $&$=$&$ e_{2}$%
\end{tabular}%
\right.$\\

 ${\rm C}_{12}^{\alpha} $&$: $&$\left\{ 
\begin{tabular}{lcl}
$e_1 \cdot e_1  $&$=$&$ e_1$ \\
$ e_{1}\circ   e_{2} $&$=$&$ \alpha e_{2}$%
\end{tabular}%
\right.$\\

 ${\rm C}_{13}$&$ :$&$ \left\{ 
\begin{tabular}{lcl}
$e_1 \cdot e_1  $&$=$&$ e_1$ \\ 
$e_{2}\circ   e_{2} $&$=$&$ e_{2}$%
\end{tabular}%
\right.$\\

 ${\rm C}_{14}$&$ :$&$ \left\{ 
\begin{tabular}{lcllcllcllcl}
$e_1 \cdot e_1 $&$ =$&$ e_2$ \\ 
$e_{1}\circ   e_{1} $&$=$&$ e_{1},$&$  e_{1}\circ   e_{2} $&$=$&$ 2 e_{2}$%
\end{tabular}%
\right.$\\

 ${\rm C}_{15}^{\alpha}$&$ :$&$ \left\{ 
\begin{tabular}{lcllcllcllcl}
$e_1 \cdot e_1 $&$ =$&$ e_2$ \\ 
$e_{1}\circ   e_{1}$&$ = $&$\alpha e_{1},$&$  e_{1}\circ   e_{2} $&$=$&$ 2 \alpha e_{2},$ &$e_{2}\circ   e_{1} $&$=$&$ e_1 + \alpha e_{2}, $&$ e_{2}\circ   e_{2} $&$= $&$2 e_{2}$%
\end{tabular}%
\right.$\\

${\rm C}_{16}$&$ :$&$ \left\{ 
\begin{tabular}{lcllcllcl}
$e_1 \cdot e_1  $&$=$&$ e_2$ \\ 
$e_{1}\circ   e_{1} $&$= $&$e_{1},$&$  e_{1}\circ   e_{2}$&$ =$&$ 2 e_{2},$& $e_{2}\circ   e_{1} $&$= $&$e_{2}$%
\end{tabular}%
\right.$\\

 ${\rm C}_{17}^{\alpha} $&$:$&$ \left\{ 
\begin{tabular}{lcl}
$e_1 \cdot e_1  $&$= $&$e_2$ \\ 
$e_{1}\circ   e_{1} $&$= $&$\alpha e_{2}$%
\end{tabular}%
\right.$

\end{longtable}

Between these algebras, the only non-trivial isomorphism is ${\rm C}_{15}^{\alpha} \cong {\rm C}_{15}^{-\alpha}$.

\end{theorem}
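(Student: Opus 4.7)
The plan is to apply the three-step procedure from the preliminaries (compute ${\rm Z}_{\rm CPL}^{2}$, orbit under ${\rm Aut}$, choose representatives) to each of the four $2$-dimensional commutative associative algebras provided by Theorem \ref{asocc2}. First I would dispose of the case in which the commutative associative multiplication $\cdot$ is identically zero: then a commutative pre-Lie algebra reduces to an ordinary pre-Lie algebra, so Theorem \ref{pre2} delivers the list ${\rm C}_{01},\ldots,{\rm C}_{08}$. In every remaining case the associated commutative associative algebra is (up to isomorphism) exactly one of ${\rm A}_{01},\ldots,{\rm A}_{04}$, and different ${\rm A}_{0i}$ yield non-isomorphic commutative pre-Lie algebras since the forgetful map $({\rm C},\cdot,\circ)\mapsto ({\rm C},\cdot)$ is an isomorphism invariant; so the four cases can be handled independently.

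For a fixed ${\rm A}_{0i}$ the next step is to write an arbitrary bilinear map $\theta(e_j,e_k)=\sum_{\ell}c_{jk}^{\ell}e_{\ell}$ with eight unknowns and impose the two defining conditions of ${\rm Z}_{\rm CPL}^{2}({\rm A}_{0i},{\rm A}_{0i})$, namely the pre-Lie identity and the Leibniz-type compatibility $\theta(x,y\cdot z)=\theta(x,y)\cdot z + y\cdot\theta(x,z)$, evaluated on all triples of basis vectors. Each case reduces to a finite linear (with a few low-degree polynomial) system in the $c_{jk}^{\ell}$, whose general solution is precisely the parametric family displayed in the relevant subsubsection: for ${\rm A}_{01}$ one finds ${\rm Z}_{\rm CPL}^{2}=\{0\}$; for ${\rm A}_{02}$ and ${\rm A}_{03}$ one obtains two-parameter families; for ${\rm A}_{04}$ a richer family with parameters $\alpha_i,\beta_i,\gamma_2$ arises.

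With ${\rm Z}_{\rm CPL}^{2}$ in hand, I would then invoke Lemma \ref{isom} and transport the action $(\theta*\phi)(x,y)=\phi^{-1}(\theta(\phi(x),\phi(y)))$ to an explicit action on the solved structure constants, using the automorphism groups described in Lemma \ref{asocc2aut}. For ${\rm A}_{01}$ this is trivial. For ${\rm A}_{02}$ and ${\rm A}_{03}$ the automorphism group is one-dimensional, acting by a single scaling $\xi\in\mathbb{C}^{*}$ on $e_2$, which allows one to normalize one nonzero coefficient to $1$ and produces the families ${\rm C}_{10}^{\alpha},{\rm C}_{11}$ and ${\rm C}_{12}^{\alpha},{\rm C}_{13}$ respectively. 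For ${\rm A}_{04}$ the group is two-dimensional (parameters $\xi,\nu$), and I would split according as the coefficient of $e_1$ in $e_1\circ e_1$ (in the first family) is zero or not, and then as $\gamma_2,\alpha_2$ vanish or not (in the second family), yielding ${\rm C}_{14},{\rm C}_{15}^{\alpha},{\rm C}_{16},{\rm C}_{17}^{\alpha}$ as the orbit representatives.

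The main obstacle will be the bookkeeping in the ${\rm A}_{04}$ case, where the two-parameter automorphism group interacts with branching subcases, and one must keep the stratification exhaustive and non-overlapping while verifying residual identifications. In particular, the only surviving non-trivial coincidence after normalization is ${\rm C}_{15}^{\alpha}\cong {\rm C}_{15}^{-\alpha}$, realized by the involution $\xi=-1,\ \nu=0$ of ${\rm A}_{04}$; this must be checked by direct substitution, and a matching non-isomorphism check (e.g.\ comparing suitable invariants such as $\alpha^{2}$, or directly inspecting the possible $\xi,\nu$) is needed to conclude that no other pairs in the final list are isomorphic.
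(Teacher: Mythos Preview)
Your proposal is correct and follows essentially the same approach as the paper: compute ${\rm Z}_{\rm CPL}^{2}$ for each of the four underlying commutative associative algebras, then reduce to orbit representatives via Lemma~\ref{isom} and the explicit automorphism groups of Lemma~\ref{asocc2aut}, with the same case splits on ${\rm A}_{04}$ (according to the vanishing of $\gamma_2$, then $\alpha_2$) and the same realization of ${\rm C}_{15}^{\alpha}\cong{\rm C}_{15}^{-\alpha}$ by $\xi=-1,\ \nu=0$. The only point worth sharpening is that, because the pre-Lie identity is quadratic in $\theta$, the solution set of the cocycle conditions on ${\rm A}_{02}$ and ${\rm A}_{03}$ is not a single linear family but a union of two pieces, exactly as the paper records.
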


\subsection{Degenerations of  $2$-dimensional commutative pre-Lie algebras}

\begin{theorem}

\label{th:degcpl}
The graph of primary degenerations and non-degenerations of the variety of $2$-dimensional commutative pre-Lie algebras is given in Figure 1, where the numbers on the right side are the dimensions of the corresponding orbits.

\begin{center}
	
	\begin{tikzpicture}[->,>=stealth,shorten >=0.05cm,auto,node distance=1.3cm,
	thick,
	main node/.style={rectangle,draw,fill=gray!10,rounded corners=1.5ex,font=\sffamily \scriptsize \bfseries },
	rigid node/.style={rectangle,draw,fill=black!20,rounded corners=0ex,font=\sffamily \scriptsize \bfseries }, 
	poisson node/.style={rectangle,draw,fill=black!20,rounded corners=0ex,font=\sffamily \scriptsize \bfseries },
	ac node/.style={rectangle,draw,fill=black!20,rounded corners=0ex,font=\sffamily \scriptsize \bfseries },
	lie node/.style={rectangle,draw,fill=black!20,rounded corners=0ex,font=\sffamily \scriptsize \bfseries },
	style={draw,font=\sffamily \scriptsize \bfseries }]

	\node (3) at (2.5,9) {$4$};
	\node (2) at (2.5,5) {$3$};
	\node (1) at (2.5,2) {$2$};
	\node (0)  at (2.5,0) {$0$};

    \node[main node] (c20) at (-5,0) {${\mathbb C^8}$};
    
    \node[main node] (c260) at (-1,2) {${\rm C}_{06}^{0}$};
    \node[main node] (c251) at (-3,2) {${\rm C}_{05}^{1}$};
    \node[main node] (c23) at (-5,2) {${\rm C}_{03}$};
    \node[ac node] (c217a) at (-10,2) {${\rm C}_{17}^{\beta}$};
    
    \node[main node] (c21) at (1,5) {${\rm C}_{01}$};
    \node[main node] (c22) at (-1,5) {${\rm C}_{02}$};
    \node[main node] (c24) at (-3,5) {${\rm C}_{04}$};
    \node[main node] (c25a) at (-5,5) {${\rm C}_{05}^{\alpha\neq1}$};
    \node[main node] (c26a) at (-8,5) {${\rm C}_{06}^{\alpha\neq0}$};
    \node[ac node] (c210) at (-11,5) {${\rm C}_{10}^{\alpha}$};
    \node[ac node] (c212) at (-13,5) {${\rm C}_{12}^{\alpha}$};
    
    \node[main node] (c27) at (1,9) {${\rm C}_{07}$};
    \node[main node] (c28) at (-1,9) {${\rm C}_{08}$};
    \node[ac node] (c211) at (-3,9) {${\rm C}_{11}$};
    \node[ac node] (c213) at (-5,9) {${\rm C}_{13}$};
    \node[ac node] (c214) at (-7,9) {${\rm C}_{14}$};
    \node[ac node] (c215) at (-9,9) {${\rm C}_{15}^{\gamma}$};
    \node[ac node] (c216) at (-11,9) {${\rm C}_{16}$};
    \node[ac node] (c29) at (-13,9) {${\rm C}_{09}$};
    
	\path[every node/.style={font=\sffamily\small}]

    %(c28) edge node[above=-6, right=-10, fill=white]{\tiny $\begin{array}{c}        \delta=\beta  \\        \epsilon=\gamma     \end{array}$}  (c29)
    %(c28) edge [bend left=0] node{}  (c21)
    (c28) edge [bend left=0] node[above=-20, right=-40, fill=white]{\tiny $\alpha=\frac{1}{2}$}  (c25a)

    (c28) edge [bend left=0] node[above=30, right=35, fill=white]{\tiny $\alpha=2$}  (c26a)    
    
    (c27) edge [bend left=0] node[above=37, right=65, fill=white]{\tiny $\alpha=1$}  (c26a)
  (c27) edge [bend left=0] node[above=0, right=-14, fill=white]{\tiny $\alpha=0$}  (c25a)

    (c215) edge [bend left=0] node[above=35, right=-50, fill=white]{\tiny $\alpha=\frac{1}{2}$}  (c25a)
 (c215) edge [bend left=0] node{}  (c217a)

    (c214) edge [bend left=0] node{}  (c217a)
    (c214) edge [bend left=0] node[above=22, right=-25, fill=white]{\tiny $\alpha=2$}  (c25a)
    (c216) edge [bend left=15] node{}  (c217a)
    (c216) edge [bend left=0] node[above=20, right=-25, fill=white]{\tiny $\alpha=2$}  (c26a)
    
    (c29) edge [bend left=0] node[above=0, right=-15, fill=white]{\tiny $\alpha=0$}  (c210)
    (c29) edge [bend left=0] node[above=0, right=-20, fill=white]{\tiny $\alpha=0$}  (c212)

    (c211) edge [bend left=-10] node[above=25, right=-5, fill=white]{\tiny $\alpha=0$}  (c25a)
    (c211) edge [bend left=0] node[above=40, right=65, fill=white]{\tiny $\alpha=0$}  (c210)
    (c211) edge [bend left=5] node{}  (c217a)
    
    (c213) edge [bend left=0] node[above=10, right=0, fill=white]{\tiny $\alpha=0$}  (c212)
    (c213) edge [bend left=0] node[above=15, right=-15, fill=white]{\tiny $\alpha=0$}  (c25a)
    (c213) edge [bend left=15] node{}  (c217a)

    (c212) edge [bend left=0] node[above=20, right=-27, fill=white]{\tiny $\alpha=-\beta$}  (c217a)
    (c210) edge [bend left=0] node[above=20, right=-14, fill=white]{\tiny $\alpha=\beta$} (c217a)
    (c22) edge [bend left=0] node{}  (c251)
    (c21) edge [bend left=0] node{}  (c260)
    (c26a) edge [bend left=0] node{}  (c23)
    (c25a) edge [bend left=0] node{}  (c23)
    (c24) edge [bend left=0] node{}  (c23)
    (c22) edge [bend left=0] node{}  (c23)
    (c21) edge [bend left=0] node{}  (c23)
    
    (c217a) edge [bend left=0] node{}  (c20)
    (c260) edge [bend left=0] node{}  (c20)
    (c251) edge [bend left=0] node{}  (c20)
    (c23) edge [bend left=0] node{}  (c20)

    ;

	\end{tikzpicture}

{\tiny 
\begin{itemize}
\noindent Legend:
\begin{itemize}
    \item[--] Round nodes: commutative pre-Lie algebras with trivial commutative associative multiplication $\cdot$.
    \item[--] Squared nodes: commutative pre-Lie algebras with non-trivial commutative associative multiplication $\cdot$.

\end{itemize}
\end{itemize}}

{Figure 1.}  Graph of primary degenerations and non-degenerations.	
\end{center}
\end{theorem}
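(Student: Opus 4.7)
The plan is to adapt to the present variety the four-step strategy already used in Lemmas~\ref{th:degleib} and \ref{th:degtleib} and Theorem~\ref{th:degnov}. First, I will compute the Lie algebra of derivations $\mathfrak{Der}(\mathrm{C})$ for each algebra and each parametric family appearing in Theorem~\ref{2dim PRE} and use $\dim O((\mu,\mu')) = 4-\dim \mathfrak{Der}(\mathrm{C})$ to place every orbit at its correct level $0, 2, 3$ or $4$ of the graph. This step is routine, since the underlying vector space is only two-dimensional and each stabilizer is cut out by a short linear system.

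Next, observe that the ``round nodes'' $\mathrm{C}_{01},\ldots,\mathrm{C}_{08}$ correspond to the $2$-dimensional pre-Lie algebras, whose degenerations and non-degenerations are already recorded in the geometric classifications \cite{burde09,kv16}, so that portion of the graph can simply be quoted. Moreover, by Remark~\ref{redbil}, no ``squared node'' can degenerate to a nonzero ``round node'', which removes a large batch of potential arrows at no further cost.

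The main work is then to settle the remaining arrows among the ``squared nodes'' $\mathrm{C}_{09},\ldots,\mathrm{C}_{17}^{\alpha}$. For each primary degeneration I will exhibit an explicit parametrized basis $E_1(t),E_2(t)$ of the form dictated by the pattern observed in the Novikov--Poisson proof: pure rescalings $E_i(t)=t^{\pm k_i}e_i$ suffice for the degenerations to $\mathbb{C}^{8}$, to $\mathrm{C}_{03}$ and to generic members of a parametric family, whereas degenerations landing in $\mathrm{C}_{17}^{\beta}$ with $\beta=\pm\alpha$ (the edges $\mathrm{C}_{10}^{\alpha},\mathrm{C}_{12}^{\alpha}\to\mathrm{C}_{17}^{\pm\alpha}$) require a rescaling combined with an affine shift $E_1(t)=te_1+c\,e_2$, chosen so that the unwanted structure constants vanish in the limit. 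Computing $\lim_{t\to 0}c_{ij}^{k}(t)$ in each case confirms the arrow.

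For each forbidden pair $\mathrm{C}_i\not\to\mathrm{C}_j$ I will exhibit a Zariski closed subset $\mathcal{R}\subset\mathcal{T}_2$ stable under invertible upper- (or lower-)triangular change of basis, containing a representative of $\mathrm{C}_i$ but none of $\mathrm{C}_j$, and then invoke Lemma~\ref{main2}. The principal obstacle will be separating the parametric families from each other; typical tricky non-arrows include $\mathrm{C}_{10}^{\alpha}\not\to\mathrm{C}_{17}^{\beta\neq\alpha}$, $\mathrm{C}_{12}^{\alpha}\not\to\mathrm{C}_{17}^{\beta\neq -\alpha}$, and the refined parameter constraints on the edges emanating from $\mathrm{C}_{07},\mathrm{C}_{08},\mathrm{C}_{11}$ and $\mathrm{C}_{13}$ (which only reach $\mathrm{C}_{05}^{\alpha},\mathrm{C}_{06}^{\alpha}$ for specific values of $\alpha$). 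These sets $\mathcal{R}$ must encode polynomial relations that tie the structural constants of the $\cdot$-multiplication to those of the $\circ$-multiplication through the target parameter, so that the incompatibility is preserved by the triangular action.
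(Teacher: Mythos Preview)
Your overall plan matches the paper's approach (derivation algebras for orbit dimensions, explicit parametrized bases for degenerations, triangular-stable Zariski closed sets $\mathcal{R}$ for non-degenerations), but there is a genuine error in your second paragraph.

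You misread Remark~\ref{redbil}. That remark says that if $(\mu,\mu')\to(\lambda,\lambda')$ then $\mu\to\lambda$ and $\mu'\to\lambda'$ separately; the consequence is that a \emph{round} node ($\mu=0$) cannot degenerate to a \emph{squared} node ($\lambda\neq 0$), not the other way round. A nontrivial commutative associative structure can always degenerate to the zero one, so squared-to-round degenerations are not excluded --- and in fact the figure records several of them: $\mathrm{C}_{11}\to\mathrm{C}_{05}^{0}$, $\mathrm{C}_{13}\to\mathrm{C}_{05}^{0}$, $\mathrm{C}_{14}\to\mathrm{C}_{05}^{2}$, $\mathrm{C}_{15}^{\gamma}\to\mathrm{C}_{05}^{1/2}$, $\mathrm{C}_{16}\to\mathrm{C}_{06}^{2}$. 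You tacitly concede some of these in your last paragraph, which contradicts the earlier claim.

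This is not cosmetic: your ``main work'' paragraph, framed as treating only arrows among the squared nodes $\mathrm{C}_{09},\ldots,\mathrm{C}_{17}^{\alpha}$, therefore omits both the construction of parametrized bases for all squared-to-round degenerations and the invariant sets ruling out the forbidden ones (e.g.\ $\mathrm{C}_{14}\not\to\mathrm{C}_{05}^{\alpha\neq 2}$, $\mathrm{C}_{14}\not\to\mathrm{C}_{06}^{\alpha}$, $\mathrm{C}_{15}^{\gamma}\not\to\mathrm{C}_{06}^{\beta}$, $\mathrm{C}_{16}\not\to\mathrm{C}_{05}^{\alpha}$, and so on). The paper handles every one of these case by case in its two tables; your sketch as written would leave this block unverified.
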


\begin{proof} 
The dimensions of the orbits are deduced by computing the algebra of derivations.
The primary degenerations are proven using the parametric bases included in following Table:

    \begin{longtable}{|lcl|ll|}
\hline
\multicolumn{3}{|c|}{\textrm{Degeneration}}  & \multicolumn{2}{|c|}{\textrm{Parametrized basis}} \\
\hline
\hline

${\rm C}_{01} $ & $\to$ & ${\rm C}_{03}  $ & $
E_{1}(t)= te_1$ & $
E_{2}(t)= t^2 e_2$  \\
\hline

${\rm C}_{01} $ & $\to$ & ${\rm C}_{06}^{0}  $ & $
E_{1}(t)= e_1$ & $
E_{2}(t)= t^{-1}e_2$  \\
\hline

${\rm C}_{02} $ & $\to$ & ${\rm C}_{03}  $ & $
E_{1}(t)= te_1$ & $
E_{2}(t)= t^2e_2$  \\
\hline

${\rm C}_{02} $ & $\to$ & ${\rm C}_{05}^{1}  $ & $
E_{1}(t)= e_1$ & $
E_{2}(t)= t^{-1}e_2$  \\
\hline

${\rm C}_{04} $ & $\to$ & ${\rm C}_{03}  $ & $
E_{1}(t)= e_1 + t e_2$ & $
E_{2}(t)= - t^{2}e_2$  \\
\hline

${\rm C}_{05}^{\alpha\neq1} $ & $\to$ & ${\rm C}_{03}  $ & $
E_{1}(t)= t e_1 + t^{-1}(\alpha-1)^{-1}e_2$ & $
E_{2}(t)= e_2$  \\
\hline

${\rm C}_{06}^{\alpha\neq0} $ & $\to$ & ${\rm C}_{03}  $ & $
E_{1}(t)= t e_1 + t^{-1} \alpha^{-1} e_2$ & $
E_{2}(t)= e_2$  \\
\hline

${\rm C}_{07} $ & $\to$ & ${\rm C}_{05}^{0}  $ & $
E_{1}(t)= e_1$ & $
E_{2}(t)= te_2$  \\
\hline

${\rm C}_{07} $ & $\to$ & ${\rm C}_{06}^1  $ & $
E_{1}(t)= e_1 + e_2$ & $
E_{2}(t)= te_2$  \\
\hline

${\rm C}_{08} $ & $\to$ & ${\rm C}_{05}^{1/2}  $ & $
E_{1}(t)= e_2$ & $
E_{2}(t)= - \frac{t}{2} e_1 + t e_2$  \\
\hline

${\rm C}_{08} $ & $\to$ & ${\rm C}_{06}^{2}  $ & $
E_{1}(t)= e_1$ & $
E_{2}(t)= te_2$  \\
\hline

${\rm C}_{09} $ & $\to$ & ${\rm C}_{10}^0  $ & $
E_{1}(t)= e_1+ e_2$ & $
E_{2}(t)= te_1$  \\
\hline

${\rm C}_{09} $ & $\to$ & ${\rm C}_{12}^0  $ & $
E_{1}(t)= e_1$ & $
E_{2}(t)= te_2$  \\
\hline

${\rm C}_{11} $ & $\to$ & ${\rm C}_{05}^0  $ & $
E_{1}(t)= e_2$ & $
E_{2}(t)= te_1$  \\
\hline

${\rm C}_{11} $ & $\to$ & ${\rm C}_{10}^0  $ & $
E_{1}(t)= e_1$ & $
E_{2}(t)= te_2$  \\
\hline

${\rm C}_{13} $ & $\to$ & ${\rm C}_{05}^0  $ & $
E_{1}(t)= e_2$ & $
E_{2}(t)= te_1$  \\
\hline
 
 ${\rm C}_{13} $ & $\to$ & ${\rm C}_{12}^0  $ & $
E_{1}(t)= e_1$ & $
E_{2}(t)= te_2$  \\
\hline
 
${\rm C}_{14} $ & $\to$ & ${\rm C}_{05}^2  $ & $
E_{1}(t)= e_1$ & $
E_{2}(t)= t^{-1}e_2$  \\
\hline

${\rm C}_{15}^{\alpha} $ & $\to$ & ${\rm C}_{05}^{1/2}  $ & $
E_{1}(t)= \frac{1}{2}e_2$ & $
E_{2}(t)= te_1$  \\
\hline

${\rm C}_{16} $ & $\to$ & ${\rm C}_{06}^2  $ & $
E_{1}(t)= e_1$ & $
E_{2}(t)= t^{-1}e_2$  \\
\hline

${\rm C}_{10}^{\alpha} $ & $\to$ & ${\rm C}_{17}^{\alpha}  $ & $
E_{1}(t)= t e_1 + e_2$ & $
E_{2}(t)= t e_2$  \\
\hline

${\rm C}_{12}^{\alpha} $ & $\to$ & ${\rm C}_{17}^{-\alpha}  $ & $
E_{1}(t)= t e_1 + e_2$ & $
E_{2}(t)= t^2 e_1$  \\
\hline

${\rm C}_{11} $ & $\to$ & ${\rm C}_{17}^{\alpha}  $ & $
E_{1}(t)= te_1+ \beta t e_2$ & $
E_{2}(t)= \beta t^2 e_2$  \\
\hline

${\rm C}_{13} $ & $\to$ & ${\rm C}_{17}^{\alpha}  $ & $
E_{1}(t)= -te_1+ \beta t e_2$ & $
E_{2}(t)= \beta t^2 e_2$  \\
\hline

${\rm C}_{14} $ & $\to$ & ${\rm C}_{17}^{\alpha}  $ & $
E_{1}(t)= t e_1 + \alpha t e_2$ & $
E_{2}(t)= t^2 e_2$  \\
\hline

${\rm C}_{15}^{\alpha} $ & $\to$ & ${\rm C}_{17}^{\beta}  $ & $
E_{1}(t)= t e_1 - (\alpha t - t\sqrt{\alpha^2+\beta})e_2 $ & $
E_{2}(t)= t^2 e_2$  \\
\hline

${\rm C}_{16} $ & $\to$ & ${\rm C}_{17}^{\alpha}  $ & $
E_{1}(t)= te_1 + \frac{\alpha t}{2}e_2 $ & $
E_{2}(t)= t^2 e_2$  \\
\hline

    \end{longtable}
     
The primary non-degenerations are proven using the sets from the following table: 

    \begin{longtable}{|lcl|l|}
\hline
\multicolumn{3}{|c|}{\textrm{Non-degeneration}} & \multicolumn{1}{|c|}{\textrm{Arguments}}\\
\hline
\hline

$    {\rm C}_{01}$ &$\not \to$&  ${\rm C}_{05}^{1} $ &
${\mathcal R}= \left\{ \begin{array}{l}
c_{11}'^1, c_{11}'^2, c_{21}'^2\in\mathbb{C},
c_{11}'^1=c_{21}'^2%, \\ c_{ij}^{k}=c_{ij}'^{k}=0 \textrm{ otherwise} 
\end{array} \right\}$\\
\hline

${\rm C}_{02}$ &$\not \to$&  ${\rm C}_{06}^{0}$&
${\mathcal R}= \left\{ \begin{array}{l}
c_{11}'^{1}, c_{11}'^{2}, c_{12}'^{2}\in\mathbb{C},
c_{12}'^2=c_{11}'^1%,\\ c_{ij}^{k}=c_{ij}'^{k}=0 \textrm{ otherwise} 
\end{array} \right\}$\\
\hline

${\rm C}_{04}$ &$\not \to$&  ${\rm C}_{06}^{0}, {\rm C}_{05}^{1}$&
${\mathcal R}= \left\{ \begin{array}{l}
c_{11}'^{1}, c_{11}'^{2}, c_{21}'^{1}, c_{21}'^{2}\in \mathbb{C}, \\
c_{11}'^1=-c_{21}'^2, -c_{11}'^2 c_{21}'^1=(c_{11}'^1)^2 %,\\c_{ij}^{k}=c_{ij}'^{k}=0 \textrm{ otherwise} 
\end{array} \right\}$\\
\hline

${\rm C}_{05}^{\alpha\neq1}$ &$\not \to$&  ${\rm C}_{05}^{1}, {\rm C}_{06}^{0}$ &
${\mathcal R}= \left\{ \begin{array}{l}
c_{11}'^{1}, c_{11}'^2, c_{12}'^2 \in \mathbb{C},
c_{12}'^2=\alpha  c_{11}'^1%,\\c_{ij}^{k}=c_{ij}'^{k}=0 \textrm{ otherwise} 
\end{array} \right\}$\\
\hline

${\rm C}_{06}^{\alpha\neq0}$ &$\not \to$&  ${\rm C}_{05}^{1}, {\rm C}_{06}^{0}$  &
${\mathcal R}= \left\{ \begin{array}{l}
c_{11}'^1, c_{11}'^2, c_{12}'^2, c_{21}'^2 \in \mathbb{C}, 
c_{12}'^2=\alpha  c_{11}'^1, c_{21}'^2=c_{11}'^1%,\\c_{ij}^{k}=c_{ij}'^{k}=0 \textrm{ otherwise} 
\end{array} \right\}$\\
\hline

$ {\rm C}_{07}$ &$\not \to$& $\begin{array}{l}
          {\rm C}_{06}^{\alpha\neq1}, {\rm C}_{05}^{\alpha\neq0}, {\rm C}_{04}  \\
          %{\rm C}_{02}, {\rm C}_{01} 
     \end{array} $&
${\mathcal R}= \left\{ \begin{array}{l}
c_{11}'^1, c_{11}'^2, c_{12}'^2, c_{21}'^2, c_{22}'^2 \in \mathbb{C},
c_{21}'^2=c_{12}'^2%,\\c_{ij}^{k}=c_{ij}'^{k}=0 \textrm{ otherwise} 
\end{array} \right\}$\\
\hline

$ {\rm C}_{08}$ &$\not \to$& $\begin{array}{l}
          {\rm C}_{06}^{\alpha\neq2}, {\rm C}_{05}^{\alpha\neq\frac{1}{2}}, {\rm C}_{04} \\
          %{\rm C}_{02}, {\rm C}_{01} 
     \end{array}$&
${\mathcal R}= \left\{ \begin{array}{l}
c_{11}'^1, c_{11}'^2, c_{12}'^2, c_{21}'^1, c_{21}'^2, c_{22}^2 \in \mathbb{C},\\
2 c_{21}'^1=c_{22}'^2,  c_{11}'^1=c_{21}'^2, 2 c_{11}'^1=c_{12}'^2%,\\c_{ij}^{k}=c_{ij}'^{k}=0 \textrm{ otherwise} 
\end{array} \right\}$\\
\hline

$ {\rm C}_{10}^{\alpha}$ &$\not \to$& $\begin{array}{l}
          {\rm C}_{17}^{\beta\neq \alpha}, {\rm C}_{06}^0,  
          {\rm C}_{05}^1, {\rm C}_{03} 
     \end{array}$  &
${\mathcal R}= \left\{ \begin{array}{l}
c_{11}^1, c_{11}^2, c_{12}^2, c_{21}^2, c_{11}'^2, c_{12}'^2 \in \mathbb{C},
c_{12}^2=c_{11}^1, \\c_{21}^2=c_{11}^1,
c_{12}'^2=\alpha  c_{11}^1, c_{11}'^2=\alpha  c_{11}^2%%,\\ c_{ij}^{k}=c_{ij}'^{k}=0 \textrm{ otherwise} 
\end{array} \right\}$\\
\hline

$   {\rm C}_{12}^{\alpha}$ &$\not \to$& $\begin{array}{l}
          {\rm C}_{17}^{\beta\neq-\alpha}, {\rm C}_{06}^{0}, 
          {\rm C}_{05}^{1}, {\rm C}_{03}  
     \end{array}$ &
${\mathcal R}= \left\{ \begin{array}{l}
c_{11}^1, c_{11}^2, c_{11}'^2, c_{12}'^2\in \mathbb{C},
c_{12}'^2=\alpha  c_{11}^1, c_{11}'^2=-\alpha  c_{11}^2%,\\c_{ij}^{k}=c_{ij}'^{k}=0 \textrm{ otherwise} 
\end{array} \right\}$\\
\hline

$ {\rm C}_{09}$ &$\not \to$&  $\begin{array}{l}
          {\rm C}_{17}^{\alpha\neq0}, {\rm C}_{05}^{\alpha}, {\rm C}_{06}^{\alpha}, {\rm C}_{03} \\
          %{\rm C}_{12}^{\alpha\neq0}, {\rm C}_{10}^{\alpha\neq0} 
     \end{array}  $&
${\mathcal R}= \left\{ \begin{array}{l}
c_{11}^1, c_{11}^2, c_{12}^2, c_{21}^2, c_{22}^2 \in \mathbb{C},
c_{12}^2=c_{21}^2%,\\c_{ij}^{k}=c_{ij}'^{k}=0 \textrm{ otherwise} 
\end{array} \right\}$\\
\hline

$   {\rm C}_{11}$ &$\not \to$&  $\begin{array}{l}
                    {\rm C}_{04},
{\rm C}_{05}^{\alpha\neq0},
{\rm C}_{06}^{\alpha}, {\rm C}_{12}^{\alpha}, {\rm C}_{10}^{\alpha\neq0} %, {\rm C}_{01}, {\rm C}_{02}, 
     \end{array} $ &
${\mathcal R}= \left\{ \begin{array}{l}
c_{11}^1, c_{11}^2, c_{12}^2, c_{21}^2, c_{11}'^2, c_{12}'^2, c_{21}'^2, c_{22}'^2 \in \mathbb{C},\\ c_{12}^2=c_{11}^1,  c_{21}^2=c_{11}^1, c_{12}'^2=c_{21}'^2,\\ 
(c_{12}'^2)^2=c_{11}'^2 c_{22}'^2, c_{11}^2 c_{22}'^2=c_{11}^1 c_{21}'^2%,\\c_{ij}^{k}=c_{ij}'^{k}=0 \textrm{ otherwise} 
\end{array} \right\}$\\
\hline

$ {\rm C}_{13}$ &$\not \to$&  $\begin{array}{l}
               {\rm C}_{04}, {\rm C}_{05}^{\alpha\neq0}, {\rm C}_{06}^{\alpha}, {\rm C}_{12}^{\alpha\neq0}, {\rm C}_{10}^{\alpha} %{\rm C}_{01}, {\rm C}_{02}, 
        
     \end{array}$&
${\mathcal R}= \left\{ \begin{array}{l}
c_{11}^1, c_{11}^2, c_{11}'^2, c_{12}'^2, c_{21}'^2, c_{22}'^2 \in \mathbb{C},
c_{12}'^2=c_{21}'^2, \\c_{11}'^2 c_{22}'^2=(c_{12}'^2)^2,
c_{11}^2 c_{22}'^2=-c_{11}^1 c_{12}'^2%,\\c_{ij}^{k}=c_{ij}'^{k}=0 \textrm{ otherwise} 
\end{array} \right\}$\\
\hline

$
     {\rm C}_{14}$ &$\not \to$&  $\begin{array}{l}
          {\rm C}_{05}^{\alpha\neq2}, {\rm C}_{06}^{\alpha}, {\rm C}_{12}^{\alpha}, {\rm C}_{10}^{\alpha}, {\rm C}_{04} 
          %\\          {\rm C}_{02}, {\rm C}_{01}, 
     \end{array} $&
${\mathcal R}= \left\{ \begin{array}{l}
c_{11}^2, c_{11}'^1, c_{11}'^2, c_{12}'^2 \in \mathbb{C},
c_{12}'^2=2 c_{11}'^1%,\\c_{ij}^{k}=c_{ij}'^{k}=0 \textrm{ otherwise} 
\end{array} \right\}$\\
\hline

$
     {\rm C}_{15}^{\alpha}$ &$\not \to$& $ \begin{array}{l}
          {\rm C}_{05}^{\beta\neq\frac{1}{2}}, {\rm C}_{06}^{\beta}, {\rm C}_{12}^{\beta}, {\rm C}_{10}^{\beta},  {\rm C}_{04} %\\
          %{\rm C}_{02}, {\rm C}_{01},
     \end{array} $&
${\mathcal R}= \left\{ \begin{array}{l}
c_{11}^2, c_{11}'^1, c_{11}'^2, c_{12}'^2, c_{21}'^1, c_{21}'^2, c_{22}'^2 \in \mathbb{C},\\ c_{11}'^1=c_{21}'^2, 2 c_{21}'^1=c_{22}'^2,  c_{12}'^2=2 c_{11}'^1,\\ 
(c_{11}'^1)^2 = c_{11}'^2 c_{21}'^1 + \alpha ^2 c_{11}^2 c_{21}'^1%, \\c_{ij}^{k}=c_{ij}'^{k}=0 \textrm{ otherwise} 
\end{array} \right\}$\\
\hline

$
     {\rm C}_{16}$ &$\not \to$& $ \begin{array}{l}
          {\rm C}_{05}^{\alpha}, {\rm C}_{06}^{\alpha\neq2}, {\rm C}_{12}^{\alpha}, {\rm C}_{10}^{\alpha}, {\rm C}_{04}
          %{\rm C}_{02}, {\rm C}_{01},  
     \end{array}$ &
${\mathcal R}= \left\{ \begin{array}{l}
c_{11}^2, c_{11}'^1, c_{11}'^2, c_{12}'^2, c_{21}'^2 \in \mathbb{C}, 
c_{21}'^2=c_{11}'^1, c_{12}'^2=2 c_{11}'^1%,\\c_{ij}^{k}=c_{ij}'^{k}=0 \textrm{ otherwise} 
\end{array} \right\}$\\
\hline

    \end{longtable}

  \end{proof}
 
 At this point, only the description of the closures of the orbits of the parametric families is missing. Although it is not necessary to study the closure of the orbits of each of the parametric families of
the variety of $2$-dimensional commutative pre-Lie algebras in order to identify its irreducible components, we will study them to give a complete description of the variety.

\begin{corollary}\label{th:degcplf}
The description of the closure of the orbit of the parametric families in the variety of $2$-dimensional commutative pre-Lie algebras.

\begin{longtable}{lcl}
  $\overline{\{O({\rm C}_{05}^{*}) \}}$ & ${\supseteq}$ &
 $ \Big\{\overline{\{O({\rm C}_{04})\}}, \  
 \overline{\{O({\rm C}_{03})\}}, \  
 \overline{\{O({\rm C}_{02})\}}, \  
 \overline{\{O({\mathbb C^8})\}} \Big\}$\\
   
   $\overline{\{O({\rm C}_{06}^{*}) \}}$ & ${\supseteq}$ & 
   $ \Big\{\overline{\{O({\rm C}_{04})\}}, \  
   \overline{\{O({\rm C}_{03})\}}, \ 
   \overline{\{O({\rm C}_{01})\}}, \ 
   \overline{\{O({\mathbb C^8})\}} \Big\}$\\
  
  $\overline{\{O({\rm C}_{10}^{*}) \}}$ & ${\supseteq}$ &
 $ \Big\{\overline{\{O({\rm C}_{17}^\alpha)\}}, \  
 \overline{\{O({\rm C}_{04})\}}, \ 
 \overline{\{O({\rm C}_{03})\}}, \ 
 \overline{\{O({\mathbb C^8})\}} \Big\}$\\
 
 $\overline{\{O({\rm C}_{12}^{*}) \}}$ & ${\supseteq}$ &
 $ \Big\{\overline{\{O({\rm C}_{17}^\alpha)\}}, \ 
 \overline{\{O({\rm C}_{04})\}}, \ 
 \overline{\{O({\rm C}_{03})\}}, \ 
 \overline{\{O({\mathbb C^8})\}} \Big\}$\\
  
  $\overline{\{O({\rm C}_{15}^{*}) \}}$ & ${\supseteq}$ &
  $ \Big\{\overline{\{O({\rm C}_{17}^\alpha)\}}, \ 
  \overline{\{O({\rm C}_{16})\}}, \  
  \overline{\{O({\rm C}_{08})\}}, \  
  \overline{\{O({\rm C}_{06}^{2})\}}, \  
  \overline{\{O({\rm C}_{05}^{1/2})\}}, \  
  \overline{\{O({\rm C}_{03})\}}, \ 
  \overline{\{O({\mathbb C^8})\}} \Big\}$\\

  $\overline{\{O({\rm C}_{17}^{*}) \}}$ & ${\supseteq}$ &
  $ \Big\{ \overline{\{O({\rm C}_{03})\}},
  \overline{\{O({\mathbb C^8})\}} \Big\}$\\

\end{longtable}

\end{corollary}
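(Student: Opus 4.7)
My plan mirrors the two-step template already used in Lemmas~\ref{th:degleibf}, \ref{th:degtleibf}, and \ref{th:degnovf}. In the first step I read off from Theorem~\ref{th:degcpl} every inclusion that is forced by the primary degenerations together with transitivity of orbit-closure containment. For example, the primary degenerations ${\rm C}_{05}^{\alpha\neq 1}\to{\rm C}_{03}$ and ${\rm C}_{06}^{\alpha\neq 0}\to{\rm C}_{03}$ immediately put $\overline{O({\rm C}_{03})}$ inside both $\overline{\{O({\rm C}_{05}^*)\}}$ and $\overline{\{O({\rm C}_{06}^*)\}}$; the primary arrows ${\rm C}_{10}^\alpha\to{\rm C}_{17}^\alpha$ and ${\rm C}_{12}^\alpha\to{\rm C}_{17}^{-\alpha}$ (valid for every $\alpha$) place $\overline{O({\rm C}_{17}^\alpha)}$ inside $\overline{\{O({\rm C}_{10}^*)\}}$ and $\overline{\{O({\rm C}_{12}^*)\}}$; the inclusion of $\overline{O({\mathbb C}^8)}$ in every row is automatic since the trivial algebra lies in every orbit closure; and the arrows emanating from ${\rm C}_{15}^\alpha$ in the graph place $\overline{O({\rm C}_{05}^{1/2})}$, $\overline{O({\rm C}_{17}^\alpha)}$ and the other relevant orbits inside $\overline{\{O({\rm C}_{15}^*)\}}$ by the same mechanism.

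In the second step I produce explicit parametric bases together with parametric indices for each remaining cross-family inclusion, following the procedure of the previous three lemmas. Concretely, given a target algebra ${\rm C}'$ that is not already in the closure of a single orbit $O({\rm C}_i^{\alpha_0})$, I exhibit maps $E_1(t), E_2(t)\in{\mathbb V}$ forming a basis of ${\mathbb V}$ for each $t\in\mathbb{C}^*$, together with $f:\mathbb{C}^*\to\mathbb{C}^*$, such that the structural constants of $({\rm C}_i^{f(t)},\cdot,\circ)$ in the basis $(E_1(t), E_2(t))$ tend to those of ${\rm C}'$ as $t\to 0$. The prototype is
\[
{\rm C}_{05}^{t^{-1}}\to {\rm C}_{04},\qquad E_1(t)=e_2,\ E_2(t)=t e_1,
\]
for which a direct computation gives $\mu(E_i,E_j)\to 0$ for every pair $(i,j)$, while among the $\mu'$-products only $\mu'(E_2,E_1)= t\cdot \mu'(e_1,e_2)=t\cdot t^{-1}e_2=E_1$ survives in the limit, reproducing exactly the multiplication table of ${\rm C}_{04}$. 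Analogous bases handle ${\rm C}_{06}^*\to{\rm C}_{04}$, the cross-family degenerations of ${\rm C}_{10}^*$ and ${\rm C}_{12}^*$ onto every parameter value ${\rm C}_{17}^\alpha$, the two boundary degenerations ${\rm C}_{15}^*\to{\rm C}_{16}$ and ${\rm C}_{15}^*\to{\rm C}_{08}$, and the universal degeneration ${\rm C}_{17}^*\to{\rm C}_{03}$.

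The only genuine technical obstacle is the bookkeeping required in this second step: for each target one must calibrate the powers of $t$ appearing in the entries of $E_1(t), E_2(t)$ against the rate of growth $t^{-p}$ of the parametric index $f(t)$, so that every potentially divergent coefficient cancels and each target structural constant emerges with its correct finite limit. This is exactly the same calibration carried out in the proofs of Lemmas~\ref{th:degleibf}--\ref{th:degnovf} of this paper, and the same compact tabular presentation --- one row per cross-family degeneration --- will suffice here to complete the description.
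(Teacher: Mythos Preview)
Your approach to the degeneration side is correct and matches the paper exactly: first harvest everything that follows from Theorem~\ref{th:degcpl} by transitivity, then supply explicit parametrized bases with a parametrized index for the genuinely new cross-family arrows (e.g.\ ${\rm C}_{05}^{t^{-1}}\to{\rm C}_{04}$, ${\rm C}_{06}^{t^{-1}}\to{\rm C}_{04}$, ${\rm C}_{15}^{t^{-1}}\to{\rm C}_{16}$, ${\rm C}_{15}^{t^{-1}}\to{\rm C}_{08}$, ${\rm C}_{17}^{t^{-1}}\to{\rm C}_{03}$, etc.). Your sample computation for ${\rm C}_{05}^{t^{-1}}\to{\rm C}_{04}$ is the same one the paper uses.

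There is, however, a genuine omission. You describe the template of Lemmas~\ref{th:degleibf}--\ref{th:degnovf} as a ``two-step'' procedure, but each of those proofs has a third component that you have left out entirely: a table of \emph{non-degenerations}, proved via Lemma~\ref{main2} by exhibiting an upper-triangular-stable Zariski-closed set $\mathcal{R}$ containing every member of the family but no representative of the excluded target. The paper's proof of the present corollary does the same, ruling out for instance ${\rm C}_{05}^*\not\to{\rm C}_{06}^\alpha$, ${\rm C}_{10}^*\not\to{\rm C}_{12}^\alpha$, ${\rm C}_{15}^*\not\to{\rm C}_{04}$, and so on. Without these non-degenerations you establish only the literal inclusions $\supseteq$ displayed in the statement; you do not obtain the \emph{description} of the closures (i.e.\ that nothing else lies in them), and in particular you cannot feed the result into the subsequent theorem identifying the irreducible components. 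Your plan should therefore include, for each family, the construction of an appropriate invariant set $\mathcal{R}$ separating it from the candidate targets of equal or lower orbit dimension not already covered by a degeneration.
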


\begin{proof}
Thanks to Theorem \ref{th:degcpl} we have all necessarily degenerations.
All necessarily the closure of the orbits of suitable families are given using the parametric bases and the parametric index included in following Table:

    \begin{longtable}{|lcl|ll|}
\hline
\multicolumn{3}{|c|}{\textrm{Degeneration}}  & \multicolumn{2}{|c|}{\textrm{Parametrized basis}} \\
\hline
\hline

${\rm C}_{05}^{t^{-1}} $ & $\to$ & ${\rm C}_{04}  $ & $
E_{1}(t)= e_2$ & $
E_{2}(t)= te_1$  \\
\hline

${\rm C}_{05}^{t+1} $ & $\to$ & ${\rm C}_{02}  $ & $
E_{1}(t)= e_1 + e_2$ & $
E_{2}(t)= te_2$  \\
\hline

${\rm C}_{06}^{t^{-1}} $ & $\to$ & ${\rm C}_{04}  $ & $
E_{1}(t)= e_2$ & $
E_{2}(t)= te_1$  \\
\hline

${\rm C}_{06}^t $ & $\to$ & ${\rm C}_{01}  $ & $
E_{1}(t)= e_1 + e_2$ & $
E_{2}(t)= t e_2$  \\
\hline

${\rm C}_{10}^{t^{-1}} $ & $\to$ & ${\rm C}_{04}  $ & $
E_{1}(t)= e_2$ & $
E_{2}(t)= te_1$  \\
\hline

${\rm C}_{12}^{t^{-1}} $ & $\to$ & ${\rm C}_{04}  $ & $
E_{1}(t)= e_2$ & $
E_{2}(t)= te_1$  \\
\hline

${\rm C}_{15}^{t^{-1}} $ & $\to$ & ${\rm C}_{16}  $ & $
E_{1}(t)= te_1$ & $
E_{2}(t)= t^2e_2$  \\
\hline

${\rm C}_{15}^{t^{-1}} $ & $\to$ & ${\rm C}_{08}  $ & $
E_{1}(t)= te_1$ & $
E_{2}(t)= \frac{1}{2}e_2$  \\
\hline

${\rm C}_{17}^{t^{-1}} $ & $\to$ & ${\rm C}_{03}  $ & $
E_{1}(t)= e_1$ & $
E_{2}(t)= t^{-1}e_2$  \\
\hline

    \end{longtable}

  \begin{longtable}{|lcl|l|}
\hline
\multicolumn{3}{|c|}{\textrm{Non-degeneration}} & \multicolumn{1}{|c|}{\textrm{Arguments}}\\
\hline
\hline

$ 
     {\rm C}_{05}^{*}$ &$\not \to$& $\begin{array}{l}
          {\rm C}_{17}^{\alpha}, {\rm C}_{06}^{\alpha}%, {\rm C}_{12}^{\alpha}, {\rm C}_{10}^{\alpha} %{\rm C}_{01}   
     \end{array} $ &
${\mathcal R}= \left\{ \begin{array}{l}
c_{11}'^1, c_{11}'^2, c_{12}'^2\in \mathbb{C}%%,\\ c_{ij}^{k}=c_{ij}'^{k}=0 \textrm{ otherwise} 
\end{array} \right\}$\\
\hline

$ 
     {\rm C}_{06}^{*} $&$\not \to$& $\begin{array}{l}
          {\rm C}_{17}^{\alpha}, {\rm C}_{05}^{\alpha}%, {\rm C}_{12}^{\alpha}, {\rm C}_{10}^{\alpha} %{\rm C}_{02}  
     \end{array} $ &
${\mathcal R}= \left\{ \begin{array}{l}
c_{11}'^1, c_{11}'^2, c_{12}'^2, c_{21}'^2\in \mathbb{C},
c_{21}'^2=c_{11}'^1%, \\c_{ij}^{k}=c_{ij}'^{k}=0 \textrm{ otherwise} 
\end{array} \right\}$\\
\hline

$ 
     {\rm C}_{10}^{*}$ &$\not \to$& $\begin{array}{l}
         {\rm C}_{05}^{\alpha},{\rm C}_{06}^{\alpha}, {\rm C}_{12}^{\alpha}
     %, {\rm C}_{02}, {\rm C}_{01} 
     \end{array} $  &
${\mathcal R}= \left\{ \begin{array}{l}
c_{11}^1, c_{11}^2, c_{12}^2, c_{21}^2, c_{11}'^2, c_{12}'^2\in \mathbb{C}, \\
c_{12}^2=c_{11}^1, c_{21}^2=c_{11}^1%%,\\ c_{ij}^{k}=c_{ij}'^{k}=0 \textrm{ otherwise} 
\end{array} \right\}$\\
\hline

$ 
     {\rm C}_{12}^{*} $&$\not \to$& $\begin{array}{l}
          {\rm C}_{05}^{\alpha},{\rm C}_{06}^{\alpha}, {\rm C}_{10}^{\alpha}
     %, {\rm C}_{02}, {\rm C}_{01} 
     \end{array} $  &
${\mathcal R}= \left\{ \begin{array}{l}
c_{11}^1, c_{11}^2, c_{11}'^2, c_{12}'^2\in \mathbb{C},
c_{11}^1 c_{11}'^2=-c_{11}^2 c_{12}'^2%%,\\ c_{ij}^{k}=c_{ij}'^{k}=0 \textrm{ otherwise} 
\end{array} \right\}$\\
\hline

$ 
     {\rm C}_{15}^{*}$ &$\not \to$&  $\begin{array}{l}
     {\rm C}_{05}^{\alpha\neq\frac{1}{2}},
     {\rm C}_{06}^{\alpha\neq2},  \\
          {\rm C}_{12}^{\alpha},
     {\rm C}_{10}^{\alpha},  {\rm C}_{04}
     \end{array}$
     %{\rm C}_{09},
     %{\rm C}_{14},
     %{\rm C}_{13},
     %{\rm C}_{11},
     %{\rm C}_{07},
     %{\rm C}_{02},
     %{\rm C}_{01},
  &
${\mathcal R}= \left\{ \begin{array}{l}
c_{11}^2, c_{11}'^1, c_{11}'^2, c_{12}'^2, c_{21}'^1, c_{21}'^2, c_{22}'^2\in \mathbb{C},\\
c_{11}'^1=c_{21}'^2, 2 c_{21}'^1=c_{22}'^2,  c_{12}'^2=2 c_{11}'^1%, c_{ij}^{k}=c_{ij}'^{k}=0 \textrm{ otherwise} 
\end{array} \right\}$\\
\hline

$ 
     {\rm C}_{17}^{*}$ &$\not \to$&$ \begin{array}{l}
               {\rm C}_{05}^{\alpha},
     {\rm C}_{06}^{\alpha}
     %{\rm C}_{12}^{\alpha},  \\
         % {\rm C}_{10}^{\alpha},  {\rm C}_{04}
     \end{array}$
     %{\rm C}_{09},
     %{\rm C}_{16},
     %{\rm C}_{15}^{\alpha},
     %{\rm C}_{14},
     %{\rm C}_{13},
     %{\rm C}_{11},
     %{\rm C}_{08},
     %{\rm C}_{07}, 
     %{\rm C}_{02},
     %{\rm C}_{01},
     
      &
${\mathcal R}= \left\{ \begin{array}{l}
c_{11}^2, c_{11}'^2, c_{12}'^2, c_{21}'^2, c_{22}'^2\in \mathbb{C},\\
c_{12}'^2=c_{21}'^2, c_{22}'^2 c_{11}'^2=(c_{12}'^2)^2%,\\c_{ij}^{k}=c_{ij}'^{k}=0 \textrm{ otherwise} 
\end{array} \right\}$\\
\hline

    \end{longtable}

\end{proof}

The geometric classification of $2$-dimensional commutative pre-Lie algebras follows by 
Theorem  \ref{th:degcpl} and Corollary \ref{th:degcplf}.

\begin{theorem}
The variety of $2$-dimensional commutative pre-Lie algebras has eleven irreducible components, corresponding to the rigid algebras ${\rm C}_{07}$, ${\rm C}_{09}$, ${\rm C}_{11}$, ${\rm C}_{13}$ and ${\rm C}_{14},$  and the families of  algebras ${\rm C}_{05}^{*}$, ${\rm C}_{06}^{*}$, ${\rm C}_{10}^{*}$, ${\rm C}_{12}^{*}$, ${\rm C}_{15}^{*}$ and ${\rm C}_{17}^{*}$.
\end{theorem}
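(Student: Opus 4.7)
The plan is to extract the irreducible components by a straightforward bookkeeping on top of the two preceding results. Recall that an algebra ${\rm C}$ (respectively, a parametric family ${\rm C}^{*}$) corresponds to an irreducible component of the variety precisely when $\overline{O({\rm C})}$ (respectively, $\overline{\{O({\rm C}^{*})\}}$) is maximal, with respect to inclusion, among all orbit closures of algebras and of parametric families in the variety.

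The first step is to locate the $4$-dimensional (``rigid'') orbits from the top row of the graph in Lemma~\ref{th:degcpl}, namely
\[
{\rm C}_{07},\ {\rm C}_{08},\ {\rm C}_{09},\ {\rm C}_{11},\ {\rm C}_{13},\ {\rm C}_{14},\ {\rm C}_{15}^{\gamma},\ {\rm C}_{16}.
\]
Corollary~\ref{th:degcplf} shows that $\overline{O({\rm C}_{08})}$ and $\overline{O({\rm C}_{16})}$ are both contained in $\overline{\{O({\rm C}_{15}^{*})\}}$, so ${\rm C}_{08}$ and ${\rm C}_{16}$ are absorbed into the family ${\rm C}_{15}^{*}$ and do not produce components of their own. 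The remaining rigid algebras ${\rm C}_{07}, {\rm C}_{09}, {\rm C}_{11}, {\rm C}_{13}, {\rm C}_{14}$ have no incoming arrows in the graph of Lemma~\ref{th:degcpl} and do not appear in any of the closures of parametric families listed in Corollary~\ref{th:degcplf}, so each contributes one irreducible component.

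The second step is to check the six candidate parametric families
\[
{\rm C}_{05}^{*},\ {\rm C}_{06}^{*},\ {\rm C}_{10}^{*},\ {\rm C}_{12}^{*},\ {\rm C}_{15}^{*},\ {\rm C}_{17}^{*}.
\]
For each of them I would invoke the non-degeneration certificates from Lemma~\ref{th:degcpl} and Corollary~\ref{th:degcplf} to rule out inclusion in the closure of any other family or any rigid algebra. Each certificate is a Zariski-closed subset $\mathcal{R}\subset\mathcal{T}_{2}$ stable under the action of the invertible upper (or lower) triangular matrices, containing a representative of the given family but admitting no representative of the conjectured larger orbit closure; the relevant lemmas on stable Zariski-closed sets then forbid the containment. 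In this way each of the six families is shown to be maximal.

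The step I expect to be the main obstacle—but which is already carried out inside Lemma~\ref{th:degcpl} and Corollary~\ref{th:degcplf}—is producing uniform-in-parameter $\mathcal{R}$-certificates for all non-containments between the various parametric families (and between each such family and the five rigid algebras). Once these certificates are in hand the theorem is a simple counting matter, yielding $5+6=11$ irreducible components, namely the rigid algebras ${\rm C}_{07}, {\rm C}_{09}, {\rm C}_{11}, {\rm C}_{13}, {\rm C}_{14}$ and the families ${\rm C}_{05}^{*}, {\rm C}_{06}^{*}, {\rm C}_{10}^{*}, {\rm C}_{12}^{*}, {\rm C}_{15}^{*}, {\rm C}_{17}^{*}$.
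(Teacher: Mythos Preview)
Your bookkeeping approach is exactly what the paper does: its proof is the single sentence that the result follows from Theorem~\ref{th:degcpl} and Corollary~\ref{th:degcplf}. However, your second step has a genuine gap for the family ${\rm C}_{17}^{*}$. You propose to invoke non-degeneration certificates to rule out the inclusion of $\overline{\{O({\rm C}_{17}^{*})\}}$ in every other orbit closure, but for ${\rm C}_{17}^{*}$ no such certificates are available --- because the inclusions in fact hold. The degeneration tables in Theorem~\ref{th:degcpl} record ${\rm C}_{11}\to{\rm C}_{17}^{\alpha}$, ${\rm C}_{13}\to{\rm C}_{17}^{\alpha}$, ${\rm C}_{14}\to{\rm C}_{17}^{\alpha}$ and ${\rm C}_{15}^{\gamma}\to{\rm C}_{17}^{\beta}$ with the target parameter unrestricted; moreover ${\rm C}_{10}^{\alpha}\to{\rm C}_{17}^{\alpha}$ and ${\rm C}_{12}^{\alpha}\to{\rm C}_{17}^{-\alpha}$ sweep out all of ${\rm C}_{17}^{*}$ as the family parameter varies, and Corollary~\ref{th:degcplf} lists $\overline{\{O({\rm C}_{17}^{\alpha})\}}$ inside $\overline{\{O({\rm C}_{10}^{*})\}}$, $\overline{\{O({\rm C}_{12}^{*})\}}$ and $\overline{\{O({\rm C}_{15}^{*})\}}$. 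Consequently $\overline{\{O({\rm C}_{17}^{*})\}}$, which has dimension at most $3$, sits properly inside the $4$-dimensional irreducible set $\overline{O({\rm C}_{11})}$ (and equally inside $\overline{O({\rm C}_{13})}$, $\overline{O({\rm C}_{14})}$, $\overline{\{O({\rm C}_{10}^{*})\}}$, $\overline{\{O({\rm C}_{12}^{*})\}}$, $\overline{\{O({\rm C}_{15}^{*})\}}$).

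Thus ${\rm C}_{17}^{*}$ is not maximal and cannot correspond to an irreducible component. Your argument for the five rigid algebras ${\rm C}_{07},{\rm C}_{09},{\rm C}_{11},{\rm C}_{13},{\rm C}_{14}$ and the five families ${\rm C}_{05}^{*},{\rm C}_{06}^{*},{\rm C}_{10}^{*},{\rm C}_{12}^{*},{\rm C}_{15}^{*}$ goes through as you describe, but the data in the two cited results support only ten components, not eleven; the listing of ${\rm C}_{17}^{*}$ appears to be an overcount that your proof sketch inherits from the stated theorem rather than something the certificates can justify.
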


\section{Anti-pre-Lie Poisson algebras}

The notion of anti-pre-Lie Poisson algebras is introduced in \cite{BL22}. 

\subsection{The algebraic classification of the 2-dimensional anti-pre-Lie
algebras}

\begin{definition}
An algebra $\left( {\rm A},\circ \right) $ is called anti-pre-Lie algebra
if the following equations are satisfied:%
\begin{longtable}{rcl}
$x\circ \left( y\circ z\right) -y\circ \left( x\circ z\right) -
(y \circ x - x \circ y)  \circ z$ &$=$&$0,$ \\
$( x\circ y - y \circ x)  \circ z+( y \circ z - z \circ y) \circ x+(z \circ x - x\circ z)  \circ
y $&$=$&$0.$
\end{longtable}
\end{definition}

It is obvious that any commutative algebra ${\rm A}$ is anti-pre-Lie algebra if and only if ${\rm A}$ is associative. So the algebraic classification of  $2$-dimensional commutative anti-pre-Lie algebras is given in Theorem \ref{asocc2}. The algebraic classification of
the $2$-dimensional non-commutative anti-pre-Lie algebras is given by \cite{BL22}.

\begin{theorem} \label{2dim anti-pre} 
Let $\left( {\rm A},\circ \right) $ be a nonzero $2$-dimensional 
anti-pre-Lie algebra. Then $\left( {\rm A},\circ \right) $ is isomorphic to one
and only one of the following algebras:
\begin{longtable}{lclcllcllcl} 

${\rm A}_{01}$&$:$&$e_1 \circ e_1 $&$=$&$ e_1,$&$ e_2 \circ e_2 $&$= $&$e_2$\\ 

${\rm A}_{02}$&$:$& $ e_1 \circ e_1  $&$=$&$ e_1,$&$ e_1 \circ e_2  $&$=$&$ e_2,$ & $e_2 \circ e_1 $&$=$&$ e_2$\\  

${\rm A}_{03}$&$:$ & $e_1 \circ e_1 $&$=$&$ e_1$\\ 
 
${\rm A}_{04}$&$:$ & $ e_1 \circ e_1  $&$=$&$ e_2$\\
 
 $ {\rm A}_{05}$&$:$ &$e_{1}\circ e_{1} $&$=$&$ -e_{2},$&$ e_{2}\circ e_{1} $&$=$&$ -e_{1}$\\

${\rm A}_{06}^{\lambda}$&$:$&$e_{2}\circ e_{1}$&$=$&$-e_{1},$&$e_{2}\circ
e_{2}$&$=$&$\lambda e_{2}$\\

 ${\rm A}_{07}$&$:$&$  e_{2}\circ e_{1}$&$=$&$-e_{1},$&$e_{2}\circ
e_{2}$&$=$&$e_{1}-e_{2}$\\

 ${\rm A}_{08}^{\lambda \neq -1}$&$:$& $e_{1}\circ e_{2}$&$=$&$\left(
\lambda +1\right) e_{1},$&$e_{2}\circ e_{1}$&$=$&$\lambda e_{1},$&$e_{2}\circ
e_{2}$&$=$&$\left( \lambda -1\right) e_{2}$\\

${\rm A}_{09} $&$:$ &$e_{1}\circ e_{2}$&$=$&$-e_{1},$&$e_{2}\circ
e_{1}$&$=$&$-2e_{1},$&$e_{2}\circ e_{2}$&$=$&$e_{1}-3e_{2}.$
\end{longtable}
\end{theorem}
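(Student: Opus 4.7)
The plan is to classify separately the commutative and the strictly non-commutative $2$-dimensional anti-pre-Lie algebras. As noted in the paragraph preceding the statement, a commutative algebra is anti-pre-Lie if and only if it is associative, so the commutative part is immediate from Theorem \ref{asocc2}, yielding exactly the representatives ${\rm A}_{01}$, ${\rm A}_{02}$, ${\rm A}_{03}$, ${\rm A}_{04}$.

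For the strictly non-commutative case I would write general structure constants $e_{i}\circ e_{j}=\sum_{k}c_{ij}^{k}e_{k}$ and substitute into the two defining identities. A first useful observation is that in dimension $2$ the cyclic identity $[x,y]\circ z+[y,z]\circ x+[z,x]\circ y=0$ is automatically satisfied whenever any two of $x,y,z$ coincide, and hence is vacuous on a two-element basis. A second structural observation, which I would prove by a short manipulation of the left-symmetry-type identity, is that the commutator $[x,y]=x\circ y - y\circ x$ endows ${\rm A}$ with a Lie algebra structure. Since there are only two isomorphism classes of $2$-dimensional complex Lie algebras, I would stratify the non-commutative case according to this associated Lie algebra.

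If the commutator is abelian, then the product is symmetric and we fall back to the commutative (hence associative) case. If the commutator is the non-abelian algebra $r_{2}$, I would fix a basis in which $[e_{2},e_{1}]=e_{1}$; the first anti-pre-Lie identity then becomes a manageable system of polynomial constraints on the entries of $e_{1}\circ e_{1}$, $e_{2}\circ e_{2}$, and on the symmetric parts of $e_{1}\circ e_{2}$ and $e_{2}\circ e_{1}$. A case split on the vanishing of these pieces, together with the action of the residual automorphism group of $r_{2}$ (upper-triangular matrices up to a sign), should produce the five families ${\rm A}_{05}$, ${\rm A}_{06}^{\lambda}$, ${\rm A}_{07}$, ${\rm A}_{08}^{\lambda\neq -1}$, and ${\rm A}_{09}$.

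The main obstacle I expect is the bookkeeping of the parametric families: verifying that ${\rm A}_{06}^{\lambda}$ and ${\rm A}_{08}^{\lambda\neq -1}$ give pairwise non-isomorphic algebras for distinct values of $\lambda$, and that each of the sporadic algebras ${\rm A}_{05}$, ${\rm A}_{07}$, ${\rm A}_{09}$ is genuinely distinct from the continuous families, will require exhibiting discrete invariants (for instance traces or eigenvalues of left- and right-multiplication operators) or directly excluding isomorphisms using the residual stabiliser. This is conceptually routine but error-prone.
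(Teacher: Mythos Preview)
The paper does not actually prove this theorem: it simply observes that the commutative case reduces to Theorem~\ref{asocc2} and then cites \cite{BL22} for the non-commutative anti-pre-Lie algebras. Your proposal, by contrast, outlines a self-contained proof.

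Your strategy is sound. The observation that the cyclic identity is vacuous in dimension~$2$ is correct (any triple of basis vectors has a repeat, and the identity trivialises whenever two arguments coincide), and the fact that the commutator is a Lie bracket is exactly the ``sub-adjacent Lie algebra'' property established in \cite{BL22}, so stratifying by the underlying Lie algebra is the natural route and is essentially what that reference does. One small correction: the automorphism group of $r_{2}$ with $[e_{2},e_{1}]=e_{1}$ consists of the maps $e_{1}\mapsto a e_{1}$, $e_{2}\mapsto b e_{1}+e_{2}$ with $a\in\mathbb{C}^{*}$, $b\in\mathbb{C}$; there is no sign ambiguity and the diagonal entry on $e_{2}$ is forced to be~$1$, which matters when you normalise the parameters in ${\rm A}_{06}^{\lambda}$ and ${\rm A}_{08}^{\lambda}$.

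What your approach buys is an explicit argument inside the paper rather than a pointer to the literature; what the paper's citation buys is brevity and avoidance of the error-prone separation of orbits that you correctly flag as the main obstacle. If you carry out your plan, the invariants you mention (eigenvalues of $L_{e_{2}}$ and $R_{e_{2}}$ on $\langle e_{1}\rangle$, together with the trace of $L_{e_{2}}$ on the full space) are indeed enough to distinguish all the listed algebras.
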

\subsection{The algebraic classification of the 2-dimensional anti-pre-Lie
Poisson algebras}
\begin{definition}
An  anti-pre-Lie Poisson algebra  is a %
 vector space   equipped with 
 a commutative associative multiplication  $\cdot$ and
an anti-pre-Lie  multiplication $\circ$.
These two operations are required to satisfy the following conditions: 
\begin{longtable}{rcl}
$2\left( x\circ y\right) \cdot z-2\left( y\circ x\right) \cdot z$&$=$&$y\cdot
\left( x\circ z\right) -x\cdot \left( y\circ z\right),$ \\
$2 \ x\circ\left( y\cdot z\right)$  &$=$&$\left( z\cdot x\right) \circ y+z\cdot
\left( x\circ y\right).$
\end{longtable}%
\end{definition}
Anti-pre-Lie algebras $\left( \rm{A},\circ \right) $ 
with zero commutative associative multiplication are isomorphic to   
 anti-pre-Lie Poisson algebras given in Theorem \ref{2dim anti-prepois}.
 Hence,   we only have to study the anti-pre-Lie Poisson structures defined on every commutative associative algebra from Theorem \ref{asocc2}  to obtain the algebraic classification of the 2-dimensional anti-pre-Lie Poisson algebras. To do this, we first need the following definition.

\begin{definition}
Let $\left( \rm{A},\cdot \right) $ be a commutative associative
algebra. Define ${\rm Z}_{\rm APL}^{2}\left( \rm{A},\rm{A}\right) $ to be
the set of all bilinear maps $\theta :\rm{A}\times \rm{A}%
\longrightarrow \rm{A}$ such that:%
\begin{longtable}{rcl}
$\theta \left( x,\theta \left( y,z\right) \right) -\theta \left( y,\theta
\left( x,z\right) \right) -\theta \left( \theta \left( y,x\right) -\theta
\left( x,y\right) ,z\right)$ & $=$&$0,$ \\
$\theta \left( \theta \left( x,y\right) -\theta \left( y,x\right) ,z\right)
+\theta \left( \theta \left( y,z\right) -\theta \left( z,y\right) ,x\right)
+\theta \left( \theta \left( z,x\right) -\theta \left( x,z\right) ,y\right)$
& $=$&$0,$ \\
$2\theta \left( x,y\right) \cdot z-2\theta \left( y,x\right) \cdot z-y\cdot
\theta \left( x,z\right) +x\cdot \theta \left( y,z\right)$ & $=$&$0,$ \\
$2\theta \left( x,y\cdot z\right) -\theta \left( z\cdot x,y\right) -z\cdot
\theta \left( x,y\right)$ & $=$&$0.$
\end{longtable}%
If $\theta \in {\rm Z}_{\rm APL}^{2}\left( \rm{A},\rm{A}\right) $, then $\left( \rm{A},\cdot ,\circ 
\right) $ is an anti-pre-Lie Poisson algebra where $x\circ y=\theta \left(
x,y\right) $ for all $x,y\in \rm{A}$.
\end{definition}

\subsubsection{Anti-pre-Lie Poisson algebras defined on ${\rm A}_{01}$}

The anti-pre-Lie Poisson defined on ${\rm A}_{01}$ are: 
\begin{equation*}
\mathrm{A}_{10}^{\alpha ,\beta }:=\left\{ 
\begin{tabular}{lcllcl}
$e_{1}\cdot e_{1}$&$=$&$e_{1},$ & $e_{2}\cdot e_{2}$&$=$&$e_{2},$ \\ 
$e_{1}\circ e_{1}$&$=$&$\alpha e_{1},$ & $e_{2}\circ e_{2}$&$=$&$\beta e_{2}.$%
\end{tabular}%
\right. 
\end{equation*}%
Since $\mathrm{Aut}({\rm A}_{01})={\mathbb S}_{2}$, we have $\mathrm{A}%
_{10}^{\alpha ,\beta }\cong \mathrm{A}_{10}^{\beta ,\alpha }$.

\subsubsection{Anti-pre-Lie Poisson algebras defined on ${\rm{A}}_{02}$}

The anti-pre-Lie Poisson structures defined on ${\rm A}_{02}$ are: 
\begin{equation*}
\left\{ 
\begin{tabular}{lcllcl lcl}
$e_{1}\cdot e_{1}$&$=$&$e_{1},$ & $e_{1}\cdot e_{2}$&$=$&$e_{2},$ &  \\ 
$e_{1}\circ e_{1} $&$=$&$\left( 2\alpha _{2}-\alpha _{3}\right) e_{1}+\alpha
_{1}e_{2},$ & $e_{1}\circ e_{2}$&$=$&$\alpha _{2}e_{2},$ & $e_{2}\circ
e_{1}$&$=$&$\alpha _{3}e_{2}.$%
\end{tabular}%
\right. 
\end{equation*}

By Lemma \ref{isom}, 
to study the action of $\mathrm{Aut}(\mathrm{A}%
_{02})$, we have to distinguish two cases:

\begin{itemize}
\item If $\alpha _{1}\neq 0$, then choose $\lambda _{22}=\alpha _{1}$ and
obtain the parametric family: 
\begin{equation*}
\mathrm{A}_{11}^{\alpha ,\beta }:=\left\{ 
\begin{tabular}{lcllcl lcl}
$e_{1}\cdot e_{1}$&$=$&$e_{1},$ & $e_{1}\cdot e_{2}$&$=$&$e_{2},$ &  \\ 
$e_{1}\circ e_{1}$&$=$&$\left( 2\alpha -\beta \right) e_{1}+e_{2},$ & $e_{1}\circ
e_{2}$&$=$&$\alpha e_{2},$ & $e_{2}\circ e_{1}$&$=$&$\beta e_{2}.$%
\end{tabular}%
\right. 
\end{equation*}

\item If $\alpha _{1}=0$, then we have: 
\begin{equation*}
\mathrm{A}_{12}^{\alpha ,\beta }:=\left\{ 
\begin{tabular}{lcllcllcl}
$e_{1}\cdot e_{1}$&$=$&$e_{1},$ & $e_{1}\cdot e_{2}$&$=$&$e_{2},$ &  \\ 
$e_{1}\circ e_{1}$&$=$&$\left( 2\alpha -\beta \right) e_{1},$ & $e_{1}\circ
e_{2}$&$=$&$\alpha e_{2},$ & $e_{2}\circ e_{1}$&$=$&$\beta e_{2}.$%
\end{tabular}%
\right. 
\end{equation*}
\end{itemize}

\subsubsection{Anti-pre-Lie Poisson algebras defined on ${\rm A}_{03}$}

The anti-pre-Lie Poisson structures defined on ${\rm A}_{03}$ are: 
\begin{equation*}
\left\{ 
\begin{tabular}{lcllcllcl}
$e_{1}\cdot e_{1}$&$=$&$e_{1},$ &  &  \\ 
$e_{1}\circ e_{1}$&$=$&$\alpha _{1}e_{1},$ & $e_{2}\circ e_{2}$&$=$&$\alpha _{2}e_{2}.$
& 
\end{tabular}%
\right. 
\end{equation*}

The action of $\mathrm{Aut}({\rm A}_{03})$ produces two cases:

\begin{itemize}
\item If $\alpha _{2}\neq 0$, then set $\lambda _{22}=\alpha _{2}^{-1}$ and
hence we obtain the algebra: 
\begin{equation*}
\mathrm{A}_{13}^{\alpha }:=\left\{ 
\begin{tabular}{lcllcllcl}
$e_{1}\cdot e_{1}$&$=$&$e_{1},$ &  &  \\ 
$e_{1}\circ e_{1}$&$=$&$\alpha e_{1},$ & $e_{2}\circ e_{2}$&$=$&$e_{2}.$ & 
\end{tabular}%
\right. 
\end{equation*}

\item If $\alpha _{2}=0$, then we obtain the family: 
\begin{equation*}
\mathrm{A}_{14}^{\alpha }:=\left\{ 
\begin{tabular}{lcllcl}
$e_{1}\cdot e_{1}$&$=$&$e_{1},$ &  \\ 
$e_{1}\circ e_{1}$&$=$&$\alpha e_{1}.$ & 
\end{tabular}%
\right. 
\end{equation*}
\end{itemize}

\subsubsection{Anti-pre-Lie Poisson algebras defined on ${\rm A}_{04}$}

The anti-pre-Lie Poisson structures defined on ${\rm A}_{04}$ are: 
\begin{equation*}
\left\{ 
\begin{tabular}{lcllcllcl}
$e_{1}\cdot e_{1}$&$=$&$e_{2},$ &  &  \\ 
$e_{1}\circ e_{1}$&$=$&$\left( 2\alpha _{2}-\alpha _{3}\right) e_{1}+\alpha
_{1}e_{2},$ & $e_{1}\circ e_{2}$&$=$&$\alpha _{2}e_{2},$ & $e_{2}\circ
e_{1}$&$=$&$\alpha _{3}e_{2}.$%
\end{tabular}%
\right. 
\end{equation*}%
The following cases arise from the action of $\mathrm{Aut}({\rm A}%
_{04})$ on this structure:

\begin{itemize}
\item If $\alpha _{2}\neq 0$, then 

\begin{itemize}
\item If $2\alpha _{3}-\alpha _{2}\neq 0$, choose $\lambda _{11}=\alpha
_{2}^{-1}$ and $\lambda _{21}=\frac{\alpha _{1}}{\alpha _{2}\left( \alpha
_{2}-2\alpha _{3}\right) }$ obtaining the family: 
\begin{equation*}
\mathrm{A}_{15}^{\alpha\neq\frac{1}{2}}:=\left\{ 
\begin{tabular}{lcllcllcl}
$e_{1}\cdot e_{1}$&$=$&$e_{2},$ &  &  \\ 
$e_{1}\circ e_{1}$&$=$&$\left( 2-\alpha \right) e_{1},$ & $e_{1}\circ e_{2}$&$=$&$e_{2},$
& $e_{2}\circ e_{1}$&$=$&$\alpha e_{2}.$%
\end{tabular}%
\right. 
\end{equation*}

Note that here $\alpha\neq\frac{1}{2}$ follows from $2\alpha _{3}-\alpha _{2}\neq 0$.

\item If $2\alpha _{3}-\alpha _{2}=0$, choose $\lambda _{11}=\alpha _{2}^{-1}
$ to obtain the family:%
\begin{equation*}
{
\mathrm{A}_{16}^{\alpha}:=\left\{ 
\begin{tabular}{lcllcllcl}
$e_{1}\cdot e_{1}$&$=$&$e_{2},$ &  &  \\ 
$e_{1}\circ e_{1}$&$=$&$\frac{3}{2} e_{1}+\alpha e_{2},$ & $e_{1}\circ
e_{2}$&$=$&$e_{2},$ & 
$e_{2}\circ e_{1}$&$=$&$\frac{1}{2} e_{2}.$%
\end{tabular}%
\right. }
\end{equation*}

Moreover, observe that $\mathrm{A}_{15}^{\frac{1}{2}}= \mathrm{A}_{16}^{0}$.

\end{itemize}

\item If $\alpha _{2}=0$, then 

\begin{itemize}
\item If $\alpha _{3}\neq 0$, choose $\lambda _{11}=\alpha _{3}^{-1}$ and $%
\lambda _{21}=- \frac{\alpha _{1}}{2\alpha _{3}^{2}}$ obtaining the
algebra: 
\begin{equation*}
\mathrm{A}_{17}:=\left\{ 
\begin{tabular}{lcllcllcl}
$e_{1}\cdot e_{1}$&$=$&$e_{2},$ \\ 
$ e_{1}\circ e_{1}$&$=$&$-e_{1},$ & $ e_{2}\circ e_{1}$&$=$&$e_{2}.$%
\end{tabular}%
\right. 
\end{equation*}

\item If $\alpha _{3}=0$, then we have the family: 
\begin{equation*}
\mathrm{A}_{18}^{\alpha }:=\left\{ 
\begin{tabular}{lcl}
$e_{1}\cdot e_{1}$&$=$&$e_{2},$ \\ 
$e_{1}\circ e_{1}$&$=$&$\alpha e_{2}.$%
\end{tabular}%
\right. 
\end{equation*}
\end{itemize}
\end{itemize}

\begin{theorem}
\label{2dim anti-prepois} 
Let $\left( {\rm A},\cdot , \circ \right) $ be a nonzero $2$-dimensional anti-pre-Lie Poisson algebra. Then ${\rm A}$ is isomorphic to one
anti-pre-Lie algebra listed in   Theorem \ref{2dim anti-pre} or to one algebra listed below: 

\begin{longtable}{lcl}

 ${\rm A}_{10}^{\alpha, \beta} $&$:$&$ \left\{ 
\begin{tabular}{lcllcllcl}
$e_{1}\cdot e_{1}$&$=$&$e_{1},$ & $e_{2}\cdot e_{2}$&$=$&$e_{2},$ \\ 
$e_{1}\circ e_{1}$&$=$&$\alpha e_{1},$ & $e_{2}\circ e_{2}$&$=$&$\beta e_{2}.$%
\end{tabular}%
\right. $\\

 ${\rm A}_{11}^{\alpha, \beta}$&$ : $&$\left\{ 
\begin{tabular}{lcllcllcl}
$e_{1}\cdot e_{1}$&$=$&$e_{1},$ & $e_{1}\cdot e_{2}$&$=$&$e_{2},$ &  \\ 
$e_{1}\circ e_{1}$&$=$&$\left( 2\alpha -\beta \right) e_{1}+e_{2},$ & $e_{1}\circ
e_{2}$&$=$&$\alpha e_{2},$ & $e_{2}\circ e_{1}$&$=$&$\beta e_{2}.$%
\end{tabular}%
\right.$\\

 ${\rm A}_{12}^{\alpha, \beta} $&$: $&$\left\{ 
\begin{tabular}{lcllcllcl}
$e_{1}\cdot e_{1}$&$=$&$e_{1},$ & $e_{1}\cdot e_{2}$&$=$&$e_{2},$ &  \\ 
$e_{1}\circ e_{1}$&$=$&$\left( 2\alpha -\beta \right) e_{1},$ & $e_{1}\circ
e_{2}$&$=$&$\alpha e_{2},$ & $e_{2}\circ e_{1}$&$=$&$\beta e_{2}.$%
\end{tabular}%
\right.$\\

 ${\rm A}_{13}^{\alpha}$&$ :$&$ \left\{ 
\begin{tabular}{lcllcllcl}
$e_{1}\cdot e_{1}$&$=$&$e_{1},$ &  &  \\ 
$e_{1}\circ e_{1}$&$=$&$\alpha e_{1},$ & $e_{2}\circ e_{2}$&$=$&$e_{2}.$ & 
\end{tabular}%
\right.$\\

 ${\rm A}_{14}^{\alpha}$&$ :$&$ \left\{ 
\begin{tabular}{lcllcl}
$e_{1}\cdot e_{1}$&$=$&$e_{1},$ &  \\ 
$e_{1}\circ e_{1}$&$=$&$\alpha e_{1}.$ & 
\end{tabular}%
\right.$\\

 ${\rm A}_{15}^{\alpha\neq\frac{1}{2}}$&$ :$&$ \left\{ 
\begin{tabular}{lcllcllcl}
$e_{1}\cdot e_{1}$&$=$&$e_{2},$ &  &  \\ 
$e_{1}\circ e_{1}$&$=$&$\left( 2-\alpha \right) e_{1},$ & $e_{1}\circ e_{2}$&$=$&$e_{2},$
& $e_{2}\circ e_{1}$&$=$&$\alpha e_{2}.$%
\end{tabular}%
\right.$\\

${\rm A}_{16}^{\alpha}$&$ :$&$ \left\{ 
\begin{tabular}{lcllcllcl}
$e_{1}\cdot e_{1}$&$=$&$e_{2},$ &  &  \\ 
$e_{1}\circ e_{1}$&$=$&$\frac{3}{2} e_{1}+\alpha e_{2},$ & $e_{1}\circ
e_{2}$&$=$&$e_{2},$ & $e_{2}\circ e_{1}$&$=$&$\frac{1}{2} e_{2}.$%
\end{tabular}%
\right.$\\

 ${\rm A}_{17} $&$:$&$ \left\{ 
\begin{tabular}{lcllcl}
$e_{1}\cdot e_{1}$&$=$&$e_{2},$ \\ 
$ e_{1}\circ e_{1}$&$=$&$-e_{1},$ & $ e_{2}\circ e_{1}$&$=$&$e_{2}.$%
\end{tabular}%
\right.$\\

 ${\rm A}_{18}^{\alpha} $&$:$&$ \left\{ 
\begin{tabular}{lcl}
$e_{1}\cdot e_{1}$&$=$&$e_{2},$ \\ 
$e_{1}\circ e_{1}$&$=$&$\alpha e_{2}.$%
\end{tabular}%
\right.$

\end{longtable}

Between these algebras, the only non-trivial isomorphism is ${\rm A}_{10}^{\alpha,\beta} \cong {\rm A}_{10}^{\beta,\alpha}$.

\end{theorem}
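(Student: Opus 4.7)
The plan is to apply the three-step procedure from Section 2.1: for each $2$-dimensional commutative associative algebra $({\rm A},\cdot)$ listed in Theorem \ref{asocc2}, I will (i) compute the affine space ${\rm Z}^2_{\rm APL}({\rm A},{\rm A})$ of valid second multiplications $\theta$, (ii) let ${\rm Aut}({\rm A})$ act by $(\theta*\phi)(x,y)=\phi^{-1}(\theta(\phi(x),\phi(y)))$, and (iii) extract one representative from each orbit, invoking Lemma \ref{isom} to guarantee that distinct orbits give non-isomorphic algebras. The case of trivial $\cdot$ is handled separately: then ${\rm Z}^2_{\rm APL}$ reduces to the defining identities of an anti-pre-Lie algebra, so Theorem \ref{2dim anti-pre} immediately produces the list ${\rm A}_{01}$--${\rm A}_{09}$ viewed as anti-pre-Lie Poisson algebras with zero $\cdot$.

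Next, for each $({\rm A}_{0i},\cdot)$ I parametrize a generic bilinear map $\theta$ by eight unknown structure constants $\gamma_{ab}^c\in\mathbb{C}$ and specialize the four identities in the definition of ${\rm Z}^2_{\rm APL}$ to every triple $(x,y,z)\in\{e_1,e_2\}^3$. The resulting linear/polynomial system cuts down the free parameters drastically. For ${\rm A}_{01}$, only $\gamma_{11}^1$ and $\gamma_{22}^2$ survive, giving the two-parameter family displayed as the input to ${\rm A}_{10}^{\alpha,\beta}$. For ${\rm A}_{02}$ and ${\rm A}_{04}$, the compatibility identities force the characteristic relation $\gamma_{11}^1 = 2\gamma_{12}^2-\gamma_{21}^2$ and $\gamma_{12}^1=\gamma_{21}^1=\gamma_{22}^1=\gamma_{22}^2=0$, yielding exactly the three-parameter normal forms stated in the subsections on ${\rm A}_{02}$ and ${\rm A}_{04}$. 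For ${\rm A}_{03}$ one gets the two independent parameters $\gamma_{11}^1$ and $\gamma_{22}^2$.

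Then I factor by ${\rm Aut}({\rm A})$ using Lemma \ref{asocc2aut}. For ${\rm A}_{01}$ the swap $\phi\in{\mathbb S}_2$ produces the identification ${\rm A}_{10}^{\alpha,\beta}\cong {\rm A}_{10}^{\beta,\alpha}$. For ${\rm A}_{02}$ and ${\rm A}_{03}$ the one-parameter action $\phi(e_2)=\xi e_2$ lets me rescale one designated coefficient to $1$; the case split on whether that coefficient vanishes produces the pairs $({\rm A}_{11}^{\alpha,\beta},{\rm A}_{12}^{\alpha,\beta})$ and $({\rm A}_{13}^\alpha,{\rm A}_{14}^\alpha)$ respectively. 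For ${\rm A}_{04}$ the two-parameter action $\phi(e_1)=\xi e_1+\nu e_2$, $\phi(e_2)=\xi^2 e_2$ gives me enough freedom to eliminate one parameter via $\nu$ and normalize another via $\xi$, producing the branching ${\rm A}_{15}^{\alpha},{\rm A}_{16}^{\alpha},{\rm A}_{17},{\rm A}_{18}^{\alpha}$.

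The main obstacle is the bookkeeping for ${\rm A}_{04}$: the branching into four subfamilies depends successively on whether $\alpha_2\neq 0$, then on whether $2\alpha_3-\alpha_2\neq 0$ (which is what forces the restriction $\alpha\neq\tfrac{1}{2}$ in ${\rm A}_{15}^\alpha$), and finally on whether $\alpha_3\neq 0$ in the residual case. I must also verify the one accidental coincidence ${\rm A}_{15}^{1/2}={\rm A}_{16}^{0}$ (needed to keep the two families disjoint) and check that no further normalization collapses parameter values within a given family—this amounts to observing that the residual ${\rm Aut}({\rm A})$-stabilizer of each normal form acts trivially on the remaining parameters, so that the only non-trivial isomorphism in the complete list is the ${\mathbb S}_2$-symmetry ${\rm A}_{10}^{\alpha,\beta}\cong {\rm A}_{10}^{\beta,\alpha}$ already recorded.
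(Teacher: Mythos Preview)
Your proposal is correct and follows essentially the same approach as the paper: compute ${\rm Z}_{\rm APL}^{2}({\rm A}_{0i},{\rm A}_{0i})$ for each of the four nonzero commutative associative algebras, then normalize the resulting parametric families under ${\rm Aut}({\rm A}_{0i})$ as described in Lemma~\ref{asocc2aut}, with the trivial-$\cdot$ case reducing to Theorem~\ref{2dim anti-pre}. Your identification of the constraint $\gamma_{11}^1=2\gamma_{12}^2-\gamma_{21}^2$ on ${\rm A}_{02}$ and ${\rm A}_{04}$, the branching for ${\rm A}_{04}$ including the restriction $\alpha\neq\tfrac12$ and the coincidence ${\rm A}_{15}^{1/2}={\rm A}_{16}^{0}$, and the sole nontrivial isomorphism ${\rm A}_{10}^{\alpha,\beta}\cong{\rm A}_{10}^{\beta,\alpha}$ all match the paper exactly.
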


\black
\subsection{Degenerations of $2$-dimensional anti-pre-Lie Poisson algebras}

\begin{lemma} \label{th:degapre}
The graph of primary degenerations and non-degenerations of the variety of $2$-dimensional anti-pre-Lie Poisson algebras is given in Figure  5, where the numbers on the right side are the dimensions of the corresponding orbits.

\end{lemma}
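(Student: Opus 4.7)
The plan is to follow the same three-part strategy used in the proofs of Lemmas \ref{th:degleib}, \ref{th:degtleib} and Theorem \ref{th:degnov}. First, I would compute $\mathfrak{Der}({\rm A})$ for each of the eighteen (families of) $2$-dimensional anti-pre-Lie Poisson algebras from Theorem \ref{2dim anti-pre} and Theorem \ref{2dim anti-prepois}, and use $\dim\,O({\rm A})=4-\dim\,\mathfrak{Der}({\rm A})$ to place each node on the correct horizontal level of Figure 5. Since the anti-pre-Lie Poisson identities are linear in each structure constant, the derivation computations reduce to solving a small linear system in the four entries of a $2\times 2$ matrix for each algebra, which is routine.

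Next, for the primary degenerations I would split into two groups. Those between the ``round'' nodes (algebras with $\cdot=0$, i.e.\ the anti-pre-Lie algebras ${\rm A}_{05},{\rm A}_{06}^{\lambda},{\rm A}_{07},{\rm A}_{08}^{\lambda},{\rm A}_{09}$) can in principle be imported from the general graph of $2$-dimensional algebras in \cite{kv16} after verifying that the target lies inside the anti-pre-Lie variety; those involving a non-trivial $\cdot$ I would exhibit one by one with an explicit parametrized basis $E_1(t),E_2(t)$, analogous to the tables in the previous sections. Candidate moves suggested by the algebraic classification (e.g.\ ${\rm A}_{10}^{\alpha,\beta}\to{\rm A}_{14}^{\alpha}$ via $E_1=e_1+e_2,E_2=te_2$; ${\rm A}_{11}^{\alpha,\beta}\to{\rm A}_{12}^{\alpha,\beta}$ via $E_1=e_1,E_2=t^{-1}e_2$; ${\rm A}_{15}^{\alpha}\to{\rm A}_{18}^{\beta}$ by absorbing the $e_1$-component of $e_1\circ e_1$ and rescaling) should cover most arrows, and I would check each limit $\lim_{t\to 0}c_{ij}^k(t)$ and $\lim_{t\to 0}c_{ij}'^k(t)$ matches the target structure constants.

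For the non-degenerations I would apply Lemma \ref{main2} (and its single-algebra analogue preceding Remark \ref{redbil}) by exhibiting, for each excluded pair, a Zariski-closed subset $\mathcal R\subseteq{\mathcal T}_2$ that is stable under upper or lower triangular changes of basis, contains a representative of the source, but contains no representative of the target. Following the pattern of the preceding sections, each $\mathcal R$ will be cut out by a short list of linear equations in the $c_{ij}^k, c_{ij}'^k$ (plus the Leibniz-type identities that define the variety); I would read off the relevant linear relations from the multiplication table of the source, verify stability by direct substitution of a triangular $\phi$, and then show the target's structure constants cannot be made to satisfy those relations in any basis compatible with $\mathcal R$. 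Remark \ref{redbil} will also quickly rule out any arrow from a node with $\cdot=0$ to a node with $\cdot\ne0$ and vice versa via the companion degeneration graph of commutative associative algebras from \cite{kv16}.

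The main obstacle I anticipate is combinatorial rather than conceptual: the variety has three parametric families ${\rm A}_{10}^{*,*},{\rm A}_{11}^{*,*},{\rm A}_{12}^{*,*}$ of two parameters and several one-parameter families, so the number of potential primary arcs to examine is large, and for each one of the families one has to carefully identify which special parameter values produce new orbits (e.g.\ the coincidence $\mathrm A_{15}^{1/2}=\mathrm A_{16}^{0}$ noted in the algebraic classification must not be overcounted as a degeneration). The technical difficulty will be finding $\mathcal R$-sets sharp enough to rule out degenerations of the form ${\rm A}_{11}^{\alpha,\beta}\not\to{\rm A}_{12}^{\gamma,\delta}$ for the ``wrong'' $(\gamma,\delta)$ while still being triangular-stable; this will likely require combining linear relations coming from both the associative multiplication (of type $c_{11}^1=c_{12}^2=c_{21}^2$) and from the anti-pre-Lie multiplication (encoding the relation $c_{11}'^1=2c_{12}'^2-c_{21}'^2$).
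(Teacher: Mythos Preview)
Your plan matches the paper's proof almost exactly: compute derivation algebras for orbit dimensions, exhibit explicit parametrized bases $E_1(t),E_2(t)$ for each primary degeneration, and rule out the remaining arrows with triangular-stable Zariski-closed sets $\mathcal R$ cut out by linear relations in the $c_{ij}^k,c_{ij}'^k$ (you even correctly anticipated the key relation $c_{11}'^{1}=2c_{12}'^{2}-c_{21}'^{2}$, which the paper uses repeatedly for ${\rm A}_{11}^{*},{\rm A}_{12}^{*},{\rm A}_{15}^{*}$). Two small corrections: the ``vice versa'' in your last paragraph is wrong, since an algebra with $\cdot\neq 0$ \emph{can} degenerate to one with $\cdot=0$ (Figure~5 has several such arrows, e.g.\ ${\rm A}_{15}^{0}\to{\rm A}_{06}^{-2}$), so Remark~\ref{redbil} only helps in one direction; and the paper does not import the round-node degenerations from \cite{kv16} here but simply writes down explicit parametrized bases for all of them alongside the others.
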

\begin{center}
	
	\begin{tikzpicture}[->,>=stealth,shorten >=0.05cm,auto,node distance=1.3cm,
	thick,
	main node/.style={rectangle,draw,fill=gray!10,rounded corners=1.5ex,font=\sffamily \scriptsize \bfseries },
	rigid node/.style={rectangle,draw,fill=black!20,rounded corners=0ex,font=\sffamily \scriptsize \bfseries }, 
	poisson node/.style={rectangle,draw,fill=black!20,rounded corners=0ex,font=\sffamily \scriptsize \bfseries },
	ac node/.style={rectangle,draw,fill=black!20,rounded corners=0ex,font=\sffamily \scriptsize \bfseries },
	lie node/.style={rectangle,draw,fill=black!20,rounded corners=0ex,font=\sffamily \scriptsize \bfseries },
	style={draw,font=\sffamily \scriptsize \bfseries }]

	\node (3) at (0.5,9) {$4$};
	\node (2) at (0.5,5) {$3$};
	\node (1) at (0.5,1) {$2$};
	\node (0)  at (0.5,0) {$0$};

    \node[main node] (c20) at (-7.75,0) {${\mathbb C^8}$};
    
    \node[main node] (c24) at (-10,1) {$\mathrm{A}_{04}$};
    \node[main node] (c26p) at (-13,1) {$\mathrm{A}_{06}^{-1}$};
    \node[main node] (c28p) at (-5.5,1) {$\mathrm{A}_{08}^{-2}$};
    \node[ac node] (c218) at (-2.5,1) {$\mathrm{A}_{18}^{\alpha}$};

    \node[main node] (c27) at (-15,5) {$\mathrm{A}_{07}$};
    \node[main node] (c29) at (-14,5) {$\mathrm{A}_{09}$};
    \node[main node] (c22) at (-13,5) {$\mathrm{A}_{02}$};
    \node[main node] (c26) at (-11,5) {$\mathrm{A}_{06}^{\beta\neq-1}$};
    \node[main node] (c28) at (-9,5) {$\mathrm{A}_{08}^{\beta\neq-1,-2}$};
    \node[main node] (c23) at (-6.5,5) {$\mathrm{A}_{03}$};
    \node[ac node] (c212) at (-4,5) {$\mathrm{A}_{12}^{\beta,\gamma}$};
    \node[ac node] (c214) at (-2,5) {$\mathrm{A}_{14}^{\beta}$};

    \node[ac node] (c216) at (-1,5) {$\mathrm{A}_{16}^{\beta}$};
    
    \node[main node] (c21) at (-15,9) {$\mathrm{A}_{01}$};
    \node[main node] (c25) at (-14,9) {$\mathrm{A}_{05}$};
    \node[ac node] (c210) at (-3,9) {$\mathrm{A}_{10}^{\delta,\epsilon}$};
    \node[ac node] (c211) at (-5,9) {$\mathrm{A}_{11}^{\delta,\epsilon}$};
    \node[ac node] (c213) at (-1,9) {$\mathrm{A}_{13}^{\delta}$};
    \node[ac node] (c215) at (-10,9) {$\mathrm{A}_{15}^{\delta\neq1/2}$};
    \node[ac node] (c217) at (-7,9) {$\mathrm{A}_{17}$};
    
	\path[every node/.style={font=\sffamily\small}]

    %(c23) edge node[above=3, right=-12, fill=white]{\tiny $\beta=0$} (c25)
    
    %(c24) edge [bend left=4] (c21)
    (c210) edge [bend left=20]  node[above=-20, right=-32, fill=white]{\tiny $\begin{array}{c}
        \delta=\beta \\
        \epsilon = \beta  \\
        \gamma = \beta  
    \end{array}$} (c212)
    (c210) edge [bend left=20] node[above=-25, right=-18, fill=white]{\tiny $\begin{array}{c}
        \delta=\beta \\\textrm{ or } \\
        \epsilon = \beta  
    \end{array}$} (c214)
    (c210) edge [bend left=-21]  node[above=2, right=-18, fill=white]{\tiny $\delta\neq\epsilon$} (c24)
    (c210) edge [bend left=0] node[above=-17, right=-13, fill=white]{\tiny $\delta\neq\epsilon$} (c218)

    (c211) edge [bend left=-35]  node[above=30, right=-27, fill=white]{\tiny $\alpha\neq2\epsilon-\delta$} (c218)
     (c211) edge [bend left=10] node[above=1, right=-23, fill=white]{\tiny $(\beta,\gamma)=(\delta,\epsilon)$} (c212)
    (c211) edge [bend left=8] (c24)
    
    (c213) edge [bend left=40] node[above=40, right=-9, fill=white]{\tiny $\delta\neq\alpha$} (c218)
    (c213) edge [bend left=25] node[above=0, right=-14, fill=white]{\tiny $\delta=\beta$} (c214)
    (c213) edge [bend left=-27] (c23)
    
    (c215) edge [bend right=0] node[above=15, right=-27, fill=white]{\tiny $(\delta,\beta)=(0,-2)$} (c26)
    (c215) edge [bend right=10] node[above=-37, right=-35, fill=white]{\tiny $\delta=1$} (c22)
    (c215) edge [bend right=-20] node[above=-10, right=-25, fill=white]{\tiny $\beta=\frac{1}{\delta-1}$} (c28)
    (c215) edge [bend right=-5] (c218)
    
    (c217) edge node[above=35, right=0, fill=white]{\tiny $\beta=0$} (c28)
    (c217) edge [bend right=20] (c218)
    
    (c212) edge node[above=25, right=-35, fill=white]{\tiny $\alpha=2\gamma - \beta$} (c218)
    (c214) edge node[above=30, right=-10, fill=white]{\tiny $\alpha=\beta$} (c218)
    (c216) edge [bend right= -25]  (c28p)
    (c216) edge [bend right=-15]  node[above=0, right=-14, fill=white]{\tiny $\alpha=\beta$} (c218)
    
    (c25) edge [bend right=0] node[above=20, right=-30, fill=white]{\tiny $\beta=0$} (c28)
    (c25) edge [bend right=0] node[above=25, right=-30, fill=white]{\tiny $\beta=0$} (c26)
  
    (c21) edge (c22)
    (c21) edge [bend right=15] (c23)
    
    (c22) edge (c24)
    (c23) edge (c24)
    
    (c26) edge (c24)
    (c28) edge (c24)
    
    (c27) edge (c24)
    (c27) edge (c26p)
    
    (c29) edge (c24)
    (c29) edge (c28p)
    
    (c24) edge (c20)
    (c26p) edge (c20)
    (c28p) edge (c20)
    (c218) edge (c20) 
     (c213) edge [bend left=40] node[above=40, right=-9, fill=white]{\tiny $\delta\neq\alpha$} (c218)  ;
    
	\end{tikzpicture}

{\tiny 
\begin{itemize}
\noindent Legend:
\begin{itemize}
    \item[--] Round nodes: anti-pre-Lie Poisson algebras with trivial commutative associative multiplication $\cdot$.
    \item[--] Squared nodes: anti-pre-Lie Poisson algebras with non-trivial commutative associative multiplication $\cdot$.

\end{itemize}
\end{itemize}}

{Figure 5.}  Graph of primary degenerations and non-degenerations.	
\end{center}
\bigskip

\bigskip

\begin{proof}
The dimensions of the orbits are deduced by computing the algebra of derivations.
The primary degenerations are proven using the parametric bases included in following Table:

    \begin{longtable}{|lll|ll|}
\hline
\multicolumn{3}{|c|}{\textrm{Degeneration}}  & \multicolumn{2}{|c|}{\textrm{Parametrized basis}} \\
\hline
\hline

$\mathrm{A}_{01} $ & $\to$ & $\mathrm{A}_{02} $ & $
E_{1}(t)= e_1 + e_2$ & $
E_{2}(t)= t e_2$  \\
\hline

$\mathrm{A}_{01} $ & $\to$ & $\mathrm{A}_{03} $ & $
E_{1}(t)= e_1$ & $
E_{2}(t)= t e_2$  \\
\hline

$\mathrm{A}_{02} $ & $\to$ & $\mathrm{A}_{04} $ & $
E_{1}(t)= t e_1 + e_2$ & $
E_{2}(t)= t e_2$  \\
\hline

$\mathrm{A}_{03} $ & $\to$ & $\mathrm{A}_{04} $ & $
E_{1}(t)= t e_1 + e_2$ & $
E_{2}(t)= t^2 e_1$  \\
\hline

$\mathrm{A}_{05} $ & $\to$ & $\mathrm{A}_{06}^{0} $ & $
E_{1}(t)= te_1$ & $
E_{2}(t)= e_2$  \\
\hline

$\mathrm{A}_{05} $ & $\to$ & $\mathrm{A}_{08}^{0} $ & $
E_{1}(t)= t e_2$ & $
E_{2}(t)= e_1 + e_2$  \\
\hline

$\mathrm{A}_{06}^{\alpha\neq-1} $ & $\to$ & $\mathrm{A}_{04} $ & $
E_{1}(t)= e_1 + t e_2$ & $
E_{2}(t)= -t (1+\alpha) e_1$  \\
\hline

$\mathrm{A}_{07} $ & $\to$ & $\mathrm{A}_{04} $ & $
E_{1}(t)= t e_2$ & $
E_{2}(t)= t^2 e_1$  \\
\hline

$\mathrm{A}_{07} $ & $\to$ & $\mathrm{A}_{06}^{-1} $ & $
E_{1}(t)= t^{-1}e_1$ & $
E_{2}(t)= e_2$  \\
\hline

$\mathrm{A}_{08}^{\alpha\neq-2} $ & $\to$ & $\mathrm{A}_{04} $ & $
E_{1}(t)= e_1 + t e_2$ & $
E_{2}(t)= -t^2 (2+\alpha) e_2$  \\
\hline

$\mathrm{A}_{09} $ & $\to$ & $\mathrm{A}_{04} $ & $
E_{1}(t)= te_2$ & $
E_{2}(t)= t^2e_1$  \\
\hline

$\mathrm{A}_{09} $ & $\to$ & $\mathrm{A}_{08}^{-2} $ & $
E_{1}(t)= t^{-1}e_1 $ & $
E_{2}(t)= e_2 $  \\
\hline

{ 
$\mathrm{A}_{10}^{\alpha,\beta\neq\alpha} $} & $\to$ & $\mathrm{A}_{04}$ & $
E_{1}(t)= t e_1 + t e_2$ & $
E_{2}(t)= t^2 (\alpha-\beta)e_1$   \\
\hline

$\mathrm{A}_{10}^{\alpha,\alpha} $ & $\to$ & $\mathrm{A}_{12}^{\alpha,\alpha} $ & $
E_{1}(t)= e_1 + e_2$ & $
E_{2}(t)= t e_2$  \\
\hline

$\mathrm{A}_{10}^{\alpha,\beta} $ & $\to$ & $\mathrm{A}_{14}^{\alpha} $ & $
E_{1}(t)= e_1$ & $
E_{2}(t)= t e_2$  \\
\hline

$\mathrm{A}_{10}^{\alpha,\beta} $ & 
$ \to
\footnote{Here $\alpha\neq\beta, \beta\neq \gamma$ and $\alpha\neq\gamma$.}$ & $\mathrm{A}_{18}^{\gamma}  $ & $
E_{1}(t)= \frac{t(\beta-\gamma)}{\alpha-\gamma}e_1 + t e_2$ & $
E_{2}(t)= \frac{t^2(\alpha-\beta)}{\alpha-\gamma}e_2$  \\
\hline

$\mathrm{A}_{11}^{\alpha,\beta} $ & $\to$ & $\mathrm{A}_{04} $ & $
E_{1}(t)= t e_1$ & $
E_{2}(t)= t^2 e_2$  \\
\hline

$\mathrm{A}_{11}^{\alpha,\beta} $ & $\to$ & $\mathrm{A}_{18}^{\gamma\neq2\beta-\alpha} $ & $
E_{1}(t)= t(\alpha-2\beta +\gamma)e_1 + t e_2$ & $
E_{2}(t)= t^2(\alpha-2\beta+\gamma)e_2$  \\
\hline

$\mathrm{A}_{11}^{\alpha,\beta} $ & $\to$ & $\mathrm{A}_{12}^{\alpha,\beta} $ & $
E_{1}(t)= e_1$ & $
E_{2}(t)= t^{-1}e_2$  \\
\hline

$\mathrm{A}_{12}^{\alpha,\beta} $ & $\to$ & $\mathrm{A}_{18}^{2\beta-\alpha} $ & $
E_{1}(t)= t e_1 + e_2$ & $
E_{2}(t)= t e_2$  \\
\hline

$\mathrm{A}_{13}^{\alpha} $ & $\to$ & $\mathrm{A}_{14}^{\alpha} $ & $
E_{1}(t)= e_1 $ & $
E_{2}(t)= t e_2 $  \\
\hline

$\mathrm{A}_{13}^{\alpha} $ & $\to$ & $\mathrm{A}_{03} $ & $
E_{1}(t)= t^2 e_1 + e_2$ & $
E_{2}(t)= t e_1$  \\
\hline

$\mathrm{A}_{13}^{\alpha} $ & $\to$ & $\mathrm{A}_{18}^{\beta\neq\alpha} $ & $
E_{1}(t)= t e_1 + t(\alpha-\beta)e_2$ & $
E_{2}(t)= -t^2(\alpha-\beta)e_2$  \\
\hline

$\mathrm{A}_{14}^{\alpha} $ & $\to$ & $\mathrm{A}_{18}^{\alpha} $ & $
E_{1}(t)= t e_1 + e_2$ & $
E_{2}(t)= - t e_2$  \\
\hline

$\mathrm{A}_{15}^{1} $ & $\to$ & $\mathrm{A}_{02} $ & $
E_{1}(t)= e_1$ & $
E_{2}(t)= t^{-1}e_2$  \\
\hline

$\mathrm{A}_{15}^0 $ & $\to$ & $\mathrm{A}_{06}^{-2} $ & $
E_{1}(t)= t^{-1}e_2$ & $
E_{2}(t)= -e_1$  \\
\hline

$\mathrm{A}_{15}^{\alpha\neq1} $ & $\to$ & $\mathrm{A}_{08}^{\frac{1}{\alpha-1}} $ & $
E_{1}(t)= t^{-1}e_2$ & $
E_{2}(t)= \frac{1}{\alpha-1}e_1 + \frac{1}{\alpha-1} e_2$  \\
\hline

$\mathrm{A}_{15}^{\alpha\neq\frac{1}{2}} $ & $\to$ & $\mathrm{A}_{18}^{\beta} $ & $
E_{1}(t)= t e_1 + \frac{t \beta}{2\alpha-1}e_2$ & $
E_{2}(t)= t^2 e_2$  \\
\hline

$\mathrm{A}_{16}^{\alpha} $ & $\to$ & $\mathrm{A}_{18}^{\alpha} $ & $
E_{1}(t)= t e_1$ & $
E_{2}(t)= t^2 e_2$  \\
\hline

$\mathrm{A}_{16}^{\alpha} $ & $\to$ & $\mathrm{A}_{08}^{-2} $ & $
E_{1}(t)= t^{-1}e_2$ & $
E_{2}(t)= -2 e_1$  \\
\hline

$\mathrm{A}_{17} $ & $\to$ & $\mathrm{A}_{18}^{\alpha\neq0} $ & $
E_{1}(t)= t e_1 + \frac{t \alpha}{2}e_2$ & $
E_{2}(t)= -\frac{t^2}{\alpha}e_1 + \frac{t^2}{2}e_2 $  \\
\hline

$\mathrm{A}_{17} $ & $\to$ & $\mathrm{A}_{18}^0 $ & $
E_{1}(t)= te_1$ & $
E_{2}(t)= t^2e_2$  \\
\hline

$\mathrm{A}_{17} $ & $\to$ & $\mathrm{A}_{08}^{0} $ & $
E_{1}(t)=t^{-1}e_2 $ & $
E_{2}(t)=e_1 $  \\
\hline

%$\mathrm{A}_{} $ & $\to$ & $\mathrm{A}_{} $ & $
%E_{1}(t)= $ & $
%E_{2}(t)= $  \\
%\hline
 
    \end{longtable}

The primary non-degenerations are proven using the sets from the following table: 

  {\small
    \begin{longtable}{|l|l|}
\hline
\multicolumn{1}{|c|}{\textrm{Non-degeneration}} & \multicolumn{1}{|c|}{\textrm{Arguments}}\\
\hline
\hline

$\begin{array}{cccc}
     \mathrm{A}_{07} &\not \to&  \mathrm{A}_{08}^{-2} & 
\end{array}$ &
${\mathcal R}= \left\{ \begin{array}{l} c_{ij}'^{k} \in \mathbb{C}, c_{11}'^{1}+2 c_{21}'^{2}=c_{12}'^{2},  2 c_{12}'^{1}+c_{22}'^{2}=c_{21}'^{1}
%, \\  c_{ij}^{k}=0 \textrm{ otherwise} 
\end{array} \right\}$\\
\hline

$\begin{array}{cccc}
     \mathrm{A}_{09} &\not \to&  \mathrm{A}_{06}^{-1} & 
\end{array}$ &
${\mathcal R}= \left\{ \begin{array}{l} c_{ij}'^{k} \in \mathbb{C}, c_{11}'^{1}+5 c_{21}'^{2}=4 c_{12}'^{2},  5 c_{12}'^{1}+c_{22}'^{2}=4 c_{21}'^{1}
%, \\  c_{ij}^{k}=0 \textrm{ otherwise} 
\end{array} \right\}$\\
\hline

$\begin{array}{cccc}
     \mathrm{A}_{06}^{\alpha} &\not \to&  \mathrm{A}_{06}^{-1}, \mathrm{A}_{08}^{-2} & 
\end{array}$ &
${\mathcal R}= \left\{ \begin{array}{l} c_{11}'^{1}, c_{11}'^{2}, c_{12}'^{2}, c_{21}'^{1}, c_{21}'^{2}, c_{22}'^{2} \in \mathbb{C}, -\alpha  c_{11}'^{1}=c_{12}'^{2},\\  -(\alpha +1) c_{11}'^{1}=c_{21}'^{2},  -\alpha  c_{21}'^{1}=c_{22}'^{2}
%, \\  c_{ij}^{k}=c_{ij}'^{k}=0 \textrm{ otherwise} 
\end{array} \right\}$\\
\hline

$\begin{array}{cccc}
     \mathrm{A}_{08}^{\alpha} &\not \to&  \mathrm{A}_{06}^{-1}, \mathrm{A}_{08}^{-2} & 
\end{array}$ &
${\mathcal R}= \left\{ \begin{array}{l} c_{11}'^{1}, c_{11}'^{2}, c_{12}'^{1}, c_{12}'^{2}, c_{21}'^{1}, c_{21}'^{2}, c_{22}'^{2}  \in \mathbb{C}, \\
(2 \alpha +1) c_{21}'^{2}+c_{11}'^{1}=0,  (\alpha -1) c_{21}'^{1}=\alpha  c_{22}'^{2}, \\ c_{12}'^{1}+c_{22}'^{2}=2 c_{21}'^{1}%,   c_{ij}^{k}=c_{ij}'^{k}=0 \textrm{ otherwise} 
\end{array} \right\}$\\
\hline

$\begin{array}{cccc}
     \mathrm{A}_{05} &\not \to&  \mathrm{A}_{02},\mathrm{A}_{03},\mathrm{A}_{06}^{\alpha \neq 0}, %\mathrm{A}_{07},
     \mathrm{A}_{08}^{\alpha\neq0}
     %\mathrm{A}_{09} 
     & 
\end{array}$ &
${\mathcal R}= \left\{ \begin{array}{l} c_{11}'^{1}, c_{11}'^{2}, c_{21}'^{1}, c_{21}'^{2},  \in \mathbb{C}, c_{21}'^{2}=-c_{11}'^{1}
%, \\  c_{ij}^{k}=c_{ij}'^{k}=0 \textrm{ otherwise} 
\end{array} \right\}$\\
\hline

$\begin{array}{cccc}
     \mathrm{A}_{12}^{\alpha,\beta} &\not \to&  \mathrm{A}_{04}, \mathrm{A}_{06}^{-1}, \mathrm{A}_{08}^{-2}, \mathrm{A}_{18}^{\gamma\neq 2\beta-\alpha} & 
\end{array}$ &
${\mathcal R}= \left\{ \begin{array}{l} c_{11}^{1}, c_{11}^{2}, c_{12}^{2}, c_{21}^{2},  c_{11}'^{1}, c_{11}'^{2}, c_{12}'^{2}, c_{21}'^{1}  \in \mathbb{C}, \\ c_{12}^{2}=c_{11}^{1},  c_{21}^{2}=c_{11}^{1}, c_{12}'^{2}=\alpha  c_{11}^{1},  c_{21}'^{2}=\beta  c_{11}^{1},   \\   c_{11}'^{2}=-(\alpha -2 \beta )c_{11}^{2} 
, c_{11}'^{1}=c_{11}^{1} (2 \alpha -\beta )%, \\  c_{ij}^{k}=c_{ij}'^{k}=0 \textrm{ otherwise} 
\end{array} \right\}$\\
\hline

$\begin{array}{cccc}
     \mathrm{A}_{14}^{\alpha} &\not \to&  \mathrm{A}_{04}, \mathrm{A}_{06}^{-1}, \mathrm{A}_{08}^{-2}, \mathrm{A}_{18}^{\beta\neq\alpha}  & 
\end{array}$ &
${\mathcal R}= \left\{ \begin{array}{l} c_{11}^{1}, c_{11}^{2}, c_{11}'^{1}, c_{11}'^{2} \in \mathbb{C}, c_{11}'^{1}=\alpha  c_{11}^{1},  c_{11}'^{2}=\alpha  c_{11}^{2}
%, \\  c_{ij}^{k}=c_{ij}'^{k}=0 \textrm{ otherwise} 
\end{array} \right\}$\\
\hline

$\begin{array}{cccc}
     \mathrm{A}_{16}^{\alpha} &\not \to&  \mathrm{A}_{04}, \mathrm{A}_{06}^{-1}, \mathrm{A}_{18}^{\beta\neq\alpha} & 
\end{array}$ &
${\mathcal R}= \left\{ \begin{array}{l} c_{11}^{2}, c_{11}'^{1}, c_{11}'^{2}, c_{12}'^{2}, c_{21}'^{2} \in \mathbb{C},\\ c_{11}'^{1}=\frac{3}{2} c_{12}'^{2}, \\  c_{21}'^{2}=\frac{1}{2} c_{12}'^{2},  \alpha  c_{11}^{2}=c_{11}'^{2}
%, \\  c_{ij}^{k}=c_{ij}'^{k}=0 \textrm{ otherwise} 
\end{array} \right\}$\\
\hline

$\begin{array}{cccc}
     \mathrm{A}_{10}^{\alpha, \beta} &\not \to&  \Big\{ \begin{array}{l} \mathrm{A}_{02}, \mathrm{A}_{03}, \mathrm{A}_{06}^{\gamma}, %\mathrm{A}_{07}, 
     \mathrm{A}_{08}^{\gamma}, %\mathrm{A}_{09},
     \\ \mathrm{A}_{12}^{\gamma,\delta}[\text{Not}(\alpha=\beta=\gamma=\delta)], \\ \mathrm{A}_{14}^{\gamma}[\alpha\neq\gamma \textrm{ and } \beta\neq\gamma] %\mathrm{A}_{16}^{\gamma}
     \end{array}\Big\}& 
\end{array}$ &
${\mathcal R}= \left\{ \begin{array}{l} 
c_{11}^{1}, c_{ij}^2, 
%c_{11}^{2}, c_{12}^{2}, c_{21}^{2}, c_{22}^{2}, 
c_{11}'^{1},  c_{ij}'^{2}
%,c_{11}'^{2}, c_{12}'^{2}, c_{21}'^{2}, c_{22}'^{2} 
\in \mathbb{C} \mbox{ for } 1\leq i,j \leq 2\\
%c_{11}^{1}, c_{11}^{2}, c_{12}^{2}, c_{21}^{2}, c_{22}^{2}, c_{11}'^{1}, c_{11}'^{2}, c_{12}'^{2}, c_{21}'^{2}, c_{22}'^{2}\in \mathbb{C}, \\
\alpha  c_{11}^{1}=c_{11}'^{1},  c_{12}'^{2}=c_{21}'^{2},  
\beta  c_{22}^{2}=c_{22}'^{2},  \\ c_{12}^{2}=c_{21}^{2},  \beta  c_{12}^{2}=c_{12}'^{2}%,  c_{ij}^{k}=c_{ij}'^{k}=0 \textrm{ otherwise} 
\end{array} \right\}$\\
\hline

$\begin{array}{cccc}
     \mathrm{A}_{10}^{\alpha, \alpha} &\not \to& 
     \Big\{ \begin{array}{l} 
     %\mathrm{A}_{02}, \mathrm{A}_{03},
     \mathrm{A}_{04}, %\mathrm{A}_{06}^{\beta}, 
     \mathrm{A}_{06}^{-1},
     %\mathrm{A}_{07}, \mathrm{A}_{08}^{\beta}, 
     \mathrm{A}_{08}^{-2}, \mathrm{A}_{18}^{\beta\neq\alpha}, \\
     %\mathrm{A}_{09},
     \mathrm{A}_{12}^{\beta,\gamma}[\text{Not}(\alpha=\beta=\gamma)] %\mathrm{A}_{14}^{\beta\neq\alpha}, %\mathrm{A}_{16}^{\beta},
      \end{array}\Big\} & 
\end{array}$ &
${\mathcal R}= \left\{ \begin{array}{l} 
c_{11}^{1}, c_{ij}^2, 
%c_{11}^{2}, c_{12}^{2}, c_{21}^{2}, c_{22}^{2}, 
c_{11}'^{1},  c_{ij}'^{2}
%,c_{11}'^{2}, c_{12}'^{2}, c_{21}'^{2}, c_{22}'^{2} 
\in \mathbb{C} \mbox{ for } 1\leq i,j \leq 2\\ \alpha  c_{11}^{1}=c_{11}'^{1},  \alpha  c_{11}^{2}=c_{11}'^{2},  \alpha  c_{12}^{2}=c_{12}'^{2},  \\ \alpha  c_{21}^{2}=c_{21}'^{2},  \alpha  c_{22}^{2}=c_{22}'^{2},  c_{12}^{2}=c_{21}^{2}
%, \\  c_{ij}^{k}=c_{ij}'^{k}=0 \textrm{ otherwise} 
\end{array} \right\}$\\
\hline

$\begin{array}{cccc}
     \mathrm{A}_{11}^{\alpha, \beta} &\not \to& 
     \Big\{ \begin{array}{l} \mathrm{A}_{02}, \mathrm{A}_{03}, \mathrm{A}_{06}^{\gamma},  \mathrm{A}_{14}^{\gamma}, %\mathrm{A}_{07},
     \mathrm{A}_{08}^{\gamma}, \\ %\mathrm{A}_{09},
     \mathrm{A}_{12}^{\gamma,\delta}[\text{Not} (\alpha =\gamma \textrm{ and } \beta =\delta)] %\mathrm{A}_{16}^{\gamma}  
     \end{array}\Big\}
     & 
\end{array}$ &
${\mathcal R}= \left\{ \begin{array}{l} c_{11}^{1}, c_{11}^{2}, c_{12}^{2}, c_{21}^{2}, c_{11}'^{1}, c_{11}'^{2}, c_{12}'^{2}, c_{21}'^{2}  \in \mathbb{C}, \\ c_{11}^{1}=c_{12}^{2},  c_{21}^{2}=c_{11}^{1},  c_{11}'^{1}=c_{11}^{1} (2 \alpha -\beta ), \\ c_{12}'^{2}=\alpha  c_{11}^{1},  c_{21}'^{2}=\beta  c_{11}^{1}
%, \\  c_{ij}^{k}=c_{ij}'^{k}=0 \textrm{ otherwise} 
\end{array} \right\}$\\
\hline

$\begin{array}{cccc}
     \mathrm{A}_{13}^{\alpha} &\not \to&  \mathrm{A}_{02}, 
     \mathrm{A}_{06}^{\beta}, 
     %\mathrm{A}_{07}, 
     \mathrm{A}_{08}^{\beta}, %\mathrm{A}_{09},
     \mathrm{A}_{12}^{\beta,\gamma}, \mathrm{A}_{14}^{\beta\neq\alpha} %\mathrm{A}_{16}^{\beta} 
     & 
\end{array}$ &
${\mathcal R}= \left\{ \begin{array}{l} c_{11}^{1}, c_{11}^{2}, c_{11}'^{1}, c_{11}'^{2}, c_{12}'^{2}, c_{21}'^{2}, c_{22}'^{2} \in \mathbb{C},\\ 
c_{12}'^{2}=c_{21}'^{2},  \alpha  c_{11}^{1}=c_{11}'^{1}%,\\c_{ij}^{k}=c_{ij}'^{k}=0 \textrm{ otherwise} 
\end{array} \right\}$\\
\hline

$\begin{array}{cccc}
     \mathrm{A}_{15}^{\alpha\neq\frac{1}{2}} &\not \to& \Big\{ \begin{array}{l}  \mathrm{A}_{02}[\alpha\neq1], \mathrm{A}_{03}, \\ \mathrm{A}_{06}^{\beta}[\alpha\neq0 \textrm{ or } \beta \neq -2], \\ %\mathrm{A}_{07},
     \mathrm{A}_{08}^{\beta\neq\frac{1}{\alpha-1}}, \mathrm{A}_{08}^{0}, %\mathrm{A}_{09},
     \mathrm{A}_{12}^{\beta,\gamma}, \mathrm{A}_{14}^{\beta} %\mathrm{A}_{16}^{\beta} 
     \end{array}\Big\}
     & 
\end{array}$ &
${\mathcal R}= \left\{ \begin{array}{l} c_{11}^{2}, c_{11}'^{1}, c_{11}'^{2}, c_{12}'^{2}, c_{21}'^{2} \in \mathbb{C},\\ 
c_{11}'^{1}=(2-\alpha ) c_{12}'^{2},   c_{21}'^{2}=\alpha  c_{12}'^{2}%,\\c_{ij}^{k}=c_{ij}'^{k}=0 \textrm{ otherwise} 
\end{array} \right\}$\\
\hline

$\begin{array}{cccc}
     \mathrm{A}_{17} &\not \to&  \mathrm{A}_{02}, \mathrm{A}_{03}, \mathrm{A}_{06}^{\alpha}, %\mathrm{A}_{07},
     \mathrm{A}_{08}^{\alpha\neq0}, %\mathrm{A}_{09},
     \mathrm{A}_{12}^{\alpha,\beta}, \mathrm{A}_{14}^{\alpha} %\mathrm{A}_{16}^{\alpha} 
     & 
\end{array}$ &
${\mathcal R}= \left\{ \begin{array}{l} c_{11}^{2}, c_{11}'^{1}, c_{11}'^{2}, c_{21}'^{2} \in \mathbb{C}, c_{21}'^{2}=-c_{11}'^{1}
%, \\  c_{ij}^{k}=c_{ij}'^{k}=0 \textrm{ otherwise} 
\end{array} \right\}$\\
\hline

    \end{longtable}
    
    }

\end{proof}

At this point, only the description of the closures of the orbits of the parametric families is missing. Although it is not necessary to study the closure of the orbits of each of the parametric families of
the variety of $2$-dimensional anti-pre-Lie Poisson algebras in order to identify its irreducible components, we will study them to give a complete description of the variety.

\begin{lemma}\label{th:degaprebf}
The description of the closure of the orbit of the parametric families in
the variety of $2$-dimensional anti-pre-Lie Poisson algebras are given below:

\begin{longtable}{lcl}
$\overline{\{O(\mathrm{A}_{06}^{*})\}}$ & ${\supseteq}$ &
$ \Big\{
\overline{\{O(\mathrm{A}_{03})\}}, \overline{\{O(\mathrm{A}_{04})\}}, \overline{\{O(\mathrm{A}_{07})\}}, \overline{\{O(\mathrm{A}_{08}^{-1})\}},
\overline{\{O({\mathbb C^8})\}} \Big\}$\\

$\overline{\{O(\mathrm{A}_{08}^{*})\}}$ & ${\supseteq}$ &
$ \Big\{
\overline{\{O(\mathrm{A}_{02})\}}, \overline{\{O(\mathrm{A}_{04})\}}, \overline{\{O(\mathrm{A}_{06}^{-2})\}}, \overline{\{O(\mathrm{A}_{09})\}}, 
\overline{\{O({\mathbb C^8})\}} \Big\}$\\

$\overline{\{O(\mathrm{A}_{10}^{*})\}}$ & ${\supseteq}$ &
$ \Big\{\begin{array}{l}
\overline{\{O(\mathrm{A}_{01})\}}, \overline{\{O(\mathrm{A}_{02})\}}, \overline{\{O(\mathrm{A}_{03})\}}, \overline{\{O(\mathrm{A}_{04})\}}, \overline{\{O(\mathrm{A}_{11}^{\alpha,\alpha})\}}, \overline{\{O(\mathrm{A}_{12}^{\alpha,\alpha})\}},\\ \overline{\{O(\mathrm{A}_{13}^{\alpha})\}}, \overline{\{O(\mathrm{A}_{14}^{\alpha})\}},
\overline{\{O(\mathrm{A}_{15}^{1})\}},
\overline{\{O(\mathrm{A}_{18}^{\alpha})\}}, 
\overline{\{O({\mathbb C^8})\}} \end{array}\Big\}$\\

$\overline{\{O(\mathrm{A}_{11}^{*})\}}$ & ${\supseteq}$ &
$ \Big\{\begin{array}{l}
\overline{\{O(\mathrm{A}_{02})\}}, \overline{\{O(\mathrm{A}_{04})\}}, \overline{\{O(\mathrm{A}_{08}^{\alpha})\}}^{1}, \overline{\{O(\mathrm{A}_{09})\}}, \overline{\{O(\mathrm{A}_{12}^{\alpha,\beta})\}},\\ \overline{\{O(\mathrm{A}_{15}^{\alpha})\}}, \overline{\{O(\mathrm{A}_{16}^{\alpha})\}}, \overline{\{O(\mathrm{A}_{17})\}}, \overline{\{O(\mathrm{A}_{18}^{\alpha})\}}, 
\overline{\{O({\mathbb C^8})\}} \end{array}\Big\}$\\

$\overline{\{O(\mathrm{A}_{12}^{*})\}}$ & ${\supseteq}$ &
$ \Big\{\begin{array}{l}
\overline{\{O(\mathrm{A}_{02})\}},
\overline{\{O(\mathrm{A}_{04})\}},
\overline{\{O(\mathrm{A}_{08}^{\alpha})\}}^{1},  \overline{\{O(\mathrm{A}_{09})\}}, \\ \overline{\{O(\mathrm{A}_{16}^{\alpha})\}}, \overline{\{O(\mathrm{A}_{18}^{\alpha})\}}, 
\overline{\{O({\mathbb C^8})\}} \end{array}\Big\}$\\

$\overline{\{O(\mathrm{A}_{13}^{*})\}}$ & ${\supseteq}$ &
$ \Big\{\begin{array}{l}
\overline{\{O(\mathrm{A}_{01})\}}, \overline{\{O(\mathrm{A}_{02})\}}, \overline{\{O(\mathrm{A}_{03})\}}, \overline{\{O(\mathrm{A}_{04})\}}, \\ \overline{\{O(\mathrm{A}_{14}^{\alpha})\}}, \overline{\{O(\mathrm{A}_{15}^{1})\}}, \overline{\{O(\mathrm{A}_{18}^{\alpha})\}}, 
\overline{\{O({\mathbb C^8})\}} \end{array}\Big\}$\\

$\overline{\{O(\mathrm{A}_{14}^{*})\}}$ & ${\supseteq}$ &
$ \Big\{
\overline{\{O(\mathrm{A}_{03})\}}, \overline{\{O(\mathrm{A}_{04})\}}, \overline{\{O(\mathrm{A}_{18}^{\alpha})\}}, 
\overline{\{O({\mathbb C^8})\}} \Big\}$\\

$\overline{\{O(\mathrm{A}_{15}^{*})\}}$ & ${\supseteq}$ &
$ \Big\{\begin{array}{l}
\overline{\{O(\mathrm{A}_{02})\}}, \overline{\{O(\mathrm{A}_{04})\}}, \overline{\{O(\mathrm{A}_{08}^{\alpha})\}}^{1}, \overline{\{O(\mathrm{A}_{09})\}}, \\ \overline{\{O(\mathrm{A}_{16}^{\alpha})\}}, \overline{\{O(\mathrm{A}_{17})\}}, \overline{\{O(\mathrm{A}_{18}^{\alpha})\}},
\overline{\{O({\mathbb C^8})\}} \end{array}\Big\}$\\

$\overline{\{O(\mathrm{A}_{16}^{*})\}}$ & ${\supseteq}$ &
$ \Big\{
\overline{\{O(\mathrm{A}_{04})\}}, \overline{\{O(\mathrm{A}_{08}^{-2})\}}, \overline{\{O(\mathrm{A}_{09})\}}, \overline{\{O(\mathrm{A}_{18}^{\alpha})\}}, 
\overline{\{O({\mathbb C^8})\}} \Big\}$\\

$\overline{\{O(\mathrm{A}_{18}^{*})\}}$ & ${\supseteq}$ &
$ \Big\{
\overline{\{O(\mathrm{A}_{04})\}}, 
\overline{\{O({\mathbb C^8})\}} \Big\}$\\

\end{longtable}

$^{1}$ Note that $\mathrm{A}_{08}^{-1}\cong \mathrm{A}_{06}^{-2}$.

\end{lemma}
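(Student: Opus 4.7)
The plan is to follow the template established in the analogous lemmas \ref{th:degleibf}, \ref{th:degtleibf}, \ref{th:degnovf} and Corollary \ref{th:degcplf}. First, the inclusions coming from the primary degenerations already listed in Lemma \ref{th:degapre} are obtained for free: every ${\rm A}_i \to {\rm A}_j$ edge (or ${\rm A}_i^\alpha \to {\rm A}_j^\beta$ with the parametric condition satisfied) gives a containment in the corresponding orbit closure. So before writing anything, I would scan the graph in Figure~5 and tick off all such already-proven inclusions for each of the nine families ${\rm A}_{06}^*, {\rm A}_{08}^*, {\rm A}_{10}^*, {\rm A}_{11}^*, {\rm A}_{12}^*, {\rm A}_{13}^*, {\rm A}_{14}^*, {\rm A}_{15}^*, {\rm A}_{16}^*, {\rm A}_{18}^*$.

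Next, I would identify which of the claimed containments are \emph{not} already implied by the primary data. These are the genuinely new ``family-to-algebra'' degenerations that require a \emph{parametrized index}: a function $f(t)$ such that, by letting the structural parameter of the family run along $f(t)$ while simultaneously rescaling the basis, one reaches the target in the limit. From the pattern of the earlier families, the most common trick is to set the parameter to $t^{-1}$, $t^{-2}$, or some similar blow-up, combined with a suitable rescaling $E_1(t)=t^a e_1+t^b e_2$, $E_2(t)=t^c e_1 + t^d e_2$. For instance, to obtain ${\rm A}_{10}^* \to {\rm A}_{15}^1$ one needs a family parametrization using ${\rm A}_{10}^{\alpha(t),\beta(t)}$ landing on the target; to obtain ${\rm A}_{11}^*\to {\rm A}_{17}$, ${\rm A}_{11}^*\to{\rm A}_{09}$, and similarly ${\rm A}_{15}^* \to {\rm A}_{17}$, ${\rm A}_{15}^*\to{\rm A}_{09}$, etc., one looks for a parameter that blows up so that the subleading terms survive in the limit. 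I would present the required pairs of (parametrized basis, parametrized index) in a table mirroring those in the previous sections.

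The non-containments needed to justify the ``$\supseteq$'' (rather than equality) and to rule out spurious extra inclusions follow from Lemma \ref{main2}: for each family ${\rm A}_i^*$ and each target algebra that is \emph{not} claimed to lie in $\overline{O({\rm A}_i^*)}$, I would write down a Zariski closed set $\mathcal{R}$ defined by linear equations in the structure constants $c_{ij}^k,c_{ij}'^k$ that is stable under the upper (or lower) triangular group action, contains a representative of every ${\rm A}_i^\alpha$, but contains no representative of the target. This is exactly what was done in the non-degeneration tables for Lemmas \ref{th:degleibf}, \ref{th:degtleibf}, \ref{th:degnovf}, and the choice of $\mathcal{R}$ is typically guided by the identities $c_{12}^2 = c_{11}^1$, $c_{21}^2 = c_{11}^1$ that pin down ${\rm A}_{02}$, together with linear relations imposed by the fixed structure of ${\rm A}_i^\alpha$ (so that the parameter $\alpha$ disappears from the defining equations of $\mathcal{R}$).

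The main obstacle will be the family ${\rm A}_{10}^*$, ${\rm A}_{11}^*$, and ${\rm A}_{15}^*$: they have the most claimed inclusions, and ${\rm A}_{11}^\alpha$ and ${\rm A}_{15}^\alpha$ contain nontrivial linear combinations of $e_1,e_2$ in several products, which makes it delicate both to design a parametrized index reaching each target and to cook up a stable $\mathcal{R}$ that still admits every ${\rm A}_{11}^{\alpha,\beta}$ (resp.\ ${\rm A}_{15}^\alpha$) without accidentally also containing a forbidden target such as ${\rm A}_{06}^{-1}$ or ${\rm A}_{08}^{-2}$. Once these three families are handled, the remaining six (${\rm A}_{06}^*$, ${\rm A}_{08}^*$, ${\rm A}_{12}^*$, ${\rm A}_{13}^*$, ${\rm A}_{14}^*$, ${\rm A}_{16}^*$, ${\rm A}_{18}^*$) are, by inspection, essentially covered by Lemma~\ref{th:degapre} plus at most one additional parametric-index degeneration each, so they should be routine.
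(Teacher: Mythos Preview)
Your proposal is correct and follows essentially the same approach as the paper: invoke the primary degenerations from Lemma~\ref{th:degapre}, supply a table of parametrized bases together with parametrized indices for the remaining family-to-algebra degenerations (the paper gives about two dozen of these, including the ones you flag for ${\rm A}_{10}^*$, ${\rm A}_{11}^*$, ${\rm A}_{15}^*$), and then list the non-degenerations via Zariski-closed sets $\mathcal{R}$ stable under triangular matrices. The only caveat is that the lemma as stated asserts only the inclusions $\supseteq$, so strictly speaking the non-degeneration table is there to show the description is \emph{complete} rather than to establish the stated containments; the paper includes it anyway, as you anticipate.
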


\begin{proof}
Thanks to Theorem \ref{th:degapre} we have all necessary degenerations between algebras.
All necessarily the closure of the orbits of suitable families are given using the parametric bases and the parametric index included in following Table: 

    \begin{longtable}{|lcl|ll|}
\hline
\multicolumn{3}{|c|}{\textrm{Degeneration}}  & \multicolumn{2}{|c|}{\textrm{Parametrized basis}} \\
\hline
\hline 

$\mathrm{A}_{06}^{t^{-1}} $ & $\to$ & $\mathrm{A}_{03} $ & $
E_{1}(t)= te_2$ & $
E_{2}(t)= te_1$  \\
\hline

$\mathrm{A}_{06}^{t-1} $ & $\to$ & $\mathrm{A}_{07} $ & $
E_{1}(t)= e_1$ & $
E_{2}(t)= -t^{-1}e_1 + e_2$  \\
\hline

$\mathrm{A}_{08}^{t^{-1}} $ & $\to$ & $\mathrm{A}_{02} $ & $
E_{1}(t)= t e_2$ & $
E_{2}(t)= e_1$  \\
\hline

$\mathrm{A}_{08}^{t-2} $ & $\to$ & $\mathrm{A}_{09} $ & $
E_{1}(t)= -te_2$ & $
E_{2}(t)= e_1 + e_2$  \\
\hline

$\mathrm{A}_{10}^{t^{-1},t^{-1}} $ & $\to$ & $\mathrm{A}_{01} $ & $
E_{1}(t)= t e_1$ & $
E_{2}(t)= t e_2$  \\
\hline

$\mathrm{A}_{10}^{\frac{t^{-1}}{2}, t^{-1}} $ & $\to$ & $\mathrm{A}_{15}^{1} $ & $
E_{1}(t)= 2t e_1 + t e_2$ & $
E_{2}(t)= -t^{2} e_2$  \\
\hline

$\mathrm{A}_{10}^{\alpha, t^{-1}} $ & $\to$ & $\mathrm{A}_{13}^{\alpha} $ & $
E_{1}(t)= e_1 $ & $
E_{2}(t)= t e_2$  \\
\hline

$\mathrm{A}_{10}^{\alpha, t + \alpha} $ & $\to$ & $\mathrm{A}_{11}^{\alpha, \alpha} $ & $
E_{1}(t)= e_1 + e_2 $ & $
E_{2}(t)= t e_2$  \\
\hline

$\mathrm{A}_{11}^{0, t^{-1}} $ & $\to$ & $\mathrm{A}_{17} $ & $
E_{1}(t)= te_1 - \frac{t^{2}}{2}e_2 $ & $
E_{2}(t)= -\frac{t^3}{2}e_2$  \\
\hline

$\mathrm{A}_{11}^{-\frac{t^{-1}}{\sqrt{1-2\alpha}}, -\frac{t^{-1}\alpha}{\sqrt{1-2\alpha}}} $ & $\to$ & $\mathrm{A}_{15}^{\alpha\neq\frac{1}{2}} $ & $
E_{1}(t)= - t \sqrt{1-2\alpha} e_1 + t^{2}e_2$ & $
E_{2}(t)= -t^{3}\sqrt{1-2\alpha} e_2$  \\
\hline

$\mathrm{A}_{11}^{t^{-1}, t^{-1}} $ & $\to$ & $\mathrm{A}_{02} $ & $
E_{1}(t)= t e_1 $ & $
E_{2}(t)= e_2$  \\
\hline

$\mathrm{A}_{11}^{-2 t^{-1}, -t^{-1}} $ & $\to$ & $\mathrm{A}_{09} $ & $
E_{1}(t)= t^{2}e_2$ & $
E_{2}(t)= t e_1$  \\
\hline

$\mathrm{A}_{11}^{t^{-1}, \frac{t^{-1}}{2}} $ & $\to$ & $\mathrm{A}_{16}^{\alpha\neq0} $ & $
E_{1}(t)= t e_1 + \frac{t}{\alpha}e_2$ & $
E_{2}(t)= \frac{t^2}{\alpha}e_2$  \\
\hline

$\mathrm{A}_{11}^{t^{-1}, \frac{t^{-1}}{2}} $ & $\to$ & $\mathrm{A}_{16}^{0} $ & $
E_{1}(t)= t e_1 + e_2$ & $
E_{2}(t)= t e_2$  \\
\hline

$\mathrm{A}_{12}^{t^{-1},t^{-1}} $ & $\to$ & $\mathrm{A}_{02} $ & $
E_{1}(t)= te_1 $ & $
E_{2}(t)= e_2$  \\
\hline

$\mathrm{A}_{12}^{-2t^{-2}, -(1+t)t^{-2}} $ & $\to$ & $\mathrm{A}_{09} $ & $
E_{1}(t)= t^{3}e_1 - t e_2$ & $
E_{2}(t)= t^{2}e_1 + e_2$  \\
\hline

$\mathrm{A}_{12}^{t^{-1},\frac{(1+t\alpha)t^{-1}}{2}} $ & $\to$ & $\mathrm{A}_{16}^{\alpha} $ & $
E_{1}(t)= t e_1 + e_2$ & $
E_{2}(t)= -\frac{t^2}{2} e_1 + \frac{t}{2}e_2$  \\
\hline

$\mathrm{A}_{12}^{\alpha t^{-1}, (1+\alpha) t^{-1}} $ & $\to$ & $\mathrm{A}_{08}^{\alpha} $ & $
E_{1}(t)= e_2$ & $
E_{2}(t)= t e_1 + t e_2$  \\
\hline

$\mathrm{A}_{13}^{t^{-1}} $ & $\to$ & $\mathrm{A}_{01} $ & $
E_{1}(t)= t e_1$ & $
E_{2}(t)= e_2$  \\
\hline

$\mathrm{A}_{13}^{t^{-1}} $ & $\to$ & $\mathrm{A}_{15}^{1} $ & $
E_{1}(t)= t e_1 + e_2$ & $
E_{2}(t)= -te_2$  \\
\hline

$\mathrm{A}_{14}^{t^{-1}} $ & $\to$ & $\mathrm{A}_{03} $ & $
E_{1}(t)= t e_1$ & $
E_{2}(t)= e_2$  \\
\hline

$\mathrm{A}_{15}^{2+t^{-1}} $ & $\to$ & $\mathrm{A}_{17} $ & $
E_{1}(t)= t e_1$ & $
E_{2}(t)= t^2 e_2$  \\
\hline

$\mathrm{A}_{15}^{\frac{1}{2}+t} $ & $\to$ & $\mathrm{A}_{09} $ & $
E_{1}(t)= t^{-1}e_2$ & $
E_{2}(t)= -2 e_1 - \frac{t^{-2}}{4}e_2$  \\
\hline

$\mathrm{A}_{15}^{2-\frac{3}{2(1 + t)}} $ & $\to$ & $\mathrm{A}_{16}^{\alpha} $ & $
E_{1}(t)= (1+t)e_1 + \frac{(1+t)^2t^{-1}\alpha}{3}e_2$ & $
E_{2}(t)= (1+t)^2e_2$  \\
\hline

$\mathrm{A}_{15}^{t^{-1}} $ & $\to$ & $\mathrm{A}_{08}^0{}$ & $
E_{1}(t)= e_2$ & $
E_{2}(t)= t e_1$  \\
\hline

$\mathrm{A}_{16}^{t^{-1}} $ & $\to$ & $\mathrm{A}_{09} $ & $
E_{1}(t)= 4t^{-1}e_2$ & $
E_{2}(t)= -2e_1$  \\
\hline

$\mathrm{A}_{18}^{t^{-2}} $ & $\to$ & $\mathrm{A}_{04} $ & $
E_{1}(t)= t e_1$ & $
E_{2}(t)= e_2$  \\
\hline

%$\mathrm{A}_{11}^{\alpha, t^{-1}} $ & $\to$ & $\mathrm{A}_{08}^{0} $ & $
%E_{1}(t)= e_2$ & $
%E_{2}(t)= t e_1$  \\
%\hline

%$\mathrm{A}_{}^{*} $ & $\to$ & $\mathrm{A}_{} $ & $
%E_{1}(t)= $ & $
%E_{2}(t)= $  \\
%\hline

    \end{longtable}

    \begin{longtable}{|l|l|}
\hline
\multicolumn{1}{|c|}{\textrm{Non-degeneration}} & \multicolumn{1}{|c|}{\textrm{Arguments}}\\
\hline
\hline

$\begin{array}{cccc}
     \mathrm{A}_{18}^{*} &\not \to&  \mathrm{A}_{06}^{\alpha}, \mathrm{A}_{08}^{\alpha}  & 
\end{array}$ &
${\mathcal R}= \left\{ \begin{array}{l} c_{11}^{2}, c_{11}'^{2} \in \mathbb{C},  c_{ij}^{k}=c_{ij}'^{k}=0 \textrm{ otherwise} 
\end{array} \right\}$\\
\hline

$\begin{array}{cccc}
     \mathrm{A}_{06}^{*} &\not \to&  \mathrm{A}_{02}, \mathrm{A}_{08}^{\alpha\neq-1}%, \mathrm{A}_{09} & 
\end{array}$ &
${\mathcal R}= \left\{ \begin{array}{l} c_{11}'^{1}, c_{11}'^{2}, c_{12}'^{2}, c_{21}'^{1}, c_{21}'^{2}, c_{22}'^{2} \in \mathbb{C},
c_{12}'^{2} c_{21}'^{1}=c_{11}'^{1} c_{22}'^{2}, \\ c_{12}'^{2} c_{21}'^{2}=c_{11}'^{2} c_{22}'^{2},  c_{11}'^{2} c_{21}'^{1}=c_{11}'^{1} c_{21}'^{2}%, \\  c_{ij}^{k}=c_{ij}'^{k}=0 \textrm{ otherwise} 
\end{array} \right\}$\\
\hline

$\begin{array}{cccc}
     \mathrm{A}_{08}^{*} &\not \to&  \mathrm{A}_{03}, \mathrm{A}_{06}^{\alpha\neq-2}%, \mathrm{A}_{07}  & 
\end{array}$ &
${\mathcal R}= \left\{ \begin{array}{l} c_{11}'^{1}, c_{11}'^{2}, c_{12}'^{1}, c_{12}'^{2}, c_{21}'^{1}, c_{21}'^{2}, c_{22}'^{2}  \in \mathbb{C},\\ 
c_{12}'^{1}+c_{22}'^{2}=2 c_{21}'^{1},  c_{12}'^{2}=2 c_{21}'^{2},\\  (c_{11}'^{1})^2-9 (c_{21}'^{2})^2+4 c_{11}'^{2} c_{22}'^{2}=0, \\ c_{11}'^{1} c_{12}'^{1}+6 c_{21}'^{1} c_{21}'^{2}-c_{11}'^{1} c_{22}'^{2}-2 c_{21}'^{2} c_{22}'^{2}=0
%, \\  c_{ij}^{k}=c_{ij}'^{k}=0 \textrm{ otherwise} 
\end{array} \right\}$\\
\hline

$\begin{array}{cccc}
     \mathrm{A}_{14}^{*} &\not \to&  \mathrm{A}_{02}, \mathrm{A}_{06}^{\alpha}, %\mathrm{A}_{07}, 
     \mathrm{A}_{08}^{\alpha}, %\mathrm{A}_{09}, 
     \mathrm{A}_{12}^{\alpha,\beta} %\mathrm{A}_{16}^{\alpha} 
     & 
\end{array}$ &
${\mathcal R}= \left\{ \begin{array}{l} c_{11}^{1}, c_{11}^{2}, c_{11}'^{1}, c_{11}'^{2} \in \mathbb{C},
c_{11}^{2} c_{11}'^{1}=c_{11}^{1} c_{11}'^{2}
%, \\  c_{ij}^{k}=c_{ij}'^{k}=0 \textrm{ otherwise} 
\end{array} \right\}$\\
\hline

$\begin{array}{cccc}
     \mathrm{A}_{16}^{*} &\not \to&  
     \begin{array}{l}\mathrm{A}_{02}, \mathrm{A}_{03}, \mathrm{A}_{06}^{\alpha},\\ %\mathrm{A}_{07},
     \mathrm{A}_{08}^{\alpha\neq-2}, \mathrm{A}_{12}^{\alpha,\beta}, \mathrm{A}_{14}^{\alpha}
     \end{array}& 
\end{array}$ &
${\mathcal R}= \left\{ \begin{array}{l} c_{11}^{2}, c_{11}'^{1}, c_{11}'^{2}, c_{12}'^{2}, c_{21}'^{2} \in \mathbb{C},
2 c_{11}'^{1}=3 c_{12}'^{2},  2 c_{21}'^{2}=c_{12}'^{2}
%, \\  c_{ij}^{k}=c_{ij}'^{k}=0 \textrm{ otherwise} 
\end{array} \right\}$\\
\hline

$\begin{array}{cccc}
     \mathrm{A}_{12}^{*} &\not \to& \begin{array}{l} %\mathrm{A}_{01}, 
     \mathrm{A}_{03}, 
     %\mathrm{A}_{05},
     \mathrm{A}_{06}^{\alpha\neq-2}, %\mathrm{A}_{07},
     %\mathrm{A}_{10}^{\alpha,\beta},
     \mathrm{A}_{11}^{\alpha,\beta},\\ %\mathrm{A}_{13}^{\alpha}, 
     \mathrm{A}_{14}^{\alpha}, \mathrm{A}_{15}^{\alpha\neq\frac{1}{2}}, \mathrm{A}_{17}\end{array} & 
\end{array}$ &
${\mathcal R}= \left\{ \begin{array}{l} c_{11}^{1}, c_{11}^{2}, c_{12}^{2}, c_{21}^{2}, c_{11}'^{1}, c_{11}'^{2}, c_{12}'^{2}, c_{21}'^{2} \in \mathbb{C},\\ 
c_{12}^{2}=c_{11}^{1}, c_{21}^{2}=c_{11}^{1},    c_{11}^{2} (c_{12}'^{2}-2 c_{21}'^{2})=-c_{11}^{1} c_{11}'^{2},  \\  
2 c_{12}'^{2}-c_{21}'^{2}=c_{11}'^{1}%, c_{ij}^{k}=c_{ij}'^{k}=0 \textrm{ otherwise} 
\end{array} \right\}$\\
\hline

$\begin{array}{cccc}
     \mathrm{A}_{13}^{*} &\not \to&  %\mathrm{A}_{05}, 
     \mathrm{A}_{06}^{\alpha}, %\mathrm{A}_{07},
     \mathrm{A}_{08}^{\alpha}, %\mathrm{A}_{09},
     \mathrm{A}_{10}^{\alpha,\beta}, %\mathrm{A}_{11}^{\alpha,\beta}, 
     \mathrm{A}_{12}^{\alpha,\beta} %\mathrm{A}_{15}^{\alpha\neq1},
     %\mathrm{A}_{16}^{\alpha}, 
     %\mathrm{A}_{17} 
     & 
\end{array}$ &
${\mathcal R}= \left\{ \begin{array}{l} c_{11}^{1}, c_{11}^{2}, c_{11}'^{1}, c_{11}'^{2}, c_{12}'^{2}, c_{21}'^{2}, c_{22}'^{2} \in \mathbb{C},
c_{12}'^{2}=c_{21}'^{2}%, \\  c_{ij}^{k}=c_{ij}'^{k}=0 \textrm{ otherwise} 
\end{array} \right\}$\\
\hline

$\begin{array}{cccc}
     \mathrm{A}_{15}^{*} &\not \to&  %\mathrm{A}_{01}, 
     \mathrm{A}_{03}, %\mathrm{A}_{05}, 
     \mathrm{A}_{06}^{\alpha\neq-2}, %\mathrm{A}_{07},
     %\mathrm{A}_{10}^{\alpha,\beta}, %\mathrm{A}_{11}^{\alpha,\beta},
     \mathrm{A}_{12}^{\alpha,\beta}, %\mathrm{A}_{13}^{\alpha},
     \mathrm{A}_{14}^{\alpha} & 
\end{array}$ &
${\mathcal R}= \left\{ \begin{array}{l} c_{11}^{2}, c_{11}'^{1}, c_{11}'^{2}, c_{12}'^{2}, c_{21}'^{2} \in \mathbb{C}, 
2 c_{12}'^{2}-c_{21}'^{2}=c_{11}'^{1}%, \\c_{ij}^{k}=c_{ij}'^{k}=0 \textrm{ otherwise} 
\end{array} \right\}$\\
\hline

$\begin{array}{cccc}
     \mathrm{A}_{10}^{*} &\not \to&  %\mathrm{A}_{05}, 
     \mathrm{A}_{06}^{\alpha}, %\mathrm{A}_{07},
     \mathrm{A}_{08}^{\alpha}, %\mathrm{A}_{09},
     %\mathrm{A}_{11}^{\alpha,\beta\neq\alpha}, 
     \mathrm{A}_{12}^{\alpha, \beta\neq\alpha} %\mathrm{A}_{15}^{\alpha\neq\frac{1}{2},1 }%\mathrm{A}_{16}^{\alpha}, \mathrm{A}_{17} 
     & 
\end{array}$ &
${\mathcal R}= \left\{ \begin{array}{l} c_{11}^{1}, c_{11}^{2}, c_{12}^{2}, c_{21}^{2}, c_{22}^{2}, c_{11}'^{1}, c_{11}'^{2}, c_{12}'^{2}, c_{21}'^{2}, c_{22}'^{2} \in \mathbb{C}, \\
c_{12}^{2}=c_{21}^{2},  c_{12}'^{2}=c_{21}'^{2},  c_{22}^{2} c_{12}'^{2}=c_{12}^{2} c_{22}'^{2}
%, \\  c_{ij}^{k}=c_{ij}'^{k}=0 \textrm{ otherwise} 
\end{array} \right\}$\\
\hline

$\begin{array}{cccc}
     \mathrm{A}_{11}^{*} &\not \to&  %\mathrm{A}_{01}, 
     \mathrm{A}_{03}, %\mathrm{A}_{05},
     \mathrm{A}_{06}^{\alpha\neq-2}, %\mathrm{A}_{07},
     %\mathrm{A}_{10}^{\alpha,\beta}, \mathrm{A}_{13}^{\alpha}, 
     \mathrm{A}_{14}^{\alpha} & 
\end{array}$ &
${\mathcal R}= \left\{ \begin{array}{l} c_{11}^{1}, c_{11}^{2}, c_{12}^{2}, c_{21}^{2}, c_{11}'^{1}, c_{11}'^{2}, c_{12}'^{2}, c_{21}'^{2} \in \mathbb{C},\\ 
c_{11}^{1}=c_{12}^{2},  c_{21}^{2}=c_{11}^{1},  2 c_{12}'^{2}-c_{21}'^{2}=c_{11}'^{1}%, \\  c_{ij}^{k}=c_{ij}'^{k}=0 \textrm{ otherwise} 
\end{array} \right\}$\\
\hline

    \end{longtable}

\end{proof}

By Lemma \ref{th:degapre} and Lemma \ref{th:degaprebf}, it follows the geometric classification.

\begin{theorem}
The variety of $2$-dimensional anti-pre-Lie Poisson algebras has three irreducible components, corresponding to the anti-pre-Lie Poisson algebra $\mathrm{A}_{05}$ and the families of anti-pre-Lie Poisson algebras $\mathrm{A}_{10}^{*}$ and $\mathrm{A}_{11}^{*}$.
\end{theorem}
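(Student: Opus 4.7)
The plan is to derive the theorem as a direct corollary of Lemmas \ref{th:degapre} and \ref{th:degaprebf}, using the general principle that the irreducible components of $\mathcal{T}_n$ correspond bijectively to the orbit closures that are maximal under inclusion. First, I would collect all dimension-$4$ candidates from Lemma \ref{th:degapre} (rigid algebras) and the generic members of the parametric families; these are the only possible sources of irreducible components, since every lower-dimensional orbit is in the closure of some higher-dimensional one by the primary degenerations displayed in Figure~5.

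Next, I would eliminate the non-maximal candidates using Lemma \ref{th:degaprebf}. The inclusions $\overline{O(\mathrm{A}_{01})}, \overline{O(\mathrm{A}_{13}^{\alpha})}, \overline{O(\mathrm{A}_{14}^\alpha)}, \overline{O(\mathrm{A}_{15}^{1})} \subseteq \overline{O(\mathrm{A}_{10}^{*})}$ and $\overline{O(\mathrm{A}_{15}^{\alpha})}, \overline{O(\mathrm{A}_{16}^{\alpha})}, \overline{O(\mathrm{A}_{17})} \subseteq \overline{O(\mathrm{A}_{11}^{*})}$ immediately rule out the rigid algebras $\mathrm{A}_{01}, \mathrm{A}_{17}$ and the parametric families $\mathrm{A}_{13}^*, \mathrm{A}_{15}^{*}$ as irreducible components. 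Combined with the primary degenerations in Figure~5, this shows every orbit closure is contained in one of $\overline{O(\mathrm{A}_{05})}$, $\overline{O(\mathrm{A}_{10}^{*})}$, or $\overline{O(\mathrm{A}_{11}^{*})}$.

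Finally, I would verify that these three candidates are pairwise incomparable with respect to inclusion of orbit closures. Since $\mathrm{A}_{05}$ has orbit dimension $4$, it cannot lie in the orbit of any member of $\mathrm{A}_{10}^{*}$ or $\mathrm{A}_{11}^{*}$; the only possibility is that it lies in the $5$-dimensional closures of these parametric families. The non-degenerations $\mathrm{A}_{10}^{*} \not\to \mathrm{A}_{05}$ and $\mathrm{A}_{11}^{*} \not\to \mathrm{A}_{05}$ would be established via the parametric version of Lemma \ref{main2}, using a Zariski closed, upper-triangular-stable set encoding the simultaneous structure of the commutative and anti-pre-Lie multiplications. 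For the two families, mutual non-containment follows from the dimension of the closures and from a distinguishing invariant: members of $\mathrm{A}_{10}^{*}$ have semisimple commutative parts (isomorphic to $\mathrm{A}_{01}$) while members of $\mathrm{A}_{11}^{*}$ have unital-type commutative parts (isomorphic to $\mathrm{A}_{02}$), and both collapse to $\mathbb{C}^2$ only by trivializing the commutative product, which forces the anti-pre-Lie limit to be incompatible with the other family's structure.

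The main obstacle will be rigorously excluding $\mathrm{A}_{05}$ from the two $5$-dimensional closures, as a pure dimension count is insufficient and the commutative parts of $\mathrm{A}_{10}^{\alpha,\beta}$ and $\mathrm{A}_{11}^{\alpha,\beta}$ can each degenerate to zero. The fix is to exhibit a stable Zariski closed set jointly controlling $(\mu,\mu')$ that is satisfied by every member of the relevant family but violated by $\mathrm{A}_{05}$, exploiting that the anti-pre-Lie product $\circ$ in $\mathrm{A}_{05}$ produces $e_1$ in $e_2 \circ e_1$, a feature absent from the $e_2$-valued anti-pre-Lie products of $\mathrm{A}_{10}^{\alpha,\beta}$ and incompatible with the rigidity of the $2\alpha-\beta$ coefficient relations of $\mathrm{A}_{11}^{\alpha,\beta}$.
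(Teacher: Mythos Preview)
Your approach is essentially the paper's: the theorem is read off directly from Lemmas~\ref{th:degapre} and~\ref{th:degaprebf}, and your outline of how to assemble the pieces is correct.

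However, the ``main obstacle'' you flag is not an obstacle at all --- the required exclusions are already implicit in Lemma~\ref{th:degaprebf} by transitivity of degeneration, so no new Zariski-closed sets are needed. From Figure~5 we have $\mathrm{A}_{05}\to\mathrm{A}_{06}^{0}$; combined with the non-degenerations $\mathrm{A}_{10}^{*}\not\to\mathrm{A}_{06}^{\alpha}$ and $\mathrm{A}_{11}^{*}\not\to\mathrm{A}_{06}^{\alpha\neq-2}$ established in the lemma, this immediately gives $\mathrm{A}_{10}^{*}\not\to\mathrm{A}_{05}$ and $\mathrm{A}_{11}^{*}\not\to\mathrm{A}_{05}$. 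Likewise, mutual incomparability of the two families follows by transitivity: $\mathrm{A}_{11}^{*}\not\to\mathrm{A}_{14}^{\alpha}$ together with $\mathrm{A}_{10}^{\alpha,\beta}\to\mathrm{A}_{14}^{\alpha}$ gives $\mathrm{A}_{11}^{*}\not\to\mathrm{A}_{10}^{\alpha,\beta}$; and $\mathrm{A}_{10}^{*}\not\to\mathrm{A}_{12}^{\alpha,\beta\neq\alpha}$ together with $\mathrm{A}_{11}^{\alpha,\beta}\to\mathrm{A}_{12}^{\alpha,\beta}$ gives $\mathrm{A}_{10}^{*}\not\to\mathrm{A}_{11}^{\alpha,\beta}$ for $\beta\neq\alpha$, which suffices. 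Your proposed ad hoc argument via the commutative part is therefore unnecessary (and, incidentally, the family closures are $6$-dimensional, not $5$-dimensional: two parameters plus four-dimensional orbits).
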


\section{ Pre-Poisson  algebras}

Pre-Poisson algebras first appeared in \cite{A00}
and have many applications in other areas of mathematics (see, for example, the citations of \cite{A00}).
The operad of pre-Poisson algebras is isomorphic to the Manin black product of the Poisson operad with the preLie operad \cite{U}.

\subsection{The algebraic classification of $2$-dimensional Zinbiel  algebras}

\begin{definition}
An algebra  is called a (left) Zinbiel  algebra if it satisfies the identity 
\begin{center}
$x\cdot  \left(    y \cdot    z\right)= 
\left( y\cdot    x + x\cdot    y\right) \cdot    z.$
\end{center}%
 \end{definition}

The algebraic classification of  $2$-dimensional Zinbiel algebras can be obtained by direct verification from \cite{kv16}.

\begin{theorem}
\label{zinb2}
Let ${\rm A}$ be a nonzero $2$-dimensional Zinbiel algebra. 
Then ${\rm A}$  is
isomorphic to  the following algebra:

\begin{longtable}{lcllll}

${\rm Z}_{1}$&$:$&$ e_{1} \cdot   e_{1}  = e_{2}.$ 
 
\end{longtable}

\end{theorem}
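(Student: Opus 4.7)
My plan is to parametrize an arbitrary $2$-dimensional algebra $({\rm A}, \cdot)$ by structure constants $e_i \cdot e_j = \sum_k c_{ij}^k e_k$ with respect to a basis $e_1, e_2$, and to impose the Zinbiel identity $x \cdot (y \cdot z) = (y \cdot x + x \cdot y) \cdot z$ on the eight basis triples. Multilinearity ensures this is equivalent to the identity itself, producing a complete polynomial system on the $c_{ij}^k$, which I would then solve and reduce to canonical form under the ${\rm GL}_2(\mathbb C)$ action.

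To organize the case analysis, I would exploit the well-known observation that on any Zinbiel algebra the symmetrized product $x \ast y := x \cdot y + y \cdot x$ is commutative and associative; this is verified by substituting the Zinbiel identity into $x \ast (y \ast z)$ and noting that the resulting expression is symmetric in $(x,y,z)$. Consequently $({\rm A}, \ast)$ is either zero or isomorphic to one of ${\rm A}_{01}, \ldots, {\rm A}_{04}$ from Theorem \ref{asocc2}, which splits the problem into five cases. In the zero-symmetrization case one has $e_i \cdot e_i = 0$ and $e_1 \cdot e_2 = \alpha e_1 + \beta e_2 = - e_2 \cdot e_1$; the Zinbiel identity applied to $(e_1, e_1, e_2)$ and $(e_2, e_2, e_1)$ immediately forces $\alpha = \beta = 0$, so ${\rm A}$ is trivial.

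For the cases ${\rm A}_{01}, {\rm A}_{02}, {\rm A}_{03}$, the symmetrization contains $e_1 \ast e_1 = e_1$, equivalently $e_1 \cdot e_1 = \tfrac{1}{2} e_1$ in ${\rm A}$; the Zinbiel identity on $(e_1, e_1, e_1)$ then yields $\tfrac{1}{4} e_1 = \tfrac{1}{2} e_1$, a contradiction. Only the case of ${\rm A}_{04}$ survives: here $e_1 \cdot e_1 = \tfrac{1}{2} e_2$, $e_2 \cdot e_2 = 0$, and writing $e_1 \cdot e_2 = \alpha e_1 + \beta e_2 = - e_2 \cdot e_1$, Zinbiel on $(e_1, e_1, e_1)$ gives $\tfrac{1}{2}(\alpha e_1 + \beta e_2) = -\alpha e_1 - \beta e_2$, forcing $\alpha = \beta = 0$. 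The remaining six basis triples of the Zinbiel identity are then automatic. After rescaling $e_2$ via an automorphism of ${\rm A}_{04}$ from Lemma \ref{asocc2aut}, one obtains the canonical form ${\rm Z}_1 : e_1 \cdot e_1 = e_2$.

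The main obstacle is really just the case bookkeeping: a priori several free parameters could survive imposing the Zinbiel identity, but the symmetrization argument cuts the problem down to a single surviving case in which one instance of the identity pins down all remaining parameters, yielding the unique normal form ${\rm Z}_1$.
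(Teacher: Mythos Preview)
Your argument is correct and complete, with one cosmetic slip at the very end. The paper itself does not give a proof of this theorem; it simply states that the result follows by direct verification from \cite{kv16}. Your route via the symmetrized product $x \ast y = x \cdot y + y \cdot x$ is a clean alternative: passing to the underlying commutative associative algebra reduces the problem to five cases, and the idempotent obstruction on $(e_1,e_1,e_1)$ kills ${\rm A}_{01}$, ${\rm A}_{02}$, ${\rm A}_{03}$ in one stroke, which is arguably more conceptual than a blind structure-constant computation.

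The slip: in the surviving case you obtain $e_1 \cdot e_1 = \tfrac{1}{2} e_2$ and then say you rescale $e_2$ ``via an automorphism of ${\rm A}_{04}$ from Lemma~\ref{asocc2aut}''. But the automorphisms in that lemma have the form $e_1 \mapsto \xi e_1 + \nu e_2$, $e_2 \mapsto \xi^2 e_2$, and a direct check shows these preserve the coefficient $\tfrac{1}{2}$ in $e_1 \cdot e_1 = \tfrac{1}{2} e_2$. What you actually need is a general ${\rm GL}_2$ change of basis on the Zinbiel algebra, not an automorphism of the symmetrized algebra: simply take $f_1 = e_1$, $f_2 = \tfrac{1}{2} e_2$, and then $f_1 \cdot f_1 = f_2$. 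This is harmless, but the reference to Lemma~\ref{asocc2aut} should be dropped.
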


\subsection{The algebraic classification of $2$-dimensional pre-Poisson algebras}

\begin{definition}
A pre-Poisson  algebra is a vector space  equipped with 
a (left) Zinbiel multiplication $\cdot$
and     
a (left) Pre-Lie multiplication $\circ  .$
These two operations are required to satisfy the following identities:
 
\begin{longtable}{rcl}

$(x\circ y - y \circ x ) \cdot z $&$=$&$ x \circ (y \cdot z) -y \cdot (x \circ z);$\\

$(x\cdot y + y \cdot x ) \circ z $&$=$&$ x \cdot (y \circ z) +y \cdot (x \circ z).$
\end{longtable}

\end{definition}

Pre-Poisson   algebras 
$({\rm P}, \cdot, \circ  )$ with zero Zinbiel multiplication  are 
isomorphic to 
pre-Lie algebras given in Theorem \ref{pre2}. 
Hence, we are studying  pre-Poisson  algebras defined on
the non-trivial Zinbiel algebra from Theorem \ref{zinb2}.

\begin{definition}
Let $\left( \rm{A},\cdot \right) $ be a Zinbiel 
algebra. 
Define ${\rm Z}_{\rm PP}^{2}\left( \rm{A},\rm{A}\right) $ to be
the set of all bilinear maps $\theta :\rm{A}\times \rm{A}%
\longrightarrow \rm{A}$ such that:%
\begin{longtable}{rcl}
$\theta \left( \theta \left( x,y\right) ,z\right) -\theta \left( x,\theta \left( y,z\right)
\right)$ & $=$&$\theta \left( \theta
\left( y,x\right) ,z\right) -\theta \left( y,\theta \left( x,z\right) \right),$ \\

$(\theta (x, y) - \theta (y, x) ) \cdot z $&$=$&$ \theta (x , y \cdot z) -y \cdot \theta (x , z),$\\

$\theta (x\cdot y + y \cdot x ,  z) $&$=$&$ x \cdot \theta (y , z) +y \cdot \theta (x , z).$
\end{longtable}

If $\theta \in {\rm Z}_{\rm PP}^{2}\left( \rm{A},%
\rm{A}\right) $, then $\left( \rm{A},\cdot ,\circ   \right) $ is a
 pre-Poisson  algebra where $x\circ   y=\theta \left( x,y\right) $ for
all $x,y\in \rm{A}$.
\end{definition}

\subsubsection{Pre-Poisson  algebras defined on  ${\rm Z}_{1}$}
 
From the computation of $\mathrm{Z}_{\mathrm{PP}}^{2}(\mathrm{Z}_{1},\mathrm{%
Z}_{1})$, the pre-Poisson structures defined on $\mathrm{Z}_{1}$ are of the
form:%
\[
\left\{ 
\begin{tabular}{lcllcllcl}
$e_{1}\cdot e_{1}$&$=$&$e_{2},$ &  &  \\ 
$e_{1}\circ e_{1}$&$=$&$\beta e_{1}+\alpha e_{2},$ & $e_{1}\circ e_{2}$&$=$&$\beta e_{2},
$ & $e_{2}\circ e_{1}$&$=$&$\beta e_{2}.$%
\end{tabular}%
\right. 
\]%
If $\beta =0$, we obtain the family:%
\[
\mathrm{P}_{09}^{\alpha }:=\left\{ 
\begin{tabular}{lcl}
$e_{1}\cdot e_{1}$&$=$&$e_{2},$ \\ 
$e_{1}\circ e_{1}$&$=$&$\alpha e_{2}.$%
\end{tabular}%
\right. 
\]%
If $%
\beta \neq 0$, we get the algebra:%
\[
\mathrm{P}_{10}:=\left\{ 
\begin{tabular}{lcllcllcl}
$e_{1}\cdot e_{1}$&$=$&$e_{2},$ &  &  \\ 
$e_{1}\circ e_{1}$&$=$&$e_{1},$ & $e_{1}\circ e_{2}$&$=$&$e_{2},$ & $e_{2}\circ
e_{1}$&$=$&$e_{2}.$%
\end{tabular}%
\right. 
\]%

Moreover, we have 
$\mathrm{P}_{09}^{\alpha }\cong \mathrm{P}_{09}^{\beta }$
if and only if $\alpha =\beta $.

\black

\begin{theorem}
\label{2dimPP} 
Let $\left( {\rm P},\cdot , \circ   \right) $ be a nonzero $2$-dimensional  pre-Poisson  algebra. Then ${\rm P}$ is isomorphic to one
pre-Lie algebra listed in Theorem \ref{pre2} or to one algebra listed below:     
\begin{longtable}{lcl}
$\mathrm{P}_{09}^{\alpha }$&$:$&$\left\{ 
\begin{tabular}{lcl}
$e_{1}\cdot e_{1}$  & $ =$  &$e_{2},$  \\ 
$e_{1}\circ e_{1}$ & $= $& $\alpha e_{2}.$ %
\end{tabular} \right.$ \\

$\rm{P}_{10}$& $:$ &$\left\{ 
\begin{tabular}{lcllcllcl}
$e_{1}\cdot e_{1}$&$=$&$e_{2},$ &  &  \\ 
$e_{1}\circ e_{1}$&$=$&$e_{1},$ & $e_{1}\circ e_{2} $&$=$&$e_{2},$ & $e_{2}\circ
e_{1}$&$=$&$e_{2}.$%
\end{tabular} \right.$

\end{longtable}
\end{theorem}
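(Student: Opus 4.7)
The plan is to split the argument by whether the Zinbiel multiplication on $\rm{P}$ vanishes. If $\cdot \equiv 0$, the two Leibniz-type identities become trivial and the defining axioms reduce to the single pre-Lie identity for $\circ$, so $\rm{P}$ is one of the algebras listed in Theorem \ref{pre2}. If $\cdot \not\equiv 0$, then by Theorem \ref{zinb2} the underlying Zinbiel algebra is isomorphic to $\rm{Z}_1$, so it suffices to classify the orbits of $\mathrm{Aut}(\rm{Z}_1)$ on $\mathrm{Z}^2_{\rm PP}(\rm{Z}_1,\rm{Z}_1)$ by Lemma \ref{isom}.

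For the main case, I would parametrize a general bilinear map by $\theta(e_i,e_j)=a_{ij}e_1+b_{ij}e_2$ for $i,j\in\{1,2\}$, yielding eight unknowns, and impose on $\theta$ the three identities defining $\mathrm{Z}^2_{\rm PP}(\rm{Z}_1,\rm{Z}_1)$. The two mixed identities involving $\cdot$ are linear and highly restrictive once substituted with the Zinbiel structure $e_1\cdot e_1=e_2$ (all other products vanish): they collapse most coefficients, forcing $\theta(e_2,e_2)=0$, $\theta(e_1,e_2)=\theta(e_2,e_1)$ proportional to $e_2$, and tying $a_{11}$ to the value $\theta(e_1,e_2)$ along $e_2$. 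The remaining pre-Lie identity for $\theta$ is then automatically satisfied and produces no further constraints, leaving the two-parameter family displayed in the paper,
\[
\theta(e_1,e_1)=\beta e_1+\alpha e_2,\quad \theta(e_1,e_2)=\beta e_2,\quad \theta(e_2,e_1)=\beta e_2.
\]

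Next I would compute $\mathrm{Aut}(\rm{Z}_1)$ directly: writing $\phi(e_1)=\xi e_1+\nu e_2$ and requiring $\phi(e_1)\cdot\phi(e_1)=\phi(e_2)$ (together with the vanishing of the other products being automatic) forces $\phi(e_2)=\xi^2 e_2$ with $\xi\in\mathbb{C}^{*}$ and $\nu\in\mathbb{C}$ arbitrary. A direct computation of $\theta*\phi$ shows that the parameters transform as $(\alpha,\beta)\mapsto(\alpha+\nu\beta\xi^{-1},\xi\beta)$. Therefore, when $\beta=0$ the parameter $\alpha$ is a complete invariant, yielding the parametric family $\rm{P}_{09}^\alpha$ with no coincidences, and when $\beta\neq 0$ one first normalizes $\beta=1$ by choosing $\xi=\beta^{-1}$ and then kills $\alpha$ by an appropriate choice of $\nu$, producing the unique algebra $\rm{P}_{10}$.

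The main obstacle I anticipate is the bookkeeping involved in the cocycle computation: three identities must be checked simultaneously, the Zinbiel product is non-commutative (so one must keep track of the asymmetry in both mixed identities), and one must be careful not to miss the interaction between the pre-Lie identity and the linear constraints already imposed. The remaining automorphism analysis is straightforward once the correct form of $\mathrm{Aut}(\rm{Z}_1)$ is pinned down.
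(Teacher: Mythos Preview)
Your proposal is correct and follows essentially the same route as the paper: split by whether $\cdot$ is trivial, reduce to classifying $\mathrm{Z}^2_{\rm PP}(\mathrm{Z}_1,\mathrm{Z}_1)$ modulo $\mathrm{Aut}(\mathrm{Z}_1)$, obtain the two-parameter family $\theta(e_1,e_1)=\beta e_1+\alpha e_2$, $\theta(e_1,e_2)=\theta(e_2,e_1)=\beta e_2$, and separate orbits by $\beta=0$ versus $\beta\neq 0$. You supply a bit more detail than the paper (the explicit description of $\mathrm{Aut}(\mathrm{Z}_1)$, which coincides with $\mathrm{Aut}(\mathrm{A}_{04})$ from Lemma \ref{asocc2aut}, and the explicit orbit action $(\alpha,\beta)\mapsto(\alpha+\nu\beta\xi^{-1},\xi\beta)$), but the argument is the same.
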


\subsection{Degeneration  of $2$-dimensional pre-Poisson algebras}

    \black

\begin{theorem}

\label{th:degpp}
The graph of primary degenerations and non-degenerations of the variety of $2$-dimensional  pre-Poisson algebras is given in Figure 3, where the numbers on the right side are the dimensions of the corresponding orbits.

\begin{center}
	
	\begin{tikzpicture}[->,>=stealth,shorten >=0.05cm,auto,node distance=1.3cm,
	thick,
	main node/.style={rectangle,draw,fill=gray!10,rounded corners=1.5ex,font=\sffamily \scriptsize \bfseries },
	rigid node/.style={rectangle,draw,fill=black!20,rounded corners=0ex,font=\sffamily \scriptsize \bfseries }, 
	poisson node/.style={rectangle,draw,fill=black!20,rounded corners=0ex,font=\sffamily \scriptsize \bfseries },
	ac node/.style={rectangle,draw,fill=black!20,rounded corners=0ex,font=\sffamily \scriptsize \bfseries },
	lie node/.style={rectangle,draw,fill=black!20,rounded corners=0ex,font=\sffamily \scriptsize \bfseries },
	style={draw,font=\sffamily \scriptsize \bfseries }]

	\node (3) at (2.5,6) {$4$};
	\node (2) at (2.5,4) {$3$};
	\node (1) at (2.5,2) {$2$};
	\node (0)  at (2.5,0) {$0$};

    \node[main node] (c20) at (-4,0) {${\mathbb C^8}$};
    
    \node[main node] (c260) at (-0,2) {${\rm P}_{06}^{0}$};
    \node[main node] (c251) at (-2,2) {${\rm P}_{05}^{1}$};
    \node[main node] (c23) at (-4,2) {${\rm P}_{03} $};
    \node[ac node] (c2170) at (-9,2) {${\rm P}_{09}^{\alpha}$};
    
    \node[main node] (c21) at (1,4) {${\rm P}_{01}$};
    \node[main node] (c22) at (-1,4) {${\rm P}_{02}$};
    \node[main node] (c24) at (-3,4) {${\rm P}_{04}$};
    \node[main node] (c25a) at (-5,4) {${\rm P}_{05}^{\alpha\neq1}$};
    \node[main node] (c26a) at (-7,4) {${\rm P}_{06}^{\alpha\neq0}$};
    
    \node[main node] (c27) at (-3,6) {${\rm P}_{07}$};
    \node[main node] (c28) at (-7,6) {${\rm P}_{08}$};
    \node[ac node] (c209) at (-9,6) {${\rm P}_{10}^{}$};
    
	\path[every node/.style={font=\sffamily\small}]

     (c28) edge [bend left=0] node[above=10, right=-20, fill=white]{\tiny $\alpha=\frac{1}{2}$}  (c25a)   
    
    (c27) edge [bend left=0] node[above=0, right=-18, fill=white]{\tiny $\alpha=1$}  (c26a)

    (c27) edge [bend left=0] node[above=-0, right=-15, fill=white]{\tiny $\alpha=0$}  (c25a)

    (c28) edge [bend left=0] node[above=0, right=-15, fill=white]{\tiny $\alpha=2$}  (c26a)

    (c209) edge [bend left=0] node[above=0, right=-15, fill=white]{\tiny $\alpha=1$}  (c26a)

    (c22) edge [bend left=0] node{}  (c251)
    (c21) edge [bend left=0] node{}  (c260)
    (c26a) edge [bend left=0] node{}  (c23)
    (c25a) edge [bend left=0] node{}  (c23)
    (c24) edge [bend left=0] node{}  (c23)
    (c22) edge [bend left=0] node{}  (c23)
    (c21) edge [bend left=0] node{}  (c23)
    
    (c2170) edge [bend left=0] node{}  (c20)
    (c260) edge [bend left=0] node{}  (c20)
    (c251) edge [bend left=0] node{}  (c20)
    (c23) edge [bend left=0] node{}  (c20)

    (c209) edge [bend left=0] node{}  (c2170)

     %  (c215) edge [bend left=-15] node{}  (c2170)
 
    ;

	\end{tikzpicture}

{\tiny 
\begin{itemize}
\noindent Legend:
\begin{itemize}
    \item[--] Round nodes: pre-Poisson algebras with trivial Zinbiel multiplication $\cdot$.
    \item[--] Squared nodes:  pre-Poisson algebras with non-trivial Zinbiel multiplication $\cdot$.

\end{itemize}
\end{itemize}}

{Figure 3.}  Graph of primary degenerations and non-degenerations.	
\end{center}
\end{theorem}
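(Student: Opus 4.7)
My plan is to follow the three-step template established in the preceding sections: read off the orbit dimensions from the algebras of derivations; construct explicit parametrized bases for each primary degeneration; certify each primary non-degeneration by exhibiting a Zariski-closed lower-triangular-stable subset of $\mathcal{T}_{2}$ and invoking Lemma \ref{main2}. A crucial simplification is Remark \ref{redbil}, which implies that the sub-graph involving only the pre-Lie algebras ${\rm P}_{01},\ldots,{\rm P}_{08}$ (viewed as pre-Poisson algebras with trivial Zinbiel multiplication) coincides with the primary degeneration graph of $2$-dimensional pre-Lie algebras, and hence follows directly from the results of \cite{kv16, burde09}. The only genuinely new edges to analyse are those involving the two new algebras ${\rm P}_{09}^{\alpha}$ and ${\rm P}_{10}$.

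Concretely, two new primary degenerations need to be produced. For ${\rm P}_{10}\to {\rm P}_{06}^{1}$, the parametrized basis $E_{1}(t)=e_{1},\ E_{2}(t)=t^{-1}e_{2}$ works: the pre-Lie multiplication of ${\rm P}_{10}$ coincides with that of ${\rm P}_{06}^{1}$ and is preserved by the rescaling, while the Zinbiel product satisfies $E_{1}\cdot E_{1}=e_{2}=t\,E_{2}(t)\to 0$. For ${\rm P}_{10}\to {\rm P}_{09}^{\alpha}$, the choice $E_{1}(t)=t\,e_{1}+\alpha t\,e_{2},\ E_{2}(t)=t^{2}e_{2}$ does the job; direct substitution yields $E_{1}\cdot E_{1}=E_{2}$, $E_{1}\circ E_{1}=t^{2}e_{1}+2\alpha t^{2}e_{2}=t\,E_{1}(t)+\alpha\,E_{2}(t)\to \alpha\,E_{2}$, and $E_{1}\circ E_{2}=E_{2}\circ E_{1}=t^{3}e_{2}=t\,E_{2}(t)\to 0$, so that in the limit we recover the structure of ${\rm P}_{09}^{\alpha}$.

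The non-degenerations split naturally into two families. Most are automatic from Remark \ref{redbil} by projecting onto either the pre-Lie or the Zinbiel factor: ${\rm P}_{10}\not\to {\rm P}_{07},{\rm P}_{08}$ follows by the dimension-of-derivations inequality; ${\rm P}_{10}\not\to {\rm P}_{05}^{\alpha}, {\rm P}_{06}^{\alpha\neq 1}, {\rm P}_{01},{\rm P}_{02},{\rm P}_{04}$ follow by the pre-Lie projection, since the pre-Lie factor of ${\rm P}_{10}$ is isomorphic to ${\rm P}_{06}^{1}$, whose pre-Lie orbit closure contains only ${\rm P}_{06}^{1}$, ${\rm P}_{03}$ and the zero algebra; and ${\rm P}_{09}^{\alpha}\not\to {\rm P}_{i}$ for $i\in\{1,2,4,5,6,7,8\}$ follows because the pre-Lie factor of ${\rm P}_{09}^{\alpha}$ is $2$-step nilpotent with image in $\langle e_{2}\rangle$. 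The residual cases, chiefly ${\rm P}_{09}^{\alpha}\not\to {\rm P}_{03},{\rm P}_{05}^{1},{\rm P}_{06}^{0}$, are the main technical hurdle: they will be settled by Zariski-closed upper-triangular-stable sets of the form $\{c'^{\,2}_{11}=\alpha\,c_{11}^{2},\ \text{other constants zero}\}$, which pin down the specific value of $\alpha$ by means of a homogeneous linear relation invariant under the triangular action (both sides scale with the same weight $a^{2}/d$). The delicate part is verifying that these sets genuinely exclude every representation of the target algebra, not merely the standard one; but this follows a pattern identical to the tables exhibited in the previous sections.
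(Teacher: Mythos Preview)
Your approach is essentially the one taken in the paper. The two parametrized bases you give for ${\rm P}_{10}\to{\rm P}_{06}^{1}$ and ${\rm P}_{10}\to{\rm P}_{09}^{\alpha}$ are exactly those the paper records, and the reduction of the pre-Lie sub-graph ${\rm P}_{01},\ldots,{\rm P}_{08}$ to the already-known pre-Lie degeneration picture (the paper cites Theorem~\ref{th:degcpl}, which in turn rests on \cite{kv16,burde09}) is identical in spirit. For the non-degenerations from ${\rm P}_{10}$, the paper uses a slightly slicker packaging: rather than invoking the pre-Lie orbit closure of ${\rm P}_{06}^{1}$, it simply observes that $({\rm P}_{10},\circ)$ is commutative while $({\rm P}_{04},\circ)$, $({\rm P}_{05}^{\alpha},\circ)$ and $({\rm P}_{06}^{\alpha\neq 1},\circ)$ are not, and commutativity of $\circ$ is a Zariski-closed condition preserved under degeneration. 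Your argument via Remark~\ref{redbil} is equivalent and perfectly valid.

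There is, however, a misstep in your final paragraph. The cases ${\rm P}_{09}^{\alpha}\not\to {\rm P}_{03},{\rm P}_{05}^{1},{\rm P}_{06}^{0}$ are not a technical hurdle at all: all four algebras sit at orbit dimension $2$, so no proper degeneration among them is possible. Likewise, ${\rm P}_{09}^{\alpha}\not\to{\rm P}_{i}$ for $i\in\{1,2,4,7,8\}$ and for ${\rm P}_{05}^{\alpha\neq1}$, ${\rm P}_{06}^{\alpha\neq0}$ is automatic since those targets have orbit dimension $\geq 3$. The Zariski-closed sets you sketch are therefore unnecessary here. What you should instead make explicit is the direction you left implicit: the non-degenerations ${\rm P}_{i}\not\to{\rm P}_{09}^{\alpha}$ for $i=1,\ldots,8$, which follow immediately from Remark~\ref{redbil} applied to the Zinbiel factor (the zero Zinbiel structure cannot degenerate to a nonzero one).
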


\begin{proof} 
The dimensions of the orbits are deduced by computing the algebra of derivations.
The primary degenerations are proven using the parametric bases included in the following Table:

    \begin{longtable}{|lcl|ll|}
\hline
\multicolumn{3}{|c|}{\textrm{Degeneration}}  & \multicolumn{2}{|c|}{\textrm{Parametrized basis}} \\
\hline
\hline

${\rm P}_{10} $ & $\to$ & ${\rm P}_{09}^{\alpha}  $ & $
E_{1}(t)= te_1+\alpha t e_2$ & $
E_{2}(t)= t^2 e_2$  \\
\hline

${\rm P}_{10} $ & $\to$ & ${\rm P}_{06}^1  $ & $
E_{1}(t)= e_1$ & $
E_{2}(t)= t^{-1} e_2$  \\
\hline

\end{longtable}

The primary non-degenerations are following from  Theorem   \ref{th:degcpl} and the following observations:
$({\rm P}_{10}, \circ)$ is commutative, 
but 
$
({\rm P}_{04}, \circ),
({\rm P}_{05}^{\alpha}, \circ)$ and 
$({\rm P}_{06}^{\alpha\neq1}, \circ)$
are non-commutative.

\end{proof}

 At this point, only the description of the closures of the orbits of the parametric families is missing. Although it is not necessary to study the closure of the orbits of each of the parametric families of
the variety of $2$-dimensional   pre-Poisson algebras in order to identify its irreducible components, we will study them to give a complete description of the variety.
The next corollary follows from 
Corollary \ref{th:degcplf} and Theorem \ref{th:degpp}.
 
\begin{corollary}\label{th:degppf}
The description of the closure of the orbit of the parametric families in the variety of $2$-dimensional   pre-Poisson algebras.

\begin{longtable}{lcl}
  $\overline{\{O({\rm P}_{05}^{*}) \}}$ & ${\supseteq}$ &
 $ \Big\{\overline{\{O({\rm P}_{04})\}}, \  
 \overline{\{O({\rm P}_{03})\}}, \  
 \overline{\{O({\rm P}_{02})\}}, \  
 \overline{\{O({\mathbb C^8})\}} \Big\}$\\
   
   $\overline{\{O({\rm P}_{06}^{*}) \}}$ & ${\supseteq}$ & 
   $ \Big\{\overline{\{O({\rm P}_{04})\}}, \  
   \overline{\{O({\rm P}_{03})\}}, \ 
   \overline{\{O({\rm P}_{01})\}}, \ 
   \overline{\{O({\mathbb C^8})\}} \Big\}$\\
  
$\overline{\{O({\rm P}_{09}^{*}) \}}$ & ${\supseteq}$ &
$ \Big\{ 
{\overline{\{O({\rm P}_{03})\}}, \{O({\mathbb C^8})\}}  \Big\}$\\

\end{longtable}

\end{corollary}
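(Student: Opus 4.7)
The plan is to derive the three inclusions separately, leveraging the previous geometric results cited in the statement's preamble.

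For $\overline{\{O({\rm P}_{05}^{*})\}}$ and $\overline{\{O({\rm P}_{06}^{*})\}}$, observe that every member of these two families, as well as each of the target algebras ${\rm P}_{01}, {\rm P}_{02}, {\rm P}_{03}, {\rm P}_{04}$, has trivial Zinbiel multiplication. When the Zinbiel product vanishes, the pre-Poisson compatibility identities are automatically satisfied, so any degeneration in the pre-Lie variety whose source and target both have zero second product lifts unchanged to a degeneration in the pre-Poisson variety. Consequently, the parametric bases and parametric indices used in Corollary~\ref{th:degcplf} to establish ${\rm C}_{05}^{*} \to {\rm C}_{04}, {\rm C}_{03}, {\rm C}_{02}$ and ${\rm C}_{06}^{*} \to {\rm C}_{04}, {\rm C}_{03}, {\rm C}_{01}$ transfer verbatim and yield the claimed inclusions, since ${\rm C}_{0i}$ and ${\rm P}_{0i}$ coincide as pre-Lie algebras for $i = 1,\dots,6$.

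For $\overline{\{O({\rm P}_{09}^{*})\}}$, the primary degeneration ${\rm P}_{09}^{\alpha} \to \mathbb{C}^8$ is already recorded in Theorem~\ref{th:degpp}, so only ${\rm P}_{09}^{*} \to {\rm P}_{03}$ requires a direct argument. I propose the parametrized index $\alpha(t) = t^{-1}$ together with the parametrized basis $E_1(t) = e_1$, $E_2(t) = t^{-1} e_2$. A direct computation gives $E_1(t) \cdot E_1(t) = e_2 = t\, E_2(t) \to 0$ as $t \to 0$, so the Zinbiel product vanishes in the limit, while $E_1(t) \circ E_1(t) = t^{-1} e_2 = E_2(t)$ matches the pre-Lie product of ${\rm P}_{03}$. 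The inclusion $\overline{\{O(\mathbb{C}^8)\}} \subseteq \overline{\{O({\rm P}_{09}^{*})\}}$ then follows either directly from Theorem~\ref{th:degpp} or by transitivity with ${\rm P}_{03} \to \mathbb{C}^8$.

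I do not foresee any genuine obstacle: the first part is a pure transfer of results already established, relying only on the observation that the mixed pre-Poisson axioms degenerate trivially when one product is zero, and the second is settled by a single explicit parametrization whose limits can be verified by hand.
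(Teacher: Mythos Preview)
Your proposal is correct and follows essentially the same route as the paper: the paper simply states that the corollary follows from Corollary~\ref{th:degcplf} and Theorem~\ref{th:degpp}, which is exactly your transfer argument for ${\rm P}_{05}^{*}$ and ${\rm P}_{06}^{*}$, and your explicit parametrization for ${\rm P}_{09}^{*}\to{\rm P}_{03}$ coincides with the one given for ${\rm C}_{17}^{*}\to{\rm C}_{03}$ in Corollary~\ref{th:degcplf} (note that ${\rm P}_{09}^{\alpha}$ and ${\rm C}_{17}^{\alpha}$ share the same structure constants).
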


The geometric classification of $2$-dimensional   pre-Poisson algebras follows by 
Theorem  \ref{th:degpp} and Corollary \ref{th:degppf}.

\begin{theorem}
The variety of $2$-dimensional   pre-Poisson algebras has five irreducible components, corresponding to the rigid algebras 
${\rm P}_{07}$, ${\rm P}_{08}$ and  ${\rm P}_{10}$,   
and the families of  algebras ${\rm P}_{05}^{*}$ and ${\rm P}_{06}^{*}$.
\end{theorem}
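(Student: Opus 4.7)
The plan is to derive the classification of irreducible components directly by combining Theorem \ref{th:degpp} and Corollary \ref{th:degppf}, together with the basic principle that $\overline{O({\rm T})}$ is an irreducible component of the variety if and only if there is no algebra or parametric family ${\rm T}'$ with $O({\rm T}) \subsetneq \overline{O({\rm T}')}$. Since there are only finitely many algebras plus the two one-parameter families ${\rm P}_{05}^{*}, {\rm P}_{06}^{*}, {\rm P}_{09}^{*}$, it suffices to compare the finite list of orbit closures pairwise using the data already assembled.

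First I would extract from Theorem \ref{th:degpp} that the rigid candidates (algebras whose orbit has dimension $4$, the maximum) are precisely ${\rm P}_{07}, {\rm P}_{08}, {\rm P}_{10}$, and that the $1$-parameter families with generic orbit dimension $3$ are ${\rm P}_{05}^{*}$, ${\rm P}_{06}^{*}$, whose total orbit closures therefore have dimension $4$. The family ${\rm P}_{09}^{*}$ also has dimension $4$ in total, but from Theorem \ref{th:degpp} we have the primary degeneration ${\rm P}_{10} \to {\rm P}_{09}^{\alpha}$, so $\overline{O({\rm P}_{09}^{*})} \subseteq \overline{O({\rm P}_{10})}$ and the family does not contribute a component.

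Next I would verify maximality of each of the five listed orbit closures. For the rigid algebras, the graph in Theorem \ref{th:degpp} shows no incoming arrows from any other algebra or family, so their closures are maximal. For ${\rm P}_{05}^{*}$ and ${\rm P}_{06}^{*}$, I would argue that they cannot be contained in any single rigid closure (they are $4$-dimensional families, whereas $\overline{O({\rm P}_{07})}, \overline{O({\rm P}_{08})}, \overline{O({\rm P}_{10})}$ are also $4$-dimensional and consist of a single orbit plus strictly lower-dimensional ones, so a $4$-dimensional family of non-isomorphic algebras cannot lie inside), and from Theorem \ref{th:degpp} there are no degenerations of the form ${\rm P}_{05}^{\alpha}\to {\rm P}_{06}^{\beta}$ or vice versa generically, so neither family is contained in the closure of the other.

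Finally I would check that the remaining algebras lie in one of the five chosen closures: ${\rm P}_{01}, {\rm P}_{03}, {\rm P}_{04}, {\rm P}_{06}^{\alpha}$ and $\mathbb{C}^{8}$ sit inside $\overline{O({\rm P}_{06}^{*})}$, while ${\rm P}_{02}, {\rm P}_{05}^{\alpha}$ sit inside $\overline{O({\rm P}_{05}^{*})}$, all by Corollary \ref{th:degppf}; and ${\rm P}_{09}^{\alpha}$ is absorbed by $\overline{O({\rm P}_{10})}$. The main obstacle is the bookkeeping to confirm that the maximality comparisons among ${\rm P}_{07}, {\rm P}_{08}, {\rm P}_{10}, {\rm P}_{05}^{*}, {\rm P}_{06}^{*}$ really exclude all incidences (that is, that no one of these five degenerates into another), but this is already read off from the graph in Theorem \ref{th:degpp}, completing the identification of five irreducible components.
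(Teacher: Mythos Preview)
Your approach is essentially identical to the paper's: the theorem is stated as a direct consequence of Theorem~\ref{th:degpp} and Corollary~\ref{th:degppf}, and you have simply spelled out the bookkeeping that makes this inference explicit. One small slip: the orbits of ${\rm P}_{09}^{\alpha}$ have dimension $2$ (not $3$), so $\overline{\{O({\rm P}_{09}^{*})\}}$ is $3$-dimensional, not $4$; this does not affect your argument since you correctly absorb this family into $\overline{O({\rm P}_{10})}$ via the degeneration ${\rm P}_{10}\to{\rm P}_{09}^{\alpha}$.
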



\begin{thebibliography}{}

\bibitem{abcf2}
Abdelwahab H., Barreiro E., Calderón A.J., Fernández Ouaridi A.,
The classification of nilpotent Lie-Yamaguti algebras,   
Linear Algebra and its Applications,   654  (2022), 339--378.
 
\bibitem{abcflts}
Abdelwahab H., Barreiro E., Calderón A.J., Fernández Ouaridi A.,
The algebraic and geometric classification of nilpotent Lie triple systems up to dimension four,  
Revista de la Real Academia de Ciencias Exactas, Físicas y Naturales. Serie A. Matemáticas,  117 (2023), 1,  11.
 
\bibitem{abcfp}
Abdelwahab H., Barreiro E., Calderón A.J., Fernández Ouaridi A.,
The algebraic classification and degenerations of nilpotent Poisson algebras, 
Journal of Algebra,   615 (2023), 243--277.

\bibitem{ack}
Abdelwahab H.,  Calder\'on A.J., Kaygorodov I.,
    The algebraic and geometric classification of nilpotent binary Lie algebras, 
    International Journal of Algebra and Computation, 29  (2019), 6, 1113--1129.


\bibitem{afm}
 Abdelwahab H., 
 Fernández Ouaridi A., 
 Martín González C.,  
 Degenerations of Poisson algebras,  
Journal of Algebra and Its Applications, 2023, DOI: 10.1142/S0219498825500872



 
\bibitem{A00}
 Aguiar M., 
Pre-Poisson algebras, 
Letters in Mathematical Physics, 54 (2000),  4, 263--277.
 
\bibitem{ale3}
Alvarez M.A., 
Degenerations of $8$-dimensional $2$-step nilpotent Lie algebras,
    Algebras and Representation Theory, 24 (2021), 5, 1231--1243.
 
 

\bibitem{afk21}
Alvarez M.,   Fehlberg J\'{u}nior  R.,  Kaygorodov I.,  
 The algebraic and geometric classification of Zinbiel algebras, 
Journal of Pure and Applied Algebra,   226 (2022), 11,  107106.
  

\bibitem{aleis2}
Alvarez M.A., Hern\'{a}ndez I., 
    Varieties of Nilpotent Lie Superalgebras of dimension $\leq 5$,
    Forum Mathematicum, 32 (2020), 3,  641--661.

 
 

\bibitem{ak21}
  Alvarez M.A.,  Kaygorodov I.,  
  The algebraic and geometric classification of nilpotent weakly associative and symmetric Leibniz algebras,  
 Journal of  Algebra, 588  (2021),  278--314.

\bibitem{aleis}
 Alvarez M.A.,  Vera S., On rigid $3$-dimensional Hom-Lie algebras,
Journal of Algebra,  588  (2021), 166--188.

\bibitem{Bai} Bai C.,  Meng D., 
The classification of Novikov algebras in low dimensions, 
Journal of Physics A, 34 (2001), 8, 1581--1594.

\bibitem{bai20}
 Bai C., Bai R., Guo L., Wu Y.,
Transposed Poisson algebras, Novikov--Poisson algebras, and $3$-Lie algebras, Journal of Algebra, 632 (2023), 535–566.

\bibitem{BL22}
  Bai C., Liu G., 
Anti-pre-Lie algebras, Novikov algebras and commutative $2$-cocycles on Lie algebras,
Journal of Algebra, 609 (2022), 337--379.


	\bibitem{bfk23}   Beites P.,    Fern\'andez Ouaridi A.,   Kaygorodov I., 
        The algebraic and geometric classification of transposed Poisson algebras, Revista de la Real Academia de Ciencias Exactas, Físicas y Naturales. Serie A. Matemáticas,   117 (2023), 2,    55.

 

 
 \bibitem{bfk22}
 Beites P., 
 Ferreira B. L. M., 
  Kaygorodov I.,  
 Transposed Poisson   structures,
 Results in Mathematics, 79 (2024), 2,  93.



\bibitem{burde09} Beneš T., Burde D., 
Degenerations of pre-Lie algebras, 
Journal of Mathematical Physics, 50 (2009), 11, 112102. 

 
\bibitem{B18} 
  Bokut L., Chen Yu., Zhang Z., 
  On free Gelfand-Dorfman-Novikov--Poisson algebras and a PBW theorem, 
  Journal of Algebra, 500 (2018), 153--170. 


\bibitem{BC99} 
Burde D., Steinhoff C.,
    Classification of orbit closures of $4$--dimensional complex Lie algebras.
 Journal of Algebra,    214 (1999), 2, 729--739.

 
  

  

 

 
\bibitem{ccsmv}	
 Cabrera Casado Yo., Siles Molina M.,  Velasco M., 
 Classification of three-dimensional evolution algebras, 
 Linear Algebra and Its Applications, 524 (2017), 68--108.

 

 

\bibitem{ckls}
Camacho L., Kaygorodov I., Lopatkin V., Salim M.,
    The variety of dual Mock-Lie algebras,
    Communications in Mathematics, 28 (2020), 2, 161--178.

 
\bibitem{cd06}
Casas J.,  Datuashvili T., 
Noncommutative Leibniz--Poisson algebras, 
Communications in Algebra, 34 (2006), 7, 2507–2530.

\bibitem{cp06}
Casas J., Pirashvili T.,
Algebras with bracket, 
Manuscripta Mathematica, 119 (2006), 1, 1--15.

   \bibitem{chouhy}
Chouhy S.,
    On geometric degenerations and Gerstenhaber formal deformations,
    Bulletin of the London Mathematical Society, 51 (2019),  5, 787--797.
    
    \bibitem{cibils}  Cibils C., 
    $2$-nilpotent and rigid finite-dimensional algebras,
    Journal of the London Mathematical Society (2), 36 (1987), 2, 211--218. 
 
  \bibitem{erik}
Darpö E.,  Rochdi A.,  
    Classification of the four-dimensional power-commutative real division algebras, 
    Proceedings of the Royal Society of Edinburgh, Section A, 141 (2011), 6, 1207--1223.

   \bibitem{Ernst}
Dieterich E., Öhman J., 
On the classification of $4$-dimensional quadratic division algebras over square-ordered fields,  
Journal of the London Mathematical Society (2), 65 (2002), 2, 285--302.


 \bibitem{dots}
 Dotsenko V., 
 Algebraic structures of $F$-manifolds via pre-Lie algebras, 
 Annali di Matematica Pura ed Applicata (4), 198 (2019),  2, 517--527



\bibitem{fkkv}
Fernández Ouaridi A.,  Kaygorodov I.,  Khrypchenko M.,  Volkov Yu., 
    Degenerations of nilpotent algebras,
    Journal of Pure and Applied Algebra,   226 (2022),  3, 106850.



\bibitem{Foissycom}
  Foissy L., 
The Hopf algebra of Fliess operators and its dual pre-Lie algebra,
Communications in Algebra, 43 (2015),   10, 4528–4552.

 
\bibitem{Foissy}
 Foissy L.,
A pre-Lie algebra associated to a linear endomorphism and related algebraic structures,
 European Journal of Mathematics, 1 (2015), 1, 78--121.

       
  \bibitem{gabriel}
Gabriel P.,
Finite representation type is open,
Proceedings of the International Conference on Representations of Algebras (Carleton Univ., Ottawa, Ont., 1974), pp. 132--155.
 
 \bibitem{ger63}
Gerstenhaber M.,
    On the deformation of rings and algebras,
    Annals of Mathematics (2), 79 (1964), 59--103.
	 

\bibitem{GRH}
Grunewald F.,  O'Halloran J.,
    Varieties of nilpotent Lie algebras of dimension less than six,
    Journal of Algebra, 112 (1988), 315--325.

\bibitem{GRH2}
Grunewald F., O'Halloran J.,
    A Characterization of orbit closure and applications,
    Journal of Algebra, 116 (1988), 163--175.

 
 

 
 
 

\bibitem{ikp20}
 Ignatyev M.,  Kaygorodov I., Popov Yu., 
  The geometric classification of $2$-step nilpotent algebras   and applications,
     Revista Matemática Complutense, 35 (2022),   3, 907--922.


 

\bibitem{ikv17}
Ismailov N., Kaygorodov I.,  Volkov Yu.,
    The geometric classification of Leibniz algebras,
    International Journal of Mathematics, 29  (2018), 5, 1850035.


 
  
 

  \bibitem{k23}
  Kaygorodov I.,     
Non-associative algebraic structures: classification and structure,  Communications in Mathematics,  32 (2024), 3, 1--62.
 
 
\bibitem{kkl20}
Kaygorodov I., Khrypchenko M., Lopes S.,
    The algebraic and geometric classification of nilpotent anticommutative algebras,
    Journal of Pure and Applied Algebra, 224 (2020), 8, 106337.


\bibitem{kkl21}
Kaygorodov I., Khrypchenko M., Lopes S.,
The geometric classification of nilpotent algebras,
 Journal of Algebra,
633 (2023), 857–886.

\bibitem{kkp20}
Kaygorodov I., Khrypchenko M., Popov Yu., 
The algebraic and geometric classification of nilpotent terminal algebras, Journal of Pure and Applied Algebra,  225 (2021), 6, 106625.



   
\bibitem{ksu18}
 Kaygorodov I., Shestakov I., Umirbaev U., 
 Free generic Poisson fields and algebras, 
 Communications in Algebra, 46 (2018), 4, 1799--1812.

 
 

\bibitem{kv20}
Kaygorodov I.,  Volkov Yu., 
    Degenerations of Filippov algebras, 
    Journal of Mathematical Physics, 61 (2020),  2, 021701. 

 

\bibitem{kv16}
Kaygorodov I.,   Volkov Yu.,
    The variety of $2$-dimensional algebras over an algebraically closed field,
    Canadian  Journal of Mathematics,  71 (2019),  4, 819--842.


 

  
\bibitem{japan}
  Kobayashi Yu., Shirayanagi K., Takahasi S.-Ei., Tsukada M.,  Classification of three-dimensional zeropotent algebras over an algebraically closed field, 
  Communications in Algebra, 45 (2017),  12, 5037--5052.

   \bibitem{lsyb}
Liu J., Sheng Yu., Bai C., 
$F$-manifold algebras and deformation quantization via pre-Lie algebras, 
Journal of Algebra, 559 (2020), 467--495.


   \bibitem{l24}
 Lopes S.,
Noncommutative Algebra and Representation Theory: Symmetry, Structure \& Invariants, Communications in Mathematics, 32
(2024), 3, 63--117.



 \bibitem{shaf}
  Maletesta N.,  Siciliano S.,
   Five-dimensional $p$-nilpotent restricted Lie algebras over algebraically closed fields of characteristic $p>3,$
   Journal of Algebra, 634 (2023), 755--789.




   \bibitem{mansuy}
  Mansuy A., 
  Preordered forests, packed words and contraction algebras,  
  Journal of Algebra, 411 (2014), 259--311.
  
  
 \bibitem{mammez}
Mammez C., 
    On the combinatorics of the Hopf algebra of dissection diagrams, 
    Journal of Algebraic Combinatorics, 51 (2020), 4, 479--526. 

    
  \bibitem{petersson}
Petersson H., 
The classification of two-dimensional nonassociative algebras, 
Results in Mathematics, 37 (2000), 1--2, 120--154.
 
 
\bibitem{R13}
Ratseev S., 
Growth of some varieties of Leibniz--Poisson algebras, 
Serdica Mathematical Journal, 37 (2011), 4, 331-340.


   \bibitem{kms}
Sartayev B., 
Some generalizations of the variety of transposed Poisson algebras,
Communications in Mathematics, 32 (2024),  2, 55--62.

   
\bibitem{S90}
Seeley C., 
    Degenerations of 6-dimensional nilpotent Lie algebras over $\mathbb{C}$, 
    Communications in Algebra, 18 (1990), 3493--3505.

 
 
  


 
\bibitem{U}
  Uchino K., 
Derived bracket construction and Manin products, 
Letters in Mathematical Physics, 93 (2010),  1, 37--53.

 
\bibitem{wolf2}
Volkov Yu., 
    $n$-ary algebras of the first level,
     Mediterranean Journal of Mathematics, 19 (2022), 1,   2.


\bibitem{wolf1}
Volkov Yu.,  
Anticommutative algebras of the third level, Linear Algebra and its Applications, 662 (2023), 18--38.



 
\bibitem{xu97}
Xu X., 
Novikov--Poisson algebras, 
Journal of Algebra, 190 (1997), 2, 253--279.

\bibitem{xu96}
Xu X., 
On simple Novikov algebras and their irreducible modules,
Journal of Algebra, 185 (1996), 3, 905--934.

 \bibitem{z15}
  Zakharov A., 
Novikov--Poisson algebras of low dimensions. (Russian),
SEMR,  12 (2015), 381--393.
  
 
 
\end{thebibliography}
\end{document}